\definecolor{linkcolor}{rgb}{0.5,0.0,0.0}
\definecolor{citecolor}{rgb}{0.0,0.5,0.0}
\definecolor{urlcolor} {rgb}{0.0,0.0,0.5}
\theoremstyle{plain}
\newtheorem{lemma}{Lemma}
\newtheorem{theorem}{Theorem}
\newtheorem{proposition}{Proposition}
\newtheorem{corollary}{Corollary}
\newtheorem{problem}{Problem}
\newtheorem{remark}{Remark}
\theoremstyle{definition}
\newtheorem{definition}{Definition}
\newtheorem*{notation}{Notation}
\newcommand{\cc}[1]{\ensuremath{\overline{#1}}}
\newcommand{\pe}[1]{\ensuremath{\check{#1}}}
\newcommand{\g}[1]{\ensuremath{g_{#1}}}
\renewcommand{\a}[1]{\ensuremath{a_{#1}}}
\newcommand{\K}[1]{\ensuremath{K_{#1}}}
\newcommand{\projalg}{\ensuremath{\mathfrak{p}}}
\newcommand{\projcl}{\ensuremath{\mathfrak{P}}}
\newcommand{\solSp}{\ensuremath{\mathfrak{A}}}
\newcommand{\lie}{\ensuremath{\mathcal{L}}}
\newcommand{\til}[1]{\ensuremath{\tilde{#1}}}
\newcommand{\h}[2]{\begin{minipage}{3.5cm}\centering \textbf{#1}\\ \textit{#2} \end{minipage}}
\newcommand{\vect}[1]{\ensuremath{\begin{pmatrix}#1\end{pmatrix}}}
\newcommand{\Rnz}{\ensuremath{\mathds{R}\setminus\{0\}}}
\newcommand{\pmo}{\ensuremath{ \{\pm1\} }}
\newcommand{\angl}{\ensuremath{[0,2\pi)}}
\newcommand{\halfangl}{\ensuremath{[0,\pi)}}
\newcommand{\R}{\ensuremath{\mathds{R}}}
\newcommand{\Z}{\ensuremath{\mathds{Z}}}
\DeclareMathOperator{\sgn}{sgn}
\DeclareMathOperator{\const}{const}
\DeclareMathOperator{\vol}{vol}
\DeclareMathOperator{\tr}{tr}
\DeclareMathOperator{\Id}{Id}
\newcommand\dts{\makebox[1em][c]{.\hfil.\hfil.}}
\newcommand{\allowhy}{\nobreak\hskip\z@skip}
\newcommand{\pushright}[1]{\ifmeasuring@#1\else\omit\hfill$\displaystyle#1$\fi\ignorespaces}
\newcommand{\pushleft}[1]{\ifmeasuring@#1\else\omit$\displaystyle#1$\hfill\fi\ignorespaces}
\begin{document}

\title{Normal forms of two-dimensional metrics admitting exactly one essential projective vector field}
\author{%
	\textsc{Gianni Manno} (\textsf{giovanni.manno@polito.it})\\
	\footnotesize
    Dipartimento di Scienze Matematiche (DISMA),
    Politecnico di Torino,
    Corso Duca degli Abruzzi, 24,
    10129 Torino, Italy\\[0.5cm]
	\textsc{Andreas Vollmer} (\textsf{andreasdvollmer@gmail.com}; \textsf{a.vollmer@unsw.edu.au})\\
	\footnotesize
    Istituto Nazionale di Alta Matematica -- Dipartimento di Scienze Matematiche (DISMA),\\[-0.2cm]
	\footnotesize
    Politecnico di Torino,
    Corso Duca degli Abruzzi, 24,
    10129 Torino, Italy\\[-0.2cm]
    --- \\[-0.2cm]
	\footnotesize
    School of Mathematics and Statistics,
    University of New South Wales,
    Sydney, NSW 2052, Australia
    }
\maketitle

\begin{abstract}
We give a complete list of mutually non-diffeomorphic normal forms for the two-dimensional metrics that admit one essential (i.e., non-homothetic) projective vector field.
This revises some results in~\cite{matveev_2012} and extends the results of \cite{bryant_2008,matveev_2012}, solving a problem posed by Sophus Lie in 1882~\cite{lie_1882}.
\end{abstract}

\begin{center}
\footnotesize
MSC 2010 classes: 53A20, 53A55, 53B10\smallskip

Keywords: projective connections; projective symmetries; projectively equivalent metrics
\end{center}

\section{Introduction}

Let $(M,g)$ be a smooth Riemannian or pseudo-Riemannian manifold of dimension $2$. In the following the abbreviation ``metric'' is used for both Riemannian and pseudo-Riemannian metrics, unless otherwise specified.
\begin{definition}
A \emph{projective transformation} is a (local) diffeomorphism of $M$ sending geodesics into geodesics (where we view geodesics as unparametrized curves).
A vector field on $M$ is called \emph{projective} if its (local) flow acts by projective transformations.
\end{definition}

\noindent The set of projective vector fields of a metric $g$ forms a Lie algebra \cite{lie_1883}, denoted by $\projalg(g)$ in the following. Infinitesimal homotheties, i.e.\ vector fields $w$ such that $\lie_wg=\lambda g$ for some $\lambda\in\mathds{R}$, are examples of projective vector fields (for $\lambda=0$, this includes Killing vector fields).

\begin{remark}
Since no misunderstandings can occur, we say that one (resp., more than one) projective vector field is admitted by a metric, meaning that its projective algebra has dimension~$1$ (resp., higher).
In doing so, we follow the terminology used in~\cite{matveev_2012,bryant_2008}.
\end{remark}

\begin{definition}
A projective vector field that is not an infinitesimal homothety is called \emph{essential}.
\end{definition}
\noindent In the present paper we mainly focus our attention on metrics admitting essential projective vector fields as metrics admitting an infinitesimal homothety $w$ were described in~\cite{lie_1882}: locally, around a generic point (where the homothetic vector field does not vanish), there is a system of coordinates $(x,y)$ such that $w=\partial_x$ and the metric is of the form
\begin{equation*}\label{eqn:homothety}
 g=e^{\lambda x}\,\begin{pmatrix} E(y) & F(y)\\ F(y) & G(y) \end{pmatrix}\,,
\end{equation*}
where the matrix on the right hand side is a non-degenerate matrix, i.e., $EG-F^2\ne0$.

\begin{definition}
Two metrics are called \emph{projectively equivalent} if they have the same geodesics (as unparametrized curves). The collection of all metrics projectively equivalent to a given metric $g$ is called the \emph{projective class} of~$g$. We denote it by $\projcl(g)$.
\end{definition}

\noindent It turns out that metrics that belong to the same projective class admit the same algebra of projective vector fields. A natural question is to characterize the (projective classes of) metrics with a prescribed Lie algebra of projective symmetries. A more thorough objective is to find a complete list of mutually non-diffeomorphic normal forms, i.e.\ coordinate transformations may not link two different normal forms.
In these directions, in his 1882 paper \cite{lie_1882}, Sophus Lie formulated the following problem:
\begin{problem}[Lie, 1882]\label{pbl:lie}
 Determine the metrics that describe surfaces whose geodesic curves admit an infinitesimal transformation\footnote{German original \cite{lie_1882}: \emph{``Es wird verlangt, die Form des Bogenelementes einer jeden Fl\"ache zu bestimmen, deren geod\"atische Curven eine infinitesimale Transformation gestatten.''}}, i.e.\ metrics whose projective algebra has $\dim(\projalg(g))\geq 1$.
\end{problem}

\noindent Fubini referred to this problem as the ``Lie problem'', see \cite{aminova_2003,fubini_1903}. An overview of the history of the problem can be found, for instance, in \cite{aminova_2003,aminova_2006}. Important works in the field, with respect to the considerations in the present paper, are for instance \cite{dini_1869,liouville_1889,bryant_2008,matveev_2012,bryant_2009}.

Problem~\ref{pbl:lie} is also referred to as \emph{Lie's First Problem}, in contrast to the narrower problem when the inequality~$\dim(\projalg(g))>1$ holds strictly. The latter problem is then referred to as \emph{Lie's Second Problem}.
A solution to Lie's (Second) Problem was first claimed in \cite{aminova_2003,aminova_2006}, but the proof contained a gap. A correct solution is given in \cite{bryant_2008}, where mutually non-diffeomorphic normal forms of $2$-dimensional metrics $g$ with $\dim(\projalg(g))\geq 2$ were found around generic points (i.e., the orbit of the projective algebra is of constant non-zero dimension in a neighborhood of these points).
Metrics admitting exactly one, essential projective vector field have been treated in~\cite{matveev_2012}, around generic points, providing an explicit list of projective classes.
However, this list is not a list of mutually non-diffeomorphic normal forms, i.e.\ non-isometric metrics; actually, it is not even sharp as a list of projective classes under projective transformations.
Moreover, reference~\cite{matveev_2012} contains some gaps and some supplementary results are incomplete, see Section~\ref{sec:state} where these issues are discussed in detail, along with a brief outline of the state of art. The main outcomes of the present paper are discussed in Section~\ref{sec:main.results}: Theorems~\ref{thm:theorem.2.revised}, \ref{thm:theorem.3.revised} and~\ref{thm:normal.forms} together with Proposition~\ref{prop:homotheties.and.eigenvectors}, constitute a classification in terms of normal forms up to isometries of metrics that admit exactly one, essential projective vector field. Note that Theorems~\ref{thm:theorem.2.revised} and~\ref{thm:theorem.3.revised} constitute corrections of two results found in~\cite{matveev_2012}, namely Theorems~2 and~3 of this reference.
As a by-product of the proof of Theorem~\ref{thm:normal.forms}, we also obtain a classification of all projective classes that cover metrics with exactly one, projective vector field that is essential (as stated above, ~\cite{matveev_2012} provides only a description of such classes, not a classification, see Section~\ref{sec:state} for more details).

\smallskip\noindent
For the formulation of the main results, we need the following proposition.

\begin{proposition}[\cite{beltrami_1865,bolsinov_2009,dini_1869}]
\label{prop:dini}
	Let $\g1$ and $\g2$ be projectively equivalent, non-proportional metrics. Then, in a neighborhood of almost every point, there are coordinates $(x,y)$ such that the metrics assume one of the following three normal forms:
	\smallskip

	\begin{center}
		\begin{TAB}(r,1cm,1cm)[3pt]{|c|c|c|c|}{|c|c|c|}
		&	\h{A}{Liouville case}	&	\h{B}{complex Liouville case}	&	 \h{C}{Jordan block case}  \\
		\g1		& $(X-Y)(dx^2\pm dy^2)$		&	 $(\cc{h(z)}-h(z))\,(d\cc{z}^2-dz^2)$					 &	 $(1+xY^\prime)dxdy$  \\
		\g2	& $(\frac{1}{X}-\frac{1}{Y})(\frac{dx^2}{X}\pm\frac{dy^2}{Y})$
		& $\left(\frac{1}{\cc{h(z)}}-\frac{1}{h(z)}\right)\, \left(\frac{d\cc{z}^2}{\cc{h(z)}}-\frac{dz^2}{h(z)}\right)$
		& \begin{minipage}{3.5cm}\centering $\frac{1+xY'}{Y^4}\ (-2Y\,dxdy$\\ $+(1+xY')\,dy^2)$\end{minipage} \\
		\end{TAB}
	\end{center}
	Here, $X=X(x)$ and $Y=Y(y)$ are functions of one variable only, and $h=h(z)$ is holomorphic, where we use coordinates $z=x+iy$, $\cc{z}=x-iy$.
\end{proposition}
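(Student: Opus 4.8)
The plan is to translate the projective equivalence of $g_1$ and $g_2$ into a single self-adjoint operator field and then reduce the classification to pointwise linear algebra. Set
\[
L \;=\; \left(\frac{\det g_2}{\det g_1}\right)^{1/3}\,g_2^{-1}g_1,
\]
a $(1,1)$-tensor that is self-adjoint with respect to $g_1$; classically (Sinjukov, Benenti), $g_1$ and $g_2$ are projectively equivalent precisely when $L$ solves a first-order linear PDE on $M$ whose right-hand side is built from $g_1$ and $d(\tr L)$ (the Sinjukov--Benenti equation $\nabla^{g_1}_k L_{ij}=\tfrac12\big((g_1)_{ik}\,\partial_j\sigma+(g_1)_{jk}\,\partial_i\sigma\big)$ with $\sigma=\tr L$, up to normalization). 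Non-proportionality of $g_1$ and $g_2$ forces $L$ to differ from a function times the identity on a dense open set; I would discard the closed, nowhere-dense locus where the eigenvalues of $L$ coincide or its Jordan type jumps. On the complement the pointwise conjugacy class of the $2\times 2$ operator $L$ is locally constant and is one of exactly three types: two distinct real eigenvalue functions $X\ne Y$ (Liouville), a complex-conjugate pair (complex Liouville), or one real eigenvalue with a genuine $2\times 2$ Jordan block (Jordan block case).

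In the first case, the Sinjukov--Benenti equation yields the Levi-Civita relations: $X$ is constant along the eigendistribution of $Y$ and vice versa. Straightening the two (automatically integrable, and $g_1$-orthogonal) eigendistributions into the coordinate foliations $\{x=\text{const}\}$ and $\{y=\text{const}\}$ gives $X=X(x)$, $Y=Y(y)$; feeding this back into the equation and rescaling the coordinates pins down $g_1=(X-Y)(dx^2\pm dy^2)$, the sign being forced by the signature. The complex-Liouville case is the analytic continuation of this computation carried out with $z=x+iy$, $\bar z=x-iy$, giving $g_1=(\overline{h(z)}-h(z))(d\bar z^2-dz^2)$. In the Jordan-block case the single eigenvalue together with the nilpotent part of $L$, substituted into the equation, integrates to $g_1=(1+xY')\,dx\,dy$. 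Finally, $g_2$ is recovered algebraically: in dimension two $L=(\det L)^{-1}g_2^{-1}g_1$, hence $g_2$ equals $(\det L)^{-1}\,g_1 L^{-1}$, and inserting the explicit $L$ and $g_1$ found above produces exactly the second row of the table in each column.

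The heart of the proof --- and the step I expect to be the main obstacle --- is extracting the Levi-Civita separation (that the two real eigenvalues depend on complementary coordinates) from the Sinjukov--Benenti equation, since the same mechanism also drives the explicit integration in the Jordan-block case, while the complex-Liouville normal form then comes for free by complexification. The remaining ingredients --- self-adjointness of $L$, the equivalence between projective equivalence and the Sinjukov--Benenti equation, the recovery formula for $g_2$, and the verification that the discarded locus is nowhere dense (so that the conclusion holds in a neighborhood of almost every point) --- are classical and can either be cited from the references listed or settled by a short direct computation.
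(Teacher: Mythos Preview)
The paper does not give its own proof of this proposition; it is stated as a known result and attributed to the references \cite{beltrami_1865,bolsinov_2009,dini_1869}. Your outline is correct and matches essentially the approach of \cite{bolsinov_2009}: introduce the Benenti tensor $L$ (which the paper itself uses later, see Definition~\ref{def:benenti.g}), note that projective equivalence is encoded in the Sinjukov equation for $L$, split into the three cases according to the pointwise Jordan type of the $g_1$-self-adjoint operator $L$, integrate in each case, and recover $g_2$ algebraically from $L$ and $g_1$.
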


\begin{remark}
Proposition~\ref{prop:dini} provides normal forms for a pair of non-proportional, projectively equivalent metrics. These have been used in \cite{matveev_2012} to arrive at a description of all projective classes of metrics with exactly one, essential projective vector field\footnote{In the current context, we use Proposition~\ref{prop:dini} to describe a pair of projectively equivalent metrics that serve as ``generators'', via Formula \eqref{eqn:general.metric} below, of their projective class. However, in particular cases, three such ``generators'' are needed.},
organized into 10 cases belonging to 3 different types according to Proposition~\ref{prop:dini}, see Theorem~\ref{thm:matveev} below, which is taken from \cite{matveev_2012}. In Theorem~\ref{thm:normal.forms}, we maintain this organization of the projective classes.
\end{remark}


\paragraph{Structure of the paper.}
In the remainder of the introduction, we present the main notions and terminology used throughout the paper. For brevity of exposition, our focus is exclusively on definitions and facts required for an understanding of the main results of this paper, and their context. Other results and definitions are introduced later, when and as far as they are needed for the individual proofs.
Section~\ref{sec:lie} first outlines the problem formulated by Sophus Lie, and then gives a brief critical discussion of the state of art of its solution. The section is concluded by a summary of the main contributions the present paper makes to the solution of Lie's Problem.
Sections~\ref{sec:proof.thm.2.revised}, \ref{sec:proof.thm.3.revised} and~\ref{sec:proof.main.theorem} contain the proofs of the main theorems of this paper, while Section~\ref{sec:preparations} contains some supplementary results.

\subsection{Metrizable projective connections, degree of mobility and Liouville tensors}\label{sec:results.used}
A metric $g$ given in an explicit system of coordinates $(x,y)$ gives rise, via its Levi-Civita connection, to a second order ordinary differential equation (ODE)
\begin{equation}\label{eqn:projective-connection}
	y'' = -\Gamma^2_{11} +(\Gamma^1_{11}-2\Gamma^2_{12})\,y' -(\Gamma^2_{22}-2\Gamma^1_{12})\,y'^2 +\Gamma^1_{22}\,y'^3,
\end{equation}
where $y=y(x)$ and $\Gamma^i_{jk}$ are the Christoffel symbols of $g$. The ODE \eqref{eqn:projective-connection} is called \emph{the projective connection associated to $g$}. The name is justified by the fact that, for a solution $y(x)$ to \eqref{eqn:projective-connection}, the curve $(x,y(x))$ is a geodesic of $g$ up to reparametrization.
Thus, (local) diffeomorphisms $(x,y)\to \big(u(x,y),v(x,y)\big)$ that preserve \eqref{eqn:projective-connection} (finite point symmetries) are projective transformations of $g$, i.e.\ they send geodesics into geodesics (as unparametrized curves).
Infinitesimal point symmetries of \eqref{eqn:projective-connection} are projective vector fields of $g$ and generate a $1$-parametric family of projective transformations.
For a more detailed study of such transformations, from a general point of view, see e.g.\ \cite{kruglikov_2009,tresse_1896}.
Let us consider a general ($2$-dimensional) projective connection,
\begin{equation}\label{eq.proj.conn.gen}
  y'' = f_0 +f_1\,y' +f_2\,y'^2 +f_3\,y'^3\,,\quad f_i=f_i(x,y)\,.
\end{equation}
A natural question is whether or not the projective connection \eqref{eq.proj.conn.gen} is metrizable, i.e.\ if there exists a $2$-dimensional metric $g$ such that \eqref{eq.proj.conn.gen} coincides with~\eqref{eqn:projective-connection}.\footnote{The problem of metrisability is examined from multiple perspectives in~\cite{bryant_2009}.}
The metrisability condition can be expressed in terms of a system of four partial differential equations (PDE) on the components of~$g$ (solved, for metrics with one or several projective vector fields, in \cite{matveev_2012} and \cite{bryant_2008}, respectively). There is one major simplification: The PDE system becomes linear when the unknowns (i.e., components of~$g$) are replaced by components of a weighted tensor section $a=\psi^{-1}(g)$,
\begin{equation}\label{eqn:Liouville.tensor.g}
	a=\psi^{-1}(g)=\frac{g}{|\det(g)|^{\nicefrac23}}, \qquad\qquad g=\psi(a)=\frac{a}{\det(a)^2}\,.
\end{equation}
We call $a=\psi^{-1}(g)$ the \emph{Liouville tensor associated to~$g$}.
It takes values in $S^2M\otimes(\vol\,M)^{-\nicefrac43}$ ($S^2M$ is the symmetric tensor product of the module of $1$-forms on $M$, $\vol\,M$ is the one-dimensional bundle of volume forms on $M$).
More precisely we have the following proposition.
\begin{proposition}[\cite{bryant_2008,liouville_1889}]
The projective connection associated to the Levi-Civita connection of a metric $g$ is~\eqref{eq.proj.conn.gen}
if and only if the entries $a_{ij}$ of the matrix $a=\frac{g}{|\det(g)|^{\nicefrac23}}$
satisfy the linear system of PDEs
\begin{equation}\label{eqn:linear.system}
\left\{
\begin{array}{l}
a_{11x} - \frac23\,f_1\,a_{11}+2f_0\,a_{12} = 0
\\
a_{11y} + 2a_{12x} -\frac43\,f_2\,a_{11} +\frac23\,f_1\,a_{12}+2f_0\,a_{22} = 0
\\
2a_{12y} + a_{22x} -2f_3\,a_{11} -\frac23\,f_2\,a_{12}+\frac43\,f_1\,a_{22} = 0
\\
a_{22y} - 2f_3\,a_{12}+\frac23\,f_2\,a_{22} = 0
\end{array}
\right.
\end{equation}
where the subscripts $x,y$ denote derivatives.
\end{proposition}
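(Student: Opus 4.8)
The plan is to trace the statement back to one structural fact: for any metric $g$, the Liouville tensor $a=\psi^{-1}(g)$ is parallel for the Levi-Civita connection of $g$ when read as a weighted tensor, and the particular combinations of Christoffel symbols that enter the projective connection \eqref{eqn:projective-connection} are exactly the ones that remain in this parallelism relation. First I would record, straight from \eqref{eqn:projective-connection}, that the projective connection of $g$ has coefficients $f_0=-\Gamma^2_{11}$, $f_1=\Gamma^1_{11}-2\Gamma^2_{12}$, $f_2=2\Gamma^1_{12}-\Gamma^2_{22}$, $f_3=\Gamma^1_{22}$, where $\Gamma^i_{jk}$ are the Christoffel symbols of $g$; thus the claim ``the projective connection of $g$ is \eqref{eq.proj.conn.gen}'' means precisely that these four expressions agree with the functions $f_0,\dots,f_3$ appearing in \eqref{eq.proj.conn.gen}.

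Next I would differentiate $a_{ij}=g_{ij}\,|\det g|^{-\nicefrac23}$, using metric compatibility $\partial_k g_{ij}=\Gamma^l_{ki}g_{lj}+\Gamma^l_{kj}g_{il}$ together with Jacobi's formula $\partial_k\log|\det g|=g^{lm}\partial_k g_{lm}=2\Gamma^l_{kl}$. This yields, in both signatures (the $|\cdot|$ is exactly what makes this uniform), the identity
\[
\partial_k a_{ij}=\Gamma^l_{ki}\,a_{lj}+\Gamma^l_{kj}\,a_{il}-\tfrac43\,\Gamma^l_{kl}\,a_{ij},
\]
which is just the coordinate form of the statement that $a$ is parallel for $\nabla^g$ as a section of $S^2M\otimes(\vol M)^{-\nicefrac43}$.

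Then I would expand this identity for $(k,i,j)=(1,1,1)$, for $(k,i,j)=(2,2,2)$, and for the two combinations ``$(2,1,1)$ plus twice $(1,1,2)$'' and ``$(1,2,2)$ plus twice $(2,1,2)$''. Substituting the identifications of $f_0,\dots,f_3$ above, these four relations should collapse, after elementary simplification, exactly onto the four equations of \eqref{eqn:linear.system}. This shows that $a=\psi^{-1}(g)$ always solves \eqref{eqn:linear.system} with $f_0,\dots,f_3$ equal to the coefficients of $g$'s own projective connection, which already gives the ``only if'' implication. For the converse I would view \eqref{eqn:linear.system} as a linear system in the unknowns $(f_0,f_1,f_2,f_3)$ with the $a_{ij}$ and their first derivatives as data; a short computation shows its coefficient matrix has determinant equal to a nonzero numerical multiple of $(\det a)^2$, hence is invertible wherever $a$ is non-degenerate. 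Since the previous step exhibits the projective connection of $g$ as a solution, it is the unique one, so $a=\psi^{-1}(g)$ satisfies \eqref{eqn:linear.system} for a prescribed tuple $(f_0,\dots,f_3)$ precisely when that tuple is the projective connection of $g$ --- which is the assertion.

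The only real difficulty is bookkeeping: verifying that the four chosen combinations of the components of the parallelism identity reproduce the coefficients $\tfrac23,\tfrac43,2$ of \eqref{eqn:linear.system} on the nose, and evaluating the $4\times4$ determinant in the last step. Both are routine two-dimensional linear algebra, with no conceptual obstruction. One could alternatively phrase the whole argument invariantly, via the projective density/tractor calculus underlying \eqref{eqn:Liouville.tensor.g}, but in dimension two the direct computation is the shortest route.
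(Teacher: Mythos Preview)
The paper does not supply its own proof of this proposition; it is stated with a citation to \cite{bryant_2008,liouville_1889} and used as input, so there is no in-paper argument to compare against.

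Your proposal is correct and is the standard direct verification. The parallelism identity
\[
\partial_k a_{ij}=\Gamma^l_{ki}\,a_{lj}+\Gamma^l_{kj}\,a_{il}-\tfrac43\,\Gamma^l_{kl}\,a_{ij}
\]
follows exactly as you say from metric compatibility and Jacobi's formula, and the four combinations you single out do reproduce \eqref{eqn:linear.system} on the nose (for instance, the $(1,1,1)$ component gives $a_{11,x}=\tfrac23 f_1 a_{11}-2f_0 a_{12}$ after substituting $f_0=-\Gamma^2_{11}$, $f_1=\Gamma^1_{11}-2\Gamma^2_{12}$). Your converse via uniqueness is also sound: the $4\times4$ coefficient matrix of \eqref{eqn:linear.system} in the unknowns $(f_0,f_1,f_2,f_3)$ has determinant $\tfrac{16}{9}(\det a)^2$, so wherever $a$ is non-degenerate the tuple $(f_0,\dots,f_3)$ is determined by $a$ and its first derivatives, hence must coincide with the projective connection of $g=\psi(a)$.

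One small clarification worth making explicit: the converse implication holds on the (open, dense) set where $\det a\ne 0$; this is consistent with how the proposition is used in the paper, since metrics are recovered from solutions of \eqref{eqn:linear.system} precisely where $\det a\ne 0$.
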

\noindent Of course, solutions to the linear system of PDE \eqref{eqn:linear.system} span a linear space.
Solutions $a=(a_{ij})$ of the linear system~\eqref{eqn:linear.system} correspond to metrics via \eqref{eqn:Liouville.tensor.g} (i.e. to solutions of the initial metrizability problem) if $\det(a)\ne0$.
In Lemma~2 of~\cite{matveev_2012} it is proven that, if $a\not\equiv0$, then the set of the points where $a$ is degenerate is nowhere dense (in the topological sense).
\begin{definition}\label{def:degree.mobility}
The space of non-zero solutions to the system \eqref{eqn:linear.system} for a metric $g$ is denoted by  $\solSp(g)$ and its dimension is called the \emph{degree of mobility} of $g$.
We usually abbreviate $\solSp=\solSp(g)$ when there is no risk of confusion.
\end{definition}
\begin{remark}\label{rmk:g-a-correspondence}
Let $a\in\solSp(g)$ be a solution of \eqref{eqn:linear.system}. The mapping~$\psi$ defined in~\eqref{eqn:Liouville.tensor.g} is a bijection that identifies $a\ne0$ with the metric $g=\psi(a)$ whose projective connection is~\eqref{eq.proj.conn.gen}.
In view of this correspondence, also the spaces $\solSp(g)$ and $\projcl(g)$ are identified, and in particular we have $\projcl(g)=\psi(\solSp(g))$.
\end{remark}

\begin{remark}
The (weighted) tensor section $a$, as defined in~\eqref{eqn:Liouville.tensor.g}, is not the only possible choice to make the system linear. In fact, we could also have chosen the new indeterminates
\begin{equation}\label{eqn:sigma.g}
   \sigma^{ij} = |\det(g)|^{\tfrac13}\,g^{ij}
\end{equation}
where $g^{ij}$ are components of the inverse metric of $g$.
We use Liouville tensors instead, consistent with the use in~\cite{matveev_2012,bryant_2008,bryant_2009}.
However, the choice~\eqref{eqn:sigma.g} is taken up again in Section~\ref{sec:strategy.benenti} when we define Benenti tensors, i.e., special conformal Killing tensors that are made use of in the proof of the main theorem.
\end{remark}

\begin{notation}
From now on we adopt, for brevity of exposition, the following convention: Whenever there is a non-integer real exponent, the base expression is to be understood as the absolute value if it is real-valued, unless explicitly stated otherwise. Similarly, $\det(g)^{\nicefrac23}$ is to be understood as identical to $|\det(g)|^{\nicefrac23}$. This convention shall not be applied if the base expression is complex-valued. For instance, for $z=\rho e^{i\theta}\in\mathds{C}$ with $\rho>0$ and $\theta\in\angl$, we have that $z^\xi = (\rho e^{i\theta})^\xi = \rho^\xi e^{i\theta\,\xi}$.
The sign of real number $h$ is denoted by $\sgn(h)$.
The indefinite integral of a function $f(y)$ is written as $\int^y f(\xi)\,d\xi$. Note that in Theorem~\ref{thm:normal.forms} below the constant of integration is absorbed by a translation of the coordinate $x$.
\smallskip

\noindent Later we are working with the pullback and pushforward of a diffeomorphism $\tau$, i.e., with $\tau^*$ and $\tau_*$, respectively. For mixed tensor fields, e.g., $A=\alpha\otimes X$ for a 1-form $\alpha$ and a vector field $X$ on $M$, we denote
\[
  \tau^*A = \tau^*(\alpha\otimes X) = \tau^{-1*}(\alpha)\otimes\tau_*(X)\,.
\]
\end{notation}

\smallskip\noindent
It is proven in \cite{matveev_2012}  that, if we restrict our attention to metrics with one, essential projective vector field, $\dim(\solSp)$ is either $2$ or $3$. Therefore, if we denote by $\{a_i\}$ a basis of  $\solSp$, any solution $a\in\solSp$ is of the form
$a=\sum_{i=1}^m K_i a_i$, where $m=\dim(\solSp)\in\{2,3\}$, $(\K1,\K2)\in\R^2\setminus\{0\}$ and the corresponding metric, via Formula~\eqref{eqn:Liouville.tensor.g}, is given by
\begin{equation}\label{eqn:general.metric}
g=g[\K{1},\dts,\K{m}]
 =\frac{
 	\sum_{i=1}^m \K{i}\frac{\g{i}}{\det(\g{i})^{\nicefrac23}}
       }{
    \left[\det\big(
       \sum_{i=1}^m \K{i}\frac{\g{i}}{\det(\g{i})^{\nicefrac23}}
    \big)\right]^2}\,,\quad m=\dim(\solSp)\in\{2,3\}\,.
\end{equation}
It can easily be seen that $\projcl(\g1)=\dts=\projcl(\g{m})=\projcl(g[\K{1},\dts,\K{m}])$, with $(\K1,\dts,\K{m})\in\mathds{R}^m\setminus\{0\}$, $m=\dim(\solSp)\in\{2,3\}$.
For later use let us define, as well,
\begin{equation}\label{eqn:space.G}
 \projcl(\g1,\dots,\g{m}) = \left\{
      \frac{
 	     \sum_{i=1}^m \K{i}\frac{\g{i}}{\det(\g{i})^{\nicefrac23}}
      }{
 	     \left[\det\big(
 	     \sum_{i=1}^m \K{i}\frac{\g{i}}{\det(\g{i})^{\nicefrac23}}
 	 \big)\right]^2} \text{ such that } (\K1,\dots,\K{m})\ne0
    \right\}\,.
\end{equation}
for $m\in\{2,3\}$, respectively.

If the degree of mobility is one, any projective vector field is homothetic. On the other hand, if the degree of mobility is at least four, there exists a Killing vector field~\cite{koenigs_1896}.
There is another remarkable result on Killing vector fields and projective equivalence:
\begin{lemma}[\cite{bryant_2008}]\label{la:killing.vectors.and.projective.equivalence}
 If a metric admits a Killing vector field, any metric projectively equivalent to it also admits a Killing vector field.
\end{lemma}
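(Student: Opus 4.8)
The plan is to exploit the correspondence (Remark~\ref{rmk:g-a-correspondence}) between the projective class of a metric and the solution space $\solSp$ of the linear system~\eqref{eqn:linear.system}, together with the fact that a Killing vector field of $g$ is a projective vector field of $g$ and hence — since projectively equivalent metrics share their projective algebra — a projective vector field of every metric in $\projcl(g)$. So let $v$ be a Killing vector field of $\g1$, and let $\g2$ be projectively equivalent to $\g1$. Then $v$ is a projective vector field of $\g2$, and I want to show that in fact $\lie_v\g2=0$ after possibly replacing $\g2$ by a suitable element of its projective class — or, better, directly.

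The key step is to observe that the flow of $v$ acts on the (finite-dimensional) solution space $\solSp=\solSp(\g1)=\solSp(\g2)$ of the linear metrizability system, because $v$ is a point symmetry of the common projective connection~\eqref{eq.proj.conn.gen}: pulling back a solution $a$ of~\eqref{eqn:linear.system} by the flow $\phi_t$ of $v$ again yields a solution, so $t\mapsto \phi_t^* a$ is a curve in $\solSp$, and differentiating gives a linear operator $a\mapsto \lie_v a$ on $\solSp$. Now $v$ being Killing for $\g1$ means precisely $\lie_v \psi^{-1}(\g1)=0$, i.e.\ the Liouville tensor $a_1=\psi^{-1}(\g1)$ is a fixed point of this linear flow; so the flow on $\solSp$ has a nonzero fixed vector. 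I then argue that the induced flow on $\solSp$ is conjugate to a flow by linear isometries with respect to a natural inner product (or at least is bounded/has a nontrivial invariant subspace on which it is trivial): the point is to find, within $\solSp$, another fixed point $a_2$ with $\det(a_2)\neq 0$ and $\psi(a_2)$ proportional to — or projectively equivalent to, which here is automatic — $\g2$. Since $\psi(a_2)$ then has $v$ as a Killing vector field and lies in $\projcl(\g1)=\projcl(\g2)$, and since any two metrics in the same projective class admitting a common vector field that is Killing for one... hmm, this last identification step needs care: it is not a priori true that $\g2$ itself is isometric to $\psi(a_2)$.

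I would therefore instead run the argument as follows. The induced linear flow $\Phi_t$ of $v$ on $\solSp$ is a one-parameter subgroup of $GL(\solSp)$. If $\solSp$ has dimension $\le 3$ (as it does in the case of interest), the Jordan structure of the generator $A=\frac{d}{dt}\big|_0\Phi_t$ is very restricted. The existence of the fixed vector $a_1$ with $\det\neq 0$, combined with the fact (Lemma~2 of~\cite{matveev_2012}, quoted in the excerpt) that degeneracy of a nonzero solution is a nowhere dense condition, lets me show that the set of $a\in\solSp$ with $\lie_v a=0$ is a linear subspace $\solSp^v$ containing $a_1$, hence of dimension $\ge 1$, and that $\psi(\solSp^v\setminus\{0\})$ consists of metrics admitting $v$ as a Killing field. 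It then remains to show $\g2\in\psi(\solSp^v)$, i.e.\ that $a_2:=\psi^{-1}(\g2)$ is itself $v$-invariant. This I get from the general structure theory of projectively equivalent pairs: using the Beltrami–Dini normal forms of Proposition~\ref{prop:dini} for the pair $(\g1,\g2)$, the condition $\lie_v\g1=0$ forces $v$ to be one of the coordinate vector fields (in cases A, B) or a specific field (in case C), and a direct check in each of the three normal forms shows that a vector field Killing for $\g1$ is automatically Killing for $\g2$.

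The main obstacle I expect is exactly this last point — showing $\g2$ itself, not merely some projectively equivalent companion, is preserved by $v$. The clean way is the normal-form case analysis: in the Liouville case $\g1=(X-Y)(dx^2\pm dy^2)$, a Killing field must preserve the foliation by the level sets of $X$ and $Y$ and act by translation in the flat conformal coordinates, and one reads off from the explicit $\g2=(\tfrac1X-\tfrac1Y)(\tfrac{dx^2}{X}\pm\tfrac{dy^2}{Y})$ that the same field kills $\g2$; similarly for the complex Liouville and Jordan-block cases. The subtlety is that the normal forms of Proposition~\ref{prop:dini} hold only near almost every point, so I must argue that "$v$ Killing for $\g2$" — an open and closed condition where $\g2$ is defined and non-degenerate — propagates from this dense open set to all of $M$, which follows since $\lie_v\g2$ is a smooth tensor field vanishing on a dense set.
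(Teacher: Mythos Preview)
The paper does not give its own proof of this lemma; it is simply quoted from~\cite{bryant_2008}. So there is nothing to compare against in the paper, and I can only assess your argument on its own merits.

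Your argument has a genuine gap. You ultimately try to prove the stronger statement that the \emph{same} vector field $v$ that is Killing for $g_1$ is Killing for $g_2$, via the Dini normal forms: you claim that in Liouville coordinates a Killing field of $g_1=(X-Y)(dx^2\pm dy^2)$ must be one of the coordinate fields $\partial_x,\partial_y$, forcing $X$ or $Y$ constant, whence $v$ kills $g_2$ as well. That claim is false. Take $X(x)=x$, $Y(y)=y$, so $g_1=(x-y)(dx^2+dy^2)$. Then $v=\partial_x+\partial_y$ is Killing for $g_1$ (the metric depends only on $x-y$), yet $v$ is \emph{not} Killing for
\[
g_2=\Big(\tfrac1x-\tfrac1y\Big)\Big(\tfrac{dx^2}{x}+\tfrac{dy^2}{y}\Big),
\]
which is visibly not invariant under $(x,y)\mapsto(x+t,y+t)$. (In this example $g_2$ does admit a Killing field, namely $x\partial_x+y\partial_y$, but it is a \emph{different} one; for $g_1$ this field is only a homothety.) So the Dini coordinates for the pair $(g_1,g_2)$ need not be adapted to the Killing field of $g_1$, and your ``direct check'' breaks down. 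Your earlier approach through the action of $\lie_v$ on $\solSp$ is more promising in spirit, but as you yourself note it does not close: the fixed subspace $\solSp^v=\ker(\lie_v)$ need not contain $\psi^{-1}(g_2)$. The lemma only asserts existence of \emph{some} Killing field for $g_2$, and a correct proof (as in~\cite{bryant_2008}) must produce one, in general different from $v$; this requires a genuinely different argument than the one you sketch.
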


\noindent The lemma implies that, for the purposes of the current paper, the existence of a Killing vector field should be considered a property of the projective class rather than of a specific metric.

Before we proceed, let us make another brief comment on notation. We use subscripts $1,2,3$ to denote the basis elements for the space~$\solSp$, e.g.\ $\a1,\a2,\a3$. Components of (weighted) tensors are denoted by double subscript, e.g., $a_{11}=a(e_1,e_1)$ is a component of $a$ w.r.t.\ a basis $\{e_i\}$ on the (co-)tangent space $T^*M$ resp.\ $TM$. Obviously, since in this paper the space $\solSp$ has dimension at most $3$, no misunderstandings can occur. Nontheless, whenever it seems advisable, a clarifying comment shall be made.

\subsection{Action of the projective algebra}\label{sec:lie.action}
The possible local infinitesimal symmetry algebras $\projalg$ of~\eqref{eqn:projective-connection} have been classified by Lie \cite{lie_1882}, proven more recently, e.g., in~\cite{romanovskii_1996}. These algebras are isomorphic to one among the following:
\[
 (i)\ \{0\}\,,\quad
 (ii)\ \mathds{R}\,,\quad
 (iii)\ \mathfrak{s}\,,\quad
 (iv)\ \mathfrak{sl}(2,\mathds{R})\,,\quad
 (v)\ \mathfrak{sl}(3,\mathds{R})\,.
\]
The algebra $\mathfrak{s}$ is the non-commutative, 2-dimensional Lie algebra spanned by elements $e_1,e_2$ that satisfy $[e_1,e_2]=e_1$. In the current paper we are in case $(ii)$ of this classification.

The projective algebra acts on the space $\solSp$ of solutions to~\eqref{eqn:linear.system} via the Lie derivative $\lie_w$ along a projective vector field~$w$.
In fact, $\solSp$ is invariant under this action, since the system~\eqref{eqn:linear.system} is projectively invariant, i.e., depends only on the projective connection, see also Section~2.1 in~\cite{matveev_2012} for more details.
We have that
\begin{equation}\label{eqn:Lwa}
\lie_wa=\det(g)^{-\frac23} \cdot \lie_wg - \frac{2}{3}\det(g)^{-\frac23}
\tr_g(\lie_wg) \cdot g
\end{equation}
Consider the Lie derivative $\lie_w|_\solSp:\solSp\to\solSp$. There exists a basis of $\solSp$ such that $\lie_w|_\solSp$ is represented by a matrix in Jordan normal form. Since $\dim(\solSp)\in\{2,3\}$, there exists a two-dimensional, $\lie_w$-invariant subspace $\hat{\solSp}\subset\solSp$ (for $m=2$, $\hat{\solSp}=\solSp$). In a suitable basis, the (restricted) Lie derivative $\lie_w|_{\hat{\solSp}}$ is represented by one of the following three matrices (note that we can rescale the matrices by a constant factor since the projective vector field is defined up to a constant factor only, cf.\ also~\cite{matveev_2012}):
\begin{equation}\label{eqn:normal-matrices}
 \text{(I)}\quad \begin{pmatrix} \lambda & 0\\ 0 & 1 \end{pmatrix}\quad\text{with $|\lambda|\geq1$}\,;\qquad\qquad
 \text{(II)}\quad \begin{pmatrix} 1 & 1\\ 0 & 1 \end{pmatrix}\,;\qquad\qquad
 \text{(III)}\quad \begin{pmatrix} \lambda & -1\\ 1 & \lambda \end{pmatrix} \quad\text{with $\lambda\in\mathds{R}$}.
\end{equation}
To simplify notation, the restricted Lie derivative $\lie_w|_\solSp$ is denoted simply by $\lie_w$, as we are going to do from now on unless otherwise specified.
We shall see in Section~\ref{sec:optimizing.lambda.xi} that, for our purposes, $\lambda\ne1$ in case~(I) and, w.l.o.g., $\lambda\geq0$ in case~(III).
In case (I), it is convenient to use, instead of~$\lambda$,
\begin{equation}\label{eqn:xi.via.lambda}
  \xi=2\,\frac{\lambda-1}{2\lambda+1}\,.
\end{equation}

\section{Lie's Problem}\label{sec:lie}
This paper is concerned with the solution of Problem~\ref{pbl:lie}, known as Lie's Problem. Our goal is a classification of metrics that admit a 1-dimensional algebra of projective vector fields such that these vector fields are essential, i.e.\ non-homothetic.
In Section~\ref{sec:state}, we outline the main results known on the subject. We do not claim to give an exhaustive discussion of the topic, but rather focus only on those results that are crucial in the context of the question under consideration. In particular, some issues in~\cite{matveev_2012} are identified.
Section~\ref{sec:main.results} is a synopsis of major outcomes of the present paper, which fix the issues raised in Section~\ref{sec:state} and moreover establish normal forms of metrics with exactly one, essential projective vector field, and a classification of their projective classes.

\subsection{State of the Art}\label{sec:state}
Normal forms of 2-dimensional metrics that admit at least 2 linearly independent projective vector fields were obtained in \cite{bryant_2008}, around almost every point.
The full classification can be found in the reference, but since we do not need it explicitly for our purposes here, we just cite the following statement:
\begin{proposition}[\cite{bryant_2008}]
Let $p$ be a point on the 2-dimensional manifold with metric~$g$. If~$g$ admits at least 2 projective vector fields that are linearly independent in~$p$, then, in a neighborhood of~$p$, there exists a Killing vector field of~$g$.
\end{proposition}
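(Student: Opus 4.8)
The plan is to pass to the linear metrizability system and exploit the action of the projective algebra on the space $\solSp$ of its solutions, following the strategy that underlies Proposition~\ref{prop:dini} and the cited work of Bryant--Manno--Matveev. Concretely: around a generic point~$p$, any metric~$g$ that admits two projectively equivalent, non-proportional metrics can be brought (by Proposition~\ref{prop:dini}) into one of the three normal forms~A, B, C, together with an explicit pair of generators $\g1,\g2$ of its projective class. Two projective vector fields $w_1,w_2$ that are linearly independent at~$p$ act on $\solSp$ by the Lie derivative~\eqref{eqn:Lwa}, and $\solSp$ is invariant under this action. Since $\dim\solSp\ge 2$ whenever a projective class is generated by two non-proportional metrics, I would first argue that having two independent projective vector fields forces either $\dim\solSp\ge 2$ with a rich enough symmetry action, or else places us in one of the exceptional configurations already analysed in~\cite{bryant_2008}.

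The key steps, in order, would be: (1) Reduce to the case where $g$ has degree of mobility at least~$2$; if it were~$1$ then every projective vector field is homothetic, and two independent homotheties already generate (via their commutator and the standard structure theory, or directly) a Killing field, so the claim holds trivially. (2) In the degree-of-mobility-$\ge 2$ case, invoke the normal forms of Proposition~\ref{prop:dini} and write the $2$-parameter family $g[\K1,\K2]$ of~\eqref{eqn:general.metric}. (3) Use~\eqref{eqn:Lwa}: a projective vector field~$w$ acts linearly on $\solSp$; examine the possible Jordan types~(I), (II), (III) of $\lie_w|_{\solSp}$ from~\eqref{eqn:normal-matrices}. For two independent projective vector fields the pair $(\lie_{w_1},\lie_{w_2})$ spans a $\ge 2$-dimensional subalgebra of $\mathfrak{gl}(\solSp)$ that must close under bracket — this constrains the possible algebras to those on Lie's list restricted to dimension~$\ge 2$, namely $\mathbb{R}^2$, $\mathfrak{s}$, $\mathfrak{sl}(2,\R)$, or $\mathfrak{sl}(3,\R)$. (4) In each such case produce explicitly, from the structure of the normal form and the linear action, an element $a_0\in\solSp$ whose associated metric $\psi(a_0)$ is preserved by some nonzero projective vector field with vanishing trace term in~\eqref{eqn:Lwa}, i.e.\ a Killing vector field; alternatively, exhibit a common eigenvector of the $\lie_{w_i}$ in $\solSp$ corresponding to a genuine metric and show the residual symmetry is an isometry of it.

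The main obstacle I expect is step~(4): turning the abstract fact ``the symmetry algebra on $\solSp$ is at least $2$-dimensional and closes under bracket'' into the concrete production of a Killing vector field of~$g$ itself (not merely of some projectively equivalent metric). This requires tracking, through the correspondence $\psi$ of~\eqref{eqn:Liouville.tensor.g} and the transformation rule~\eqref{eqn:Lwa}, exactly when the $\frac{2}{3}\tr_g(\lie_wg)$ term drops out, i.e.\ identifying which one-parameter subgroups act with unit determinant on the relevant line in $\solSp$. Case~B (complex Liouville) and the Jordan-block case~C are the delicate ones, since there the eigenstructure of $\lie_w$ can be complex or non-semisimple and one must argue separately that a real, non-degenerate, isometrically-invariant combination still exists. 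A clean way around the full case analysis is to appeal directly to the already-established classification in~\cite{bryant_2008}: since that reference gives normal forms for all metrics with $\dim\projalg\ge 2$ around generic points, one need only inspect that list and observe that in every entry a Killing vector field is present — then combine this with Lemma~\ref{la:killing.vectors.and.projective.equivalence} to cover all metrics in the same projective class. I would present the proof along these lines, deferring the bulk of the verification to~\cite{bryant_2008} and supplying only the reduction in steps~(1)--(3) in detail.
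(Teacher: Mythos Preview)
The paper does not give its own proof of this proposition: it is stated purely as a citation from~\cite{bryant_2008} (see the sentence preceding it: ``we just cite the following statement''). So there is no in-paper argument to compare against, and your final suggestion --- defer to the classification in~\cite{bryant_2008} and read off that every entry carries a Killing field --- is precisely what the paper does.

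Regarding your independent sketch: step~(1) is fine and can be made sharper than you state it. If $\lie_{w_i}g=\lambda_i g$ and $w_1,w_2$ are linearly independent at~$p$, then $\lambda_2 w_1-\lambda_1 w_2$ is a nonzero Killing field unless $\lambda_1=\lambda_2=0$, in which case both $w_i$ are already Killing. No commutator or structure theory is needed. However, the rest of the outline does not constitute a proof. Two concrete issues: (i)~Proposition~\ref{prop:dini} (your step~(2)) applies only ``in a neighborhood of almost every point'', whereas the statement here fixes an arbitrary point~$p$ where the projective fields are independent; you would need to argue separately that $p$ can be taken generic for the Dini reduction, or bypass Dini entirely. (ii)~Step~(4) is, as you yourself note, the whole content of the proposition, and you do not carry it out: knowing that $\projalg(g)$ acts on~$\solSp$ with a $2$-dimensional image in~$\mathfrak{gl}(\solSp)$ does not by itself produce a Killing field of~$g$. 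Your fallback to Lemma~\ref{la:killing.vectors.and.projective.equivalence} is also a citation of~\cite{bryant_2008}, so the argument never becomes self-contained. In short, what you have is a plausible roadmap plus the same black-box appeal to~\cite{bryant_2008} that the paper makes; it is not an alternative proof.
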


\noindent The case of 2-dimensional metrics that admit exactly one, projective vector field has been studied in \cite{matveev_2012}, in a neighborhood where such a projective vector field does not vanish.
The major outcome of \cite{matveev_2012} is a list of cases (projective classes) that cover metrics with exactly one, essential projective vector field.
The list in Theorem~\ref{thm:matveev} is divided along the cases of Proposition~\ref{prop:dini}, and subdivided according to the cases in~\eqref{eqn:normal-matrices}.
\begin{theorem}[\cite{matveev_2012}]\label{thm:matveev}
Let $g$ be a $2$-dimensional metric on $M$ such that $\dim(\projalg(g))=1$. Moreover, let us assume that $g|_U$ admits no homothetic vector field in any neighborhood $U\subset M$. Then, in a neighborhood of almost every point there exists a coordinate system
$(x, y)$ such that $g$ is of the form \eqref{eqn:general.metric}, where $g_i$, $i\in\{1,\dts,m\}$, $m\in\{2,3\}$, are described below.
\begin{enumerate}
	\setlength\itemsep{0em}
	\setlist{nolistsep}
	\item[A)] Liouville-type metrics
			  \begin{equation}\label{eq.metrics.Liouville}
            \g1=(X(x)-Y(y))(X_1(x)\,dx^2+Y_1(y)\,dy^2)\,,\quad \g2=\left(\frac{1}{X}-\frac{1}{Y}\right)
              \left(\frac{X_1}{X}\,dx^2+\frac{Y_1}{Y}\,dy^2\right)
              \end{equation}
	For $h\in\Rnz$, $\xi\in(0,1)\cup(1,4]$, $\varepsilon\in\pmo$, $\lambda\in\mathds{R}$:
	\begin{enumerate}
		\item[(I)] $X(x)=e^{\xi x}$, $Y(y)=he^{\xi y}$, $X_1(x)=e^{2x}$, $Y_1(y)=\varepsilon\,e^{2y}$ (if $\xi=2$: $h\ne-\varepsilon$)
		\item[(II)] $X(x)=\frac{1}{x}$, $Y(y)=\frac{1}{y}$, $X_1(x)=\frac{e^{-3x}}{x}$, $Y_1(y)=h\,\frac{e^{-3y}}{y}$
		\item[(III)] $X(x)=\tan(x)$, $Y(y)=\tan(y)$, \smash{$X_1(x)=\frac{e^{-3\lambda x}}{\cos(x)}$, $Y_1(y)=h\,\frac{e^{-3\lambda y}}{\cos(y)}$} (if $\lambda=0$: $h\ne\pm1$)
	\end{enumerate}
	The projective vector field is
	\begin{equation}\label{eqn:proj.vector.field.A}
	  w = \partial_x + \partial_y\,.
	\end{equation}
	\item[B)] Complex Liouville metrics
\begin{equation}\label{eq.metrics.complex.Liouville}
\g1=(h(z)-\cc{h(z)})\,(h_1(z)\,dz^2-\cc{h_1(z)}\,d\cc{z}^2)\,,
	\quad \g2=\left(\frac{1}{h(z)}-\frac{1}{\cc{h(z)}}\right)\,\left(\frac{h_1(z)}{h(z)}\,dz^2-\frac{\cc{h_1(z)}}{\cc{h(z)}}\,d\cc{z}^2\right)
\end{equation}
For $C\in\mathds{C}, |C|=1$, $\xi\in(0,1)\cup(1,4]$, $\lambda\in\mathds{R}$:
	\begin{enumerate}
		\item[(I)] $h(z)=C\,e^{\xi z}$, $h_1(z)=e^{2z}$ (if $\xi=2$: $C\ne\pm1$)
		\item[(II)] $h(z)=\frac{1}{z}$, $h_1(z)=C\,\frac{e^{-3z}}{z}$
		\item[(III)] $h(z)=\tan(z)$, \smash{$h_1(z)=C\,\frac{e^{-3\lambda z}}{\cos(z)}$} (if $\lambda=0$: $C\ne\pm1$)
	\end{enumerate}
	The projective vector field is
	\begin{equation}\label{eqn:proj.vector.field.B}
	  w = \partial_x = \partial_z + \partial_{\cc{z}}\,.
	\end{equation}
	\item[C)] Jordan-block metrics
	\begin{equation}\label{eq.metrics.JB}
\g1=(Y(y)+x)\,dxdy\,,
	\quad
	 \g2=-\frac{2\,(Y(y)+x)}{y^3}\,dxdy+\frac{(Y(y)+x)^2}{y^4}\,dy^2
\end{equation}
For $\lambda\in\mathds{R}$ and $\xi\in(0,\frac12)\cup(\frac12,1)\cup(1,4]$:
	\begin{enumerate}
		\item[(Ia)] $Y(y)=y^2$ ($y>0$), and there is a third metric,
		\begin{equation}\label{eq.metrics.g3.extra}
        \g3=\frac{y^2+x}{(3x-y^2)^6}\,\left(
				9(y^2+x)\,dx^2 -4y\,(9x+y^2)\,dxdy +12x\,(y^2+x)\,dy^2
			\right)\,.
        \end{equation}
       	The projective vector field is
        \begin{equation}\label{eqn:proj.vector.field.CIa}
          w = 2x\partial_x + y\partial_y\,.
        \end{equation}
		\item[(Ib)] $Y(y)=y^{\nicefrac{1}{\xi}}$ ($y>0$) and
       	the projective vector field is
		\begin{equation}\label{eqn:proj.vector.field.CIb}
		  w = x\partial_x + \xi y\partial_y\,.
		\end{equation}
		\item[(II)] $Y(y)=e^{\nicefrac{3}{2y}}\,\frac{\sqrt{|y|}}{y-3} +\int^y e^{\nicefrac{3}{2s}}\,\frac{\sqrt{|s|}}{(s-3)^2}\,ds$
		and the projective vector field is
		\begin{equation}\label{eqn:proj.vector.field.CII}
		  w = \frac{y-3}{2}\,
		      \left(
		   			x + \int^y e^{\nicefrac{3}{2s}}\,
		   						\frac{\sqrt{|s|}}{(s-3)^2}\,ds
		      \right)\partial_x + y^2\partial_y\,.
		\end{equation}
		\item[(III)] $Y(y)=e^{-\frac32\lambda\arctan(y)}\frac{\sqrt[4]{y^2+1}}{y-3\lambda}
		+\int^y e^{-\frac32\lambda\arctan(s)}\frac{\sqrt[4]{s^2+1}}{(s-3\lambda)^2}\,ds$
		and the projective vector field is
		\begin{equation}\label{eqn:proj.vector.field.CIII}
		w = \frac{y-3\lambda}{2}\,
			\left(
				  x + \int^y e^{-\frac32\lambda\arctan(s)}\,
				  		\frac{\sqrt[4]{s^2+1}}{(s-3\lambda)^2}\,ds
			\right)\partial_x + (y^2+1)\partial_y\,.
		\end{equation}
	\end{enumerate}
\end{enumerate}

\end{theorem}

\noindent This statement and its implications deserve a detailed discussion. Theorem~\ref{thm:matveev} says that, given a metric with exactly one, essential projective vector field, its projective class is defined by one of the metrics in the list.
However, in general, the converse is not true, and in this sense the list of Theorem~\ref{thm:matveev} is not optimal.
More specifically, we can identify the following shortcomings of Theorem~\ref{thm:matveev}:
\begin{enumerate}
 \item Theorem~\ref{thm:matveev} contains the projective classes of metrics that admit one, essential projective vector field. It does not provide a classification because the list in Theorem~\ref{thm:matveev} is not sharp, i.e., some projective classes overlap.
 Also, it does not provide normal forms for metrics within these projective classes.
 \item Metrics that admit one, essential projective vector field~$w$ can be projectively equivalent to metrics for which the projective vector field is homothetic (this caveat is, in fact, already stated in \cite{matveev_2012}).
 In Proposition~\ref{prop:homotheties.and.eigenvectors}, we fix this issue under the assumption that no non-trivial Killing vector field exists.
 \item However, some metrics in Theorem~\ref{thm:matveev} do admit a non-trivial Killing vector field in addition to the essential projective vector field (and thus, by Lemma~\ref{la:killing.vectors.and.projective.equivalence}, all metrics projectively equivalent to them do).
 This contradicts the claim of Theorem~3 of~\cite{matveev_2012}.\footnote{Theorem~3 of~\cite{matveev_2012} claims: None of the metrics from Theorem~\ref{thm:matveev} admits a non-trivial Killing vector field.}
 Theorem~\ref{thm:theorem.3.revised} below closes this gap.
 \item Not all metrics in Theorem~\ref{thm:matveev} with 1-dimensional symmetry algebra have degree of mobility~$2$. Theorem~2 of~\cite{matveev_2012} claims that only the case C(Ia) has degree of mobility~3.\footnote{Theorem~2 of~\cite{matveev_2012} claims: The projective class of every metric~$g$ contained in Theorem~\ref{thm:matveev}, except for those of case C(Ia), coincides with $\projcl(\g1,\g2)$ of~\eqref{eqn:space.G}. The projective class of a metric $g$ from the case C(Ia) of Theorem~\ref{thm:matveev} coincides with $\projcl(\g1,\g2,\g3)$.}
 But other metrics of Theorem~\ref{thm:matveev} admit degree of mobility~3, too, e.g.
 $g = (e^{3x}-e^{3y})(e^{2x}dx^2-e^{2y}dy^2)$
 is an explicit counterexample to Theorem~2 of~\cite{matveev_2012}, as it is inside case A(I) of Theorem~\ref{thm:matveev}, but has degree of mobility~$3$. We close this gap in Theorem~\ref{thm:theorem.2.revised}.
\end{enumerate}

\subsection{Main results of this paper}\label{sec:main.results}
The issues outlined in the above list are going to be fixed by several theorems, which are proven (separately) in the remaining sections of the paper.
First, let us consider the gap concerning the degree of mobility of metrics, see item~4 in the list in the end of Section~\ref{sec:state}, which is interconnected with item~3 of that list.
It appears most transparent to proceed in two steps, stated in Theorems~\ref{thm:theorem.2.revised} and~\ref{thm:theorem.3.revised} (which correspond to Theorems~2 and~3 of~\cite{matveev_2012}).
We first identify cases with degree of mobility~3; cases with a non-trivial Killing vector field are accounted for by Theorem~\ref{thm:theorem.3.revised}.
\begin{theorem}\label{thm:theorem.2.revised}
Let $g$ be a metric of Theorem~\ref{thm:matveev}.
Then $g$ is in exactly one of the following cases:
\begin{enumerate}
	\item It has degree of mobility~2 and a 1-dimensional projective (not Killing) algebra.
	\item It admits a non-trivial Killing vector field. In this case its projective algebra has dimension at least 2.
	(These metrics are fully considered in Theorem~\ref{thm:theorem.3.revised}).
	\item The metric~$g$ is projectively equivalent to at least one metric in the following list:
	\begin{itemize}
		\item[] \emph{A(I)} with the parameters satisfying $(\xi,h,\varepsilon)\in\{(3,\pm1,-1), (2,\pm4,\mp1), (2,\pm\nicefrac14,\mp1) \}$
		\item[] \emph{B(I)} with $(\xi,C)=(3,\pm1)$ or $(\xi,C)=(3,\pm i)$
		\item[] \emph{C(Ia)}
	\end{itemize}
	In these cases, the metric~$g$ has degree of mobility~3 and a 1-dimensional projective (not Killing) algebra.
\end{enumerate}
\end{theorem}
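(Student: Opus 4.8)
\emph{Strategy.} The degree of mobility $\dim\solSp(g)$ depends only on the projective connection of $g$, hence only on its projective class, so it suffices to compute it for one representative of each case of Theorem~\ref{thm:matveev}, the natural choice being $\g1$ (supplemented, where relevant, by $\g2$ and, in case C(Ia), by $\g3$). First I would settle the coarse partition. A metric from Theorem~\ref{thm:matveev} admitting a non-trivial Killing vector field $v$ has $\dim\projalg\geq2$, since $v$ is not proportional to the non-homothetic essential projective vector field; conversely, by \cite{bryant_2008} a metric with two pointwise independent projective vector fields admits a Killing vector field near almost every point, so for the metrics under consideration ``$g$ admits a non-trivial Killing vector field'' is equivalent to $\dim\projalg(g)\geq2$. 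In particular, if $\dim\projalg(g)=1$ then $g$ has no Killing vector field (otherwise it would span $\projalg(g)$ and the essential field would be Killing); moreover $\dim\solSp(g)\in\{2,3\}$ then, since $\dim\solSp=1$ would force every projective vector field to be homothetic, while $\dim\solSp\geq4$ would produce a Killing vector field by \cite{koenigs_1896} and hence $\dim\projalg\geq2$. This already gives the trichotomy at the logical level: either $g$ has a Killing vector field (case~2, with $\dim\projalg\geq2$, the precise sublist being the content of Theorem~\ref{thm:theorem.3.revised}), or $\dim\projalg(g)=1$ and the only remaining invariant is whether $\dim\solSp(g)=2$ (case~1) or $\dim\solSp(g)=3$ (case~3).

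\emph{Computing the degree of mobility.} The core of the argument is then to run through the ten cases A(I)--A(III), B(I)--B(III), C(Ia)--C(III) and determine $\dim\solSp$ as a function of the parameters. For a given $\g1$ one reads off the coefficients $f_0,\dts,f_3$ of its projective connection \eqref{eqn:projective-connection}, substitutes them into \eqref{eqn:linear.system}, and reads off the dimension of the solution space of the resulting overdetermined linear first-order PDE system in $(a_{11},a_{12},a_{22})$ from its integrability conditions. The structure of the normal forms makes this tractable: in the Liouville cases~A the separated form of $\g1$ renders the system separable, reducing it to pairs of linear ODEs in $x$ and in $y$ coupled only through separation constants; in the complex Liouville cases~B the same happens in the coordinates $z,\cc{z}$; in the Jordan-block cases~C the triangular shape of $\g1=(Y(y)+x)\,dxdy$ allows successive integration. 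Two independent solutions are always available, namely $a_i=\psi^{-1}(\g i)$ for $i=1,2$ (and $a_3=\psi^{-1}(\g3)$ from \eqref{eq.metrics.g3.extra} in case C(Ia)), so the point is to identify exactly for which parameter values a third independent solution exists. The computation must then show that this occurs precisely in case C(Ia) (for all admissible parameters), in case A(I) for $(\xi,h,\varepsilon)\in\{(3,\pm1,-1),(2,\pm4,\mp1),(2,\pm\tfrac14,\mp1)\}$, and in case B(I) for $(\xi,C)\in\{(3,\pm1),(3,\pm i)\}$, whereas in A(II), A(III), B(II), B(III), C(Ib), C(II), C(III) the integrability conditions pin the solution space down to dimension~$2$ for every admissible parameter. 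In the resonant subcases the extra solution is exhibited explicitly (for A(I) and B(I) it turns out to be, up to scale, the Liouville tensor of a metric of the same form with shifted parameters, which is what singles out the discrete list of resonant values); one then verifies directly that these $\dim\solSp=3$ metrics still have $\dim\projalg=1$, for instance by checking that they carry no Killing vector field, so that they fall under case~3 and not case~2.

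\emph{Conclusion and main obstacle.} Assembling these facts yields the statement: cases~1 and~3 are distinguished by the degree of mobility, and both are distinguished from case~2 because $\dim\projalg(g)=1$ excludes a Killing vector field, so the three cases are mutually exclusive; they are exhaustive by the first paragraph; and since $\solSp(g)$, hence $\projcl(g)$, is generated via \eqref{eqn:general.metric} by $\g1,\g2$ (resp.\ $\g1,\g2,\g3$), any metric with $\dim\solSp(g)=3$ and $\dim\projalg(g)=1$ is, after normalising parameters, projectively equivalent to one of the metrics listed in item~3. The laborious but routine part is the PDE bookkeeping across the ten cases; I expect the genuine difficulty to be the \emph{negative} assertion in the transcendental subcases B(II), B(III) and especially C(II), C(III): there $Y(y)$ is a non-elementary integral, so ruling out a third solution for \emph{every} admissible parameter has to be carried out through the differential equation satisfied by $Y$ (and by the associated separation functions) rather than through any closed form, and one must check that the integrability conditions of \eqref{eqn:linear.system} do not degenerate for exceptional parameter values. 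A secondary point is the exclusivity with case~2, i.e.\ confirming that none of the finitely many resonant metrics of item~3 secretly admits a Killing vector field; this is a finite verification, consistent with the Killing classification obtained independently in Theorem~\ref{thm:theorem.3.revised}.
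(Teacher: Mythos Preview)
Your logical skeleton is sound: the trichotomy is set up correctly, and you rightly observe that the whole question reduces to computing $\dim\solSp$ case by case and checking, for the finitely many resonant subcases, that no Killing field appears. The paper proceeds exactly along these lines.

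Where you diverge is in the \emph{mechanism of the reduction}. You propose to exploit the separated (Liouville/Jordan) structure of $\g1$ to decouple \eqref{eqn:linear.system} into ODEs in $x$ and in $y$. The paper instead exploits the projective symmetry: it first passes to coordinates in which $w=\partial_x$, so that the coefficients $f_0,\dts,f_3$ of the projective connection depend on $y$ alone. A putative third solution must then sit in a $\lie_w$-invariant complement of $\langle\a2,\a3\rangle$, hence either $\lie_w\a1=\mu\a1$ or $\a1$ extends a Jordan chain; in both cases $\a1=e^{\mu x}\tilde a(y)$ (plus an explicit particular solution in the Jordan case), and \eqref{eqn:linear.system} collapses to a system of \emph{ordinary} differential equations in $y$ with the single free parameter $\mu$. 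Differentiating the one algebraic relation among $(\tilde a_{11},\tilde a_{12},\tilde a_{22})$ twice yields a $3\times3$ linear algebraic system $Mv=b$, and the existence of a third solution is reduced to analysing when $\det M\equiv 0$ (resp.\ when $Mv=b$ is consistent). This is what produces the finite resonant list in A(I) and B(I) and rules out the remaining cases.

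Your separation-of-variables route is not obviously wrong, but it is less robust than you suggest: in the original coordinates of Theorem~\ref{thm:matveev} the $f_i$ genuinely depend on both variables (through the factor $X-Y$, etc.), so \eqref{eqn:linear.system} does not decouple into ODEs by virtue of the Liouville form alone. The paper's symmetry reduction is what makes the problem one-dimensional, uniformly across all cases A--C, and also organises the Jordan-block possibility (your proposal does not say how you would handle the case where the third solution is not an eigenvector of $\lie_w$). Your instinct that C(II), C(III) must be done through the ODE for $Y$ is correct; in the paper these cases are in fact dispatched by reference to~\cite{matveev_2012}, using exactly the $\lie_w$-reduction just described.
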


\noindent Let us now consider the cases with a Killing vector field. As indicated in item~3 of the list after Theorem~\ref{thm:matveev}, metrics in Theorem~\ref{thm:matveev} might admit a Killing vector field, contrary to the claim in~\cite{matveev_2012}. This gap is closed by the following theorem (recall also Lemma~\ref{la:killing.vectors.and.projective.equivalence}).
\begin{theorem}\label{thm:theorem.3.revised}
The metrics in the list of Theorem~\ref{thm:matveev} do not admit a non-trivial Killing vector field, except for the following ones
\begin{enumerate}[label=(\roman*)]
 \item Metrics projectively equivalent to metrics A(I) with $\xi=4$ and $h=1$, i.e.\ to
 \begin{equation}\label{eqn:metric.nu.4.h.1}
   g = (e^{4x}-e^{4y})(e^{2x}dx^2+\varepsilon\,e^{2y}dy^2)\,,\quad
   \varepsilon\in\{-1,1\}\,.
 \end{equation}
 The metric $g$ admits the Killing vector field
 \[
   v = \frac{e^{y-x}}{e^{2x}+\varepsilon e^{2y}}\,\partial_x
       +\varepsilon\,\frac{e^{x-y}}{e^{2x}+\varepsilon e^{2y}}\,\partial_y
 \]
 in addition to the projective vector field $\partial_x+\partial_y$.
 \item Similarly to the previous case, metrics projectively equivalent to metrics B(I) with $\xi=4$ and $C=1$ admit a Killing vector field.
 \item Metrics projectively equivalent to metrics B(III) with $\lambda=0$ and $C=\pm i$, i.e.\ to
 \begin{equation}\label{eqn:metric.lambda.0.C.im}
  g = i\,(\tan(z)-\tan(\cc{z}))\left( \frac{dz^2}{\cos(z)} + \frac{d\cc{z}^2}{\cos(\cc{z})} \right)\,.
 \end{equation}
 The metric~$g$ admits the Killing vector field
 \[
  v =
  \frac{\sin(z)\big(\sin(z)\sin(\cc{z})+\cos(z)\cos(\cc{z})-\cos(\cc{z})+\cos(z)-1\big)\
	\bigg(\cos(z)\partial_z+\cos(\cc{z})\partial_{\cc{z}}\bigg)}%
      {(\sqrt{\cos(z)+1}\sqrt{\cos(\cc{z})+1}(\cos(z)\sin(z)\cos(\cc{z})-\cos(z)^2\sin(\cc{z})-\sin(z)\cos(\cc{z})+\cos(z)\sin(\cc{z}))}
 \]
 in addition to the projective vector field $\partial_x=\partial_z+\partial_{\cc{z}}$.
\end{enumerate}
\end{theorem}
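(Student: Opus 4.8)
The plan is to verify Theorem~\ref{thm:theorem.3.revised} by a direct computation carried out case by case over the list in Theorem~\ref{thm:matveev}, exploiting the linearity of the metrizability system~\eqref{eqn:linear.system} and Lemma~\ref{la:killing.vectors.and.projective.equivalence}. First I would recall that, by Lemma~\ref{la:killing.vectors.and.projective.equivalence}, the existence of a non-trivial Killing vector field is a property of the whole projective class; hence it suffices to decide, for each case of Theorem~\ref{thm:matveev}, whether \emph{some} (equivalently, every) metric of the class $\projcl(\g1,\dots,\g m)$ admits a Killing field. Concretely, a Killing field of $g=\psi(a)$ corresponds, via~\eqref{eqn:Lwa}, to a solution $b\in\solSp$ with $\lie_w$-related structure: more precisely, $v$ is a homothety of $g$ iff $\lie_v a$ is proportional to $a$, and is Killing iff in addition the conformal factor vanishes. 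Rather than chasing $v$ directly, the cleaner route is the one used in~\cite{matveev_2012, bryant_2008}: a metric in the $2$- or $3$-dimensional solution space $\solSp$ admits a Killing vector field precisely when a certain algebraic/differential obstruction built from the basis $\{a_i\}$ and the matrix of $\lie_w$ (one of~\eqref{eqn:normal-matrices}) vanishes. So the first step is to set up, for each of the three Dini normal forms (Liouville A, complex Liouville B, Jordan block C) and each Jordan type (I),(II),(III), this obstruction explicitly in terms of the one-variable functions $X,Y,h$ appearing in Theorem~\ref{thm:matveev}.

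Next I would compute. For each of the ten cases I substitute the explicit $\g1,\g2$ (and $\g3$ in case C(Ia)) into the obstruction and ask when it can be made to vanish identically in $(x,y)$ (resp.\ $(z,\cc z)$). Because the functions are exponentials, powers, or $\tan/\cos$, this reduces to elementary identities among such functions and pins down the parameter values $(\xi,h,\varepsilon)$, $(\xi,C)$, $(\lambda,\eta)$ for which a Killing field exists. I expect this to single out exactly the three families in the statement: A(I) with $\xi=4,h=1$; B(I) with $\xi=4,C=1$; and B(III) with $\lambda=0,C=\pm i$. For these I would then exhibit the Killing field explicitly and verify $\lie_v g=0$ by a direct (symbolic) check — the vector fields written in the theorem are precisely such witnesses, and one only needs to confirm $\lie_v g=0$ by plugging in, which is a finite calculation. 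Conversely, for all remaining parameter values I must argue the obstruction does not vanish, i.e.\ no Killing field exists; here the exponential/trigonometric independence of the relevant function systems does the job. Note also that A(I) with $\xi=4,h=1$ and B(I) with $\xi=4,C=1$ are the degree-of-mobility-$3$ cases flagged in Theorem~\ref{thm:theorem.2.revised}, consistent with Koenigs' result that degree of mobility $\ge4$ forces a Killing field — this gives an independent sanity check, though $\dim\solSp=3$ alone does not force a Killing field, so the explicit computation is still required.

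The main obstacle, I expect, is twofold. First, the case C (Jordan block) is computationally the heaviest: the metrics~\eqref{eq.metrics.JB} are already of mixed/degenerate signature form and case C(Ia) carries a \emph{third} generator $\g3$ of~\eqref{eq.metrics.g3.extra}, so the Killing obstruction lives on a $3$-dimensional $\solSp$ with a nontrivial $\lie_w$-action, and one must be careful that the claimed absence of Killing fields there is genuine and not an artifact of a degenerate locus. Second, and more delicate, is ensuring completeness: one must be sure that the obstruction used really detects \emph{all} Killing fields of \emph{all} metrics in the class, including those of the ``twisted'' form~\eqref{eqn:general.metric} with generic $\K i$, not just of the distinguished generators $\g1,\g2$. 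This is where I would lean on~\eqref{eqn:Lwa} together with the classification~\eqref{eqn:normal-matrices} of $\lie_w|_{\hat\solSp}$: a Killing field $v$ acts on $\solSp$, commutes (up to the known structure) with $w$, and this rigidly constrains the possible $\lie_v$; running through the Jordan types of $\lie_v$ against those of $\lie_w$ leaves only finitely many algebraic possibilities to test, which then match the parameter conditions found above. Assembling these pieces — obstruction setup, ten elementary case checks, three explicit Killing-field verifications, and the completeness argument via the $\lie_w$-module structure — yields the theorem.
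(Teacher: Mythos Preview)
Your proposal has a genuine gap: the ``obstruction'' you plan to compute is never actually defined. You gesture at ``a certain algebraic/differential obstruction built from the basis $\{a_i\}$ and the matrix of $\lie_w$'', but there is no such obstruction available from the data you cite. The relation \eqref{eqn:Lwa} tells you how the \emph{given} projective field $w$ acts on $\solSp$; it says nothing about whether some \emph{other} vector field $v$ is Killing. Your completeness argument is similarly circular: you say a Killing field $v$ ``acts on $\solSp$, commutes (up to the known structure) with $w$'' --- but if such a $v$ exists and is independent of $w$, then $\dim\projalg(g)\geq 2$, which is exactly what you are trying to detect, not something you can assume in order to constrain $v$. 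Without a concrete necessary condition for the existence of a Killing field, the case-by-case computation you describe has nothing to compute.

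There is also a factual slip: you write that ``A(I) with $\xi=4,h=1$ and B(I) with $\xi=4,C=1$ are the degree-of-mobility-$3$ cases flagged in Theorem~\ref{thm:theorem.2.revised}''. They are not. The degree-of-mobility-$3$ cases in Theorem~\ref{thm:theorem.2.revised} are $\xi=3$ and $\xi=2$ with specific $h$, plus C(Ia); the $\xi=4$ cases are precisely the Killing-field cases of Theorem~\ref{thm:theorem.3.revised} and fall under item~2 of Theorem~\ref{thm:theorem.2.revised}, not item~3. So your ``sanity check'' via Koenigs is based on a misreading.

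The paper's method is entirely different and concrete: it uses the classical Darboux--Eisenhart criterion that if $g$ admits a Killing field then the scalar curvature $R$ and $\ell=g(dR,dR)$ are functionally dependent, so $E=R_x\ell_y-R_y\ell_x$ must vanish identically. For each case of Theorem~\ref{thm:matveev} one computes $E$ explicitly, writes it as a combination of linearly independent elementary functions (exponentials, trigonometric products), and reads off the finitely many parameter values for which $E\equiv 0$. Those candidates are then checked directly (computer algebra) for an actual Killing field. This gives a genuine necessary condition that drives the case analysis; your proposal lacks such a condition.
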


\noindent A further simplification is obtained in the case that the degree of mobility is exactly~3.
\begin{proposition}\label{prop:dom3.proj.equiv}
 Metrics in Theorem~\ref{thm:matveev} with exactly one, essential projective vector field and degree of mobility~$3$ are projectively equivalent to $g=(x+y^2)\,dxdy$ on some part of its domain, i.e.\ to a metric of type C(Ia).
\end{proposition}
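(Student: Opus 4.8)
The plan is to reduce, by Theorem~\ref{thm:theorem.2.revised}, to the finite list of cases with degree of mobility $3$ and a one-dimensional (non-Killing) projective algebra, namely the sublist of A(I), B(I) and C(Ia) appearing in item~3 of that theorem, and then to exhibit explicit projective equivalences carrying each of these metrics into a metric of type C(Ia). Since projective equivalence between metrics is the same as the equality of projective classes (Remark~\ref{rmk:g-a-correspondence}), and since $\projcl(g)=\psi(\solSp(g))$ is a projective invariant of the whole $3$-dimensional solution space $\solSp$, it suffices to show that the triples of ``generator'' metrics $(\g1,\g2,\g3)$ attached to each of these cases span the same $\solSp$, up to a change of coordinates, as the triple attached to case C(Ia) in Theorem~\ref{thm:matveev} (i.e.\ $\g1=(x+y^2)\,dxdy$ together with its $\g2$ and the extra $\g3$ of~\eqref{eq.metrics.g3.extra}).

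The key steps, in order, are as follows. First I would invoke Theorem~\ref{thm:theorem.2.revised} to see that the hypothesis ``degree of mobility $3$ and exactly one essential projective vector field'' forces $g$ to be projectively equivalent to one of the listed metrics in A(I) $(\xi,h,\varepsilon)\in\{(3,\pm1,-1),(2,\pm4,\mp1),(2,\pm\tfrac14,\mp1)\}$, B(I) $(\xi,C)=(3,\pm1)$ or $(3,\pm i)$, or C(Ia). Case C(Ia) is the target, so nothing is to be done there. Second, for the A(I) cases I would write down the corresponding Liouville metrics $\g1,\g2$ from~\eqref{eq.metrics.Liouville} explicitly (for instance $\g1=(e^{3x}-e^{3y})(e^{2x}dx^2-e^{2y}dy^2)$ for $(\xi,h,\varepsilon)=(3,1,-1)$), compute a basis $a_1,a_2$ of the corresponding solution space via~\eqref{eqn:Liouville.tensor.g}, find by solving the linear system~\eqref{eqn:linear.system} the third independent solution $a_3$ guaranteed by Theorem~\ref{thm:theorem.2.revised}, and then produce the corresponding metric $\g3=\psi(a_3)$; the claim is that a suitable (rational/exponential) change of coordinates sends this triple to the C(Ia) triple. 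Third, I would do the analogous computation in the B(I) cases, using the complex-Liouville normal form~\eqref{eq.metrics.complex.Liouville}, keeping track of the reality conditions on $z,\cc z$ and the parameter $C$; the $C=\pm i$ subcase will behave slightly differently from $C=\pm1$ but must still map into C(Ia).

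The main obstacle is the explicit construction of the connecting coordinate change in each case — i.e.\ producing, for each metric on the list, a diffeomorphism $\tau$ with $\tau^*\solSp = \solSp\big((x+y^2)\,dxdy\big)$, and verifying that it is a genuine local diffeomorphism on the relevant domain (which is why the statement only asserts the equivalence ``on some part of its domain''). A clean way to organize this, and the route I expect to take, is to first prove the proposition for case C(Ia) tautologically, then observe that C(Ia) is the only case whose solution space $\solSp$ admits a basis with a prescribed normal-form structure under the $\lie_w$-action — concretely, identifying $\solSp$ together with the operator $\lie_w|_\solSp$ of~\eqref{eqn:Lwa}–\eqref{eqn:normal-matrices} as a $3$-dimensional module, and checking that in each of the A(I) and B(I) cases this module is isomorphic (as a pair: vector space plus endomorphism, plus the quadratic ``metrizability'' structure) to the C(Ia) one. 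Since all these metrics sit in the cases singled out by Theorem~\ref{thm:theorem.2.revised}, the degree of mobility is already $3$ and the projective vector field is already essential and unique, so the only thing left to pin down is that there is essentially one such module with a metrizable solution, forcing all of them into the single projective class of C(Ia). The bookkeeping of signs ($\varepsilon$, $\pm$) and of the complex versus real cases is where the real work lies, but it is finite and mechanical once the module-isomorphism viewpoint is set up.
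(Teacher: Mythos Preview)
Your overall plan---invoke Theorem~\ref{thm:theorem.2.revised} to reduce to the finite list, then produce explicit coordinate changes into C(Ia)---is the right one and is essentially what the paper does. However, the execution you sketch is heavier than necessary, and one of your proposed shortcuts does not work.

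First, you do not need to compute the third solution $a_3$ or match the entire triple $(\g1,\g2,\g3)$. Two metrics are projectively equivalent iff they are isometric to metrics in the same projective class, so it suffices to pick \emph{one} representative metric from each of the A(I) and B(I) projective classes in the list and exhibit an explicit local isometry to some metric projectively equivalent to $(x+y^2)\,dxdy$. The paper does exactly this: for instance, for A(I) with $(\xi,h,\varepsilon)=(3,1,-1)$ it shows that $\g1=(e^{3u}-e^{3v})(e^{2u}du^2-e^{2v}dv^2)$ is carried isometrically onto $(x+y^2)\,dxdy$ (restricted to $x>0$) by
\[
(x,y)=\Big(\tfrac13 k^2(e^u-e^v)^2,\ \pm k(e^u+e^v)\Big),\qquad k=\sqrt[5]{9/8}.
\]
Analogous explicit substitutions (with $e^z,e^{\bar z}$ in place of $e^u,e^v$) handle the B(I) cases, and the A(I) case with $\xi=2$ is treated by mapping onto $\g2$ of the C(Ia) pair rather than $\g1$. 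This is what buys the ``on some part of its domain'' clause: the Jacobian degenerates along $u=v$ (resp.\ $\sin v=0$), and the image only covers the half-plane $x>0$ (resp.\ $x<0$) of the C(Ia) domain.

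Second, the ``module-isomorphism viewpoint'' is not a valid shortcut. That $(\solSp,\lie_w|_\solSp)$ are isomorphic as abstract $3$-dimensional modules with endomorphism is an invariant of the projective class, but not a complete one: there is no mechanism by which an abstract linear isomorphism of solution spaces produces a diffeomorphism of the underlying surfaces intertwining the projective connections. Your parenthetical ``plus the quadratic metrizability structure'' does not fix this---that structure \emph{is} the full projective connection, so you would be assuming what you want to prove. You correctly note earlier that one must actually construct $\tau$ with $\tau^*\solSp=\solSp((x+y^2)\,dxdy)$; that is the whole content, and there is no way around writing down the explicit change of variables as above.

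In short: keep your first two paragraphs, drop the module-theoretic detour, and aim directly for single-metric isometries via explicit exponential substitutions of the type above.
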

\noindent The proof can be found in Section~\ref{sec:dom3.proj.equiv}.

In Theorem~\ref{thm:theorem.3.revised}, we have identified metrics in Theorem~\ref{thm:matveev} that admit a Killing vector field. Their projective algebra has dimension larger than one.
Under the assumption that the projective vector field~$w$ is unique (up to rescaling), this vector field is therefore either essential or truly homothetic (i.e., not Killing).
A distinction of these two cases is possible by the following theorem (this addresses item~2 of the list after Theorem~\ref{thm:matveev}):
\begin{proposition}\label{prop:homotheties.and.eigenvectors}
Let $g$ be a metric projectively equivalent to a metric of Theorem~\ref{thm:matveev},
but%
\footnote{The metrics in Theorem~\ref{thm:theorem.3.revised} have a projective algebra of dimension $\dim(\projalg(g))>1$, but for the statement $\dim(\projalg(g))=1$ is needed. On the other hand, any metric with $\dim(\projalg(g))>1$ admits a Killing vector field, see~\cite{bryant_2008}. Theorem~\ref{thm:theorem.3.revised} makes sure these are the only exceptions (up to projective equivalence, see Lemma~\ref{la:killing.vectors.and.projective.equivalence}).}
not in the projective class of any metric in Theorem~\ref{thm:theorem.3.revised}.
Then the projective vector field~$w$ is homothetic if and only if the Liouville tensor $a=\psi^{-1}(g)$ is an eigenvector of $\lie_w|_\solSp$.
\end{proposition}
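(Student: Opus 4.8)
The plan is to use the explicit formula~\eqref{eqn:Lwa} for $\lie_w a$ together with the defining property of a homothety, namely $\lie_w g = \lambda g$ for some constant $\lambda\in\R$. If $w$ is homothetic, then substituting $\lie_w g = \lambda g$ into~\eqref{eqn:Lwa} gives $\lie_w a = \det(g)^{-2/3}\lambda g - \tfrac{2}{3}\det(g)^{-2/3}\tr_g(\lambda g)\,g = \lambda\,a - \tfrac{2}{3}\lambda\cdot 2\cdot a = -\tfrac{1}{3}\lambda\,a$, using $\tr_g(g)=\dim M=2$ and the identification $a = \det(g)^{-2/3}g$ from~\eqref{eqn:Liouville.tensor.g}. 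Hence $a$ is an eigenvector of $\lie_w|_\solSp$ with eigenvalue $-\lambda/3$; in particular, a Killing vector field ($\lambda=0$) corresponds to $a\in\ker(\lie_w|_\solSp)$. This settles the ``only if'' direction and requires essentially no work beyond bookkeeping the trace.

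For the converse, I would argue contrapositively: assume $w$ is \emph{not} homothetic (so, since $w$ is the unique projective vector field up to scaling and $g$ is not in the projective class of a Theorem~\ref{thm:theorem.3.revised} metric, there is also no Killing vector field, by Lemma~\ref{la:killing.vectors.and.projective.equivalence}), and show that $a$ cannot be an eigenvector of $\lie_w|_\solSp$. Suppose for contradiction that $\lie_w a = \mu\,a$ for some $\mu\in\R$. Running the computation in~\eqref{eqn:Lwa} backwards: writing $\lie_w g = \beta g + b$ where $b$ is the trace-free part of $\lie_w g$ with respect to $g$ (so $\tr_g b = 0$), we get $\lie_w a = \det(g)^{-2/3}(\beta g + b) - \tfrac{2}{3}\det(g)^{-2/3}(2\beta)\,g = -\tfrac{1}{3}\beta\,a + \det(g)^{-2/3}b$. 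Thus $\lie_w a = \mu a$ forces $\det(g)^{-2/3}b = (\mu + \tfrac{1}{3}\beta)\,a$, and since $b$ is trace-free while $a$ (being a nonzero multiple of $g$) has $\tr_g a = 2\det(g)^{-2/3}\neq 0$, the only possibility is $b=0$ and $\mu = -\beta/3$. But then $\lie_w g = \beta g$, i.e.\ $w$ would be homothetic — contradicting our assumption. Hence $a$ is not an eigenvector.

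The one point that needs care is whether $\beta$ is necessarily \emph{constant} in the step ``$\lie_w g = \beta g \Rightarrow w$ homothetic'': a priori the decomposition $\lie_w g = \beta g + b$ only gives a function $\beta\in C^\infty(M)$. Here I would invoke that $g$ is two-dimensional, and a standard fact (e.g.\ from the theory of conformal vector fields on surfaces, or directly: if $\lie_w g = \beta g$ then the integrability conditions obtained by commuting derivatives of the Codazzi-type equation force $d\beta=0$ in dimension two when $w$ is additionally projective, or one simply notes that the Liouville tensor computation already encodes this) forces $\beta = \const$. Alternatively, since $\lie_w a \in\solSp$ and the relation $\det(g)^{-2/3}b = (\mu + \tfrac13\beta)a$ must hold pointwise with $b=0$, we directly obtain $\lie_w g = \beta g$ with $\beta$ determined by the eigenvalue $\mu$ of the \emph{linear} map $\lie_w|_\solSp$, hence constant. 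I expect this constancy argument to be the only genuine subtlety; everything else is a direct manipulation of~\eqref{eqn:Lwa}.
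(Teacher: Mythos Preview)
Your proof is correct and follows essentially the same approach as the paper: both directions hinge on the identity~\eqref{eqn:Lwa} relating $\lie_w a$ to $\lie_w g$, and the converse amounts to inverting that relation. The paper computes $\lie_w g$ directly from $g = a/\det(a)^2$ and $\lie_w a = \mu a$ to obtain $\lie_w g = -3\mu\,g$, whereas you organize the same computation via the trace/trace-free decomposition $\lie_w g = \beta g + b$; your final observation that $\beta = -3\mu$ inherits constancy from the eigenvalue $\mu$ is exactly the right resolution of the constancy worry, and makes the preceding appeal to conformal vector field theory unnecessary.
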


\noindent The proof of Proposition~\ref{prop:homotheties.and.eigenvectors} can be found in Section~\ref{sec:proof.homotheties.and.eigenvectors}.

Finally, and most importantly, the following theorem provides a list of normal forms, up to coordinate transformations, of metrics that admit exactly one essential projective vector field, indicating the type of the metric according to Proposition~\ref{prop:dini}.
Theorem~\ref{thm:normal.forms} below resolves the first issue in item~1 after Theorem~\ref{thm:matveev}.

\begin{theorem}\label{thm:normal.forms}
Let $g$ be a (pseudo-)Riemannian metric on a $2$-dimensional manifold $M$. Let us assume that $\dim(\projalg(g))=1$ in any neighborhood of $M$ and that $\projalg(g)$ is generated by an essential projective vector field. Then, in a neighborhood of almost every point there exists a local coordinate system $(x,y)$ such that $g$ assumes one of the following mutually non-diffeomorphic normal forms (organized according to their type, see Proposition \ref{prop:dini}).

\noindent Note: The list below contains some ``special cases'' for the choice of the parameter values. The origin of these exceptions is going to become clear during the proof, see Section~\ref{sec:proof.main.theorem}.

\paragraph{(A) Liouville case}

\begin{enumerate}
 \setlength\itemsep{0em}
 \setlist{nolistsep}
	\item\label{item:A.2r}
		The normal forms are
		\[
		  g=\kappa\left( \frac{(e^{\xi x}-he^{\xi y})\,e^{2x}}{(1+\varrho he^{\xi y})(1+\varrho e^{\xi x})^2}\,dx^2 +\varepsilon\frac{(e^{\xi x}-he^{\xi y})\,e^{2y}}{(1+\varrho he^{\xi y})^2(1+\varrho e^{\xi x})}\,dy^2 \right)
		\]
		where $\xi=2\,\frac{\lambda-1}{2\lambda+1}\in(0,1)\cup(1,4]$, $\varepsilon\in\pmo$ and $|h|\geq1$ (if $\xi=2$: $h\ne-\varepsilon$ and $h\ne-4\varepsilon$; if $\xi=3$: $|h|\ne1$ when $\varepsilon=-1$; if $\xi=4$: $h\ne1$) as well as $\kappa\in\Rnz$, $\varrho\in\pmo$.\smallskip

		\emph{Special cases:} If $h=-1$, we require $\varrho=1$. If $h=+1$ and $\varepsilon=1$, the parameter $\kappa$ may only assume positive values, $\kappa>0$.
	\item\label{item:A.1r}
		The normal forms are
		\[
		  g=\kappa\left(\frac{(y-x)e^{-3x}}{x^2y}dx^2 + \frac{h (y-x)e^{-3y}}{xy^2}dy^2\right)
		\]
		where $|h|\geq1$ and $\kappa\in\Rnz$.\smallskip

		\emph{Special case:} In case $h=1$, $\kappa>0$ is required.
	\item\label{item:A.2c}
		The normal forms are
		\begin{align*}
		 g&=\frac{\sin(y-x)}{\sin(y+\theta)\,\sin(x+\theta)}\,
		\left(
		\frac{e^{-3\lambda\,x}}{\sin(x+\theta)}\,dx^2
		+\frac{h\,e^{-3\lambda\,y}}{\sin(y+\theta)}\,dy^2
		\right)\,, && \text{for $\lambda>0$}\,, \\
		g&=\kappa\,\frac{\sin(y-x)}{\sin(y)\,\sin(x)}\,
		\left(
		\frac{dx^2}{\sin(x)}
		+\frac{h\,dy^2}{\sin(y)}
		\right)\,,
		\quad (x,y \mod 2\pi)
		&& \text{case ``$\lambda=0$''}\,,
		\end{align*}
		where $|h|\leq e^{-3\lambda\pi}$ (if $\lambda=0$: $h\ne\pm1$) as well as $\theta\in\angl$ and $\kappa>0$\smallskip

		\emph{Special case:} If $\lambda>0$ and $|h|=e^{-3\lambda\pi}$, we require $\theta\in\halfangl$.
\end{enumerate}
The projective vector field in (A.1)--(A.3) is \eqref{eqn:proj.vector.field.A}.

\paragraph{(B) Complex--Liouville case}

	\begin{enumerate}
		\setlength\itemsep{0em}
		\setlist{nolistsep}
		\setcounter{enumi}{3}
		\item\label{item:B.2r}
			The normal forms are
			\[
			  g=\kappa\,\left(
				 \frac{Cz^\xi-\cc{C}\cc{z}^\xi}{(1+\cc{C}\cc{z}^\xi)(1+Cz^\xi)^2}\,dz^2
				 -\frac{Cz^\xi-\cc{C}\cc{z}^\xi}{(1+\cc{C}\cc{z}^\xi)^2(1+Cz^\xi)}\,d\cc{z}^2
				\right)
			\]
			where $\xi=2\,\frac{\lambda-1}{2\lambda+1}\in(0,1)\cup(1,4]$ and $C=e^{i\varphi}$ with $\varphi\in\halfangl$ (if $\xi=2$: $C\ne\pm1$; if $\xi=3$: $C^2\ne\pm1$; if $\xi=4$: $C\ne1$) as well as $\kappa\in\Rnz$.
		\item\label{item:B.1r}
			The normal forms are
			\[
			  g=\kappa\,(\cc{z}-z)\,
				\left(
				\frac{C\,e^{-3z}}{z^2\cc{z}}\,dz^2
				 -\frac{\cc{C}\,e^{-3\cc{z}}}{z\cc{z}^2}\,d\cc{z}^2
				\right)
			\]
			where $C=e^{i\varphi}$, $\varphi\in\halfangl$ and $\kappa\in\Rnz$.
		\item\label{item:B.2c}
			The normal forms are
			\begin{align*}
			g&=\frac{\sin(\cc{z}-z)}{\sin(\cc{z}+\theta)\,\sin(z+\theta)}\,
			\left(
			\frac{C\,e^{-3\lambda z}}{\sin(z+\theta)}\,dz^2
			-\frac{\cc{C}\,e^{-3\lambda\cc{z}}}{\sin(\cc{z}+\theta)}\,d\cc{z}^2
			\right)\,, && \text{for $\lambda>0$}\,, \\
			g&=\kappa\,\frac{\sin(\cc{z}-z)}{\sin(\cc{z})\,\sin(z)}\,
			\left(
			\frac{C\,dz^2}{\sin(z)}
			-\frac{\cc{C}\,d\cc{z}^2}{\sin(\cc{z})}
			\right)\,,
			\quad (z \mod 2\pi)
			&& \text{case ``$\lambda=0$''}\,,
			\end{align*}
			where $C=e^{i\varphi}$ (if $\lambda>0$: $\varphi\in\halfangl$; if $\lambda=0$, $\varphi\in(0,\pi)$) as well as	$\theta\in\angl$ and $\kappa>0$.
	\end{enumerate}
	The projective vector field in (B.4)--(B.6) is \eqref{eqn:proj.vector.field.B}.

\paragraph{(C) Jordan block case}
	\begin{enumerate}
		\setlength\itemsep{0em}
		\setlist{nolistsep}
		\setcounter{enumi}{6}
		\item\label{item:C.2r.m2}
			The normal forms are ($y>0$)
			\[\displaystyle
			  g=\kappa\left(-2\frac{y^{\frac{1}{\xi}}+x}{(y-\varrho)^3}dxdy
			+ \frac{\left(y^{\frac{1}{\xi}}+x\right)^2}{(y-\varrho)^4}dy^2\right)
			\]
			where $\xi=2\,\frac{\lambda-1}{2\lambda+1}\in(0,\frac12)\cup(\frac12,1)\cup(1,4]$ as well as $\varrho\in\pmo$ and $\kappa\in\Rnz$.
		\item\label{item:C.1r}
			The normal forms are
			\[
		 		g=\kappa\,(Y(y)+x)\,dxdy
         		\qquad
		 		\text{where}
		 		\quad
		 		Y(y)=\int^y \frac{e^{\nicefrac{3}{2s}}}{|s|^{\nicefrac32}}\,ds
		 		\quad\text{and}\quad
		 		\kappa\in\Rnz\,.
			\]
		\item\label{item:C.2c}
			The normal forms are
			\[
		  		g	= \kappa\,(Y(\lambda,y)+x)\,dxdy
          		\qquad
          		\text{where}
          		\quad
          		Y(\lambda,y)
		   		= \int^y \frac{e^{-\frac32\lambda\arctan(s)}}{(s^2+1)^{\nicefrac34}}\,ds
			\]
			and $\lambda\geq0$. For $\lambda=0$, we require $\kappa>0$. For $\lambda>0$, $\kappa=e^{\lambda\theta}$ with $\theta\in\angl$.
		\item\label{item:C.2r.m3}
		\emph{Superintegrable case}:
		These metrics are parameterized by points on a sphere. Their normal forms $g[\theta,\varphi]$ are obtained via~\eqref{eqn:Liouville.tensor.g} from a solution~$\sigma[\theta,\varphi]$ of~\eqref{eqn:linear.system}, given by
		\begin{equation}\label{eqn:supint.sphere}
		\sigma[\theta,\varphi]
		=\sin\theta\,\cos\varphi\,\a1
		+\sin\theta\,\sin\varphi\,\a2
		+\cos\theta\,\a3\,,\quad
		\theta\in(0,\pi)\,,\,\,\varphi\in[0,2\pi)\,,
		\end{equation}
		where\footnote{The missing points of the sphere (intersections with the axes) are the metrics with a homothetic vector field.}
		$\varphi\notin\{0,\frac{\pi}{2},\pi,\frac{3\pi}{2}\}$ if $\theta=\frac{\pi}{2}$.
		The $\a{i}$ are given via~\eqref{eqn:Liouville.tensor.g} from the projectively equivalent metrics ($y>0$)
		\begin{subequations}\label{eqn:supint.generators}
		\begin{align}
		  \g1 &= (y^2+x)\,dxdy \\
		  \g2 &= -2\,\tfrac{y^2+x}{y^3}\,dxdy+\tfrac{(y^2+x)^2}{y^4}\,dy^2 \\
		  \g3 &= \tfrac{y^2+x}{(3x-y^2)^6}\,
			(9\,(y^2+x)\,dx^2-4y\,(9x+y^2)\,dxdy+12x\,(y^2+x)\,dy^2)
		\end{align}
		\end{subequations}
	\end{enumerate}
	The projective vector field in the case (C.7) is \eqref{eqn:proj.vector.field.CIb}, and in the case (C.10) it is \eqref{eqn:proj.vector.field.CIa}.
	In the case (C.8), instead of \eqref{eqn:proj.vector.field.CII}, we have
	\begin{equation}\label{eqn:proj.vector.field.C.8}
		w = \frac{y-3}{2}\left( x+ 2\int^y \frac{e^{\nicefrac{3}{2y}}\sqrt{|y|}}{(y-3)^2}\right) \partial_x +y^2\partial_y
	\end{equation}
	and in the case (C.9), instead of \eqref{eqn:proj.vector.field.CIII}, we have
	\begin{equation}\label{eqn:proj.vector.field.C.9}
	   w = \frac{y-3\lambda}{2}\left( x+2\int^y \frac{ e^{-\frac32\lambda\arctan(y)}\sqrt[4]{y^2+1} }{(y-3\lambda)^2} \right)\partial_x + (y^2+1)\,\partial_y
	\end{equation}
\end{theorem}
\noindent Alternatively to~\eqref{eqn:supint.sphere}, we derive explicit coordinate normal forms -- not identical to those of~\eqref{eqn:supint.sphere}~-- of type-10 metrics in Theorem~\ref{thm:normal.forms}. These normal forms are summarized in Proposition~\ref{prop:normal.forms.CIa} on page~\pageref{prop:normal.forms.CIa}.
Furthermore, as a by-product of Theorem~\ref{thm:normal.forms}, the following corollary is obtained. It provides a classification of the projective classes that contain metrics with one, essential projective vector field, in the sense of providing mutually projectively inequivalent normal forms. This fixes the second issue of item~1 in the list after Theorem~\ref{thm:matveev}.

For a more detailed discussion of the superintegrable case (C.10) and the normal forms (C.8) and (C.9) see also~\cite{manno_2019}, which is based on the present paper.
\begin{corollary}\label{cor:projective.classes}
The parameters $C,h,\varepsilon$ and $\lambda$ (or $\xi=\xi(\lambda)$ via~\eqref{eqn:xi.via.lambda}, respectively) describe all projective classes that contain metrics with exactly one, essential projective vector field.
In particular, two different parameter configurations imply that the projective classes are different.
\end{corollary}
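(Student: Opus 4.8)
The plan is to deduce the corollary from Theorem~\ref{thm:matveev} together with the reductions carried out in the proof of Theorem~\ref{thm:normal.forms}. I would split the statement into two parts: \emph{completeness} (the listed parameters enumerate every projective class at stake) and \emph{sharpness} (distinct configurations give distinct projective classes). For completeness, recall that by Theorem~\ref{thm:matveev} any projective class that covers a metric with exactly one, essential projective vector field equals $\projcl(\g1,\dots,\g{m})$ with $m\in\{2,3\}$, the generators $\g{i}$ depending only on $(h,\varepsilon,\lambda)$ in case~A, on $(C,\lambda)$ in case~B, and on $\lambda$ in case~C, where $\xi$ and $\lambda$ are linked by~\eqref{eqn:xi.via.lambda}. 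Since, by the proof of Theorem~\ref{thm:normal.forms}, this class contains at least one of the normal forms listed there, the data of the case together with $C,h,\varepsilon,\lambda$ indeed describes all such projective classes. It then remains to note that the remaining parameters occurring in the normal forms --- $\kappa$, $\varrho$, $\theta$, and, in the superintegrable case~\ref{item:C.2r.m3}, the sphere coordinates of~\eqref{eqn:supint.sphere} --- never change the projective class: $\kappa$ is an overall constant rescaling, hence a homothety and trivially projectively equivalent, whereas the others arise, in the course of the proof of Theorem~\ref{thm:normal.forms}, from the freedom in the choice of $(\K{1},\dots,\K{m})$ in~\eqref{eqn:general.metric} and from residual coordinate changes commuting with the essential projective vector field~$w$; by the identity $\projcl(\g1)=\dots=\projcl(g[\K{1},\dots,\K{m}])$ recorded after~\eqref{eqn:general.metric}, all of these act within a single projective class. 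Thus, for each fixed $(C,h,\varepsilon,\lambda)$ (and fixed case), the associated normal forms lie in one and the same projective class.

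For sharpness, suppose that two normal forms $g[\mathbf p]$ and $g[\mathbf p']$ of Theorem~\ref{thm:normal.forms} are projectively equivalent, i.e.\ a local diffeomorphism $\tau$ identifies their projective connections~\eqref{eq.proj.conn.gen}. Then $\tau$ identifies the solution spaces $\solSp$ of~\eqref{eqn:linear.system}, and --- the essential projective vector field being unique up to rescaling --- it intertwines the operators $\lie_w|_{\solSp}$. Hence the Jordan type of $\lie_w|_{\solSp}$, which is one of the three matrices~\eqref{eqn:normal-matrices}, is a projective invariant; in case~(I) so is the eigenvalue, once normalised so that the spectrum is $\{\lambda,1\}$ with $|\lambda|\ge1$ and $\lambda\ne1$, and in case~(III) the value $\lambda\ge0$ (after fixing the orientation of the $\lie_w$-invariant $2$-plane). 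Since $\lambda\mapsto\xi$ is, by~\eqref{eqn:xi.via.lambda}, a bijection of $\{\lambda:|\lambda|\ge1,\ \lambda\ne1\}$ onto $(0,1)\cup(1,4]$, the value of $\xi$ (equivalently $\lambda$) is thereby pinned down. Likewise the trichotomy A/B/C of Proposition~\ref{prop:dini} is read off from the eigenvalue structure of a generic pair of elements of $\solSp$ and is therefore preserved by $\tau$, so the case is determined as well. What remains is to recover $h,\varepsilon$ in case~A and $C$ in case~B.

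For those, I would use that the eigenlines of $\lie_w|_{\solSp}$ --- or its single generalised eigenline in case~(II) --- single out the generators $\g1,\g2$ up to a scalar intrinsically inside the projective class (in case~(I) they are, up to scale, the eigenvectors $\a1,\a2$ of $\lie_w|_{\solSp}$; cf.~\eqref{eqn:Lwa}). Consequently $\tau$ must send $\g1[\mathbf p]$ to a constant multiple of $\g1[\mathbf p']$, i.e.\ these two metrics are isometric up to a homothety. Comparing the explicit expressions~\eqref{eq.metrics.Liouville} and~\eqref{eq.metrics.complex.Liouville} --- equivalently, invoking the step in the proof of Theorem~\ref{thm:normal.forms} in which exactly the admissible identifications among these generators are worked out, and using Theorems~\ref{thm:theorem.2.revised} and~\ref{thm:theorem.3.revised} together with Proposition~\ref{prop:dom3.proj.equiv} to dispose separately of the degenerate subcases (degree of mobility~$3$, or presence of a Killing vector field), where the pencil of generators has to be treated by hand --- one concludes $h=h'$, $\varepsilon=\varepsilon'$, resp.\ $C=C'$. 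Hence $\mathbf p=\mathbf p'$.

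The step I expect to be the main obstacle is precisely this last one: excluding \emph{accidental} projective equivalences between generators carrying different values of $h,\varepsilon$ (or of $C$), that is, proving that these finer parameters are genuine moduli of the projective class rather than artefacts of the parametrisation. Unlike the identification of the case and of $\xi$, which follows from the soft invariance of $\lie_w|_{\solSp}$ and of the eigenvalue type on $\solSp$, this requires the detailed case-by-case computation already performed in the proof of Theorem~\ref{thm:normal.forms}.
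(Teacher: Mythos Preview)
Your proposal is correct and follows essentially the same approach as the paper: both deduce the corollary as a by-product of the proof of Theorem~\ref{thm:normal.forms}, separating the parameters into those fixing the projective class ($C,h,\varepsilon,\lambda$) and those locating a metric within it ($\kappa,\varrho,\theta,\dots$), and then arguing that if two first-kind configurations gave the same projective class one would obtain two distinct normal forms related by a coordinate change, contradicting Theorem~\ref{thm:normal.forms}. Your version is more explicit about \emph{why} the case label and $\lambda$ (resp.\ $\xi$) are projective invariants---via the Jordan type of $\lie_w|_\solSp$ and the A/B/C trichotomy---which the paper treats more tersely through Lemma~\ref{la:different.cases} and Lemma~\ref{la:lambda.different.classes}; but for the finer parameters $h,\varepsilon,C$ you, like the paper, ultimately defer to the case-by-case computations of Section~\ref{sec:proof.normal.forms}. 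One small caveat: your sentence about eigenlines singling out the generators $\g1,\g2$ does not apply verbatim in case~(III), where $\lie_w|_\solSp$ has no real eigenvectors; since you immediately fall back on the detailed analysis of Theorem~\ref{thm:normal.forms} anyway, this does not affect the validity of the argument.
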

\begin{proof}
The statement follows from the proof of Theorem~\ref{thm:normal.forms}, see Section~\ref{sec:proof.projective.classes}.
\end{proof}

\begin{remark}
 The cases (C.8) and (C.9) in Theorem~\ref{thm:normal.forms} involve indefinite integrals, which are determined up to an integration constant only. Note that this inherent freedom is absorbed by simple translations in $x$.
 In these cases, the non-diffeomorphic property is satisfied after fixing the integration constant.
\end{remark}

\begin{remark}
 Note that the normal forms in Theorem~\ref{thm:normal.forms} are local normal forms, where locality is to be understood as in Proposition~\ref{prop:dini}.
 More precisely, for almost every point $p$, one can find a neighborhood $\mathcal{U}\ni p$, and a coordinate system $(x,y)$  on $\mathcal{U}$, in which the metric is described by one of the forms of the Theorem~\ref{thm:normal.forms}. We underline that the coordinate system is generally not centered in $p$.
 The normal forms themselves are not canonical, as some restrictions may be modified. For instance, in case (C.7) we may relax the requirement $y>0$, allowing negative~$y$, but at the expense of imposing $\rho=1$, which can be achieved after a transformation $y\to-y$.
\end{remark}

\section{Proof of Theorem \ref{thm:theorem.2.revised}}\label{sec:proof.thm.2.revised}
Theorem~2 of \cite{matveev_2012} claims that in almost all cases -- all except C(Ia) of Theorem~\ref{thm:matveev} -- the space of solutions is 2-dimensional. Unfortunately, the proof contains a gap which gives rise to counterexamples.
Take a metric~$g$ from Theorem~\ref{thm:matveev}. As it turns out, the following situations can happen:
(1) First, the metric~$g$ could admit a Killing vector field in addition. This implies that the projective algebra is higher-dimensional \cite{bryant_2008}.
(2) If no additional Killing vector field exists, the projective algebra is 1-dimensional. In this situation, the degree of mobility can be~2 or~3, see~\cite{matveev_2012}.

The current section is devoted to identifying the metrics in Theorem~\ref{thm:matveev} that have a 1-dimensional projective algebra but whose degree of mobility is equal to~3.

\subsection{Method}\label{sec:method.thm2}
We aim to determine the metrics in Theorem~\ref{thm:matveev} which have degree of mobility~3 and do not admit a non-trivial Killing vector field.
Such a metric~$g$ corresponds to a solution of the system~\eqref{eqn:linear.system}, and also satisfies a system $\lie_w(\a{i}) = \sum_j A_{ij}\a{j}$, where $i,j$ range from~1 to~3 (with $A_{ij}\in\mathds{R}$).
Note that $\a1,\a2,\a3$ are Liouville tensors, and a certain linear combination of them corresponds to the metric~$g$, via Equation~\eqref{eqn:general.metric}.
Recall that there is always a two-dimensional subspace of $\solSp$ that is invariant under $\lie_w$, see also~\cite{matveev_2012}.
Let $(\a2,\a3)$ be a basis of this subspace. As it is invariant under $\lie_w$, we obtain
\[
  \qquad
  \lie_w\begin{pmatrix} \a2 \\ \a3 \end{pmatrix}
  =A\begin{pmatrix} \a2 \\ \a3 \end{pmatrix}\,,\quad
  \text{where $A$ is a matrix as in Equation~\eqref{eqn:normal-matrices}}\,.
\]
Explicit solutions for $\a2,\a3$ are obtained in~\cite{matveev_2012}.
We rely on the same approach as~\cite{matveev_2012}, which is based on an argument by contradiction. Thus, let us assume that the degree of mobility is~3. We can find a basis $(\a1,\a2,\a3)$ of the space $\solSp$ of solutions to~\eqref{eqn:linear.system} such that $\lie_w$ takes one of the following forms:

\begin{equation}\label{eqn:dom.3.Lw}
 \begin{pmatrix} 1 & 1 & \\ & 1 & 1 \\ & & 1 \end{pmatrix}\,,\quad
 \begin{pmatrix} \mu & & \\ & 1 & 1 \\ & & 1 \end{pmatrix}\,,\quad
 \begin{pmatrix} \mu & & \\ & \lambda' & \\ & & 1 \end{pmatrix}\,,\quad
 \begin{pmatrix} \mu & & \\ & \lambda & -1 \\ & 1 & \lambda \end{pmatrix}\,,\quad
\end{equation}
where $|\lambda'|\geq1$, $\lambda\in\mathds{R}$ and $\mu\ne0$. Each of these cases admits a 2-dimensional subspace invariant under $\lie_w$, see~\eqref{eqn:normal-matrices}, where we can use Proposition~\ref{prop:dini}.
Now, assume the degree of mobility is indeed~3, and~$\a1$ is transversal to the invariant subspace spanned by $(\a2,\a3)$.
Then, following~\cite{matveev_2012}, the components of $\a1$ satisfy a certain linear system.
Generically there are no solutions to this system, and~\cite{matveev_2012} claims that a solution only exists in case C(Ia) in Theorem~\ref{thm:matveev}.
However, contrary to this claim, we actually find additional solutions to this linear system in other cases.
We will also use the following lemma which is implicitly proven in~\cite{matveev_2012}.
\begin{lemma}\label{la:homothetic.subspace}
 Assume $\solSp$ (the space of solutions to~\eqref{eqn:linear.system}) admits a 2-dimensional subspace, invariant under~$\lie_w$, such that on this subspace the projective vector field is homothetic.
 Then any metric in the projective class represented by $\solSp$ admits a non-trivial Killing vector field.
\end{lemma}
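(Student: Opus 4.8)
The plan is to reduce the hypothesis to a statement about a pair of non-proportional, projectively equivalent metrics that share $w$ as a homothety with the \emph{same} dilation factor, then to put that pair into one of Dini's normal forms (Proposition~\ref{prop:dini}) and read off a Killing vector field there.

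First I would reformulate the hypothesis at the level of Liouville tensors. Using~\eqref{eqn:Lwa}, a direct computation gives that $\lie_w g=\lambda g$ with $\lambda$ constant is equivalent to $\lie_w a=-\tfrac{\lambda}{3}\,a$ for $a=\psi^{-1}(g)$; conversely, if $\lie_w a=\mu\,a$ with $\mu$ constant, then tracking the flow $\phi_t$ of $w$ and using that $\psi$ is natural with $\psi(c\,a)=c^{-3}\psi(a)$ (in dimension~$2$) yields $\phi_t^*\psi(a)=e^{-3\mu t}\psi(a)$, hence $\lie_w\psi(a)=-3\mu\,\psi(a)$. Thus ``$w$ is homothetic for every metric $\psi(a)$ with $a$ in the given invariant subspace'' is equivalent to ``every nonzero $a$ of that subspace is an eigenvector of $\lie_w$ restricted to it'', and a linear operator on a $2$-dimensional space all of whose nonzero vectors are eigenvectors is a multiple of the identity. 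Hence $\lie_w=c\,\Id$ on the invariant subspace, for some $c\in\R$.

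Next, pick a basis $\a1,\a2$ of the invariant subspace and set $\g1=\psi(\a1)$, $\g2=\psi(\a2)$; by Remark~\ref{rmk:g-a-correspondence} these are projectively equivalent, they are non-proportional because $\a1,\a2$ are, and by the previous step both satisfy $\lie_w\g{i}=\lambda\g{i}$ with $\lambda=-3c$. If $\lambda=0$, then $w$ itself is a non-trivial Killing vector field of $\g1$, which lies in the projective class represented by $\solSp$, and Lemma~\ref{la:killing.vectors.and.projective.equivalence} finishes the proof; so assume $\lambda\neq0$. I would then consider the $(1,1)$-tensor field $L=\big|\det\g2/\det\g1\big|^{\nicefrac{1}{3}}\,\g2^{-1}\g1$. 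From $\phi_t^*\g{i}=e^{\lambda t}\g{i}$ one reads off $\phi_t^*(\g2^{-1}\g1)=\g2^{-1}\g1$ and $\phi_t^*(\det\g2/\det\g1)=\det\g2/\det\g1$, so $\lie_w L=0$; in particular $\tr L$ and $\det L$, and therefore the eigenvalue functions of $L$, are $w$-invariant. Now apply Proposition~\ref{prop:dini} to $(\g1,\g2)$. A short computation of $L$ from the three normal forms gives $L=-\operatorname{diag}(X(x),Y(y))$ in the Liouville case~A, $L=-\operatorname{diag}(h(z),\cc{h(z)})$ in the $(z,\cc{z})$-frame in the complex Liouville case~B, and a single $2\times2$ Jordan block with eigenvalue proportional to $Y(y)$ in the Jordan block case~C; in all cases the eigenvalues of $L$ depend on a single coordinate. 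Since $w$ is a nonzero projective vector field, it vanishes at most on a nowhere dense set, so we may restrict to an open set where $w\neq0$; there the $w$-invariance of the eigenvalues forces $X'\equiv0$ or $Y'\equiv0$ (case~A), $h'\equiv0$ (case~B), resp.\ $Y'\equiv\text{constant}$ (case~C) --- otherwise $w$ would vanish on an open set. (In case~C one small addition is needed: if $Y'$ is a nonzero constant then, after a translation in $x$, $\g1$ becomes a constant multiple of $x\,dxdy$, hence $y$-independent; if $Y'$ is nonconstant then $w(Y)=0$ forces $w\parallel\partial_x$ and then $\lie_w\g1=\lambda\g1$ forces $w=0$.) In every case $\g1$ is, in the relevant coordinates, independent of one coordinate --- flat in case~B --- so translation in that coordinate is a non-trivial Killing vector field of $\g1$. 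As $\g1$ belongs to the projective class represented by $\solSp$, Lemma~\ref{la:killing.vectors.and.projective.equivalence} produces a non-trivial Killing vector field for every metric of that class.

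The step I expect to be the main obstacle is the case-by-case analysis in Dini's normal forms, and in particular closing the Jordan block case~C, where the $w$-invariance of $L$ by itself does not exclude $w=0$ and must be combined with the homothety equation; the remaining essential ingredient is the classical normal-form result recalled in Proposition~\ref{prop:dini} together with the explicit shape of $L$ in each of its three cases.
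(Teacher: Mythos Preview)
Your proof is correct and takes a genuinely different route from the paper's. The paper rescales $w$ so that $\lie_w=\Id$ on the invariant subspace, puts the pair $(\a2,\a3)$ into one of the Dini normal forms of Proposition~\ref{prop:dini}, and then writes out the \emph{full} system $\lie_w\a{i}=\a{i}$ componentwise in each coordinate chart: in the Liouville case this reads $w^1_x=w^2_y=-\tfrac32$, $w^1_y=w^2_x=0$, $w^1X'=w^2Y'=0$, so $w$ is an explicit nonzero affine vector field and both $X'$ and $Y'$ are forced to vanish; the other cases are handled the same way.

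Your argument instead extracts only the $w$-invariance of the Benenti tensor $L$ from the equal homothety factors, and then uses that the eigenvalues of $L$ in the Dini charts are precisely (up to constants) the functions $X,Y$, $h,\cc h$, or $Y$. This is more conceptual and in cases~A and~B slightly cleaner, since you need only the invariance of scalar functions together with the unique-continuation property of projective vector fields, rather than the full component equations. The price is that in case~C the Benenti invariance alone is not enough---you correctly observe this and bring back the homothety equation $\lie_w\g1=\lambda\g1$ to rule out nonconstant $Y'$; the paper's direct PDE calculation does this in one stroke. Note also that the paper's approach yields the sharper conclusion that \emph{both} $X$ and $Y$ are constant in case~A (so $\g1$ is flat), whereas your argument only gives one of them constant---but either suffices for the Killing field, so nothing is lost.
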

\noindent Before proving the lemma, recall Lemma~\ref{la:killing.vectors.and.projective.equivalence}: If a metric admits a non-trivial Killing vector field, then all metrics in its projective class admit a non-trivial Killing vector field, too.
\begin{proof}
 Let us assume the projective vector field is truly homothetic on the subspace, i.e.\ not Killing (otherwise the statement is trivial).
 We recite the proof from~\cite{matveev_2012}. However, in order to ensure self-containedness as well as brevity, we only briefly outline the reasoning.
 Let the invariant subspace be spanned by a pair $\a1,\a2$.
 Next, observe that the assumption implies, maybe after rescaling $w$,
 \begin{equation}\label{eqn:Lw.for.homothetic.class}
  \lie_w\begin{pmatrix} \a2 \\ \a3 \end{pmatrix}
  =\begin{pmatrix} 1 & \\ & 1 \end{pmatrix}
  \begin{pmatrix} \a2 \\ \a3 \end{pmatrix}\,.
 \end{equation}
 The pair $(\a2,\a3)$ can be assumed to be in Liouville, complex Liouville or Jordan form, according to Proposition~\ref{prop:dini}.
 Let us first consider the Liouville case and $w=(w^1,w^2)$. Then~\eqref{eqn:Lw.for.homothetic.class} is equivalent to
 \[
  \frac{\partial w^1}{\partial x} = -\frac32 = \frac{\partial w^2}{\partial y}\,,\quad
  \frac{\partial w^1}{\partial y} = \frac{\partial w^2}{\partial x} = 0\,,\quad
  w^1X' = 0 = w^2Y'\,,
 \]
 as derived in~\cite{matveev_2012}.
 This means, since $w\ne0$, that the system is incompatible unless the functions $X,Y$ are constant, which implies the existence of a non-trivial Killing vector field.
 For the complex Liouville case, the proof is similar.
 Finally, in the Jordan case, Equation~\eqref{eqn:Lw.for.homothetic.class} is equivalent to
 \begin{equation*}
  (w^1,w^2) = (-\tfrac32x+c,0)\,,\quad
  3Y(y)+2c=0\,,
 \end{equation*}
 where $c$ is an integration constant (which, w.l.o.g., can be chosen $c=0$, see~\cite{matveev_2012}). Thus, we obtain the metric $(Y(y)+x)\,dxdy = x\,dxdy$, which admits the Killing vector field $\partial_y$ (it is even constant curvature).
\end{proof}

\noindent The proof of Theorem~\ref{thm:theorem.2.revised} is performed in Sections~\ref{sec:Thm2.A.I}---\ref{sec:Thm2.C}.
It is carried out along the following general directions:
\begin{enumerate}
	\item Perform a change of coordinates such that in the new coordinates $(x,y)$ the projective vector field becomes $w=\partial_x$. Thus, the metrizability conditions~\eqref{eqn:linear.system} will depend only on~$y$.
	\item Assume the degree of mobility is 3. Let $(\a2,\a3)$ be a basis of the $\lie_w$-invariant subspace of~$\solSp$, such that $\lie_w$ takes the form of one of the cases in~\eqref{eqn:normal-matrices}. Then two principal cases can occur (cf.~\eqref{eqn:dom.3.Lw}):
	\begin{enumerate}
		\item The Liouville tensor~$\a1$ is an eigendirection associated to the eigenvalue $\mu$ of $\lie_w$.
		The metrizability conditions~\eqref{eqn:linear.system} thus become
		\begin{subequations}\label{eqn:linear.system.dom3.mu}
			\begin{align}
			\mu\,a_{11}-\frac23\,f_1\,a_{11}+2f_0\,a_{12} &= 0
			\label{eqn:linear.system.dom3.mu.first} \\
			a_{11,y}+2\mu\,a_{12}-\frac43\,f_2\,a_{11}+\frac23\,f_1\,a_{12}+2f_0\,a_{22} &= 0
			\label{eqn:linear.system.dom3.mu.second} \\
			2a_{12,y}+\mu\,a_{22}-2f_3\,a_{11}-\frac23\,f_2\,a_{12}+\frac43\,f_1\,a_{22} &= 0
			\label{eqn:linear.system.dom3.mu.third} \\
			a_{22,y}-2\,f_3\,a_{12}+\frac23\,f_2\,a_{22} &= 0\,.
			\label{eqn:linear.system.dom3.mu.fourth}
			\end{align}
		\end{subequations}
		This is a homogeneous linear system of four PDE for the components of~$a=\a1$.
		\item The second possibility, which actually only occurs once in~\eqref{eqn:dom.3.Lw}, is that $\lie_w$ on the 3-dimensional space $\solSp$ of solutions to~\eqref{eqn:linear.system} admits only one eigenvalue, but with algebraic multiplicity~3. It this case, a system of PDE similar to~\eqref{eqn:linear.system.dom3.mu} is obtained, which however is inhomogeneous.
	\end{enumerate}
	\item The Equations~\eqref{eqn:linear.system.dom3.mu.second}---\eqref{eqn:linear.system.dom3.mu.fourth} can be solved for derivatives of components of $a=\a1$. The remaining Equation~\eqref{eqn:linear.system.dom3.mu.first} is an algebraic condition on~$\a1$. Differentiating~\eqref{eqn:linear.system.dom3.mu.first} twice and replacing derivatives, we obtain two more algebraic conditions on~$\a1$. Actually, the equations are linear algebraic and, for the case~(a) of point~2, homogeneous. In the case~(b) of point~2,~\eqref{eqn:linear.system.dom3.mu} is replaced by an inhomogeneous system, and the linear algebraic system becomes inhomogeneous, too.
	Writing the linear algebraic system in the form of a matrix equation,
	\begin{equation}\label{eqn:Mv=b}
	  \underbrace{
	  	\begin{pmatrix}
	  	  \mu-\tfrac23 f_1 & 2f_0 & 0 \\
	  	  * & * & * \\
	  	  * & * & *
	  	\end{pmatrix}
	  }_{M}\,
  	  \underbrace{
  	  	\begin{pmatrix} a_{11} \\ a_{12} \\ a_{22} \end{pmatrix}
  	  }_{v}
      = b\,.
	\end{equation}
	The entries of $M$ marked by~$*$ depend only on $\mu$ and the functions $f_0,\dots,f_3$.
	In case (a), $b=0$. In case (b), $b\ne0$.

	If $\det(M)\ne0$, system~\eqref{eqn:Mv=b} admits only the trivial solution, which would contradict the hypothesis. Therefore, $\det(M)=0$ must hold, in order that the degree of mobility can be~3 (or higher).

	The requirement $\det(M)=0$ can be solved and yields possible parameter configurations that might admit degree of mobility~3. In order to get the precise answer, we need to substitute these parameter configurations back into~\eqref{eqn:Mv=b}. Solving this restricted problem yields relations on the functions $a_{11}$, $a_{12}$ and $a_{22}$ that permit to express two of these functions in terms of the other function. Reinserting into \eqref{eqn:linear.system.dom3.mu.second}--\eqref{eqn:linear.system.dom3.mu.fourth}, a reduced ODE problem is obtained, whose compatibility is straightforwardly confirmed or disconfirmed.
\end{enumerate}

\noindent The proof is carried out along the structure suggested by Theorem~\ref{thm:matveev}. In order to maintain brevity, only the computations for metrics of types A(I---III) are to be shown in details. For the other cases, only a summary is given as, in fact, the computations are conceptually analogous and would therefore be of little surplus value for the reader.

\subsection{Metrics A(I) of Theorem~\ref{thm:matveev}}\label{sec:Thm2.A.I}
The metrics under consideration in this section are those of type A(I) in Theorem~\ref{thm:matveev}.
We perform the change of coordinates $(x_\text{new},y_\text{new})=\left(x_\text{old}+y_\text{old},y_\text{old}-x_\text{old}\right)$, such that the projective vector field~$w$ assumes the form~$\partial_{x_\text{new}}$.
The projective connection, in the new coordinates, reads
\begin{align*}
y''
=\frac{\xi\,\varepsilon\,f_\xi^+(y) + f_{\xi-4}^+(y)}%
      {4\varepsilon\,f_\xi^-(y)}
&-\frac{3\xi\,f_{\xi-4}^-(y) - (\xi-4)\varepsilon\,f_\xi^-(y)}%
      {4\varepsilon\,f_\xi^-(y)}\,y'
\\
&+\frac{3\,f_{\xi-4}^+(y) - \varepsilon\,f_\xi^+(y)}%
      {4\varepsilon\,f_\xi^-(y)}\,y'^2
-\frac{\xi\,f_{\xi-4}^-(y) + (\xi-4)\varepsilon\,f_\xi^-(y)}%
      {4\varepsilon\,f_\xi^-(y)}\,y'^3
\end{align*}
where we define the shorthand notation $f^\pm_r(y)=e^{ry}\pm h^2 e^{-ry}$.
The metrizability equations are~\eqref{eqn:linear.system.dom3.mu}, as the Lie derivative on the entire 3-dimensional space $\solSp$ of solutions to~\eqref{eqn:linear.system} can only take the form
\smash{\tiny $\left(\begin{smallmatrix} \mu & & \\ & \lambda' & \\ & & 1 \end{smallmatrix}\right)$}.
Note that $\xi=\xi(\lambda')$ according to~\eqref{eqn:xi.via.lambda}.
As outlined in Section~\ref{sec:method.thm2}, a matrix equation $Mv=0$ can be derived from system~\eqref{eqn:linear.system.dom3.mu}, assuming $\lie_w\a1=\mu\a1$ for the eigenvector of $\lie_w$ transversal to the 2-dimensional invariant subspace.
Explicitly, we find that the determinant of the matrix $M$ evaluates to
\begin{equation}\label{eqn:detM.1c}
\det(M) =
-256\,e^{-8y} (3\mu+\xi+2)(3\mu-2\xi+2)(e^{2\xi\,y}-h)^4\,P\,,
\end{equation}
where $P$ is a polynomial in
$
e^{8y}\,,\,
e^{12y}\,,\,
e^{16y}\,,\,
e^{2(\xi+4)\,y}\,,\,
e^{4\xi\,y}\,,\,
e^{4(\xi+1)\,y}\,,\,
e^{4(\xi+2)\,y}\,,\,
e^{2(\xi+6)\,y}\,,\,
e^{2(\xi+2)\,y}
$
with constant coefficients.
These are linearly independent functions if
\begin{equation}\label{eqn:detM.1c.generic.condition}
\xi^5\,e^{18(\xi+4)y}\,(\xi-1)^2(\xi-3)^2(\xi-6)^2(\xi+2)^2(\xi-4)^5(\xi-2)^9
\ne0
\end{equation}
(note that not all zeros of~\eqref{eqn:detM.1c.generic.condition} are admissible parameter values in Theorem~\ref{thm:matveev}).
Let us first assume that~\eqref{eqn:detM.1c.generic.condition} is non-zero.
In this generic situation, the vanishing of~\eqref{eqn:detM.1c} requires that any coefficient w.r.t.\ the independent functions vanishes. In particular, the coefficient of $e^{16y}$ has to vanish, i.e.
\[
 81\,h^2\,\xi^2\,(\xi+2)(\xi-1) = 0
\]
must hold, which is impossible for any admissible value of $h$ or $\xi$, see Theorem~\ref{thm:matveev}.
Let us now see what happens if~\eqref{eqn:detM.1c.generic.condition} is not satisfied. In view of the parameter restrictions on $\xi$ in Theorem~\ref{thm:matveev}, c.f.~\cite{matveev_2012}, only the following cases need to be considered:
$\xi\in\left\{ 2,\, 3,\, 4\right\}$.

In case $\xi=3$, the condition that~\eqref{eqn:detM.1c} must vanish can be solved straightforwardly, using that $(e^{Ny})_{N\in\Z}$ are linearly independent.
Solving the system of equations obtained by equating the coefficients of these linearly independent functions in~\eqref{eqn:detM.1c} to zero, we find
$(h,\varepsilon,\mu)=(\pm1,-1,-\tfrac23)$.
We reinsert this into~\eqref{eqn:Mv=b} and~\eqref{eqn:linear.system.dom3.mu}, and verify that this system is solvable. We also confirm that it does not admit a Killing vector field.
In the case $\xi=2$ we impose the additional assumption $\mu\ne0$, which ensures that no non-trivial Killing vector field exists\footnote{This excludes only the case $(\xi,h,\varepsilon)=(2,\pm1,\mp1)$, which is forbidden in Theorem~\ref{thm:matveev} anyway.}.
Requiring~\eqref{eqn:detM.1c} to vanish, we find $h\in\{\pm4,\pm\tfrac14,\pm1\}$ and $\varepsilon=\pm1$ with the constraint $\sgn(h)=-\varepsilon$.
Resubstituting this into~\eqref{eqn:Mv=b} and~\eqref{eqn:linear.system.dom3.mu} we verify that these parameter configurations actually admit the additional solution~$\a3$. For $h=-\varepsilon$, the metrics admit a Killing vector field, but we notice that these configurations are forbidden in Theorem~\ref{thm:matveev}.
Finally, if $\xi=4$, requiring~\eqref{eqn:detM.1c} to vanish implies $\mu=0$. Such metrics therefore would admit a Killing vector field, and can thus be ignored.

\subsection{Metrics A(II) of Theorem~\ref{thm:matveev}}
For metrics of type A(II) the Lie derivative on the 2-dimensional invariant subspace is of the form (II) in~\eqref{eqn:normal-matrices}. It is a 2-dimensional Jordan block for an eigenvalue of $\lie_w$ that has algebraic multiplicity (at least)~2.
This configuration allows for two possibilities on the entire, 3-dimensional space~$\solSp$ of solutions to~\eqref{eqn:linear.system}, namely the first and second of the cases in~\eqref{eqn:dom.3.Lw}, as they admit eigenvalues with algebraic multiplicity larger than~$1$.
Therefore,
\begin{equation}\label{eqn:A.II.cases}
 \lie_w\begin{pmatrix} \a1 \\ \a2 \\ \a3 \end{pmatrix}
 = \begin{pmatrix} \mu & & \\ & 1 & 1 \\ & & 1 \end{pmatrix}
 \begin{pmatrix} \a1 \\ \a2 \\ \a3 \end{pmatrix}
 \quad\text{or}\quad
 \lie_w\begin{pmatrix} \a1 \\ \a2 \\ \a3 \end{pmatrix}
 = \begin{pmatrix} 1 & 1 & \\ & 1 & 1 \\ & & 1 \end{pmatrix}
 \begin{pmatrix} \a1 \\ \a2 \\ \a3 \end{pmatrix}\,.
\end{equation}
Let us briefly comment on these cases.
In the first case, represented by the matrix
\smash{\tiny $\left(\begin{smallmatrix} \mu & & \\ & 1 & 1 \\ & & 1 \end{smallmatrix}\right)$},
$\lie_w$ admits two real eigenvalues with, respectively, algebraic multiplicity one and two.
In the second case, represented by the matrix
\smash{\tiny $\left(\begin{smallmatrix} 1 & 1 & \\ & 1 & 1 \\ & & 1 \end{smallmatrix}\right)$},
$\lie_w$ admits only one real eigenvalue, which is of algebraic multiplicity~3.

Let us begin with the first case of~\eqref{eqn:A.II.cases}.
%
%
Following~\cite{matveev_2012}, we perform the change of coordinates
$x=\frac{x_\text{old}+y_\text{old}}{2}$,
$y=\frac{x_\text{old}-y_\text{old}}{2}$\,,
leading to the projective connection for metrics of type A(II),
\begin{equation}
y'' = \frac{2h+f^+_6(y)}{8h\,y}
+ \frac{12h\,y-3f^-_6(y)}{8h\,y}\,y'
+ \frac{3f^+_6(y)-2h}{8h\,y}\,y'^2
- \frac{12hy+f^-_6(y)}{8h\,y}\,y'^3
\end{equation}
where $f^\pm_r(y)=e^{r\lambda y}\pm h^2 e^{-r\lambda y}$.
The determinant of the matrix $M$, obtained as explained in Section~\ref{sec:method.thm2}, is
\begin{multline}
\det(M) =
55296\,h^4\,y^4\,(\mu-1)^2\,e^{6y}\,\bigg(
9\,e^{24y}
+6h\,(5\mu+4)(2\mu+1)\,y\,e^{18y}
-30h\mu\,(\mu+2)\,e^{18y}
\\
-8h^2\,(\mu+2)(\mu-1)(2\mu+1)^2\,y^2\,e^{12y}
-6h^2(10\mu^2+20\mu+3)\,e^{12y}
\\
-6h^3\,(5\mu+4)(2\mu+1)\,y\,e^{6y}
-30h^3\mu\,(\mu+2)\,e^{6y}
+9\,h^4
\bigg)\,.
\end{multline}
Obviously, $\det(M)=0$ if $\mu=1$. Let us postpone this possibility for the moment and assume $\mu\ne1$. In that case, the determinant vanishes (identically) if and only if the big bracket does. However, the functions
\[
1, e^{24y}, ye^{18y}, e^{18y}, y^2e^{12y}, e^{12y}, ye^{6y}, e^{6y}
\]
are linearly independent, and thus, in order that the bracket vanishes, each coefficient of these functions has to vanish independently.
But this cannot occur since for instance the coefficient of $e^{24y}$ is always non-zero.
Next, let us turn back to the case $\mu=1$; for this case $\lie_w$ admits a 2-dimensional subspace $\solSp'\subset\solSp$ with
$\lie_wa=a$ for any $a\in\solSp'$,
and such a projective class would admit a non-trivial Killing vector field, see Lemma~\ref{la:homothetic.subspace}.

Finally, we need to consider the second case of~\eqref{eqn:A.II.cases},
which is the particular case, mentioned in Section~\ref{sec:method.thm2}, when we obtain an inhomogeneous matrix system.
It is obtained as follows. Firstly we obtain, in coordinates such that the projective vector field $w=\partial_x$,
\begin{equation*}
 \lie_w\begin{pmatrix} \a2 \\ \a3 \end{pmatrix} = \begin{pmatrix} \a2+\a3 \\ \a3 \end{pmatrix}
  \qquad\text{and}\qquad
 \partial_x \a1 = \a1 + \a2\,.
\end{equation*}
The first subsystem has, in fact, already been solved in~\cite{matveev_2012}: Its solution is among the cases of Theorem~\ref{thm:matveev}. We are therefore left with the (inhomogeneous) subsystem $\partial_x \a1 = \a1 + \a2$.
Before we continue, let us write $\a1 = e^x\tilde{a}+a_\mathrm{part}$, where we introduce $\tilde{a}$, which is (for the purposes here) a matrix that depends on $y$ only and which solves the homogeneous problem; on the other hand, $a_\mathrm{part}$ is a particular solution of the inhomogeneous problem.
A particular solution has, in fact, already been obtained in~\cite{matveev_2012},
\begin{equation}
a_\mathrm{part} = -\frac{xe^x}{8y^{\tfrac13}h^{\tfrac23}}\,
\begin{pmatrix}
(x-2y)he^{-3y}+(x+2y)e^{3y} & (x-2y)he^{-3y}-(x+2y)e^{3y} \\
(x-2y)he^{-3y}-(x+2y)e^{3y} & (x-2y)he^{-3y}+(x+2y)e^{3y}
\end{pmatrix}\,.
\end{equation}
Inserting this into~\eqref{eqn:linear.system}, we end up with the system of PDE
\begin{subequations}
\begin{align}
0 &= \tilde{a}_{11}-\frac23\,f_1\,\tilde{a}_{11}+2f_0\,\tilde{a}_{12} -\frac{y^{\frac23}}{4c^{\frac23}}\,\left( e^{3y}-he^{-3y} \right)
\label{eqn:linear.system.dom3.inhomogeneous.first} \\
\tilde{a}_{11,y} &= -2\,\tilde{a}_{12}+\frac43\,f_2\,\tilde{a}_{11}-\frac23\,f_1\,\tilde{a}_{12}-2f_0\,a_{22} -\frac{y^{\frac23}}{4c^{\frac23}}\,\left( e^{3y}+he^{-3y} \right)
\label{eqn:linear.system.dom3.inhomogeneous.second} \\
\tilde{a}_{12,y} &= -\frac12\,\tilde{a}_{22}+f_3\,\tilde{a}_{11}+\frac13\,f_2\,\tilde{a}_{12}-\frac23\,f_1\,\tilde{a}_{22} +\frac{y^{\frac23}}{4c^{\frac23}}\,\left( e^{3y}-he^{-3y} \right)
\label{eqn:linear.system.dom3.inhomogeneous.third} \\
\tilde{a}_{22,y} &= 2\,f_3\,\tilde{a}_{12}-\frac23\,f_2\,\tilde{a}_{22}\,.
\label{eqn:linear.system.dom3.inhomogeneous.fourth}
\end{align}
\end{subequations}
Although this is an inhomogeneous system, we can argue as in the homogeneous cases:
We differentiate~\eqref{eqn:linear.system.dom3.inhomogeneous.first} twice w.r.t.\ $y$, and use~\eqref{eqn:linear.system.dom3.inhomogeneous.second}---\eqref{eqn:linear.system.dom3.inhomogeneous.fourth} to replace derivatives.
As a result, we arrive at a linear system $Mv=b$, $v=(\tilde{a}_{11},\tilde{a}_{12},\tilde{a}_{22})^T$, for which it is easily confirmed that $\det(M)=0$.
We aim now to derive another condition which is incompatible with this.
To this end, let $M=(m_1,m_2,m_3)$ where $m_i$ denote the columns of $M$. Consider the matrix $B=(m_2,m_3,b)$.
Its determinant is
\begin{multline}
\det(B) =
7776h^{\tfrac{11}{3}}\,y^{\tfrac{11}{3}}\,\bigg(
7\,e^{39y}
+65h\,e^{33y}
+3h^2\,(96y+1)\,e^{27y}
+3h^3\,(96y-89)\,e^{21y} \\
-3h^4\,(96y+89)\,e^{15y}
-3h^5\,(96y-1)\,e^{9y}
+65h^6\,e^{3y}
+7h^7\,e^{-3y}
\bigg)\,,
\end{multline}
and indeed does not vanish for $h\ne0$.
Therefore, the linear system $Mv=b$ does not admit solutions,
according to the Rouch\'e-Capelli theorem.

\subsection{Metrics A(III) of Theorem~\ref{thm:matveev}}
Any metric in the projective class is given in the form
\[
  g = (\tan(x)-\tan(y))\,\left(h\,\frac{e^{-3\lambda\,x}}{\cos(x)}\,dx^2 + \frac{e^{-3\lambda\,y}}{\cos(y)}\,dx^2\right)\,.
\]
Let us assume its degree of mobility is $3$. The Lie derivative then takes the form
\[
 \lie_w = \begin{pmatrix} \mu & & \\ & \lambda & -1 \\ & 1 & \lambda\end{pmatrix}\,.
\]
For $\lambda=0$, the steps outlined in Section~\ref{sec:method.thm2} are easily carried out. We find that $\mu=0$ must hold for a non-trivial solution of~\eqref{eqn:Mv=b} to exist. This implies the existence of a Killing vector field.
Therefore, let us assume $\lambda\ne0$. This case is a little more involved.
In new coordinates $x=\frac{x_\text{old}+y_\text{old}}{2}$ and $y=\frac{x_\text{old}-y_\text{old}}{2}$, the projective connection reads
\begin{equation}
 y'' =
 \frac{f^+_6(y)+2h\cos(2y)}{4h\,\sin(2y)}
 +3\,\frac{-f^-_6(y)+2h\sin(2y)}{4h\,\sin(2y)}\,y'
 +\frac{3f^+_6(y)-2h\cos(2y)}{4h\,\sin(2y)}\,y'^2
 -\frac{f^-_6(y)+6\lambda h\,\sin(2y)}{4h\,\sin(2y)}\,y'^3
\end{equation}
where $f^\pm_6(y)=e^{6\lambda y}\pm h^2 e^{-6\lambda y}$.
From the $3\times3$ matrix system obtained as before, we compute the determinant to be
\begin{multline}\label{eqn:Thm2.1b.detM}
 \det(M)
 = 55296h^4\,(\lambda^2-2\lambda\mu+\mu^2+1)\,\cos^4(y)\sin^4(y)\,\bigg(
   9\,(\lambda^2+1)\,f^+_{12}(y)
   \\
   -30h\,\mu\,(2\lambda+\mu)\,\cos(2y)\,f^+_6(y)
   +3h\,(4\lambda^3+13\lambda^2\mu+10\lambda\mu^2+4\lambda-7\mu)\,\sin(2y)\,f^-_6(y)
   \\
   +2h^2\,(\lambda-\mu)(2\lambda+\mu)\,(\lambda^2+4\lambda\mu+4\mu^2+1)\,\sin(2y)^2
   -6h^2\,(3\lambda^2+20\lambda\mu+10\mu^2+3)
 \bigg)
\end{multline}
where we introduce $f^\pm_{12}(y)=e^{12\lambda y}\pm h^4 e^{-12\lambda y}$.
The functions $f^+_{12}(y),\cos(2y)f^+_6(y),\sin(2y)f^-_6(y),\sin(2y)^2,1$ are linearly independent, and
$\lambda^2-2\lambda\mu+\mu^2+1 = 0$
does not have real solutions.
Therefore each (constant real) coefficient in the bracket of~\eqref{eqn:Thm2.1b.detM} has to vanish independently, and in particular this is impossible for the coefficient of $f^+_{12}(y)$.

\subsection{Metrics B(I--III) of Theorem~\ref{thm:matveev}}
The metrics of types B(I), B(II) and B(III) are, written in real coordinates $x=\frac{z+\cc{z}}{2}$, $y=\frac{z-\cc{z}}{2}$, already such that $w=\partial_x$.
Computing a determinant is an algebraic task. Formally, it is therefore sufficient to replace~$h$ by a complex number~$H=\frac{C}{\cc{C}}=e^{2i\varphi}$ ($C=e^{i\varphi}$), and set $\varepsilon=-1$.
We can thus follow exactly the same steps as in, respectively, the cases A(I--III), but have to take into account complex solutions for $H$.
For brevity, we just summarize the results.

\paragraph{Metrics B(I)}
The exceptional cases are $\xi\in\{2,3,4\}$.
Firstly, if~$\xi=2$, we find only real solutions for~$H$, namely~$H=1$, since $|H|=1$. This implies $C=\cc{C}$ and thus $C=\pm1$. But this configuration is forbidden in Theorem~\ref{thm:matveev}.
Next, if~$\xi=3$, we find $(H,\mu)=(\pm1,-\tfrac23)$. This implies $C\in\{\pm1,\pm i\}$, but in fact these configurations correspond only to two metrics (since we can always rescale by a constant global factor),
\[
  g_{+} = (e^{3z}-e^{3\cc{z}})(e^{2z}dz^2-e^{2\cc{z}}d\cc{z}^2)
  \quad\text{and}\quad
  g_{-} = (e^{3z}+e^{3\cc{z}})(e^{2z}dz^2-e^{2\cc{z}}d\cc{z}^2)\,.
\]
Substituting this back into~\eqref{eqn:Mv=b} and~\eqref{eqn:linear.system.dom3.mu}, we find that it is compatible and admits a solution, but no Killing vector field. Therefore, the degree of mobility of these metrics is~3.
%
Finally, if~$\xi=4$, we obtain~$\mu=0$. This corresponds to a solution that admits a Killing vector field, see Section~\ref{sec:Thm2.A.I}.

\paragraph{Metrics B(II)}
The proof in the real Liouville case only uses $h\ne0$, and thus the same conclusion holds also in the complex Liouville case.

\paragraph{Metrics B(III)}
The proof is analogous to the real Liouville case. If $\lambda=0$, we quickly see that $\mu=0$, implying the existence of a Killing vector field.
For $\lambda\ne0$, we can proceed as in the case A(III) and find that no cases with degree of mobility equal to~3 do exist.

\subsection{Metrics C(I--III) of Theorem~\ref{thm:matveev}}\label{sec:Thm2.C}
For metrics of type C in Theorem~\ref{thm:matveev}, the explicit computations can be found in~\cite{matveev_2012}.

\section{Proof of Theorem~\ref{thm:theorem.3.revised}}\label{sec:proof.thm.3.revised}
\subsection{Method}
We prove Theorem~\ref{thm:theorem.3.revised} by following the approach outlined in~\cite{matveev_2012}.
It is based on a technique already known to Darboux~\cite[\S\S~688--689]{darboux_1897} and Eisenhart~\cite[pp.~323--325]{eisenhart_1909}, and more recently a similar approach is pursued in~\cite{kruglikov_2009}.
Essentially, this technique is based on the following fact:
If a metric~$g$ admits a Killing vector field, then the scalar curvature~$R$, the square of its length $\ell=g(dR,dR)$, and its Laplacian $\Delta R$ are functionally dependent. In particular, this means that the differentials $dR$ and $d\ell$ need to be linearly dependent.
%
Then, in any system of coordinates, the determinant
\begin{equation}\label{eqn:E.general}
  E = \det\begin{pmatrix}
        R_x & \ell_x \\
        R_y & \ell_y
      \end{pmatrix}
      = 0
\end{equation}
necessarily is zero if~$g$ admits a non-trivial Killing vector field.
The converse does not hold in general. But, for our purposes, it turns out to be already strong enough a restriction. Indeed, it is feasible to check all metrics that satisfy~\eqref{eqn:E.general} individually for the existence of Killing vector fields.
According to Theorem~3 of~\cite{matveev_2012} none of the metrics in Theorem~\ref{thm:theorem.3.revised} do admit a non-trivial Killing vector field, but we actually find a few counterexamples to this statement. These have to be removed from the ultimate list of normal forms, as we do in Theorem~\ref{thm:normal.forms}.

In Sections~\ref{sec:proof.thm.3.A.I}---\ref{sec:proof.thm.3.C}, we study Equation~\eqref{eqn:E.general} for each case of Theorem~\ref{thm:matveev}. We find a number of parameter values $h,C$, $\varepsilon$ and $\lambda$ (resp., $\xi$), for which the corresponding metrics satisfy condition~\eqref{eqn:E.general} identically.
More precisely, our strategy is as follows: We can formulate~\eqref{eqn:E.general} as a polynomial equation whose variables are certain linearly independent functions. All coefficients w.r.t.\ these independent functions have to vanish identically. Thus, we obtain a system of equations on the parameter values. Finally, as~\eqref{eqn:E.general} is only a necessary criterion, we need to check if the resulting metrics, specified by these parameter values, actually admit a non-trivial Killing vector field. For explicit metrics, this can be done easily using standard computer algebra routines.
We were using Maple\textsuperscript{TM} for this paper.
Alternatively, one could also resort to criteria as in~\cite{kruglikov_2009}.

Linear independence of functions can be checked with their Wronskian. Note that the vanishing of the Wronskian is only a necessary criterion for linear dependence of functions. But, for the purposes here, it is sufficient for reaching a conclusion.
We work with two independent coordinates $x,y$. Nontheless instead of generalized Wronskians only usual Wronskians are going to be used.

\subsection{Metrics A(I) of Theorem~\ref{thm:matveev}}\label{sec:proof.thm.3.A.I}
We can express the determinant $E$ in~\eqref{eqn:E.general} as a polynomial
\begin{equation}\label{eqn:E.A.I}
  E = \sum_k b_k e^{f_k(x,y,\xi)}
\end{equation}
where $f_k(x,y;\xi)$ are certain polynomial functions $f_k(x,y;\xi)=(c^k_{11}\xi+c^k_{12})x+(c^k_{21}\xi+c^k_{22})y$ for integers $c^k_{ij}\in\Z$.
A straightforward computation yields the Wronskian w.r.t.~$x$, of the functions $e^{f_k(x,y;\xi)}$, as
\[
  W = P(\xi)\,\prod_k e^{f_k(x,y;\xi)}\,,
\]
where $P(\xi)$ is a polynomial in $\xi$ and does not depend on $x,y$.
Solving $P(\xi)=0$ confirms that the functions $f_k(x,y;\xi)$ are linearly independent if $\xi$ satisfies
\begin{equation}\label{eqn:thm3.1c.nu}
\xi \not\in \left\{ 1,2,3,4,\frac12,\frac23,\frac25,\frac32,\frac43,\frac45,\frac65,\frac83,\frac85 \right\}
\end{equation}
In the generic case~\eqref{eqn:thm3.1c.nu}, Equation~\eqref{eqn:E.general} implies that every coefficient $b_k$ in~\eqref{eqn:E.A.I} vanishes.
In particular the coefficient of $e^{2(\xi-1)(3x+y)}$ in~\eqref{eqn:E.A.I} has to vanish, i.e.
\[
-2h\varepsilon\,(\xi-1)(\xi+2)(\xi^2-3\xi+30) = 0\,.
\]
Obviously, provided~\eqref{eqn:thm3.1c.nu}, this cannot occur for real, positive~$\xi$ (see also the constraints on $\xi$ stated in Theorem~\ref{thm:matveev}).
We conclude that metrics of type A(I) do not admit a non-trivial Killing vector field, except maybe when~\eqref{eqn:thm3.1c.nu} does not hold.
Let us examine these special cases:
For the fractional values in~\eqref{eqn:thm3.1c.nu}, and also if $\xi=3$, it is easy to see that~\eqref{eqn:E.general} implies $h=0$, but this is forbidden in Theorem~\ref{thm:matveev}.
Next, if~$\xi=1$, Equation~\eqref{eqn:E.general} is always satisfied, regardless of the value of $h$ or $\varepsilon$. However, $\xi=1$ is forbidden in Theorem~\ref{thm:matveev} already, so no further restriction applies.
Assuming~$\xi=2$, we find that~\eqref{eqn:E.general} implies $h=-\varepsilon$, i.e.\ $(h,\varepsilon)=(1,-1)$ or $(h,\varepsilon)=(-1,1)$. These cases indeed admit a non-trivial Killing vector field. However, again, these cases are already forbidden in Theorem~\ref{thm:matveev}.
Finally, let us consider~$\xi=4$. Taking the coefficient of $e^{22x+2y}$, we infer from Equation~\eqref{eqn:E.general} that
\[
 -24\varepsilon\,(h-1) = 0\,.
\]
This implies $(\xi,h,\varepsilon)=(4,1,\pm1)$, and we indeed confirm that the corresponding metrics~\eqref{eqn:metric.nu.4.h.1} admit a non-trivial Killing vector field, see page~\pageref{thm:theorem.3.revised} for the explicit formula.
This, indeed, is a counterexample to the claim of Theorem~3 of~\cite{matveev_2012}.

\subsection{Metrics A(II) of Theorem~\ref{thm:matveev}}\label{sec:thm.3.A.II}
By a straightforward computation, using the explicit coordinate expressions of Theorem~\ref{thm:matveev}, we obtain~\eqref{eqn:E.general} as a polynomial equation
\begin{equation}\label{eqn:E.A.II}
  E = \sum_{k=1}^5 \sum_{i,j} c_{ijk} x^i y^j f_k(x,y) = 0\,,
\end{equation}
w.r.t.\ the functions
\begin{multline*}
 f_1 = e^{9x+3y}+h^2\,e^{9y+3x} \,,\,\,
 f_2 = e^{9x+3y}-h^2\,e^{9y+3x} \,,\,\,
 f_3 = e^{12x}+h^4\,e^{12y}\,,\,\,
 f_4 = e^{12x}-h^4\,e^{12y} \,,\,\,
 f_5 = e^{6x+6y} \,.
\end{multline*}
Alternatively, we may write~\eqref{eqn:E.A.II} as a polynomial of the form
\begin{equation}\label{eqn:E.A.II.2}
 E = \sum_{i=1}^m C_{i} x^{\alpha_i} y^{\beta_i} e^{\nu_i x} e^{\rho_i y} = 0\,,\quad
 \text{where $\alpha_i, \beta_i, \nu_i, \rho_i\in\Z$}\,.
\end{equation}
This is a convenient form in view of the following condition for linear independence.
\begin{lemma}\label{la:linear.independence.polynomial}
 The set $\{x^{\alpha_i}e^{\beta_i x}\}_{i=1,\dots,n}$, where $\alpha_i,\beta_i\in\Z$, $n\in\mathds{N}$, is linearly independent if
 \[
   \prod_{i<j} (\alpha_i-\alpha_j) \ne0
   \quad\text{or}\quad
   \prod_{i<j} (\beta_i-\beta_j) \ne0
 \]
\end{lemma}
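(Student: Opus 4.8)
The plan is to prove Lemma~\ref{la:linear.independence.polynomial} by reducing the claim about functions of the form $x^{\alpha_i}e^{\beta_i x}$ to the classical fact that the functions $x^{k}e^{\mu x}$, for finitely many \emph{distinct} pairs $(k,\mu)$, are linearly independent. The latter is standard: if $\sum_{(k,\mu)} c_{k,\mu}\,x^k e^{\mu x} = 0$ identically, then grouping by the exponents $\mu$ yields $\sum_{\mu} p_\mu(x)\,e^{\mu x}=0$ with polynomials $p_\mu$, and since exponentials with distinct rates are linearly independent over the ring of polynomials (an easy induction: divide by the slowest exponential and differentiate enough times to kill all polynomial coefficients of one summand, or invoke the Wronskian), each $p_\mu\equiv0$, hence every $c_{k,\mu}=0$.

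Granting that, the lemma follows quickly. Suppose first that $\prod_{i<j}(\beta_i-\beta_j)\ne0$, i.e.\ all the exponential rates $\beta_i$ are pairwise distinct. Then the pairs $(\alpha_i,\beta_i)$ are automatically pairwise distinct, and the classical fact applies directly: the family $\{x^{\alpha_i}e^{\beta_i x}\}$ is linearly independent. If instead $\prod_{i<j}(\alpha_i-\alpha_j)\ne0$, i.e.\ all the polynomial exponents $\alpha_i$ are pairwise distinct, then I would first note one subtlety: the $\alpha_i$ may be negative integers, so these are Laurent-type monomials. Multiplying a putative linear relation $\sum_i C_i\,x^{\alpha_i}e^{\beta_i x}=0$ through by $x^{N}$ with $N=-\min_i\alpha_i$ (if that minimum is negative) turns all exponents into nonnegative integers while preserving pairwise distinctness, so w.l.o.g.\ $\alpha_i\in\mathds{N}_0$ and the $\alpha_i$ are distinct. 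Now group the relation by the distinct values among the $\beta_i$: for each such value $b$ one obtains a polynomial $p_b(x)=\sum_{i:\beta_i=b} C_i x^{\alpha_i}$, and $\sum_b p_b(x)e^{bx}=0$. By the classical fact each $p_b\equiv0$; but within a fixed $b$ the exponents $\alpha_i$ are distinct, so the monomials $x^{\alpha_i}$ are linearly independent and hence every $C_i=0$. This proves linear independence in the second case as well, completing the proof.

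The only real work is the classical independence statement for $\{x^k e^{\mu x}\}$ over distinct pairs, and the cleanest self-contained route is via the Wronskian or the ``divide and differentiate'' induction sketched above; everything else is bookkeeping. I expect the main (minor) obstacle to be handling negative exponents $\alpha_i$ cleanly, which the multiplication-by-$x^N$ trick resolves, and making sure the grouping-by-exponential-rate argument is stated correctly when the $\alpha_i$ are not assumed distinct (first case) versus when the $\beta_i$ are not assumed distinct (second case). Since the paper only needs this as a \emph{sufficient} condition for linear independence — used to justify equating coefficients to zero in the polynomial identities~\eqref{eqn:E.A.II.2} — it suffices to present the two cases as above; no sharper statement is required.
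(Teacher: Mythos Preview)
Your argument is correct and complete. It differs from the paper's approach, though. The paper computes the Wronskian of $\{x^{\alpha_i}e^{\beta_i x}\}$ directly and observes that it equals $\frac{(-1)^n}{x^{N}}\,\tilde{P}\cdot\prod_i x^{\alpha_i}e^{\beta_i x}$ where $N=\binom{n}{2}$ and $\tilde{P}$ is a polynomial in $x$ of degree at most $N$; the leading coefficient of $\tilde{P}$ is $\prod_{i<j}(\beta_i-\beta_j)$ and its constant term is $\prod_{i<j}(\alpha_i-\alpha_j)$, so either hypothesis forces the Wronskian to be nonzero. Your route instead reduces both cases to the classical independence of $\{x^k e^{\mu x}\}$ over distinct pairs $(k,\mu)$ via grouping by exponential rate (plus the $x^N$-multiplication trick to handle negative $\alpha_i$). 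Your proof is more conceptual and avoids the Wronskian computation entirely; the paper's version has the mild advantage of making the two conditions visibly symmetric, appearing as the top and bottom coefficients of a single polynomial.
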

\noindent Applying the lemma twice, for $x$ and $y$, to~\eqref{eqn:E.A.II.2}, we find that if~\eqref{eqn:E.A.II} holds, $C_{i}=0$ for each~$i$.
This, however, is easily disconfirmed. Therefore, $E=0$ does not admit solutions, and thus no non-trivial Killing vector field exists.

\begin{proof}[Proof of Lemma~\ref{la:linear.independence.polynomial}]
The Wronskian of the functions $\{x^{\alpha_i}e^{\beta_ix}\}_{i=1\dots n}$ is equal to
\[
P = \frac{(-1)^n}{x^{N}}\, \tilde{P}\cdot \prod_{i=1}^n x^{\alpha_i}e^{\beta_ix}
\]
where $N=\frac{(n-1)n}{2}$ and $\tilde{P}$ is a polynomial of degree (at most) $N$.
The component of highest degree of~$P$ is $\hat{P}=\prod_{i<j}(\beta_i-\beta_j)$,
and the component of degree zero is equal to $\check{P}=\prod_{i<j}(\alpha_i-\alpha_j)$.
Therefore, if either $\hat{P}\ne0$ or $\check{P}\ne0$, the Wronskian is not (identically) zero.
\end{proof}

\subsection{Metrics A(III) of Theorem~\ref{thm:matveev}}
In this case, similarly to Section~\ref{sec:thm.3.A.II}, we obtain~\eqref{eqn:E.general} in the form
\begin{equation}\label{eqn:E.A.III}
  E = \sum_{i=1}^m C_i\,\cos(x)^{\alpha_i}\cos(y)^{\beta_i}\sin(x)^{\epsilon_i}\sin(y)^{\eta_i}\,e^{n_i\lambda x}\,e^{m_i\lambda y} = 0\,,
\end{equation}
where $C_i$ are constants depending on $h$ and $\lambda$, and where $\epsilon_i,\eta_i\in\{0,1\}$, $0\leq\alpha_i,\beta_i\leq23$, and $\alpha_i,\beta_i,n_i,m_i\in\mathds{Z}$.
Analogously to Lemma~\ref{la:linear.independence.polynomial}, we prove
\begin{lemma}\label{la:linear.independence.A.III}
 The set $\{\cos(x)^{\alpha_i}e^{\beta_i\lambda x},\sin(x)\cos(x)^{\rho_i}e^{\eta_i\lambda x}\}_{i=1,\dots,N}$, where $\alpha_i,\beta_i,\rho_i,\eta_i\in\Z$, $N\in\mathds{N}$, is linearly independent.
\end{lemma}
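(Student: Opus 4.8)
The plan is to mimic the Wronskian argument used for Lemma~\ref{la:linear.independence.polynomial}, but now with the variable $x$ replaced by $t=\cos(x)$ wherever possible, treating the factor $\sin(x)$ separately. First I would split the candidate family into its two sub-types: the ``even'' functions $u_i(x)=\cos(x)^{\alpha_i}e^{\beta_i\lambda x}$ and the ``odd'' functions $v_i(x)=\sin(x)\cos(x)^{\rho_i}e^{\eta_i\lambda x}$. A relation $\sum_i c_i u_i + \sum_j d_j v_j \equiv 0$ can be separated into its parity components with respect to $x\mapsto -x$ whenever $\lambda=0$; more robustly, one differentiates and evaluates, or simply substitutes $x\mapsto x+\pi$ (under which $\cos$ and $\sin$ change sign) to decouple the two sums up to sign bookkeeping. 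It therefore suffices to prove linear independence within each sub-family separately, which is the substance of the lemma.

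For the ``even'' family $\{\cos(x)^{\alpha_i}e^{\beta_i\lambda x}\}$, I would compute the Wronskian in $x$ and show it is not identically zero. The key observation is that near a generic point $x_0$ (say $\cos x_0\ne 0$), one may write $\cos(x)^{\alpha_i}e^{\beta_i\lambda x}=(\cos x_0)^{\alpha_i}e^{\beta_i\lambda x_0}\cdot\big(1+\text{analytic in }(x-x_0)\big)$, and the leading behaviour under repeated differentiation is governed by the quantity $-\alpha_i\tan(x)+\beta_i\lambda$, which plays the role that $\beta_i$ played in the polynomial lemma. More cleanly, I would argue by the substitution $t=e^{i x}$ so that $\cos x = \tfrac12(t+t^{-1})$ and $e^{\beta_i\lambda x}=t^{-i\beta_i\lambda}$; then each $u_i$ becomes $2^{-\alpha_i}(t+t^{-1})^{\alpha_i}t^{-i\beta_i\lambda}$, i.e. (after clearing a common power of $t$) a polynomial in $t$ times a power $t^{-i\beta_i\lambda}$. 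A nontrivial linear relation among the $u_i$ would give a nontrivial relation among functions of the form $t^{\gamma_i}P_i(t)$ with distinct exponent data, and Lemma~\ref{la:linear.independence.polynomial} (applied with $x$ replaced by $t$, noting that the exponents $-i\beta_i\lambda$ are mutually distinct whenever the $\beta_i$ are, and that the polynomials $P_i$ have mutually distinct degrees $2\alpha_i$ when the $\alpha_i$ are distinct) forces all coefficients to vanish. Strictly, one needs the pairs $(\alpha_i,\beta_i)$ to be mutually distinct; since the lemma is invoked in~\eqref{eqn:E.A.III} precisely for such a finite collection of exponent pairs arising from an explicit computation, this hypothesis is met in the application.

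For the ``odd'' family $\{\sin(x)\cos(x)^{\rho_i}e^{\eta_i\lambda x}\}$ the same reduction applies after factoring out $\sin(x)$ (equivalently, the factor $\tfrac{1}{2i}(t-t^{-1})$ in the $t$-variable), which is common to all members and hence irrelevant for linear independence; one is left again with $t^{\gamma_i}$ times polynomials of distinct degrees, and Lemma~\ref{la:linear.independence.polynomial} finishes the job. I expect the main obstacle to be purely bookkeeping: making the exponent arithmetic precise when $\lambda$ is a generic real (so the powers $t^{-i\beta_i\lambda}$ are genuinely distinct, not merely distinct integers) and handling cleanly the degenerate case $\lambda=0$, where the exponential factors collapse and one must rely solely on the distinctness of the $\alpha_i$ (resp.\ $\rho_i$); in that case the Wronskian computation is exactly the ``degree-zero component'' branch $\check P=\prod_{i<j}(\alpha_i-\alpha_j)\ne0$ already isolated in the proof of Lemma~\ref{la:linear.independence.polynomial}. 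Once linear independence is established, it is used in the surrounding argument exactly as in Section~\ref{sec:thm.3.A.II}: every coefficient $C_i$ in~\eqref{eqn:E.A.III} must vanish, which one checks is impossible, so no metric of type A(III) admits a non-trivial Killing vector field.
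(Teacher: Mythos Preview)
Your proposal has a genuine gap at the decoupling step. The parity trick $x\mapsto -x$ works only for $\lambda=0$, as you note; but your fallback $x\mapsto x+\pi$ does not separate the two sub-families either: it multiplies $\cos(x)^{\alpha_i}e^{\beta_i\lambda x}$ by $(-1)^{\alpha_i}e^{\beta_i\lambda\pi}$ and $\sin(x)\cos(x)^{\rho_j}e^{\eta_j\lambda x}$ by $(-1)^{1+\rho_j}e^{\eta_j\lambda\pi}$, factors which depend on the individual exponents and do not sort cleanly into ``even'' versus ``odd''. So the reduction to the two sub-families separately is not established. Second, even granting that reduction, invoking Lemma~\ref{la:linear.independence.polynomial} after the substitution $t=e^{ix}$ lands you with functions $t^{-i\beta_i\lambda}P_i(t)$ where the exponent is complex; that lemma is stated and proved for integer $\alpha_i,\beta_i$, and its Wronskian computation does not carry over verbatim. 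You would also still need to disentangle, within a fixed exponential exponent $\beta_i=\eta_j$, the Laurent polynomials coming from the two sub-families.

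The paper's proof reverses the order of separation and thereby avoids both problems. One evaluates the putative linear relation at $x=x_0+2k\pi$, $k\in\Z$: since $\cos,\sin$ are $2\pi$-periodic, the only $k$-dependence is through $e^{m\lambda(x_0+2k\pi)}=e^{m\lambda x_0}\,B_m^{\,k}$ with $B_m=e^{2m\pi\lambda}$. Letting $k$ range over $\{1,\dots,n\}$ produces a Vandermonde system in the $B_m$ (distinct because $\lambda\ne0$), forcing each fixed-$m$ block
\[
\sum_r\big(a_{rm}\cos(x_0)^r+b_{rm}\sin(x_0)\cos(x_0)^r\big)=0
\]
to vanish identically in $x_0$. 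Only now, with the exponential factor stripped away, does the parity substitution $x_0\mapsto -x_0$ legitimately separate the symmetric and antisymmetric parts, giving $\sum_r a_{rm}\cos(x_0)^r=0$ and $\sum_r b_{rm}\cos(x_0)^r=0$, whence all coefficients vanish. The moral: peel off the non-periodic factor first via periodicity and Vandermonde, then use parity on what is left.
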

\begin{proof}
Consider the linear combination (with $a_{rm},b_{rm}\in\mathds{R}$; $n$ large enough)
\begin{equation}\label{eqn:lin.ind.A.III}
 0 = \sum_{r,m=0}^n\left(
	a_{rm}\,\cos(x)^{r}
	+ b_{rm}\,\sin(x)\cos(x)^{r}
    \right)\,e^{m\lambda\,x}\,.
\end{equation}
Evaluating~\eqref{eqn:lin.ind.A.III} at $x=x_0+2k\pi$ for $k\in\Z$ and $x_0\in\R$, we obtain a family of equations
\[
 0 = \sum_{r,m} \left(
          a_{rm}\,\cos(x_0)^{r}+b_{rm}\,\sin(x_0)\cos(x_0)^{r}
		\right)
		e^{m\lambda\,x_0}e^{2m\pi\lambda\,k}\,.
\]
Letting $B_m=e^{2m\lambda\pi}$ and
\[
  c_m(x_0) = e^{2m\lambda\pi\,x_0}
    \sum_r \left(
          a_{rm}\,\cos(x_0)^{r}+b_{rm}\,\sin(x_0)\cos(x_0)^{r}
        \right)\,,
\]
we therefore obtain a Vandermonde-like matrix equation (letting $k$ run from $1$ to $n$)
\begin{equation}\label{eqn:vandermonde}
 \begin{pmatrix}
  B_1 & \cdots & B_n \\
  \vdots & \ddots & \vdots \\
  B_1^n & \cdots & B_n^n
 \end{pmatrix}\,
 \begin{pmatrix}
  c_1(x_0)\\ \vdots\\ c_n(x_0)
 \end{pmatrix}
 =0
\end{equation}
with $B_i\ne B_j$ for $i\ne j$.
The matrix~\eqref{eqn:vandermonde} is invertible and therefore $c_1(x_0)=\dots=c_m(x_0)=0$ for any value of $x_0$.
This implies
\[
  \sum_r \left( a_{rm}\,\cos(x)^{r}+b_{rm}\,\sin(x)\cos(x)^{r} \right) = 0\qquad\forall\,m\,,
\]
whose symmetric respectively antisymmetric parts (w.r.t.~$x$) have to vanish independently,
\[
  \sum_r a_{rm}\,\cos(x)^{r} = 0
  \quad\text{and}\quad
  \sum_r b_{rm}\,\cos(x)^{r} = 0\,,\qquad\forall\,m\,.
\]
Thus $a_{rm}=0$ and $b_{rm}=0$ for all $0\leq r,m\leq n$.
This proves the assertion.
\end{proof}
\smallskip

\noindent Applying the lemma twice, for~$x$ as well as~$y$, we have to solve the system $\{C_i=0\}$ for $\lambda$ and $h$.
We find that the only solutions are $(\lambda=0,h=\pm1)$, and we notice that in Theorem~\ref{thm:matveev} this parameter configuration is forbidden.
Thus, we have confirmed that metrics of type A(III) in Theorem~\ref{thm:matveev} do not admit a non-trivial Killing vector field.

\subsection{Metrics B(I) of Theorem~\ref{thm:matveev}}
This case is analogous to the case of metrics A(I).
The determinant~$E$ in~\eqref{eqn:E.general} can be expressed as a polynomial
\begin{equation}\label{eqn:E.B.I}
E = \sum_k b_k e^{f_k(z,\bar{z},\xi)}
\end{equation}
where $f_k(z,\bar{z};\xi)$ are polynomial functions of the form $f_k(z,\bar{z};\xi)=(c^k_{11}\xi+c^k_{12})z+(c^k_{11}\xi+c^k_{12})\bar{z}$ with $c^k_{ij}\in\Z$.
The Wronskian w.r.t.~$z$, of the functions $e^{f_k(z,\bar{z};\xi)}$ is of the form
\[
W = P(\xi)\,\prod_k e^{f_k(z,\bar{z};\xi)}\,,
\]
where $P(\xi)$ is a polynomial in $\xi$ and does not depend on $z,\bar{z}$.
Solving $P(\xi)=0$ confirms that the functions $f_k(z,\bar{z};\xi)$ are linearly independent if $\xi$ satisfies
\begin{equation}\label{eqn:thm3.2c.nu}
\xi\not\in\left\{ 1, 2, 3, 4, \frac12, \frac23,
\frac25, \frac32, \frac43, \frac45,
\frac65, \frac83, \frac85 \right\}\,.
\end{equation}
In the generic case~\eqref{eqn:thm3.2c.nu}, Equation~\eqref{eqn:E.general} implies that every coefficient $b_k$ in~\eqref{eqn:E.B.I} vanishes.
In particular the coefficient of $e^{2\xi z+4\xi\cc{z}+2z+6\cc{z}}$ has to vanish, i.e.
\[
  (7\xi^2-27\xi+30)(\xi-1)(\xi^2-4)\,C^2\,\cc{C}^4 = 0\,.
\]
The only way to realize this equality (for $\xi\in\R$, $C\in\mathds{S}^1\subset\mathds{C}$) is $C=0$, but this value is forbidden by Theorem~\ref{thm:matveev}.
It remains to check the cases when~\eqref{eqn:thm3.2c.nu} is not satisfied.
Following the procedure outlined for case A(I), we find:
For~$\xi=2$, we obtain an equation that is polynomial in the (linearly independent) functions $e^{2z+6\cc{z}}$, $e^{6z+2\cc{z}}$. The coefficients vanish simultaneously whenever $c_2=\Im(C)=0$, i.e.~$C=\pm1$. However, this is forbidden in Theorem~\ref{thm:matveev}.
Second, if $\xi=3$, $c_2=\Im(C)=0$. This leads to metrics that admit a non-trivial Killing vector field, analogously to the case A(I). This is another counterexample to Theorem~3 of~\cite{matveev_2012}.
Next, we do not have to check the case~$\xi=1$, since it is completely forbidden in Theorem~\ref{thm:matveev}, but indeed any choice of $C\ne0$ results in a metric that admits a non-trivial Killing vector field.
Finally, any other exception to~\eqref{eqn:thm3.2c.nu} implies $C=0$, which is forbidden in Theorem~\ref{thm:matveev}.

\subsection{Metrics B(II) of Theorem~\ref{thm:matveev}}
Analogously to case A(II), we can express
\begin{equation}
E = \sum_{k=1}^5 \sum_{i,j} c_{ijk} z^i \cc{z}^j f_k(z,\cc{z})\,,
\end{equation}
where we define (note $C\in\mathds{S}^1\subset\mathds{C}$ does not appear, in contrast to case~A(II) where~$h$ appears)
\begin{equation*}
f_1 = e^{9z+3\cc{z}}+e^{9\cc{z}+3z}\,,\,\,
f_2 = e^{9z+3\cc{z}}-e^{9\cc{z}+3z} \,,\,\,
f_3 = e^{12z}+e^{12\cc{z}} \,,\,\,
f_4 = e^{12z}-e^{12\cc{z}} \,,\,\,
f_5 = e^{6z+6\cc{z}} \,.
\end{equation*}
We proceed as in the case A(II), and confirm that metrics of type B(II) admit no non-trivial Killing vector field.

\subsection{Metrics B(III) of Theorem~\ref{thm:matveev}}
Arguing as in the case A(III), we consider the polynomial equation
\begin{equation}\label{eqn:E.B.III}
  E = \sum_{i=1}^m K_i\,\cos(z)^{\alpha_i}\cos(\bar{z})^{\beta_i}\sin(z)^{\epsilon_i}\sin(\bar{z})^{\eta_i}\,e^{n_i\lambda z}\,e^{m_i\lambda \bar{z}} = 0\,,
\end{equation}
where $\epsilon_i,\eta_i\in\{0,1\}$, $0\leq\alpha_i,\beta_i\leq23$, and $\alpha_i,\beta_i,n_i,m_i\in\mathds{Z}$.
However, the constants $K_i$ do now depend on $\lambda\in\R$ and $H=\frac{C}{\bar{C}}$ where $C\in\mathds{S}^1\subset\mathds{C}$ is the parameter from Theorem~\ref{thm:matveev}.

By the same reasoning as in the case A(III), we arrive at the conclusion that~\eqref{eqn:E.B.III} can only hold if $\lambda=0$ and $C=\pm\bar{C}$. Since $|C|=1$, this implies $C\in\{\,\pm1,\,\pm i\,\}$. We discard $C=\pm1$ as this is forbidden in Theorem~\ref{thm:matveev}.
For $C=\pm i$, we actually find that the metric~\eqref{eqn:metric.lambda.0.C.im}
admits a non-trivial Killing vector field, contrary to the statement in Theorem~3 of~\cite{matveev_2012}.
The explicit form of the Killing vector field can be found in the statement of Theorem~\ref{thm:theorem.3.revised}.

\subsection{Metrics C(I--III) of Theorem~\ref{thm:matveev}}\label{sec:proof.thm.3.C}
Consider a metric of the form $g = (x+Y(y))\,dxdy$.
Its scalar curvature is given by $R = \frac{4\,Y'(y)}{(x+Y(y))^3}$
and, by a straightforward computation, we obtain
\begin{equation}\label{eqn:E.C}
 E = \frac{768\,Y'(y)}{(x+Y(y))^{12}}\,\left( 3Y'''(y)Y'(y)-5Y''(y)^2 \right)\,.
\end{equation}
For the metrics of types C(I--III), obviously $Y'(y)\ne0$, and therefore Equation $E=0$ with $E$ as in~\eqref{eqn:E.C} implies
\[
  3Y'''(y)Y'(y)-5Y''(y)^2 = 0\,.
\]
It is easily verified that this ODE is not satisfied by the functions $Y(y)$ for the metrics under consideration.
Thus, we conclude that metrics of the types C(I--III) do not admit a non-trivial Killing vector field.
Of course, this statement is also contained in~\cite{matveev_2012}, where a different reasoning is employed, see Section~4.5 of~\cite{matveev_2012}.

\section{Proof of Propositions~\ref{prop:dom3.proj.equiv} and~\ref{prop:homotheties.and.eigenvectors}, and parameter refinements}\label{sec:preparations}
Before we commence with the actual proof of Theorem~\ref{thm:normal.forms}, we first prove some preparative statements. In particular, we prove Proposition~\ref{prop:homotheties.and.eigenvectors} which responds to one of the shortcomings of reference~\cite{matveev_2012} as outlined in the list after Theorem~\ref{thm:matveev}.
Next, we prove Proposition~\ref{prop:dom3.proj.equiv}, which constitues a huge simplification for the case of degree of mobility~3.
Finally, we remove some obvious ambiguity from the parameters in Theorem~\ref{thm:matveev}. In the cited theorem they are not optimal in the sense that two sets of parameters can represent the same projective class. A priori there can be even more ambiguity hidden in the parameters and it is not before the end of the proof of Theorem~\ref{thm:normal.forms} that we can confirm that actually all ambiguity has been removed.

\subsection{Proof of Proposition~\ref{prop:dom3.proj.equiv}}\label{sec:dom3.proj.equiv}
We prove that the metrics in Theorem~\ref{thm:matveev} that have degree of mobility~3 and admit exactly one projective vector field, are projectively equivalent on some subset of their domain.
For the proof, we first choose suitable metrics in each projective class, and then prove that these metrics are isometric on a certain part of their domain.

\paragraph{Representatives for the projective classes.}
Theorem~\ref{thm:theorem.2.revised} tells us which metrics in Theorem~\ref{thm:matveev} have degree of mobility~3 and admit exactly one projective vector field.
On the other hand, in Theorem~\ref{thm:matveev}, projective classes are described, via Formula~\eqref{eqn:space.G}, by metrics that have the form specified by Proposition~\ref{prop:dini}.
For our purposes, suitable metrics are:
\begin{itemize}
	\item Metrics A(I) with $(\xi,h,\varepsilon)=(3,1,-1)$ are projectively equivalent to
	\begin{align}\label{eqn:metric.g1a}
	\g1^a &= (e^{3u}-e^{3v})(e^{2u}du^2-e^{2v}dv^2)\,,
	\end{align}
	The projective vector field is $w=\partial_u+\partial_v$ and $\lie_w\g1=(\xi+2)\g1$.
	\item Metrics A(I) with $(\xi,h)\in\{(2,4), (2,\nicefrac14)\}$, $\varepsilon=-1$, are projectively equivalent to
	\begin{equation}\label{eqn:metric.g2b}
	 \g2^b = (e^{-2y}-4e^{-2x})(4dx^2-dy^2)
	\end{equation}
	The projective vector field is $w=\partial_x+\partial_y$ and $\lie_w\g1=(\xi+2)\g1$.
	\item Metrics of type B(I) with $(\xi,C)=(3,\pm1)$ are projectively equivalent to
	\begin{equation}\label{eqn:metric.g1B+}
	\g1^{B+} = (e^{3z}-e^{3\cc{z}})(e^{2z}dz^2-e^{2\cc{z}}d\cc{z}^2)\,.
	\end{equation}
	The projective vector field is $w=\partial_z+\partial_{\cc{z}}$ and $\lie_w\g1=5\g1$.
	\item Metrics of type B(I) with $(\xi,C)=(3,\pm i)$ are projectively equivalent to
	\begin{equation}\label{eqn:metric.g1B-}
	\g1^{B-} = (e^{3z}+e^{3\cc{z}})(e^{2z}dz^2-e^{2\cc{z}}d\cc{z}^2)\,.
	\end{equation}
	The projective vector field is $w=\partial_z+\partial_{\cc{z}}$ and $\lie_w\g1=5\g1$.
	\item Metrics of type C(Ia) are projectively equivalent to
	\begin{equation}\label{eqn:metric.g1C}
	\g1^C = (x+y^2)dxdy\,.
	\end{equation}
	The projective vector field is $w=2x\partial_x+y\partial_y$ with $\lie_w\g1=5\g1$.
\end{itemize}

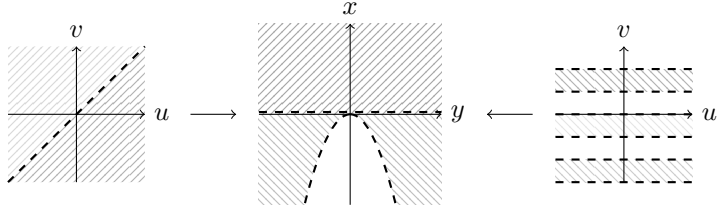
\begin{figure}
\begin{center}
\begin{tikzpicture}[scale=0.3]
\fill[pattern=north east lines,pattern color=gray!30] (-15,-3) -- (-9,3) -- (-15,3) -- cycle;
\fill[pattern=north east lines,pattern color=gray!60] (-9,-3) -- (-15,-3) -- (-9,3) -- cycle;
\draw[dashed, thick] (-15,-3) -- (-9,3);
\draw[->] (-15,0) -- (-9,0) node[right] {$u$};
\draw[->] (-12,-3) -- (-12,3) node[above] {$v$};
\draw[->] (-7,0) -- (-5,0);
\fill[pattern=north west lines,pattern color=gray!50] (0,0) parabola (2,-4) |- (0,0);
\fill[pattern=north west lines,pattern color=gray!50] (0,0) parabola (-2,-4) |- (0,0);
\fill[pattern=north west lines,pattern color=gray!50] (2,-4) rectangle (4,0);
\fill[pattern=north west lines,pattern color=gray!50] (-2,-4) rectangle (-4,0);
\fill[pattern=north east lines,pattern color=gray!70] (-4,0) rectangle (4,4);
\draw[dashed, thick] (0,0) parabola (2,-4);
\draw[dashed, thick] (0,0) parabola (-2,-4);
\draw[dashed, thick] (-4,0.1) -- (4,0.1);
\draw[->] (-4,0) -- (4,0) node[right] {$y$};
\draw[->] (0,-4) -- (0,4) node[above] {$x$};
\draw[->] (8,0) -- (6,0);
\fill[pattern=north west lines,pattern color=gray!20] (9,-1) rectangle (15,0);
\fill[pattern=north west lines,pattern color=gray!40] (9,-3) rectangle (15,-2);
\fill[pattern=north west lines,pattern color=gray!60] (9,1) rectangle (15,2);
\draw[dashed, thick] (9,0) -- (15,0);
\draw[dashed, thick] (9,-1) -- (15,-1);
\draw[dashed, thick] (9,1) -- (15,1);
\draw[dashed, thick] (9,2) -- (15,2);
\draw[dashed, thick] (9,-2) -- (15,-2);
\draw[dashed, thick] (9,-3) -- (15,-3);
\draw[->] (9,0) -- (15,0) node[right] {$u$};
\draw[->] (12,-3) -- (12,3) node[above] {$v$};
\end{tikzpicture}
\caption{Sketch of the patches mapped onto one another by coordinate transformations mediating between the metrics $g_1^a$, $g_1^C$ and $g_1^{B\pm}$. In particular the coordinate diagonal of~\eqref{eqn:metric.g1a} is mapped onto the $y$-axis of~\eqref{eqn:metric.g1C}, and a family of parallel lines for~\eqref{eqn:metric.g1B+} and~\eqref{eqn:metric.g1B-}, see the left, middle or right panel of the figure, respectively.}
\label{fig:prop4.coordinate.trafos}
\end{center}
\end{figure}

\noindent We emphasize again that everything here is local, i.e.\ we are do not consider `internal' coordinate transformations that map a neighborhood of any of the metrics~\eqref{eqn:metric.g1a}--\eqref{eqn:metric.g1C} onto another neighborhood of the same metric.
We start with showing that the metric $g_1^a$ is isometric to $g_1^C$.
\begin{lemma}\label{la:g1a.g1C}
Let $\mathcal{U}$ be a neighborhood in the domain of~\eqref{eqn:metric.g1a}.
There is a coordinate transformation $\tau$ such that
$\tau^*(g|_\mathcal{U})=\pe{g}|_{\pe{\mathcal{U}}}$
where $\pe{\mathcal{U}}$ is a neighborhood in the domain of~\eqref{eqn:metric.g1C} with $x>0$.
\end{lemma}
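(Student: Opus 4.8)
The plan is to construct the coordinate transformation $\tau$ explicitly and then verify, by a direct (if somewhat lengthy) computation, that it intertwines the two metrics. The starting point is the observation made in the caption of Figure~\ref{fig:prop4.coordinate.trafos}: the coordinate diagonal $\{u=v\}$ of $\g1^a$ should be sent to the $y$-axis $\{x=0\}$ of $\g1^C$. Since both metrics carry the projective vector field in a distinguished role --- $w=\partial_u+\partial_v$ for $\g1^a$ and $w=2x\partial_x+y\partial_y$ for $\g1^C$, both acting with $\lie_w\g1=5\g1$ --- a first natural step is to align these vector fields. Introducing ``light-cone'' coordinates adapted to $w=\partial_u+\partial_v$, namely $s=u+v$ (the flow parameter) and $t=v-u$ (transverse), turns $\g1^a$ into a metric on which $w=\partial_s$, so its coefficients depend on $t$ only up to an overall exponential factor $e^{\alpha s}$. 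The same reduction applied to $\g1^C=(x+y^2)dxdy$: here $w$-invariant combinations are $y^2/x$ (or equivalently $\log x - 2\log|y|$, constant along the flow) and $\log|y|$. Matching the flow parameter of one metric to the flow parameter of the other, and the transverse invariant of one to the transverse invariant of the other, pins down $\tau$ up to the residual freedom in each normal form.

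Concretely, I would posit an ansatz of the form $x = x(u,v)$, $y = y(u,v)$ in which $y$ depends only on the $w$-invariant combination and a monotone function of it, and $x$ is then forced by the requirement that $y^2/x$ be (a function of) the transverse invariant $v-u$ of $\g1^a$; the exponents are fixed by the requirement $\lie_w = 5$ on both sides, i.e.\ the weights must agree. A convenient guess, suggested by the shapes $e^{3u}-e^{3v}$ and $x+y^2$, is something like $y = c\,(e^{3v}-e^{3u})^{1/3}$ or a similar fractional-power expression, with $x$ chosen so that $x+y^2$ reproduces (up to constants) the Liouville factor $e^{3u}-e^{3v}$ after clearing denominators. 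One then computes $\tau^*\big((x+y^2)dxdy\big)$ and checks it equals $(e^{3u}-e^{3v})(e^{2u}du^2 - e^{2v}dv^2)$ on the appropriate patch. Because we are working locally and only on a neighborhood, the fractional powers and sign choices cause no difficulty: we restrict to a region where all the expressions involved are smooth and nonvanishing, and where (as required in the statement) the image lies in $\{x>0\}$. The patches involved are exactly those sketched in Figure~\ref{fig:prop4.coordinate.trafos}.

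The main obstacle I anticipate is not conceptual but bookkeeping: getting the exponents, constants, and signs in the ansatz exactly right so that the pullback matches on the nose, rather than up to a nonconstant conformal factor. The safeguard against a wrong guess is dimensional/weight analysis under $w$: any candidate $\tau$ that does not send $w$ to $w$ with the correct weight, or does not send the diagonal of $\g1^a$ to the $y$-axis of $\g1^C$, can be discarded immediately, which drastically narrows the search. A second, minor, point is to confirm that $\tau$ is a genuine local diffeomorphism on $\mathcal{U}$ --- i.e.\ that its Jacobian is nonzero there --- and that $\pe{\mathcal{U}}$ can indeed be taken inside $\{x>0\}$; both follow once the explicit formulae are in hand, since $\{x=0\}$ is the image of the diagonal and the two half-spaces $x>0$, $x<0$ correspond to the two sides of that diagonal in the $(u,v)$-plane. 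Once the identity $\tau^*(g|_{\mathcal{U}}) = \pe g|_{\pe{\mathcal{U}}}$ is verified by substitution, the lemma is proved.
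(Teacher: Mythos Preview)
Your strategy---align the projective vector fields and use the weight $\lie_w g_1=5g_1$ to constrain the ansatz---is sound and is essentially how one would \emph{discover} the transformation. But the proposal is only a plan, not a proof, and the one concrete guess you write down is wrong in a way that contradicts your own earlier observation. You note (correctly, from the figure) that the diagonal $\{u=v\}$ must map to $\{x=0\}$; yet your candidate $y=c\,(e^{3v}-e^{3u})^{1/3}$ vanishes on the diagonal, so it would send $\{u=v\}$ to $\{y=0\}$, not to the $y$-axis. The weight analysis alone leaves a functional freedom $y=e^{(u+v)/2}F(u-v)$, $x=e^{u+v}G(u-v)$, and one still has to pick $F,G$ so that the pullback matches exactly; nothing in the proposal pins this down.

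The paper sidesteps this search entirely by simply writing down the explicit map
\[
x=\tfrac{1}{3}k^{2}(e^{u}-e^{v})^{2},\qquad y=\pm k\,(e^{u}+e^{v}),\qquad k=(9/8)^{1/5},
\]
and verifying the identity directly. The key algebraic reason this works is the factorisation $e^{3u}-e^{3v}=(e^{u}-e^{v})(e^{2u}+e^{u+v}+e^{2v})$: one gets $x+y^{2}=\tfrac{4}{3}k^{2}(e^{2u}+e^{u+v}+e^{2v})$ and $dx\,dy=\tfrac{2}{3}k^{3}(e^{u}-e^{v})(e^{2u}du^{2}-e^{2v}dv^{2})$, so the product reproduces $(e^{3u}-e^{3v})(e^{2u}du^{2}-e^{2v}dv^{2})$ up to the constant $\tfrac{8}{9}k^{5}=1$. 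The Jacobian is nonzero off the diagonal, the diagonal maps to $\{x=0\}$, and $x>0$ everywhere else, which gives the patch statement. Your weight heuristics are consistent with this formula, but the transformation is polynomial in $e^{u},e^{v}$ rather than involving cube roots of $e^{3u}-e^{3v}$.
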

\begin{proof}
Obviously, the metric~\eqref{eqn:metric.g1a} is not defined for $u=v$. On the other hand, the metric~\eqref{eqn:metric.g1C} is not defined on the parabola $x=-y^2$ (compare the dashed lines in Figure~\ref{fig:prop4.coordinate.trafos}).
Consider the transformation
\begin{equation}
 \tau(u,v) = \begin{pmatrix} x \\ y \end{pmatrix} =
 \begin{pmatrix}
   \frac13\,k^2\,(e^u-e^v)^2 \\
   k\,(e^u+e^v)
 \end{pmatrix}
\end{equation}
with $k=\sqrt[5]{\nicefrac98}$.
Its Jacobian is non-zero whenever $u\ne v$.
Under this transformation, we have
\[
 u=v \Leftrightarrow x=0\,,
\]
and $x>0$, $y>0$ for all values of $u,v$.
It is then easily confirmed that
\[
  \{\tau(u,v)\,|\,u>v\} = \{\tau(u,v)\,|\,u<v\} = \{(x,y)\,|\,x>0,\,y>0\}\,.
\]
Similarly, if we use the transformation (still $k=\sqrt[5]{\nicefrac98}$)
\begin{equation}
\tau(u,v) = \begin{pmatrix} x \\ y \end{pmatrix} =
\begin{pmatrix}
  \frac13\,k^2\,(e^u-e^v)^2 \\
  -k\,(e^u+e^v)
\end{pmatrix}\,,
\end{equation}
then we obtain
\[
  \{\tau(u,v)\,|\,u>v\} = \{\tau(u,v)\,|\,u<v\} = \{(x,y)\,|\,x>0,\,y<0\}\,.
\]
This concludes the proof.
\end{proof}

\noindent Next, we show that the metrics $g_1^{B\pm}$ are isometric to $g_1^C$.
\begin{lemma}\label{la:g1B.g1C}
Let $\mathcal{U}$ be a neighborhood in the domain of~\eqref{eqn:metric.g1B+}.
Then there is a coordinate transformation $\tau$ such that
$\tau^*(g|_\mathcal{U})=\pe{g}|_{\pe{\mathcal{U}}}$
where $\pe{\mathcal{U}}$ is a neighborhood in the domain of~\eqref{eqn:metric.g1C} with $x<0$.
Similarly, let $\mathcal{U}$ be a neighborhood in the domain of~\eqref{eqn:metric.g1B-}.
Then there is a coordinate transformation $\tau$ such that
$\tau^*(g|_\mathcal{U})=\pe{g}|_{\pe{\mathcal{U}}}$
where $\pe{\mathcal{U}}$ is a neighborhood in the domain of~\eqref{eqn:metric.g1C} with $x>0$.
\end{lemma}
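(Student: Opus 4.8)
The plan is to prove both halves exactly as in Lemma~\ref{la:g1a.g1C}: exhibit an explicit coordinate map $\tau$ carrying a piece of the domain of $g_1^{B\pm}$ to a piece of the domain of $g_1^C$, verify the identity $\tau^*g_1^C=g_1^{B\pm}$ by a direct computation, and then inspect the range of $\tau$ to see that its image contains a full neighborhood of a point with $x<0$ (resp.\ $x>0$). Throughout I write $z=p+iq$.

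For $g_1^{B+}$ I would take the same formula as in Lemma~\ref{la:g1a.g1C}, with the complex Liouville coordinates $z,\cc z$ now in place of $u,v$:
\[
 \tau(z,\cc z)=\vect{x\\ y}=\vect{\tfrac13 k^2\,(e^z-e^{\cc z})^2\\ k\,(e^z+e^{\cc z})}\,,\qquad k=\sqrt[5]{\nicefrac98}\,.
\]
The verification that $\tau^*g_1^C=g_1^{B+}$ is word for word the one in Lemma~\ref{la:g1a.g1C}, based on the factorisation $e^{3z}-e^{3\cc z}=(e^z-e^{\cc z})(e^{2z}+e^{z+\cc z}+e^{2\cc z})$: one gets $x+y^2=\tfrac43 k^2\,\tfrac{e^{3z}-e^{3\cc z}}{e^z-e^{\cc z}}$ and $dx\,dy=\tfrac23 k^3\,(e^z-e^{\cc z})\,(e^{2z}dz^2-e^{2\cc z}d\cc z^2)$, hence $(x+y^2)\,dx\,dy=\tfrac89 k^5\,(e^{3z}-e^{3\cc z})(e^{2z}dz^2-e^{2\cc z}d\cc z^2)=g_1^{B+}$ since $k^5=\nicefrac98$. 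The only genuinely new point is the sign: because $e^z-e^{\cc z}=2i\,e^p\sin q$ is purely imaginary, $(e^z-e^{\cc z})^2<0$, so the image here lies in $x<0$ — in contrast with Lemma~\ref{la:g1a.g1C}, where $(e^u-e^v)^2>0$. The Jacobian of $\tau$ is nonzero wherever $z\ne\cc z$, and — using also the variant with $y=-k(e^z+e^{\cc z})$ — the images cover a full neighborhood of any point with $x<0$, exactly as in Lemma~\ref{la:g1a.g1C}.

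For $g_1^{B-}$ I would use the analogous map with the roles of $e^z+e^{\cc z}$ and $e^z-e^{\cc z}$ interchanged,
\[
 \tau(z,\cc z)=\vect{x\\ y}=\vect{\tfrac13 k^2\,(e^z+e^{\cc z})^2\\ k\,(e^z-e^{\cc z})}\,,\qquad k=\sqrt[5]{\nicefrac98}\,,
\]
and run the same computation with the sum-of-cubes identity $e^{3z}+e^{3\cc z}=(e^z+e^{\cc z})(e^{2z}-e^{z+\cc z}+e^{2\cc z})$ replacing the difference-of-cubes one; this yields $\tau^*g_1^C=g_1^{B-}$. Since $e^z+e^{\cc z}=2e^p\cos q$ is real, now $(e^z+e^{\cc z})^2\ge0$ and the image meets $x>0$, as claimed.

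The point that needs care — and I expect the only real obstacle — is reality in the complex-Liouville normalisation: a symmetric $2$-form written in $z,\cc z$ is real only after the customary overall factor of $i$ is restored (compare the explicit $i$ in the B(III) metric of Theorem~\ref{thm:theorem.3.revised}(iii)), and correspondingly one of the two new coordinates comes out purely imaginary. This is handled, as elsewhere in the paper, by absorbing that $i$ into a rescaling of the affected coordinate, which does not change the isometry class; once this bookkeeping is done, the remaining checks — that $\tau$ is a local diffeomorphism off the locus $z=\cc z$ (resp.\ $e^z+e^{\cc z}=0$) and that its image contains a full neighborhood of a point with $x<0$ (resp.\ $x>0$) — are identical to the corresponding step in the proof of Lemma~\ref{la:g1a.g1C}.
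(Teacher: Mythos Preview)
Your proposal is correct and follows essentially the same approach as the paper: you use the identical coordinate maps $\tau$ (with $k=\sqrt[5]{9/8}$), the same cube-factorisation identities, and the same sign observation that $(e^z-e^{\cc z})^2<0$ forces $x<0$ for $g_1^{B+}$ while $(e^z+e^{\cc z})^2>0$ gives $x>0$ for $g_1^{B-}$. The paper adds a finer bookkeeping of which strips in the $v=\text{Im}(z)$ direction land in the region $-y^2<x<0$ versus $x<-y^2$, but this is only a refinement of the range analysis you sketch. Your explicit flag about the reality issue in the $g_1^{B-}$ case (the second output coordinate $k(e^z-e^{\cc z})$ being purely imaginary, to be absorbed by the conventional factor of $i$ in the complex-Liouville normalisation) is a point the paper passes over silently; noting it is a plus, not a defect.
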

\begin{proof}
The proof is similar to that of Lemma~\ref{la:g1a.g1C}.
However, the metric $g_1^{B+}$ is not defined for
\[
 \sin(3v)=0\quad\Rightarrow\quad
 v=\frac{N}{3}\,\pi\quad (N\in\mathds{Z})\,.
\]
Consider the coordinate transformation
\begin{equation}\label{eqn:trafo.g1B+}
 \tau(u,v) = \begin{pmatrix} x \\ y \end{pmatrix}
  = \begin{pmatrix}
     \frac13\,k^2\,(e^z-e^{\bar z})^2 \\
     \pm k\,(e^z+e^{\bar z})
    \end{pmatrix}\,,\qquad
 k= \sqrt[5]{\nicefrac98}\,.
\end{equation}
We see that $x<0$ for all values of $z,\bar{z}$. This transformation has non-zero Jacobian if
\[
 \sin(v)=0\quad\Rightarrow\quad
 v=N\pi\quad (N\in\mathds{Z})\,.
\]
The situation is illustrated in the middle and right panel of Figure~\ref{fig:prop4.coordinate.trafos}.
Using the coordinate transformation, we obtain
\[
  \{(x,y)\,|\,x+y^2=0\} = \{\tau(u,v)\,|\,v=\tfrac{N}{3}\,\pi\ \ N\in\mathds{Z}\}
\]
and
\[
  \{ (x,y)\,|\,x=0\} = \{ \tau(u,v)\,|\,v=N\pi, N\in\mathds{Z}\}\,.
\]
It is then easily seen that
\begin{align*}
  -y(u,v)^2 < x(u,v) < 0
  &\quad\text{for }
  \frac{N\pi}{3}<v<\frac{(N+1)\pi}{3}\text{ and $N$ odd}\,, \\
  x(u,v) < -y(u,v)^2
  &\quad\text{for }
  \frac{N\pi}{3}<v<\frac{(N+1)\pi}{3}\text{ and $N$ even}\,.
\end{align*}
This concludes the first part of the proof.
For metric~\eqref{eqn:metric.g1B-}, the conclusion can be drawn from an analogous reasoning, using the transformation
\begin{equation}\label{eqn:trafo.g1B-}
 \tau(u,v) = \begin{pmatrix} x \\ y \end{pmatrix}
  = \begin{pmatrix}
     \frac13\,k^2\,(e^z+e^{\bar z})^2 \\
     \pm k\,(e^z-e^{\bar z})
    \end{pmatrix}\,,\qquad
  k=\sqrt[5]{\nicefrac98}\,.
\end{equation}
\end{proof}

\noindent Finally we check that the projective class of $g_2^b$ coincides with that of $g_1^C$. For the proof, however, we choose representatives different from~\eqref{eqn:metric.g1C}.
\begin{lemma}\label{la:g1b.g1C}
Let $\mathcal{U}$ be a neighborhood in the domain of~\eqref{eqn:metric.g2b}.
Then there exists a coordinate transformation $\tau$ such that
$\tau^*(g|_\mathcal{U})=\pe{g}|_{\pe{\mathcal{U}}}$
where $\pe{\mathcal{U}}$ is a neighborhood in the domain of a metric~$\pe{g}$ that is (on $\pe{\mathcal{U}}$) projectively equivalent to~\eqref{eqn:metric.g1C}.
\end{lemma}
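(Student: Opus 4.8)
The plan is to follow the template of Lemmas~\ref{la:g1a.g1C} and~\ref{la:g1B.g1C}: produce an explicit coordinate transformation $\tau$ together with an explicit metric $\pe{g}$ satisfying $\tau^*(g|_{\mathcal U})=\pe{g}|_{\pe{\mathcal U}}$, and then verify that $\pe{g}$ lies in the projective class of~\eqref{eqn:metric.g1C}. The novelty compared with the two preceding lemmas is that $\pe{g}$ will not be $(x+y^2)\,dxdy$ itself but another representative of that projective class. By Theorem~\ref{thm:theorem.2.revised} this class has degree of mobility~$3$, so it equals $\projcl(\g1,\g2,\g3)$ with $\g1,\g2,\g3$ as in~\eqref{eqn:supint.generators}; the target $\pe{g}$ will be one such metric $g[\K1,\K2,\K3]$ in the sense of~\eqref{eqn:general.metric}, possibly one of the generators~\eqref{eqn:supint.generators} up to a linear change of coordinates.

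First I would record the structure of $g_2^b$. Up to a constant factor and the interchange $x\leftrightarrow y$, the metric $g_2^b=(e^{-2y}-4e^{-2x})(4\,dx^2-dy^2)$ is the metric $\g2$ of the Liouville pair~\eqref{eq.metrics.Liouville} for $(\xi,h,\varepsilon)=(2,4,-1)$ (equivalently $(2,\tfrac14,-1)$), and Theorem~\ref{thm:theorem.2.revised} guarantees that $\solSp(g_2^b)$ is three-dimensional with a one-dimensional projective algebra generated by the essential field $w=\partial_x+\partial_y$. Passing, as in Section~\ref{sec:Thm2.A.I}, to coordinates $(u,v)$ in which $w=\partial_u$, a basis of $\solSp(g_2^b)$ is given by the Liouville tensors of~\eqref{eq.metrics.Liouville} together with the third solution $\a3$ computed there; this three-dimensional freedom is what makes it possible to reach a Jordan-block shape.

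The technical core is the construction of $\tau$. In the spirit of~\eqref{eqn:trafo.g1B+}--\eqref{eqn:trafo.g1B-}, and guided by the computation in Lemma~\ref{la:g1a.g1C} (where $x+y^2$ factored as a ratio $(e^{3u}-e^{3v})/(e^u-e^v)$), one starts from the Liouville form of $g_2^b$ and applies a map $(\tilde x,\tilde y)=\bigl(\alpha\,(e^{ru}\mp 2e^{rv})^2,\ \beta\,(e^{ru}\pm 2e^{rv})\bigr)$ with constants $\alpha,\beta,r$ tuned so that $\tilde x+\tilde y^2$ acquires exactly the factorization characteristic of the Jordan-block case, the collision of the two exponential eigenvalue branches along the diagonal $\{u=v\}$ being what produces the Jordan block. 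Substituting this $\tau$ into $g_2^b$ yields, on a suitable open set $\mathcal U$, an explicit metric $\pe{g}(\tilde x,\tilde y)$; one then checks that $\psi^{-1}(\pe{g})$ solves the linear system~\eqref{eqn:linear.system} associated with the projective connection~\eqref{eqn:projective-connection} of $(x+y^2)\,dxdy$ --- equivalently, that $\pe{g}$ is a member of $\projcl(\g1,\g2,\g3)$ --- so that $\pe{g}$ is projectively equivalent to~\eqref{eqn:metric.g1C} on $\pe{\mathcal U}$. The domain bookkeeping (sign of $\tilde x$, and which patch of $(x+y^2)\,dxdy$ is covered, together with the periodicity phenomena encountered in Lemma~\ref{la:g1B.g1C}) is handled exactly as in the two preceding lemmas, with Figure~\ref{fig:prop4.coordinate.trafos} as a guide.

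I expect the main obstacle to be pinning down $\tau$ together with the correct target $\pe{g}$. In Lemmas~\ref{la:g1a.g1C} and~\ref{la:g1B.g1C} the source and target metrics are genuinely isometric and $\tau$ is a mild variant of a standard Liouville-to-Jordan substitution, whereas here $\pe{g}$ is only projectively related to $(x+y^2)\,dxdy$, so one must first identify the right point $(\K1,\K2,\K3)$ inside the three-dimensional family $\projcl(\g1,\g2,\g3)$ before the substitution can even be written down. Once the right target and transformation are in hand, the remaining verifications are direct computations of the kind already performed in Section~\ref{sec:Thm2.A.I}.
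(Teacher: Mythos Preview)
Your overall strategy matches the paper's: construct an explicit coordinate change from $g_2^b$ to some metric $\pe g$ that lies in the projective class of $(x+y^2)\,dxdy$, and then do the domain bookkeeping as in the two preceding lemmas. You also correctly anticipate the one new wrinkle, namely that $\pe g$ need not be $\g1^C$ itself but only projectively equivalent to it.

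Where you diverge from the paper is in how you propose to find $\pe g$ and $\tau$. You suggest an ansatz $(\tilde x,\tilde y)=(\alpha(e^{ru}\mp 2e^{rv})^2,\ \beta(e^{ru}\pm 2e^{rv}))$, patterned directly on Lemma~\ref{la:g1a.g1C}, and plan to identify the target \emph{a posteriori} by checking that $\psi^{-1}(\pe g)$ solves~\eqref{eqn:linear.system}. The paper instead fixes the target \emph{first}: it takes $\pe g=\hat g=\g2$ from~\eqref{eqn:supint.generators}, i.e.\ $\hat g=-2\,\tfrac{y^2+x}{y^3}\,dxdy+\tfrac{(y^2+x)^2}{y^4}\,dy^2$, which is manifestly in $\projcl(\g1,\g2,\g3)$, and then writes down the map
\[
\tau(u,v)=\Bigl(\tfrac13\bigl(-e^{-2u}+e^{2v-4u}\bigr),\ \pm e^{v-2u}\Bigr),
\]
whose Jacobian is everywhere nonvanishing. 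This transformation is \emph{not} of the squared/linear form you propose; the mixed exponent $e^{v-2u}$ reflects the $y^{-3}$, $y^{-4}$ weights of $\g2$ rather than the bare $\g1$. With this choice the verification of projective equivalence is free (it is one of the generators~\eqref{eqn:supint.generators}), and the remaining work is only the domain analysis: $x+y^2=0$ corresponds to $v=u-\ln 2$, separating the two patches.

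So the potential gap in your plan is the ansatz itself. It is tailored to hit $\g1$, but $g_2^b$ is the $\g2$-type representative of its A(I) pair, and the natural isometry lands on the $\g2$-type representative of C(Ia). If you insist on your form you will likely not get a clean target, and the subsequent check against~\eqref{eqn:linear.system} becomes a genuine computation rather than a formality. Picking the target $\hat g=\g2$ up front, as the paper does, short-circuits this.
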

\begin{proof}
The proof is similar to those of Lemmas~\ref{la:g1a.g1C} and~\ref{la:g1B.g1C}.
The following metric is clearly projectively equivalent to~\eqref{eqn:metric.g1C}:
\[
  \hat{g} = -2\,\frac{y^2+x}{y^3}dxdy + \frac{(y^2+x)^2}{y^4}dy^2\,.
\]
It is not defined on the parabola $x=-y^2$ as well as for $y=0$.
Consider the transformation
\[
 \tau(u,v) = \begin{pmatrix} x \\ y \end{pmatrix} =
 \begin{pmatrix}
   \frac13\,\left( -e^{-2u}+e^{2v-4u} \right) \\
   \pm e^{v-2u}
 \end{pmatrix}
\]
Its Jacobian is non-zero for any value of $u,v$.
Furthermore, we see
\[
  x+y^2=0 \Leftrightarrow v=u-\ln(2)\,.
\]
Finally, it is confirmed that
\[
  x(u,v) > -y(u,v)^2 \text{  if $v>u-\ln(2)$\quad and\quad }
  x(u,v) < -y(u,v)^2 \text{  if $v<u-\ln(2)$}\,.
\]
This concludes the proof.
\end{proof}

\subsection{Proof of Proposition~\ref{prop:homotheties.and.eigenvectors}}\label{sec:proof.homotheties.and.eigenvectors}
The parametrical description of the projective classes given in Theorem~\ref{thm:matveev} is not optimal.
Before we commence on the proof of Theorem~\ref{thm:normal.forms}, we therefore remove some of the parametrical ambiguity in order to simplify the body of the proof.
This parametrical simplification is achieved in two ways:
First, in the present Section~\ref{sec:proof.homotheties.and.eigenvectors}, we prove Proposition~\ref{prop:homotheties.and.eigenvectors}, which allows us to address item~2 of the list after Theorem~\ref{thm:matveev} in Section~\ref{sec:state}.
Second, in the remainder of Section~\ref{sec:preparations}, we determine refined parameter ranges for the metrics \eqref{eq.metrics.Liouville}--\eqref{eq.metrics.JB}.
Specifically, for this latter improvement, we identify sets of parameters that give rise to the same projective class, and then restrict the parameter ranges accordingly in the metrics \eqref{eq.metrics.Liouville}--\eqref{eq.metrics.JB}.

\paragraph{Proof of Proposition~\ref{prop:homotheties.and.eigenvectors}.} Proposition~\ref{prop:homotheties.and.eigenvectors} characterizes metrics that do not admit a homothetic vector field\footnote{We need to impose that the projective algebra is 1-dimensional, but this is straightforward by Theorem~\ref{thm:theorem.3.revised}.}.
In~\cite{matveev_2012}, it has already been remarked (with examples) that projective classes of metrics with exactly one, essential projective vector field can still contain metrics for which this vector field becomes homothetic.
We prove that such metrics are linked to the eigenvalues of $\lie_w$, which allows us to easily remove these metrics from our list.\medskip

\noindent First, assume there is a homothetic vector field~$w$. Then, since $\dim(\projalg(g))=1$, we have $\lie_wg=\eta\,g$.
Furthermore, according to Equation~\eqref{eqn:Lwa},
\[
 \lie_wa
 =\det(g)^{-\frac23} \cdot \lie_wg
 - \frac23\,\det(g)^{-\frac23}\tr_g(\lie_wg) \cdot g
 =\eta\,a - \frac43\,\eta\,\cdot a
 =: \mu\,a\,.
\]
On the other hand, if $\lie_wa=\mu\,a$, then compute the components of $\lie_wg$, i.e.\
\begin{align*}
\lie_wg_{ij}
&=w^k\partial_k\left(\frac{a_{ij}}{\det(a)^2}\right)
=\frac{\lie_wa_{ij}}{\det(a)^2}-\frac{\beta\partial_kw^k}{\det(a)^2}
-2\det(a)^{-3}\,w^k\partial_k(\det(a))\,a_{ij} \\
&=\frac{\lie_wa_{ij}}{\det(a)^2}
-\frac{\beta\partial_kw^k}{\det(a)^2}\,a_{ij}
-2\,\frac{w^k\tr_g(\partial_ka)}{\det(a)^2}\,g_{ij}
\stackrel{\text{($\ast$)}}{=}
3\mu\,g_{ij}
+(4+3\beta)\partial_kw^k\,g_{ij}
=-3\mu\,g_{ij}
\end{align*}
where $\beta=-\frac43$ is the weight of the Liouville tensor $a$ and where we use $\lie_wa=\mu\,a$ at ($\ast$). This confirms $\lie_wg=\eta\,g$ with $\eta\in\mathds{R}$, implying $w$ is a homothetic vector field.
This concludes the proof.

\subsection{Optimizing the parameters~\texorpdfstring{$\lambda$ and $\xi$}{that represent eigenvalues of the Lie derivative} in Theorem~\ref{thm:matveev}}\label{sec:optimizing.lambda.xi}
The parameters $\lambda$ and $\xi$ (see also~\eqref{eqn:xi.via.lambda}) in Theorem~\ref{thm:matveev} are not sharp, in the sense that we could pick two cases with different values of those parameters, but whose projective classes are identical.
The reason is that in~\eqref{eqn:normal-matrices} the parameter $\lambda$ is not optimal.
The following lemmas fix this ambiguity.
\begin{lemma}\label{la:lambda.different.classes}
 (a) For metrics of cases~(I) in Theorem~\ref{thm:matveev}, two different values of~$\xi=\xi(\lambda)$, see~\eqref{eqn:xi.via.lambda}, give rise to different projective classes if $|\lambda|\ne1$ (i.e., $\xi\ne0$ and $\xi\ne4$).

 (b) In cases~(III), two different parameter values~$\lambda$, $\lambda'$, with $|\lambda'|\ne|\lambda|$, give rise to different projective classes. Furthermore, w.l.o.g., the parameter value can be chosen non-negative.
\end{lemma}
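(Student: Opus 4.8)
The plan is to use that, since the linear system~\eqref{eqn:linear.system} and the projective vector field $w$ depend only on the projective connection, the pair $\big(\solSp,\ \lie_w|_\solSp\big)$ is attached to $\projcl(g)$ rather than to the particular representative $g$; moreover $w$ is canonical up to a nonzero real factor whenever $\dim\projalg(g)=1$, which by Theorem~\ref{thm:theorem.3.revised} fails only for the Killing exceptions listed there (all with $\xi=4$ in case~(I) and $\lambda=0$ in case~(III)). Hence the conjugacy class of $\lie_w|_\solSp$ \emph{up to an overall real rescaling} is a projective invariant; in particular, so are the degree of mobility $\dim\solSp$ and the eigenvalue multiset of $\lie_w|_\solSp$ taken up to a common real factor. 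Since $\dim\projalg(g)$ is also a projective invariant, a class that contains a metric with a Killing vector field (by Lemma~\ref{la:killing.vectors.and.projective.equivalence} this is a property of the class) is never equal to one that does not, so the Killing exceptions never clash with the generic parameter count.

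For part~(a), I would first set aside the case-(I) metrics of Theorem~\ref{thm:matveev} with degree of mobility $3$: by Theorem~\ref{thm:theorem.2.revised} these form a short explicit list, all projectively equivalent --- via Proposition~\ref{prop:dom3.proj.equiv} --- to the single metric C(Ia), and are therefore handled separately. For the remaining case-(I) metrics $\dim\solSp=2$, so $\solSp$ equals the invariant subspace of~\eqref{eqn:normal-matrices} and $\lie_w|_\solSp$ has eigenvalues $\{\lambda,1\}$ with $|\lambda|\ge1$. If two such metrics $g,g'$ lie in the same projective class, then $\{\lambda,1\}$ and $\{\lambda',1\}$ coincide up to a common real factor $c$, i.e.\ $\{c\lambda,c\}=\{\lambda',1\}$, forcing $\lambda'\in\{\lambda,1/\lambda\}$; since $|\lambda|,|\lambda'|\ge1$ and $|\lambda|\ne1$, the branch $\lambda'=1/\lambda$ is impossible, so $\lambda'=\lambda$. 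As $\xi\mapsto\lambda$ is the invertible map~\eqref{eqn:xi.via.lambda}, this gives $\xi'=\xi$.

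For part~(b), the same scheme applies to case~(III), where (by Theorem~\ref{thm:theorem.2.revised} the degree of mobility is $2$ once a Killing vector field is excluded) $\lie_w|_\solSp$ has eigenvalues $\lambda\pm i$. Up to a common real factor $c$, this pair becomes $\{c\lambda+|c|\,i,\ c\lambda-|c|\,i\}$, and the shape of the normal form~\eqref{eqn:normal-matrices}(III) pins the imaginary part to $\pm1$, forcing $|c|=1$; the residual choice $c=\pm1$ sends $\lambda\mapsto\pm\lambda$. Thus $|\lambda|$ is exactly the projective invariant carried by case~(III), so $|\lambda'|\ne|\lambda|$ yields different projective classes; and since conjugating $\left(\begin{smallmatrix}\lambda&-1\\1&\lambda\end{smallmatrix}\right)$ by $\operatorname{diag}(1,-1)$ and negating (i.e.\ replacing $w$ by $-w$) yields $\left(\begin{smallmatrix}-\lambda&-1\\1&-\lambda\end{smallmatrix}\right)$, one may take $\lambda\ge0$ without loss of generality.

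I expect the main obstacle to be bookkeeping rather than depth: one must track which eigenvalue the normal form normalises to $1$ (respectively how the imaginary part is fixed in case~(III)) and the residual sign freedom $c=\pm1$ arising from $w\mapsto -w$, and one must justify the reduction to $\dim\solSp=2$ --- that is, confirm that the degree-of-mobility-$3$ configurations singled out in Theorem~\ref{thm:theorem.2.revised} genuinely collapse, through Proposition~\ref{prop:dom3.proj.equiv}, to the class of C(Ia) and so do not affect the count. Beyond invoking \eqref{eqn:normal-matrices}, \eqref{eqn:xi.via.lambda}, Theorems~\ref{thm:theorem.2.revised} and~\ref{thm:theorem.3.revised}, Lemma~\ref{la:killing.vectors.and.projective.equivalence} and Proposition~\ref{prop:dom3.proj.equiv}, no computation is needed.
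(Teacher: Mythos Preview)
Your proposal is correct and follows essentially the same approach as the paper: both arguments rest on the observation that the eigenvalue multiset of $\lie_w|_\solSp$, taken up to an overall real rescaling (and reordering), is an invariant of the projective class, and then unwind what this says about $\lambda$ in each of the normal forms~\eqref{eqn:normal-matrices}. Your treatment is in fact more careful than the paper's, which does not explicitly separate out the degree-of-mobility-$3$ configurations or the Killing exceptions in this lemma; your bookkeeping via Theorems~\ref{thm:theorem.2.revised},~\ref{thm:theorem.3.revised} and Proposition~\ref{prop:dom3.proj.equiv} is sound but not strictly needed for the core argument.
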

\begin{proof}
 Recall the considerations in Section~\ref{sec:lie.action}. For the projective vector field~$w$, the eigenvalues of~$\lie_w$ are fixed, up to the freedom of rescaling~$w$. On the other hand, we can reorder eigenvalues. These are the only liberties we have in rephrazing the eigenvalue problem. In~\eqref{eqn:normal-matrices}, these two ambiguities have already been used to impose a certain normalization and ordering of the eigenvalues, however the freedom has only been exploited partially. This is where we start from in this proof.

 In case~(I) of~\eqref{eqn:normal-matrices}, the eigenvalues can be reordered and rescaled such that they become $\{\lambda,1\}$ with $|\lambda|\geq1$. Therefore, the ratio of their absolute values is fixed. The only case when such a choice is not unique is when $|\lambda|=1$, i.e.\ we deal with the situation of two eigenvalues that are equal up to their sign.

 Similarly, in case~(III), the (pair of complex) eigenvalues given in~\eqref{eqn:normal-matrices} is $( \lambda+i\,, \lambda-i )$. Obviously, to achieve this, only a rescaling of~$w$ is needed. If we also use our freedom to reorder the eigenvalues, we find that $( \lambda-i\,, \lambda+i )$ is indistinguishable from $( \lambda'+i, \lambda'-i )$, where $\lambda'=-\lambda$. Indeed, rescaling $( \lambda-i\,, \lambda+i )$ by $-1$ yields $( \lambda'+i, \lambda'-i )$, with $\lambda'=-\lambda$. We have thus established that $\lambda$ is unique up to its sign, i.e.\ the absolute value of $\lambda$ cannot be different for different projective classes.
\end{proof}
Let us now consider the case (I) when $|\lambda|=1$.
\begin{lemma}\label{la:lambda.1.homothetic}
 If, in case (I) of~\eqref{eqn:normal-matrices}, we have $\lambda=1$ and a unique (up to rescaling) projective vector field, then this vector field is homothetic.
\end{lemma}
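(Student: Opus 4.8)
The plan is to read off, directly from \eqref{eqn:normal-matrices}, that in case~(I) with $\lambda=1$ the operator $\lie_w$ restricted to the two-dimensional $\lie_w$-invariant subspace $\hat{\solSp}\subset\solSp$ is the identity, so that $\lie_w a=a$ for \emph{every} $a\in\hat{\solSp}$. Recalling from the proof of Proposition~\ref{prop:homotheties.and.eigenvectors} that $\lie_w a=\mu\,a$ forces $\lie_w\psi(a)=-3\mu\,\psi(a)$, this means that for each $a\in\hat{\solSp}$ the metric $\psi(a)$ admits $w$ as a homothety with factor $-3\neq0$, i.e.\ a genuine (non-Killing) homothety. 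In other words, $\hat{\solSp}$ is a two-dimensional, $\lie_w$-invariant subspace of $\solSp$ on which the projective vector field is homothetic --- precisely the hypothesis of Lemma~\ref{la:homothetic.subspace}.

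Applying Lemma~\ref{la:homothetic.subspace} then yields that every metric in the projective class represented by $\solSp$ carries a non-trivial Killing vector field; in particular the metric $g$ of the statement admits some Killing vector field $v\not\equiv0$. The proof is concluded using the standing hypothesis that $\projalg(g)$ is one-dimensional: since $v$ and $w$ both belong to $\projalg(g)$ and $w\neq0$, we must have $v=c\,w$ with $c\in\Rnz$, whence $\lie_w g=c^{-1}\lie_v g=0$. Thus $w$ is itself a Killing vector field, and in particular it is homothetic, which is the assertion.

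The one place that requires care --- and the reason I would route the argument through Lemma~\ref{la:homothetic.subspace} (together with Lemma~\ref{la:killing.vectors.and.projective.equivalence}, which propagates the Killing field across the whole projective class) rather than arguing more directly --- is the case $\dim(\solSp)=3$: then the Liouville tensor $\psi^{-1}(g)$ of the metric in the statement need not lie in $\hat{\solSp}$, so one cannot simply conclude $\lie_w\psi^{-1}(g)=\psi^{-1}(g)$ and must instead obtain the Killing field for the whole class. Lemma~\ref{la:homothetic.subspace} handles both $\dim(\solSp)=2$ and $\dim(\solSp)=3$ uniformly and is really the substantive input; the remainder is elementary linear algebra of the one-dimensional algebra $\projalg(g)$. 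When $\dim(\solSp)=2$ one has $\hat{\solSp}=\solSp$, so $\psi^{-1}(g)\in\hat{\solSp}$ and $\lie_w\psi^{-1}(g)=\psi^{-1}(g)$ gives $w$ homothetic immediately; the single argument above simply specializes to this.
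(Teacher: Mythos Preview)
Your proof is correct, and in fact it is more careful than the paper's own argument. The paper's proof is precisely the two-line observation you record in your last sentence: when $\lambda=1$ the matrix in case~(I) of~\eqref{eqn:normal-matrices} is the identity, so (the paper writes) ``for any solution~$a$ to~\eqref{eqn:linear.system} we have $\lie_w a=a$'', and then Proposition~\ref{prop:homotheties.and.eigenvectors} gives that $w$ is homothetic. That is the whole proof in the paper.

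The difference is that the paper's sentence ``for any solution~$a$'' is literally justified only when $\hat{\solSp}=\solSp$, i.e.\ when the degree of mobility is~$2$; the paper does not address the case $\dim\solSp=3$. Your route through Lemma~\ref{la:homothetic.subspace} is longer but handles both possibilities uniformly: the two-dimensional homothetic subspace forces a Killing field on the entire class, and then the one-dimensionality of $\projalg(g)$ pins that Killing field to~$w$. So what your approach buys is an explicit treatment of the $\dim\solSp=3$ case, at the cost of invoking the stronger Lemma~\ref{la:homothetic.subspace} rather than just the elementary computation behind Proposition~\ref{prop:homotheties.and.eigenvectors}. The paper's approach is shorter but leaves that case implicit.
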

\begin{proof}
 Obviously, for any solution~$a$ to~\eqref{eqn:linear.system}, we have $\lie_wa=a$. The claim then follows from Proposition~\ref{prop:homotheties.and.eigenvectors}.
\end{proof}

\subsection{Optimizing the parameters \texorpdfstring{$h$}{h} and \texorpdfstring{$C$}{C} in Theorem~\ref{thm:matveev}}\label{sec:optimizing.h.C}
The real Liouville metrics in Theorem~\ref{thm:matveev} depend on a (real) parameter~$h$,
and in the complex Liouville cases we similarly have a complex parameter~$C$, which w.l.o.g.\ can be assumed to have absolute value $|C|=1$. Examining the Liouville type metrics in Proposition~\ref{prop:dini} and Theorem~\ref{thm:matveev}, we notice a structural feature of these metrics: they are roughly symmetric under the exchange of the roles of~$x$ and~$y$.
Let us investigate how swapping $x\leftrightarrow y$ can actually produce restrictions on the parameters.
Consider metrics of type A(I) in Theorem~\ref{thm:matveev},
\begin{equation}
 \g1 =\g1[\xi,h,\varepsilon]
  = (x^\xi-hy^\xi)(dx^2+\varepsilon dy^2)\,,\quad
 \g2 =\g2[\xi,h,\varepsilon]
  = \left(\frac{1}{x^\xi}-\frac{1}{hy^\xi}\right)\left(\frac{dx^2}{x^\xi}+\varepsilon\frac{dy^2}{hy^\xi}\right)
\end{equation}
(note the change of coordinates, and the restriction $x>0$, $y>0$).
Let us first rewrite this as
\begin{equation}
 \g1 = -h\varepsilon\,\left(y^\xi-\frac{1}{h}\,x^\xi\right)(dy^2+\varepsilon dx^2)\,,\quad
 \g2 = -\frac{\varepsilon}{h^2}\,\left(\frac{1}{y^\xi}-\frac{1}{hx^\xi}\right)\left(\frac{dy^2}{y^\xi}+\varepsilon\frac{dy^2}{\tfrac{1}{h}y^\xi}\right)\,.
\end{equation}
Next, let us consider a linear combination
\begin{equation}
 g 
   = \K1 \g1[h,\xi,\varepsilon](x,y) + \K2 \g2[h,\xi,\varepsilon](x,y)
   = -\frac{\varepsilon}{h}\,\K1\,\g1[\tfrac{1}{h},\xi,\varepsilon](y,x) -\frac{\varepsilon}{h^2}\,\K2\,\g1[\tfrac{1}{h},\xi,\varepsilon](y,x)
\end{equation}
This means that by exchanging the variables $x\leftrightarrow y$, we are able to map the spaces $\solSp(\g1[\xi,h,\varepsilon])$ and $\solSp(\g1[\xi,\frac{1}{h},\varepsilon])$ onto each other.
Therefore, in the proof of Theorem~\ref{thm:normal.forms} we may restrict to $h\in\Rnz$ with $|h|\geq1$ (or, equivalently, $|h|\leq1$).
The same applies in the cases A(II) and A(III), with the proof identical to that for case A(I) outlined above.
An analogous reasoning works in the cases B(I), B(II) and B(III), with the following implication: We may in those cases exchange $z\leftrightarrow\cc{z}$ and, therefore, map the space spanned by
$(\g1[\xi,C],\g2[\xi,C])$, $C=e^{-i\varphi}$ with $\varphi\in\angl$, onto the one spanned by $(\g1[\xi,C],\g2[\xi,C])$, $C=e^{-i\varphi}$ with $\varphi\in\halfangl$.
Note that here we applied also the freedom to rescale~$C$ in order to achieve $|C|=1$. In addition, we have found that we may restrict the phase of~$C$ to $\varphi\in\halfangl$.

\section{Proof of Theorem~\ref{thm:normal.forms}}\label{sec:proof.main.theorem}
This section is devoted to the proof of the main theorem, i.e.\ the list of mutually non-diffeomorphic normal forms given in Theorem~\ref{thm:normal.forms}.
It is organized as follows:
\begin{enumerate}
 \item First, we outline the techniques applied for the main part of the proof of Theorem~\ref{thm:normal.forms}.
 Roughly speaking, we consider the metrics contained in Theorem~\ref{thm:matveev} in explicit coordinate choices (these metrics depend not only on the parameters appearing in~\eqref{eq.metrics.Liouville}--\eqref{eq.metrics.JB}, but also on those parameters involved in~\eqref{eqn:general.metric}).
 We then examine all possible coordinate transformations between them and use this freedom to derive the normal forms.
 \item In line with Theorem~\ref{thm:theorem.2.revised}, metrics with degree of mobility~3 can be treated separately, and we present two ways for arriving at non-diffeomorphic normal forms.
 \item The derivation of the actual normal forms is performed, for degree of mobility~2, in a case-by-case procedure. Each case is characterized by the number of (real) eigenvalues of~$\lie_w$ (c.f.\ the Roman numbers in Theorem~\ref{thm:matveev}) and of the Benenti tensor (c.f.\ letters in Theorem~\ref{thm:matveev}).
 \item As a by-product to the classification in terms of non-diffeomorphic normal forms, we obtain also an optimal list of the projective classes containing metrics with exactly one, essential projective vector field.
\end{enumerate}

\noindent Together with Theorems~\ref{thm:theorem.2.revised} and \ref{thm:theorem.3.revised}, and Proposition~\ref{prop:homotheties.and.eigenvectors}, the steps 1--3 constitute the proof of Theorem~\ref{thm:normal.forms}. Recall two observations: firstly, metrics with degree of mobility~1 need not be considered as the (unique up to rescaling) projective vector field is homothetic for such metrics. Secondly, metrics with degree of mobility at least~4 admit a non-trivial Killing vector field \cite{koenigs_1896} and thus are not of interest here.

\subsection{Techniques and Strategies}\label{sec:techniques.strategies}
Theorem~\ref{thm:normal.forms} takes account of the liberty to apply arbitrary coordinate transformations to a metric. This freedom is ``removed'' from the normal forms as follows: Theorem~\ref{thm:matveev} already fixes the form of the metric in the sense that the coordinate expressions in each case have a certain, parametric form.
Next, we have established in Theorems~\ref{thm:theorem.2.revised} and~\ref{thm:theorem.3.revised} which metrics~$g$ of Theorem~\ref{thm:matveev} have degree of mobility~2, or 3, and $\dim\projalg(g)=1$. For simplicity let us assume degree of mobility~2 (for degree of mobility~3 a similar reasoning holds, see Section~\ref{sec:proof.degree.of.mobility.3}).
We obtain the normal forms of Theorem~\ref{thm:normal.forms} by a thorough investigation of the possible coordinate transformations between two metrics of a particular, parametric form as specified by each case in Theorem~\ref{thm:matveev}. Equivalently, we can state this as follows: We fix a projective class contained in Theorem~\ref{thm:matveev}, requiring degree of mobility~2. Thus, the metric can be taken in a certain parametric form, according to the cases A(I)--C(III) of Theorem~\ref{thm:matveev}.
The preoccupation of the current section is to find all coordinate transformations that mediate between two parametric representations of a given metric, i.e.\ to find the parameter configurations in each of the cases A(I)--C(III) that represent the same metric.

\begin{lemma}\label{la:different.cases}
 Metrics with degree of mobility~2 and 1-dimensional projective algebra cannot be projectively equivalent if they belong to different cases in Theorem~\ref{thm:matveev}.
\end{lemma}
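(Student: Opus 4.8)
The plan is to show that, for a metric $g$ with $\dim\projalg(g)=1$ and degree of mobility exactly~$2$, the case of Theorem~\ref{thm:matveev} to which $g$ belongs --- i.e.\ the pair consisting of a letter in $\{\text{A},\text{B},\text{C}\}$ and a Roman numeral in $\{\text{I},\text{II},\text{III}\}$ --- is completely determined by the projective class $\projcl(g)$. Among the cases of Theorem~\ref{thm:matveev} that have degree of mobility~$2$ the labels are pairwise distinct (the only potential clash, C(Ia) versus C(Ib), does not arise here: C(Ia) has degree of mobility~$3$ by Theorem~\ref{thm:theorem.2.revised}). So this is enough: if two metrics as in the statement were projectively equivalent they would share their projective class, hence their case label, hence their case. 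Both ingredients of the label will be extracted from data intrinsic to $\projcl(g)$, namely the solution space $\solSp=\psi^{-1}(\projcl(g))$ --- which here is $2$-dimensional, cf.\ Remark~\ref{rmk:g-a-correspondence} --- and the projective vector field $w$, which is unique up to a nonzero constant because $\dim\projalg(g)=1$.

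\emph{The Roman numeral.} Since the degree of mobility is exactly~$2$, we have $\solSp=\hat\solSp$ in the notation of Section~\ref{sec:lie.action}, so $\lie_w|_\solSp$ is a well-defined endomorphism of $\solSp$, canonical up to conjugation (a change of basis of $\solSp$) and up to an overall nonzero scalar (a rescaling of $w$). Its real Jordan type is thus an invariant of $\projcl(g)$, and by~\eqref{eqn:normal-matrices} it is exactly one of three mutually exclusive types: two distinct real eigenvalues (case~I --- the scalar case $\lambda=1$ is excluded because $w$ is essential, see Lemma~\ref{la:lambda.1.homothetic}, while $\lambda=-1$ still gives distinct eigenvalues), a single real eigenvalue carrying a nontrivial Jordan block (case~II), or a complex-conjugate pair of eigenvalues (case~III). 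Hence $\projcl(g)$ determines the Roman numeral.

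\emph{The letter.} Pick any two non-proportional metrics $g_1,g_2\in\projcl(g)$, which exist since the degree of mobility is at least~$2$. By Proposition~\ref{prop:dini}, in a neighborhood of almost every point the pair $(g_1,g_2)$ can be brought into one of the three Dini normal forms, and the letter is the algebraic type, at a generic point, of the $g_1$-self-adjoint operator $L=g_1^{-1}g_2$: distinct real eigenvalues give~A, a non-real pair gives~B, a non-diagonalizable $L$ gives~C. I would then check that the letter does not depend on the chosen pair. Since the degree of mobility is exactly~$2$, any non-proportional pair $(g_1',g_2')\in\projcl(g)$ comes from $(g_1,g_2)$ by a real nondegenerate change of basis of $\solSp$; writing $N=a_1^{-1}a_2$ for the corresponding solutions $a_i=\psi^{-1}(g_i)$, one finds that $L$ is a scalar multiple of $N$ while the operator built from the new pair is a scalar multiple of a real M\"obius function of $N$. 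A real M\"obius transformation, away from a nowhere-dense locus, preserves the reality of eigenvalues, their coincidence pattern, and the presence of a nontrivial nilpotent part; hence the letter is the same for every representing pair, and so it too is an invariant of $\projcl(g)$.

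Combining the two parts, the case label of any metric satisfying the hypotheses is determined by its projective class, which proves the lemma. The step demanding the most care is the last one: showing that the letter A/B/C is genuinely attached to the pencil $\solSp$ and not merely to a chosen basis of it, and, for the Roman numeral, ruling out --- using that $w$ is essential, via Lemmas~\ref{la:lambda.1.homothetic} and~\ref{la:lambda.different.classes} --- the degenerate configuration $\lambda=1$ that would otherwise blur the distinction between cases~I and~II.
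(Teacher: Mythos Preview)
Your proof is correct and follows essentially the same approach as the paper: both identify the Roman numeral with the Jordan type of $\lie_w|_\solSp$ and the letter with the algebraic type of (the eigenvalues of) the Benenti tensor, noting that both are invariants of the projective class. The paper's argument is terser --- it simply asserts that ``these properties are characteristic to the projective class'' --- whereas you spell out the basis-independence of the letter via the M\"obius action on $N=a_1^{-1}a_2$, which is a welcome elaboration of a point the paper leaves implicit.
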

\begin{proof}
 The cases in Theorem~\ref{thm:matveev} are organized according to
 (i) \emph{letters A--C,} i.e.\ the type of eigenvalues of the Benenti tensor, to be discussed in detail in Section~\ref{sec:strategy.benenti}. See also~\cite{bolsinov_2009} where Proposition~\ref{prop:dini} is proven.
 (ii) \emph{Roman numbers I--III:} type of eigenvalues of~$\lie_w$ where, as always, $w$ is the projective vector field (see~\cite{bolsinov_2009}).
 More precisely, the labels mean: 2 real eigenvalues (labels A resp.\ I), 2 complex conjugate eigenvalues (B resp.\ III) and 1 real eigenvalue with algebraic multiplicity~2 (C resp.\ II).
 As one easily confirms, these properties are characteristic to the projective class, i.e.\ invariant under projective transformations. This proves the statement.
\end{proof}

\noindent The current section presents the techniques and strategies applied later, in Sections~\ref{sec:proof.normal.forms} and~\ref{sec:proof.degree.of.mobility.3}.
It is organized as follows:
First we discuss the flow of the projective vector field in the space~$\solSp$ of solutions to~\eqref{eqn:linear.system}. In fact, metrics along its orbits are isometric, as the exponential map of~$w$ provides an explicit coordinate transformation.
Second, we discuss how this coordinate transformations act on the length of the projective vector field~$w$, viewed mainly from the viewpoint of explicit coordinate expressions.
Next, Section~\ref{sec:strategy.benenti} introduces Benenti tensors and discusses how their properties can be exploited for our purposes. Two explicit examples are computed for later reference in Section~\ref{sec:proof.normal.forms}.
Finally, in Section~\ref{sec:reparametrisation} we discuss an alternative parametrisation of metrics in a projective class for which $\lie_w$ does not admit real eigenvalues.

For the remainder of this section, let us fix a system of coordinates $(x,y)$ as in Theorem~\ref{thm:matveev}, and assume degree of mobility~2 (unless stated otherwise). The assumption of degree of mobility~2 holds if we exclude metrics mentioned in Theorem~\ref{thm:theorem.2.revised}. Also, we exclude metrics covered by Theorem~\ref{thm:theorem.3.revised} and metrics that correspond to eigenvectors of $\lie_w$, see Proposition~\ref{prop:homotheties.and.eigenvectors}, as they do not comply with the requirements we are interested in.

\subsubsection{Projective orbits}
Consider a specific projective class among A(I)--C(III) of Theorem~\ref{thm:matveev}, excluding metrics covered by Theorems~\ref{thm:theorem.2.revised} and~\ref{thm:theorem.3.revised}.
Metrics belonging to this class will be of the form~\eqref{eqn:general.metric} and admit, up to a non-vanishing constant factor, one and only one projective vector field: let us fix this vector field and denote it by~$w$, while we denote its local flow by $\phi_t$.
\begin{definition}[Projective orbits]
Let~$w$ be a projective vector field and $\phi_t$ be its local flow.
An orbit of $\phi_t$ in~$\solSp$ that passes through a point~$a$ (i.e.\ a Liouville tensor~\eqref{eqn:Liouville.tensor.g}) is called the \emph{projective orbit through $a$}.
In view of Remark~\ref{rmk:g-a-correspondence},
the term of projective orbit will refer either to the orbit in the space of Liouville tensors or in the space of metrics, depending on the context.
\end{definition}

\noindent The transformation $\phi_t$ sends a metric of the form \eqref{eqn:general.metric} into a metric of the same form. Thus, in view of~\eqref{eqn:space.G}, it induces a map $\mathds{R}^m\setminus\{\textrm{eigenspaces of }\lie_w\}\to \mathds{R}^m\setminus\{\textrm{eigenspaces of }\lie_w\}$, $m=\dim \solSp\in\{2,3\}$. Note that eigenspaces of~$\lie_w$ need to be removed in view of Proposition~\ref{prop:homotheties.and.eigenvectors}. Due to~\eqref{eqn:space.G}, the eigenspaces are essentially axes in~$\mathds{R}^m$, and the parameters~$\K{i}$ are cartesian coordinates. Let us discuss how the projective vector field acts in this cartesian setting.
To this end, let $g\in\projcl(\g1,\dts,\g{m})$, $m\in\{2,3\}$, as in~\eqref{eqn:space.G}. Furthermore, let $a=\psi^{-1}(g)$ be its associated Liouville tensor~\eqref{eqn:Liouville.tensor.g}, and let $\a{i}=\psi^{-1}(\g{i})$ for each value of~$i$.
Thus, the metric is represented by a cartesian expression $a=\sum_i \K{i}\a{i}$ for some $K_i\in\mathds{R}$.
We have already seen in Section~\ref{sec:results.used} how the Lie derivative~$\lie_w$ acts on the space~$\solSp$ of Liouville tensors,
\begin{equation}\label{eqn:bo}
  \phi^*_t\left(\sum_{i=1}^m \K{i}\a{i}\right)
   =\sum_{i=1}^m \K{i}\,\exp(tw)^\ast\a{i}
   =\sum_{i=1}^m \K{i}'(t)\a{i}\,.
\end{equation}
Note that the pullback with the exponential mapping $\exp(tw)$ is represented, in the local basis $(\a{i})$, by a matrix that can be computed explicitly. Indeed, it is the matrix exponential of a matrix from~\eqref{eqn:normal-matrices}.
The projective orbit through a point $(\K1,\dots,\K{m})$ is thus visualised by a curve $(\K1'(t),\dots,\K{m}'(t))$.
Metrics on the same projective orbit are isometric, as they are linked by $\phi^*_t$.
A complication that can occur is that not all projective transformations are of type $\phi_t$ for some $t$, so that metrics belonging to different $\phi_t$-orbits could be isometric.
In order to see if this happens, we need additional techniques.

\subsubsection{Coordinate transformations as local isometries}\label{sec:poles.isometries}
Locally, any isometry is a coordinate transformation, i.e.\ a local diffeomorphism.
Thus, let $\tau$ be a local isometry between two metrics~$g',g$ with $g=\tau^*(g')$, with degree of mobility~2 or~3. Actually, since we work with explicit coordinate representations of a certain form, $\tau$ in fact is a coordinate transformation that preserves the parametrical form of the respective case of Theorem~\ref{thm:matveev}.
Recalling the action of~$\lie_w$, and Formula~\eqref{eqn:Lwa}, fix the projective vector field~$w$ by the condition
\begin{equation}\label{eqn:fix.proj.vector.field}
  \lie_w\begin{pmatrix} a\\ \pe{a} \end{pmatrix}
  = \textsf{M}\begin{pmatrix} a\\ \pe{a} \end{pmatrix},
\end{equation}
where $\textsf{M}$ is a $2\times2$-matrix as in~\eqref{eqn:normal-matrices}. This depends only on the directions of $a$ and $\pe{a}$ in the 2-dimensional invariant subspace~$\langle\a1,\a2\rangle=:\hat{\solSp}\subset\solSp$.
Since~$\tau$ preserves eigenspaces, condition~\eqref{eqn:fix.proj.vector.field} fixes $w$, in the sense that $\tau_*(w)=w$, keeping in mind the caveats from Section~\ref{sec:preparations}.
Denoting the local flow of~$w$ by~$\phi_t$, $t\to\tau(\phi_t(p_0))$ is an integral curve of~$w$ starting at $p'_0=\tau(p_0)$ for a point $p_0$ on the manifold.
Moreover, we have that
\begin{equation}\label{eqn:isometries.length.preservation}
g_{\phi_t(p_0)}(w_{\phi_t(p_0)},w_{\phi_t(p_0)})
=g'_{\tau(\phi_t(p_0))}(w_{\tau(\phi_t(p_0))},w_{\tau(\phi_t(p_0))})
=g'_{\phi_t(p'_0)}(w_{\phi_t(p'_0)},w_{\phi_t(p'_0)})\,.
\end{equation}
The poles and zeros of each side of Equation~\eqref{eqn:isometries.length.preservation}, regarded as functions in $t$, coincide for isometric metrics, entailing extra conditions on $K_i$ and $K_i'$.
This is going to be exploited in Section~\ref{sec:proof.normal.forms}.
In addition, we make use of the properties of Benenti tensors in order to identify possible coordinate transformations between metrics in Theorem~\ref{thm:matveev}.

\subsubsection{Benenti tensors and eigenspaces of the Lie derivative\texorpdfstring{ $\lie_w$}{}}\label{sec:strategy.benenti}
Benenti tensors are well-known objects that have successfully been employed in a number of fields, e.g., in the context of orthogonal separation coordinates, Killing-St\"ackel spaces, the proof of the Lichnerowicz-Obata conjecture, splitting-gluing constructions, and others. Here, we use Benenti tensors merely as a tool to facilitate the computations in a conceptually concise way, without going deeper into the theory of these interesting and useful objects, more on which can be found in the references given below.
For the purposes in this present paper, Benenti tensors will be defined as follows:
\begin{definition}\label{def:benenti.g}
 Let $(g,\pe{g})$ be a pair of projectively equivalent metrics. Then
 \begin{equation}\label{eqn:benenti.tensor}
   L(g,\pe{g}) := \left(\frac{\det(\pe{g})}{\det(g)}\right)^{\frac{1}{n+1}}\,\pe{g}^{-1}g
   = \pe{\sigma}\sigma^{-1} = \left|\frac{\det(a)}{\det(\pe{a})}\right|\,\pe{a}^{-1}a
 \end{equation}
 is called the \emph{Benenti tensor} of the pair $(g,\pe{g})$.
\end{definition}

\noindent Precisely speaking, Definition~\ref{def:benenti.g} is not the actual definition of a Benenti tensor as given initially by Benenti, see~\cite{benenti_1992}.
Rather, a Benenti tensor (for a metric $g$, not a pair of metrics) is a special conformal Killing tensor with certain additional properties, cf.~\cite{benenti_1992,ibort_2000,bolsinov_2003}.
For our context, the definitions are equivalent.
Readers interested in the precise definition and properties of Benenti tensors, may like to refer to~\cite{ibort_2000,crampin_2008,benenti_1992,bolsinov_2003}, as well as other references, for instance \cite{benenti_2016,benenti_2008,kiosak_2010,manno_2017ben,topalov_2003,blaszak_2005}.

\begin{remark}\label{rmk:eigenspaces.are.1d}
In dimension 2, any non-zero solution $a_{ij}$ of \eqref{eqn:linear.system} has, in almost every point, full rank ($a_{ij}$ viewed as a matrix), see~\cite{matveev_2012}. In higher dimension degenerate solutions are possible and linked to eigenvalues of Benenti tensors, see~\cite{manno_2017ben}.
If we consider a metric~$g$ with exactly one, essential projective vector field~$w$, the Lie derivative~$\lie_w$ on~$\solSp(g)$ has only $1$-dimensional eigenspaces (if any). For instance, assume~$g$ has degree of mobility 2, and an eigenvalue of algebraic multiplicity $2$. If its geometric multiplicity were $2$, the vector field~$w$ would be homothetic for all metrics in the projective class, including $g$, which would contradict the hypothesis. Thus, the geometric multiplicity is at most~$1$ (this is a general fact). Actually, it can be shown (regardless of the degree of mobility of $g$) that if $\lie_w$ admits a 2-dimensional eigenspace, it admits a non-trivial Killing vector, see Lemma~\ref{la:homothetic.subspace} and~\cite{matveev_2012} for details.
\end{remark}

\noindent The following lemma turns out to be a very useful tool for finding possible coordinate transformations between metrics in Theorem~\ref{thm:matveev}, see also the discussion in the beginning of Section~\ref{sec:techniques.strategies}.

\begin{lemma}\label{la:benenti.transformations}
Let $g$ be a 2-dimensional metric with exactly one projective vector field~$w$ (up to rescaling), and degree of mobility~2.
Recall the space of solutions to the metrizability equations~\eqref{eqn:linear.system}, which is denoted by~$\solSp=\solSp(g)$.
Let~$\pe{g}$ be a metric projectively equivalent, but non-proportional to~$g$, and let~$L$ be the Benenti tensor for the pair $(g,\pe{g})$, given by~\eqref{eqn:benenti.tensor}.
Assume that~$w$ is homothetic for~$g$. Then there is a local diffeomorphism~$\tau$ that maps the family
\begin{equation}\label{eqn:benenti.1r}
  \{ KL+c\Id\,,\  K,c\in\R\,,\ K\ne0 \}\,,
\end{equation}
onto itself.
In case~$w$ is homothetic for~$\pe{g}$, too, the local diffeomorphism~$\tau$ can be chosen such that it maps also the family
\begin{equation}\label{eqn:benenti.2r}
  \{ KL\,,\  K\ne0 \}\,,
\end{equation}
onto itself, i.e.\ $L$ is preserved up to multiplication by a non-zero constant.
\end{lemma}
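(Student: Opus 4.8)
The plan is to exploit the interplay between the projective orbits (Section~3.2.1) and the algebraic relationship between the Benenti tensor $L$ and the Lie derivative $\lie_w$. The key observation is that if $w$ is homothetic for $g$, then by Proposition~\ref{prop:homotheties.and.eigenvectors} the Liouville tensor $a=\psi^{-1}(g)$ is an eigenvector of $\lie_w|_\solSp$, say $\lie_w a = \mu\,a$. Since $\dim\solSp=2$, there is a second eigenvalue (or a Jordan block); writing $\pe a = \psi^{-1}(\pe g)$ for the other basis vector, we first want to understand how $\lie_w$ acts on $L=\pe\sigma\sigma^{-1}$. The first step is therefore to compute $\lie_w L$ in terms of $\lie_w a$ and $\lie_w\pe a$, using formula~\eqref{eqn:benenti.tensor} together with~\eqref{eqn:Lwa}; one finds that $\lie_w L$ is again a polynomial (degree $\le 1$) in $L$ with constant coefficients, because the $|\det(a)/\det(\pe a)|$-factor contributes only scalars and $\lie_w$ acts diagonally (or as a Jordan block) on $\langle a,\pe a\rangle$.

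Second, I would invoke the exponential/flow picture: the local flow $\phi_t=\exp(tw)$ is, by construction, a one-parameter family of \emph{isometries} of $g$ onto metrics in the same projective class (since $w$ is homothetic for $g$, each $\phi_t$ rescales $g$ by a constant, and after the canonical normalisation built into the Liouville tensor, $\phi_t^*$ acts on $\solSp$ by the matrix exponential of an element of~\eqref{eqn:normal-matrices}). The diffeomorphism $\tau$ I am looking for is precisely (a suitable) $\phi_t$, possibly composed with the ``coordinate-swap'' symmetries discussed in Section~\ref{sec:optimizing.h.C}. The point is that $\phi_t^*$ acts on the pair $(a,\pe a)$, hence on $L=\pe\sigma\sigma^{-1}$, by the corresponding linear substitution; since $a$ is an eigenvector, $\phi_t^*a = e^{\mu t}a$, so the ``direction'' of $a$ is preserved, while $\phi_t^*\pe a$ is an affine combination (in the Jordan case) of $a$ and $\pe a$. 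Translating $\pe\sigma^{-1}\sigma = L^{-1}$ back, the map $L\mapsto \pe\sigma'\sigma'^{-1}$ induced by $\phi_t$ sends $L$ to an expression of the form $\alpha(t)L+\beta(t)\Id$ with $\alpha(t)\ne0$: this is exactly the statement that the family~\eqref{eqn:benenti.1r} is mapped onto itself. Concretely: since $\pe g' = \phi_t^*\pe g$ is projectively equivalent to $g'=\phi_t^*g = (\text{const})\,g$, the Benenti tensor $L(g',\pe g') = L(g,\pe g')$ differs from $L(g,\pe g)=L$ by the change of the ``numerator'' metric $\pe g\mapsto\pe g'$; and because $\pe\sigma' = e^t\pe\sigma + (\text{Jordan term})\,\sigma$ in the appropriate basis, one gets $\pe\sigma'\sigma^{-1} = e^t L + c\,\Id$ with $c$ determined by $t$ and the Jordan data, which proves the first claim after composing with the rescaling identifying $g'$ with $g$.

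Third, for the refined statement, suppose in addition $w$ is homothetic for $\pe g$ as well. Then $\pe a$ is \emph{also} an eigenvector of $\lie_w|_\solSp$, so $\lie_w$ is diagonalisable on $\langle a,\pe a\rangle$ (no Jordan block), with eigenvalues $\mu,\nu$ say. Now $\phi_t^*a=e^{\mu t}a$ and $\phi_t^*\pe a = e^{\nu t}\pe a$, hence $\phi_t^*\pe\sigma = e^{(\nu-\mu)t/??}\,\pe\sigma\cdot(\text{scalar})$ — more precisely the induced action on $L=\pe\sigma\sigma^{-1}$ is pure rescaling $L\mapsto \kappa(t)L$ with $\kappa(t)\ne0$, because both the numerator and denominator directions are individually preserved, so no $\Id$-term appears. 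This gives~\eqref{eqn:benenti.2r}. Here one must also use the coordinate swap $x\leftrightarrow y$ (or $z\leftrightarrow\cc z$) from Section~\ref{sec:optimizing.h.C} if needed, which exchanges $L$ with a constant multiple of $L^{-1}$, together with $\phi_t$, to realise all of~\eqref{eqn:benenti.2r} by genuine diffeomorphisms of the parametric normal form.

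\textbf{Main obstacle.} The delicate point I expect is the careful bookkeeping in the Jordan-block case (matrix (II) of~\eqref{eqn:normal-matrices}): there $\phi_t^*\pe a$ acquires a term proportional to $a$ that grows linearly in $t$, and one must check that after dividing by $\det$ and taking $\pe\sigma^{-1}$, this produces exactly a shift by a multiple of $\Id$ (and not some more complicated tensorial term) — i.e.\ that the ``$c\,\Id$'' in~\eqref{eqn:benenti.1r} is forced and sufficient. This amounts to verifying the identity $\pe\sigma'\sigma^{-1} = \alpha L+\beta\Id$ at the level of $2\times2$ matrices, using that $\sigma,\pe\sigma$ are symmetric and $\sigma^{-1}\pe\sigma$ has the requisite eigenvalue structure; it is essentially linear algebra in $2\times2$ matrices but needs to be done cleanly. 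The second, milder obstacle is ensuring that the diffeomorphism $\tau$ can genuinely be taken to preserve the \emph{parametric form} of the metrics in Theorem~\ref{thm:matveev} (not merely the projective class abstractly), which is handled by combining $\phi_t$ with the discrete symmetries already recorded in Section~\ref{sec:preparations}.
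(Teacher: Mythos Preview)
You have misread what the lemma is actually asserting, and this sends your whole argument in the wrong direction. Despite the somewhat unfortunate ``there is a local diffeomorphism~$\tau$'' in the statement, the content of the lemma (look at the proof in the paper and, more importantly, at how the lemma is \emph{used} immediately afterwards) is a \emph{universal} statement: \emph{every} local isometry~$\tau$ between two metrics of the given parametric shape sends the family $\{KL+c\,\Id\}$ to itself. That is precisely what yields ``four differential equations on the components of the Jacobian of~$\tau$'' and constrains the admissible coordinate changes. Your plan instead tries to \emph{construct} a particular~$\tau$, namely $\phi_t$ (possibly composed with a swap). But for a bare existence claim one could simply take $\tau=\mathrm{id}$, so that reading is vacuous; and a construction of a single~$\tau$ gives no restriction whatsoever on other isometries, which is the whole point of the lemma in the later applications.

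Once the correct reading is in place, the proof is a two-line linear-algebra argument, and your elaborate flow/Jordan bookkeeping is unnecessary. Any isometry~$\tau$ is in particular a projective transformation, so $\tau^*$ acts linearly on~$\solSp$ and commutes with the intrinsic operator~$\lie_w$; since by Proposition~\ref{prop:homotheties.and.eigenvectors} the Liouville tensor $a=\psi^{-1}(g)$ is an eigenvector of~$\lie_w$ and the eigenspaces are $1$-dimensional (Remark~\ref{rmk:eigenspaces.are.1d}), one has $\tau^*a=k\,a$ for some $k\ne0$, while $\tau^*\pe a=K_1a+K_2\pe a$. Now use that in dimension~$2$ the map $a\mapsto\sigma$ is the adjugate (hence linear), so $L(a,\,\cdot\,)=(\cdot)\,\sigma^{-1}$ is linear in the second slot and $L(a,a)=\Id$; this gives
\[
\tau^*L(a,\pe a)=L(ka,\,K_1a+K_2\pe a)=\tfrac{K_1}{k}\,\Id+\tfrac{K_2}{k}\,L(a,\pe a),
\]
which is exactly~\eqref{eqn:benenti.1r}. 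If $\pe a$ is also an eigenvector, then $K_1=0$ and you get~\eqref{eqn:benenti.2r}. Your second paragraph actually contains this computation, but applied only to $\tau=\phi_t$; the fix is to run the same two lines for an \emph{arbitrary} isometry~$\tau$, and then the ``main obstacle'' you flag (the Jordan bookkeeping for $\phi_t$) disappears entirely.
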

\begin{proof}
Let~$a$ be an eigenvector of $\lie_w$ and $\tau^*:\solSp(g)\to\solSp(g)$ for a (local) isometry $\tau$. Then $\tau^*(a)=ka$ for $k\in\Rnz$ (where we think of $a$ as a matrix in explicit coordinates). At the same time, the solution $\pe{a}$ (corresponding to $\pe{g}$ via~\eqref{eqn:Liouville.tensor.g}) is mapped to another element of $\solSp$.
Thus, $\tau^*(\pe{a})=K_1a+K_2\pe{a}$ for some constants $\K1,\K2$. Altogether, we obtain
\[
 \tau^*(L(a,\pe{a})) = \frac{1}{k}L(a,K_1a+K_2\pe{a})
 = \frac{\K1}{k}\,\Id+\frac{\K2}{k}\,L(a,\pe{a})
\]
and thus follows the statement.
If~$\pe{a}$ also is an eigenvector of~$\lie_w$, then~$\K1=0$ and~$\tau^*(L)$ is a scalar multiple of~$L$.
\end{proof}

\noindent Broadly speaking, Lemma~\ref{la:benenti.transformations} allows us to obtain constraints to coordinate expressions of isometries, i.e.\ for the local isometry~$\tau:(x,y)\to (u(x,y),v(x,y))$.
In fact the condition $\tau^*(L)$ being in the span of $L$ and $\Id$ yields four differential equations on the components of the Jacobian matrix of $\tau$, providing restrictions on the possible coordinate transformations.
Below, we can obtain explicit solutions for $u(x,y)$ and $v(x,y)$ from these equations, however depending on some free constants. These constants can be constricted further, using explicit metrics corresponding to eigenvectors of~$\lie_w$, and the form-preserving requirement.
We illustrate this by two examples that are going to be referred to in later computations in Section~\ref{sec:proof.normal.forms}.

\paragraph{Example 1: Diagonal Benenti tensor.}
Let us assume the Benenti tensor $L$ has the diagonal form
\begin{equation}\label{eqn:Benenti.diagonal}
L = \begin{pmatrix} X & 0 \\ 0 & Y \end{pmatrix}\,,\quad X=X(x)\,,\,\, Y=Y(y)\,,
\end{equation}
w.r.t.\ coordinates $(x,y)$.
We discuss the possible coordinate transformations preserving the tensor \eqref{eqn:Benenti.diagonal} up to a constant conformal factor.
This addresses the case when we have two distinct real eigenvalues for $\lie_w$; the case when there is only one eigenvalue can be treated similarly, and we shall discuss this at the end of this subsection.
In local coordinates, we may write $L = X\,dx\otimes\partial_x +Y\,dy\otimes\partial_y$.
Take a change of coordinates $\tau:(x,y)\to(u(x,y),v(x,y))$ such that, in the new coordinates $(u,v)$, $L$ is still represented by a diagonal matrix.
More precisely,
\begin{equation}\label{eqn:new.diagonal.L}
\tau^*(L) = U\,du\otimes\partial_u +V\,dv\otimes\partial_v\,,
\end{equation}
with $U=U(u)$ and $V=V(v)$ depending on only one of the new coordinates $(u,v)$, respectively.
The off-diagonal entries of~\eqref{eqn:new.diagonal.L} entail the requirements
\begin{equation}\label{eqn:diagonal.trafo}
X\,x_u\,v_x+Y\,y_u\,v_y = 0\quad\text{and}\quad
X\,x_v\,u_x+Y\,y_v\,u_y = 0\,.
\end{equation}
Consider now the transformation into the new coordinates,
\[
\begin{pmatrix} du \\ dv \end{pmatrix} =
\begin{pmatrix} u_x & u_y \\ v_x & v_y \end{pmatrix}\,
\begin{pmatrix} dx \\ dy \end{pmatrix}\,.
\]
The inverse transformation is given by
\begin{equation}\label{eqn:derivatives.coeffs}
\begin{pmatrix} dx \\ dy \end{pmatrix} =
\frac{1}{u_xv_y-u_yv_x}\,
\begin{pmatrix} v_y & -u_y \\ -v_x & u_x \end{pmatrix}\,
\begin{pmatrix} du \\ dv \end{pmatrix} =
\begin{pmatrix} x_u & x_v \\ y_u & y_v \end{pmatrix}\,
\begin{pmatrix} du \\ dv \end{pmatrix}\,.
\end{equation}
Thus, Equations~\eqref{eqn:diagonal.trafo} become, respectively,
\[
(X-Y)\,v_x\,v_y = 0\quad\text{and}\quad
(Y-X)\,u_x\,u_y = 0\,,
\]
and therefore, locally, we have
\begin{equation}\label{eqn:2r.cases}
u=u(x), v=v(y)\quad\text{or}\quad
u=u(y), v=v(x)\,.
\end{equation}
Next, the entries on the diagonal of~\eqref{eqn:new.diagonal.L}, in matrix representation, entail the requirements
\begin{equation}\label{eqn:diagonal.entries.2r}
X\,x_uu_x+Y\,y_uu_y = K\,U\quad\text{and}\quad
X\,x_vv_x+Y\,y_vv_y = K\,V
\end{equation}
where $K\in\Rnz$ and where the $U$ and $V$ are analogs of $X$ and $Y$, respectively.
Combining this with \eqref{eqn:2r.cases}, we find that either
\begin{equation}\label{eqn:possible.transf.2r.diagonal.L}
 X=KU\,, Y=KV\,,\quad\text{or}\quad X=KV\,, Y=KU\,.
\end{equation}

\paragraph{Example 2.}
Let us now quickly look at the case when~$\lie_w$ admits only one real eigenvalue, i.e.\ the first case of Lemma~\ref{la:benenti.transformations}.
The Benenti tensor shall still have the form~\eqref{eqn:Benenti.diagonal}, so
Equation~\eqref{eqn:diagonal.trafo} remains true in this case. However,~\eqref{eqn:diagonal.entries.2r} must be replaced by
\begin{equation*}
X\,x_uu_x+Y\,y_uu_y = K\,U+c\quad\text{and}\quad
X\,x_vv_x+Y\,y_vv_y = K\,V+c
\end{equation*}
with a constant $c\in\R$, and thus we find
\begin{equation}\label{eqn:possible.transf.1r.diagonal.L}
  X=KU+c\,,\ Y=KV+c\,,\quad
  \text{or}\quad
  X=KV+c\,,\ Y=KU+c\,.
\end{equation}

\noindent Of course, Equations~\eqref{eqn:possible.transf.2r.diagonal.L} and~\eqref{eqn:possible.transf.1r.diagonal.L} are the result of a generic reasoning. The details of the computations are going to be discussed in Section~\ref{sec:proof.normal.forms}, where we apply the reasoning outlined above to prove Theorem~\ref{thm:normal.forms}.

\subsubsection{Parametrization of metrics within a given projective class}\label{sec:reparametrisation}
According to Formula~\eqref{eqn:general.metric}, and assuming degree of mobility~2, we can describe any metric~$g$ inside a projective class in terms of two generating metrics $\g1$, $\g2$. Alternatively, using its Liouville tensor~\eqref{eqn:Liouville.tensor.g}, we can characterize~$g$ as a linear combination of the Liouville tensors~$\a1$ and~$\a2$ (associated to $\g1,\g2$),
$a = \K1\a1+\K2\a2$.
Formally, this corresponds to Cartesian coordinates on the space $\langle\a1,\a2\rangle$. Of course, we are free to choose a different parametrisation. Particularly, in cases where $\lie_w$ does not admit a real eigenvalue, we exploit reparametrizations as follows:
let $\lie_w$ have the form~(III) of~\eqref{eqn:normal-matrices}, and let  $(\a1,\a2)$ be a basis of $\solSp$ in which $\lie_w$ assumes such a form. We have that
\begin{equation}\label{eqn:III.expLw}
\exp(t\lie_w)=e^{\lambda t}\,\begin{pmatrix} \cos(t) & -\sin(t)\\ \sin(t) & \quad\cos(t) \end{pmatrix}.
\end{equation}
Consider a general metric $g=\psi(\K1\a1+\K2\a2)$ from the projective class $\projcl(\g1,\g2)$.
By virtue of Equation~\eqref{eqn:III.expLw}, we can compute
\begin{equation}\label{eqn:III.pullback}
\phi_t^*(\K1\a1+\K2\a2)=e^{\lambda t}\left( \K1\cos(t)\a1-\K1\sin(t)\a2+\K2\sin(t)\a1+\K2\cos(t)\a2 \right).
\end{equation}
Thus, the orbits of the projective flow describe logarithmically spiraling curves in \solSp.

\begin{remark}\label{rmk:III.parameter.values}
For what follows, it will be convenient to reformulate the problem in terms of the alternative parameters $(\theta,K)$ in \solSp, defined by the relations
\begin{equation}\label{eqn:III.reparametrization}
 \K1=K e^{\lambda\theta} \sin(\theta)
 \quad\text{and}\quad
 \K2=K e^{\lambda\theta} \cos(\theta)\,.
\end{equation}
For practical purposes, it is advisable to consider the cases of vanishing and non-vanishing~$\lambda$ separately. If $\lambda=0$, the reparametrization~\eqref{eqn:III.reparametrization} leads to polar parameters $(K,\theta)$ with $K>0$ representing the radius and $\theta\in\halfangl$ being the angle parameter. The inverse transformation for~\eqref{eqn:III.reparametrization} reads, in this case,
\begin{equation}
 K=\sqrt{\K1^2+\K2^2}
 \quad\text{and}\quad
 \theta=\arctan\left(\frac{\K1}{\K2}\right)\,.
\end{equation}
On the other hand, if $\lambda>0$, we require $\theta\in\mathds{R}$. For fixed $K>0$, $\theta$ parametrizes a spiraling curve in $\solSp$.
The parameter~$K$ thus has to be restricted to values $K\in[1,e^{2\lambda\pi})$ in order to avoid ambiguity. We define $K=e^{\lambda\alpha}$ and interpret $\alpha$ as an inclination angle, $\alpha\in\angl$.
For $\lambda>0$, the inverse of~\eqref{eqn:III.reparametrization} is not straightforward, but we can construct $\alpha,\theta$ if we first convert $(\K1,\K2)$ into polar coordinates $(R(\K1,\K2),\varphi(\K1,\K2))$, where~$R$ is the radial and~$\varphi$ the angular parameter. Then we have
\[
 \alpha = \left( \frac{\ln(R)}{\lambda}-\varphi \right) \mod\,2\pi\,,
\]
and, setting
$m:=\frac{1}{2\pi}\,\left( \alpha-\frac{\ln(R)}{\lambda}+\varphi \right)$,
we obtain also $\theta = \varphi + 2\pi m$.
\end{remark}
\noindent The parametrization~\eqref{eqn:III.reparametrization} is adjusted to the problem in the sense that $K$ is invariant along orbits.

\begin{lemma}\label{la:projcurve_III}
	The pairs $(K,\theta)$ and $(K',\theta')$ are related by the transformation~\eqref{eqn:III.pullback} if and only if $K' = K$.
\end{lemma}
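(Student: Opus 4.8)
The plan is to substitute the reparametrisation \eqref{eqn:III.reparametrization} directly into the pullback formula \eqref{eqn:III.pullback} and to read off how the flow $\phi_t$ acts in the coordinates $(K,\theta)$ on $\solSp$. First I would insert $\K1=Ke^{\lambda\theta}\sin\theta$ and $\K2=Ke^{\lambda\theta}\cos\theta$ into \eqref{eqn:III.pullback}, collect the coefficients of $\a1$ and $\a2$, and apply the addition theorems for sine and cosine. This gives
\begin{equation*}
 \phi_t^*(\K1\a1+\K2\a2)
  = Ke^{\lambda(\theta+t)}\sin(\theta+t)\,\a1
   +Ke^{\lambda(\theta+t)}\cos(\theta+t)\,\a2\,,
\end{equation*}
that is, in the parameters $(K,\theta)$ the transformation \eqref{eqn:III.pullback} acts simply by $(K,\theta)\mapsto(K,\theta+t)$: the ``radial'' parameter $K$ is left unchanged and only the ``angle'' $\theta$ is shifted by $t$. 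This single identity is the heart of the proof and involves nothing beyond the above bookkeeping.

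From here the ``if'' direction is immediate: if $K'=K$, take $t=\theta'-\theta$; then \eqref{eqn:III.pullback} maps the element of $\solSp$ with parameters $(K,\theta)$ to the one with parameters $(K,\theta')=(K',\theta')$, using the displayed formula together with the fact that $\theta$ ranges over all of $\R$ when $\lambda>0$ (resp.\ that $\theta'$ already lies in the admissible interval when $\lambda=0$). For the ``only if'' direction, assume $(K,\theta)$ and $(K',\theta')$ are related by \eqref{eqn:III.pullback} for some $t$. Comparing the displayed expression for the transformed coefficients with \eqref{eqn:III.reparametrization} written for $(K',\theta')$, and taking the Euclidean norm of the coefficient vector (so that $\sin^2+\cos^2\equiv1$ is used), I obtain $(K')^2e^{2\lambda\theta'}=K^2e^{2\lambda(\theta+t)}$ together with $(\sin\theta',\cos\theta')=(\sin(\theta+t),\cos(\theta+t))$. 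When $\lambda=0$ this already yields $K'=K$, both parameters being positive.

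The only genuinely delicate point is the case $\lambda>0$ in the ``only if'' direction. There the relation $(\sin\theta',\cos\theta')=(\sin(\theta+t),\cos(\theta+t))$ only gives $\theta'=\theta+t+2k\pi$ for some $k\in\Z$, whence $K'=Ke^{-2\lambda k\pi}$. To conclude $K'=K$ I would invoke the normalisation fixed in Remark~\ref{rmk:III.parameter.values}, namely $K,K'\in[1,e^{2\lambda\pi})$: this forces $k=0$ and hence $K'=K$. The same restriction is what makes $K$ a well-defined function of $(\K1,\K2)$ in the first place; for $\lambda=0$ no such subtlety arises since $K=\sqrt{\K1^2+\K2^2}$ is manifestly invariant under the rotation appearing in \eqref{eqn:III.pullback}.
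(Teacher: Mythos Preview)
Your proof is correct and follows essentially the same approach as the paper: substitute \eqref{eqn:III.reparametrization} into \eqref{eqn:III.pullback}, compare coefficients in the basis $(\a1,\a2)$, and for $\lambda>0$ invoke the normalisation $K,K'\in[1,e^{2\lambda\pi})$ from Remark~\ref{rmk:III.parameter.values} to kill the $2k\pi$ ambiguity. Your treatment of the $\lambda>0$ case is in fact more explicit than the paper's, which merely gestures at the constraint $K,K'<e^{2\lambda\pi}$.
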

\begin{proof}
	Consider~\eqref{eqn:III.pullback} from which we infer
	\begin{equation*}
	  K e^{\lambda (t+\theta)} \left( \sin(\theta+t)\,\a1+\cos(\theta+t)\,\a2 \right)
	  =
	  K' e^{\lambda \theta'} \sin(\theta')\,\a1
	  +K' e^{\lambda \theta'} \cos(\theta')\,\a2 \,.
	\end{equation*}
	Since $(\a1,\a2)$ is a basis, we obtain $\tan(\theta')=\tan(\theta+t)$. If $\lambda=0$, this implies $K'=K$ directly. If $\lambda\ne0$, we also need to take into account that $K',K<e^{2\lambda\pi}$.
	The other implication is straightforward.

\end{proof}

\begin{figure}
  \begin{center}
	\scalebox{.45}{\input{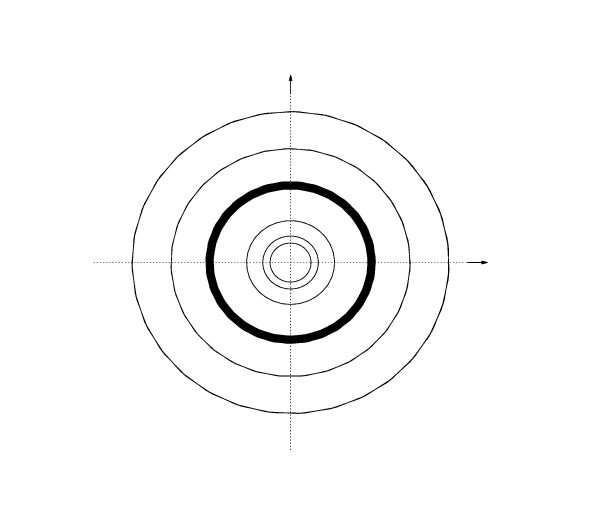}}
	\hspace{1cm}
	\scalebox{.45}{\input{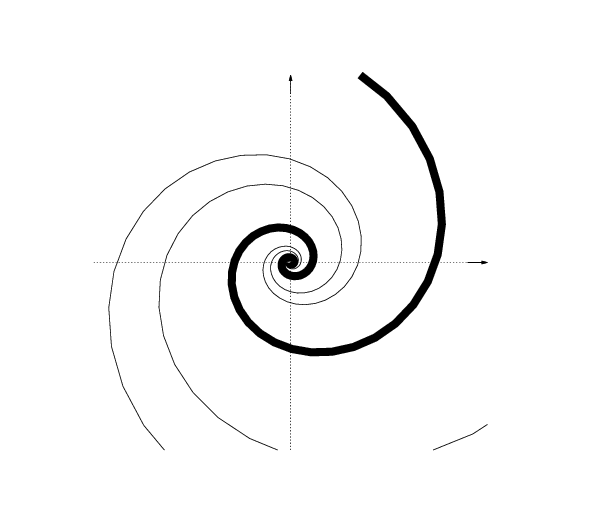}}
	\smallskip
  \end{center}
  \vspace{-1cm}
  \caption{The flow of the projective vector field in \solSp, sketched for different values of $(\K1,\K2)$ and different~$\lambda$.
  The left panel shows orbits for $\lambda=0$, the right panel for $\lambda>0$.
  See Lemma~\ref{la:projcurve_III} for details.}\label{fig:orbit.III}
\end{figure}

\subsection{Metrics with degree of mobility 3}\label{sec:proof.degree.of.mobility.3}
According to Proposition~\ref{prop:dom3.proj.equiv}, metrics with degree of mobility~3 can be transformed, locally, into a metric of the form~\eqref{eqn:general.metric} where the metrics $\g{i}$ are given by Equations~\eqref{eqn:supint.generators}.
The projective vector field, fixed by~\eqref{eqn:fix.proj.vector.field} and case~(I) of~\eqref{eqn:normal-matrices}, is \eqref{eqn:proj.vector.field.CIa}.
In the current section, we establish the normal forms for metrics within the projective class described by~\eqref{eqn:supint.generators} via~\eqref{eqn:space.G}.
We present two ways how this can be achieved. The first is a ``hands-on'' approach that provides us with explicit coordinate normal forms.
The second way is more conceptual and gives us a better understanding of the geometry of the projective class modulo isometries.
Before we proceed with the actual proof, let us make a few useful comments.
\begin{remark}\label{rmk:dom3.eigenvalues}
For the metrics $\g1$, $\g2$ and $\g3$ in~\eqref{eqn:supint.generators}, the projective vector field is homothetic, according to Propositions~\ref{prop:dom3.proj.equiv} and~\ref{prop:homotheties.and.eigenvectors}. The Lie derivatives w.r.t.~\eqref{eqn:proj.vector.field.CIa} are
\[
 \lie_w\g1 = 5\g1\,, \qquad
 \lie_w\g2 = 2\g2\,, \qquad
 \lie_w\g3 = -4\g3\,.
\]
Any isometry is a projective transformation, and moreover any isometry preserves eigenspaces of~$\lie_w$, as eigenspaces are geometric objects.
Therefore, for an isometry $\tau$ we have the identities $\tau^*\g{i}=\mu_i\g{i}$ with $\mu_i\in\Rnz$ for $i\in\{1,2,3\}$.
On the other hand, the metrics~\eqref{eqn:supint.generators} suffice, due to Formula~\eqref{eqn:general.metric}, to describe any metric of the projective class $\projcl(\g1,\g2,\g3)$, using the same system of coordinates $(x,y)$. In order to identify which metrics in $\projcl(\g1,\g2,\g3)$ can be transformed into one another, we therefore need to understand those coordinate transformations that preserve the form of the metric $g\in\projcl(\g1,\g2,\g3)$ obtained via~\eqref{eqn:general.metric}.
\end{remark}

\noindent According to Remark~\ref{rmk:dom3.eigenvalues}, the Lie derivative $\lie_w:\solSp\to\solSp$ admits three real, different eigenvalues.
This restricts heavily the freedom to perform coordinate transformations that preserve the form of the metric.
\begin{lemma}\label{la:dom3.form.preserving.isometry}
Any isometry that preserves the form of a metric $g\in\projcl(\g1,\g2,\g3)$ in the projective class~\eqref{eqn:supint.generators} is, in these coordinates, of the form
\begin{equation}\label{eqn:dom3.transformation}
 (x_\text{new},y_\text{new}) = (k^2x_\text{old},ky_\text{old})\quad
 \text{where $k\in\Rnz$}\,.
\end{equation}
\end{lemma}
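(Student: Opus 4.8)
The plan is to exploit the three distinct eigenvalues of $\lie_w$ together with the explicit coordinate expressions~\eqref{eqn:supint.generators} for $\g1,\g2,\g3$. As noted in Remark~\ref{rmk:dom3.eigenvalues}, any isometry $\tau$ preserving the form of a metric in $\projcl(\g1,\g2,\g3)$ must permute eigenspaces of $\lie_w$; since the three eigenvalues $5,2,-4$ are pairwise distinct, the eigenspaces are one-dimensional and cannot be interchanged, so $\tau^*\g{i}=\mu_i\,\g{i}$ for constants $\mu_i\in\Rnz$, $i=1,2,3$. The first step is therefore to write down what $\tau^*\g{i}=\mu_i\g{i}$ means concretely: writing $\tau:(x,y)\mapsto(u(x,y),v(x,y))$, the pullback of each $\g{i}$ (a quadratic differential in $du,dv$ with coefficients that are explicit rational/polynomial functions of $x,y$) must equal $\mu_i$ times the corresponding expression in $dx,dy$. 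This gives, for each $i$, three scalar PDEs on the four partial derivatives $u_x,u_y,v_x,v_y$.

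The key step is to use $\tau^*\g1=\mu_1\g1$ first, since $\g1=(y^2+x)\,dx\,dy$ has the simplest (off-diagonal, rank-two) form. Pulling back, $(v^2+u)\,(u_x\,dx+u_y\,dy)(v_x\,dx+v_y\,dy)=\mu_1(y^2+x)\,dx\,dy$; comparing the $dx^2$ and $dy^2$ coefficients forces $u_x v_x=0$ and $u_y v_y=0$, hence (locally, by continuity and non-degeneracy of the Jacobian) either $u=u(x),v=v(y)$ or $u=u(y),v=v(x)$. I expect the second branch to be eliminated quickly: $\g2$ and $\g3$ in~\eqref{eqn:supint.generators} contain $dy^2$ terms but no $dx^2$ term, and this asymmetry is not preserved under swapping the roles of the coordinates (one can also invoke that $w=2x\partial_x+y\partial_y$ must be pushed to a multiple of itself, and its two real eigenlines $\partial_x,\partial_y$ have different weights $2$ and $1$, so they cannot be exchanged). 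So we are in the branch $u=u(x)$, $v=v(y)$.

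With $u=u(x),v=v(y)$, the remaining conditions become ODEs. From $\tau^*\g1=\mu_1\g1$ the off-diagonal coefficient gives $(v(y)^2+u(x))\,u'(x)\,v'(y)=\mu_1(y^2+x)$; matching the structure forces $u(x)=\alpha x+\beta$ and $v(y)^2=\alpha y^2+\gamma$ type relations — more carefully, differentiating in $x$ and $y$ separately shows $u'$ and $v'\!\cdot v$ must be constants, giving $u=ax+b$, $v^2=cy^2+d$ with $a\,\sqrt{c}$-type compatibility, and then plugging into $\tau^*\g2=\mu_2\g2$ (whose $dy^2$-coefficient $\tfrac{(v^2+u)^2}{v^4}$ pulled back must match $\tfrac{(y^2+x)^2}{y^4}$ up to $\mu_2$) pins down $b=d=0$ and forces $v=k y$, $u=k^2 x$ with a single free constant $k\in\Rnz$; the factor $k^2$ versus $k$ is exactly the ratio of the weights of $\partial_x$ and $\partial_y$ under $w$. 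A final check that this transformation does send each $\g{i}$ to a nonzero multiple of itself (indeed $\mu_1=k^5$, $\mu_2=k^2$, $\mu_3=k^{-4}$, matching the eigenvalues) confirms~\eqref{eqn:dom3.transformation} and that these are the only such isometries.

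The main obstacle I anticipate is purely bookkeeping: carefully extracting from the three tensor equations $\tau^*\g{i}=\mu_i\g{i}$ enough independent scalar relations to kill all free constants except $k$, while correctly handling the branch $u=u(y),v=v(x)$ (which must be shown genuinely impossible rather than merely non-generic). Using $\g3$ — with its awkward denominator $(3x-y^2)^6$ — as a consistency check rather than a primary equation should keep the computation manageable; the denominators $y$ and $3x-y^2$ transforming covariantly under $(x,y)\mapsto(k^2x,ky)$ is itself a strong constraint that rapidly forces the linear form of the transformation.
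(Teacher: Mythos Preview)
Your proposal is correct and follows essentially the same route as the paper: start with $\tau^*\g1=\mu_1\g1$ to reduce to $u=u(x),v=v(y)$ or $u=u(y),v=v(x)$, then use $\g2$ to kill the second branch and to force $v=ky$ and $u$ linear, and finally pin down $u=k^2x$. Two small corrections: first, your parenthetical remark that $\g3$ ``contains a $dy^2$ term but no $dx^2$ term'' is wrong --- $\g3$ in~\eqref{eqn:supint.generators} has the term $9(y^2+x)^2\,dx^2$ --- so the elimination of the swapped branch must come from $\g2$ alone (exactly as the paper does). Second, your plan to extract everything from $\g1$ and $\g2$ and leave $\g3$ as a mere check is fine and actually slightly cleaner than the paper's version: once $v=ky$ and $u=ax+b$ are known, resubstituting into $(v^2+u)\,u'v'=\mu_1(y^2+x)$ immediately gives $b=0$ and $a=k^2$. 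The paper instead carries an independent constant $\eta$ in $x=\eta u$ and only fixes $\eta=k^2$ by invoking $\tau^*\g3=\mu_3\g3$; this is equivalent but needlessly brings in the more complicated metric.
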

\begin{proof}
Consider the metrics \g1\ and \g2\ as in~\eqref{eqn:supint.generators}, and a transformation $(x,y)=(x(u,v),y(u,v))$. For $\g1$, we obtain (at `!' impose the preservation of eigenspaces)
\begin{equation}\label{eqn:dom3.isometry.g1}
 \g1	= (x+y^2)\,(x_uy_u\,du^2+x_vy_v\,dv^2+(x_uy_v+x_vy_u)\,dudv)
		\stackrel{!}{=}\mu_1\,(u+v^2)\,dudv
\end{equation}
Thus, around almost every point, we have one of the following possibilities:
\[
  (i)\ x=x(u),\quad y=y(v)
  \qquad\text{or}\qquad
  (ii)\ x=x(v),\quad y=y(u)\,,
\]
so~\eqref{eqn:dom3.isometry.g1} gives
\begin{equation}\label{eqn:dom3.isometry.cond.1}
  \mu_1\,(u+v^2) = (x+y^2)\,(x_uy_v+x_vy_u).
\end{equation}
Now, do an analogous computation for \g2.
Case (ii) turns out to be incompatible as it would replace the $dy^2$-term by a term in $du^2$ instead of $dv^2$.
For alternative (i), we have
\begin{align}
  \g2 &= -2\,\frac{x+y^2}{y^3}\,x_uy_v\,dudv +\frac{(x+y^2)^2}{y^4}\,y_v^2\,dv^2 \nonumber \\
  &\stackrel{\eqref{eqn:dom3.isometry.cond.1}}{=}
   -2\mu_1\,\frac{u+v^2}{y^3}\,dudv +\frac{\mu_1^2}{x_u^2}\,\frac{(u+v^2)^2}{y^4}\,dv^2
  \stackrel{!}{=}
	  -2\mu_2\,\frac{u+v^2}{v^3}\,dudv +\mu_2\,\frac{(u+v^2)^2}{v^4}\,dv^2.
  \label{eqn:dom3.isometry.cond.2}
\end{align}
The coefficients of, respectively, $dudv$ and $dv^2$ in \eqref{eqn:dom3.isometry.cond.2} yield the equations
\[
  \frac{\mu_1}{y^3} = \frac{\mu_2}{v^3}
  \qquad\text{and}\qquad
  \frac{\mu_1^2}{y^4\,x_u^2} = \frac{\mu_2}{v^4}\,,
\]
from which we can deduce
\begin{equation}\label{eqn:dom3.isometry.cond.3}
  y=\sqrt[3]{ \frac{\mu_1}{\mu_2} }\,v=:\mu\,v
  \qquad\text{and}\qquad
  x_u^2=\frac{\mu_1^2}{\mu_2}\,\sqrt[3]{\frac{\mu_2}{\mu_1}}^4 = \mu_1^{\nicefrac23}\sqrt[3]{\mu_2}=:\eta\,,
\end{equation}
where we let the root take account of the sign and where $\mu,\eta\in\Rnz$ are constants.
In view of~\eqref{eqn:dom3.isometry.cond.2}, we can integrate the second equation of~\eqref{eqn:dom3.isometry.cond.3} and obtain $x=\eta\,u$. Note that $\mu$ and $\eta$ are not independent, but, since their definition is rather complicated, we shall continue with the investigation of the transformation $(x,y)=(\eta u,\mu v)$ without prior study of the relation of $\mu$ and $\eta$. Apply this coordinate transformation to~$\g3$ of~\eqref{eqn:supint.generators}. The result again has to be in the same eigenspace. Thus,
\begin{multline*}
  \g3
  = \frac{\mu^2 v^2+\eta u}{(3\eta u-\mu^2 v^2)^6} \left(
	  9\,(\mu^2 v^2+\eta u)\,\eta^2\,du^2
	  -4\mu\,y\,(9\eta u+\mu^2 v^2)\,\eta\mu\,dudv
	  +12\eta u\,(\mu^2 v^2+\eta u)\,\mu^2\,dv^2
  \right) \\
  \stackrel{!}{=}
  \mu_3\,\frac{v^2+u}{(3u-v^2)^6} \left(
	  9\,(v^2+u)\,du^2
	  -4y\,(9u+v^2)\,dudv
	  +12u\,(v^2+u)\,dv^2
  \right)\,.
\end{multline*}
The coefficients w.r.t.\ $du^2$ and $dv^2$ of this equation give two equations from which we may infer
\[
  \eta^2\,\frac{(\mu^2 v^2+\eta u)^2}{(3\eta u-\mu^2 v^2)^6}
  = \mu_3\frac{(v^2+u)^2}{(3u-v^2)^6}
  = \eta\mu^2\,\frac{(\mu^2 v^2+\eta u)^2}{(3\eta u-\mu^2 v^2)^6}\,.
\]
Therefore, we have verified $\eta=\mu^2$ and thus, in order to preserve the form of the metric, it is necessary that the coordinate transformation takes the form $(x,y)=(\mu^2u,\mu v)$.
\end{proof}

\begin{remark}\label{rmk:normalization.superintegrable.case}
Consider an arbitrary metric $g\in\projcl(\g1,\g2,\g3)$.
Due to Formula~\eqref{eqn:general.metric}, if we apply $(x,y)\to(k^2x,ky)$,
\[
 g=\frac{
	\sum_{i=1}^3 \K{i}k^{-\frac{\alpha_i}{3}}\,\frac{\g{i}}{\det(\g{i})^{\nicefrac23}}
 }{
	\left[\det\big(
	\sum_{i=1}^3 \K{i}k^{-\frac{\alpha_i}{3}}\,\frac{\g{i}}{\det(\g{i})^{\nicefrac23}}
	\big)\right]^2}\,,
\]
where $(\alpha_1,\alpha_2,\alpha_3)=(5,2,-4)$ according to Remark~\ref{rmk:dom3.eigenvalues}.
Thus, by a suitable choice of~$k$, we may normalize the absolute value of one coefficient among $K_1$, $K_2$, $K_3$, leaving only its sign as a free parameter.
\end{remark}

\subsubsection{Way 1: Normal forms as explicit coordinate expressions}
Our starting point is Remark~\ref{rmk:normalization.superintegrable.case}.
Due to Propositions~\ref{prop:dom3.proj.equiv} and~\ref{prop:homotheties.and.eigenvectors}, the metric can be taken to be in the form C(Ia) of Theorem~\ref{thm:matveev}.
Generically, $\K{i}\ne0$ for $i\in\{1,2,3\}$, but we need to admit the possibility~$K_j=0$ for one~$j\in\{1,2,3\}$ (when two of the $K_i$ are zero, the metric is proportional to one of~\eqref{eqn:supint.generators} and thus admits a homothety).

\begin{lemma}\label{la:CIa.K3.zero}
If $\K3=0$, $\K1,\K2\ne0$, then a metric~$g\in\projcl(\g1,\g2,\g3)$ is isometric to
\begin{equation}\label{eqn:superint.K3zero.2}
	 g[\kappa,\varrho]
	 =\kappa\,\left(-2\frac{y^2+x}{(y-\varrho)^3}\,dxdy+\frac{(y^2+x)^2}{(y-\varrho)^4}\,dy^2\right)
\end{equation}
where $\kappa\in\Rnz$ and $\varrho\in\pmo$.
There is no coordinate transformation between two different copies~$g=g[\kappa,\varrho]$ and $g'=g[\kappa',\varrho']$ of~\eqref{eqn:superint.K3zero.2}.
\end{lemma}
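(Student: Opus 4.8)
The statement has two parts: first, exhibit an isometry bringing a metric $g\in\projcl(\g1,\g2,\g3)$ with $\K3=0$ into the two–parameter form~\eqref{eqn:superint.K3zero.2}; second, show that no coordinate transformation links two copies of~\eqref{eqn:superint.K3zero.2} with distinct parameters. I will treat these in turn, relying heavily on Lemma~\ref{la:dom3.form.preserving.isometry} and Remark~\ref{rmk:normalization.superintegrable.case}.

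For the first part, I start from Formula~\eqref{eqn:general.metric} with $\K3=0$, so $g=\psi(\K1\a1+\K2\a2)$, which by~\eqref{eqn:Liouville.tensor.g} and~\eqref{eqn:supint.generators} is a metric built only from $\g1,\g2$. A direct computation of $\psi(\K1\a1+\K2\a2)$ using the explicit forms $\g1=(y^2+x)\,dxdy$ and $\g2=-2\tfrac{y^2+x}{y^3}\,dxdy+\tfrac{(y^2+x)^2}{y^4}\,dy^2$ produces a metric of the shape $\kappa\big(-2\tfrac{y^2+x}{(y-c)^3}\,dxdy+\tfrac{(y^2+x)^2}{(y-c)^4}\,dy^2\big)$ for suitable $\kappa=\kappa(\K1,\K2)\in\Rnz$ and $c=c(\K1,\K2)$ arising from the ratio $\K1/\K2$ (the shift $y\mapsto y-c$ coming from the relative weighting of the $dxdy$-terms of $\a1$ and $\a2$). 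Then I invoke Remark~\ref{rmk:normalization.superintegrable.case}: applying the admissible transformation $(x,y)\mapsto(k^2x,ky)$ (the only form-preserving one, by Lemma~\ref{la:dom3.form.preserving.isometry}) rescales $\K1\mapsto k^{-5/3}\K1$ and $\K2\mapsto k^{-2/3}\K2$, which lets me normalise $|c|$ to $1$, so $\varrho=c/|c|\in\pmo$; the residual sign of $c$ cannot be removed since it is not of the form $|c|$ times a cube-power factor that $k$ could flip. The surviving free data are exactly $\kappa\in\Rnz$ and $\varrho\in\pmo$, giving~\eqref{eqn:superint.K3zero.2}.

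For the second (rigidity) part, suppose $\tau$ is a local diffeomorphism with $\tau^*(g[\kappa',\varrho'])=g[\kappa,\varrho]$. Since both metrics lie in the same projective class $\projcl(\g1,\g2,\g3)$, and this property is projective, $\tau$ is a projective transformation; hence $\tau$ preserves the eigenspaces of $\lie_w$ (these are geometric), so by the argument of Lemma~\ref{la:dom3.form.preserving.isometry} $\tau$ must be of the form $(x,y)=(\mu^2 u,\mu v)$. But~\eqref{eqn:superint.K3zero.2} is \emph{not} of the form C(Ia); it has the pole line $y=\varrho$ rather than $y=0$. Applying $(x,y)=(\mu^2u,\mu v)$ to $g[\kappa',\varrho']$ moves its pole line to $v=\varrho'/\mu$ and rescales the overall factor, so matching with $g[\kappa,\varrho]$ forces $\varrho'/\mu=\varrho$ together with a relation between $\kappa',\mu,\kappa$. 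Since $\varrho,\varrho'\in\pmo$, the first relation gives $\mu=\pm1$, i.e.\ $\mu^2=1$; feeding $\mu=\pm1$ back into the coefficient-matching equations (comparing the $dxdy$- and $dy^2$-coefficients as in the proof of Lemma~\ref{la:dom3.form.preserving.isometry}) yields $\varrho'=\varrho$ and $\kappa'=\kappa$. Hence distinct $(\kappa,\varrho)$ give non-diffeomorphic metrics.

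\textbf{Main obstacle.} The conceptual steps are routine given Lemma~\ref{la:dom3.form.preserving.isometry}; the only genuinely delicate point is the bookkeeping in the first part — verifying that $\psi(\K1\a1+\K2\a2)$ really collapses to the single shift parameter $c$ (so that the two parameters $\K1,\K2$ modulo the rescaling freedom reduce to $(\kappa,\varrho)$ with $\varrho\in\pmo$ only), and that the sign of $c$ is a genuine invariant not killed by $k$. This requires carefully tracking how the weights $\det(\g{i})^{-2/3}$ interact with the $(x,y)\mapsto(k^2x,ky)$ scaling, exactly the computation sketched in Remark~\ref{rmk:normalization.superintegrable.case}; I expect it to be a short but sign-sensitive calculation rather than a conceptual difficulty.
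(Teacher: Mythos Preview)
Your first part (normalising to the two-parameter form via Remark~\ref{rmk:normalization.superintegrable.case}) agrees with the paper. For the rigidity part you take a different route: the paper does \emph{not} apply Lemma~\ref{la:dom3.form.preserving.isometry} directly here but instead uses the length-preservation device of Section~\ref{sec:poles.isometries}, writing out $g(w,w)$ and $g'(w,w)$ along the flow $\phi_s(x_0,y_0)=(s^2x_0,sy_0)$, matching poles in the flow parameter~$s$ to get $\varrho y_0=\varrho'y_0'$, and then reading off $\kappa'=\kappa$, $x_0'=x_0$, $\varrho'=\varrho$ from the resulting polynomial identity in~$s$. Your direct substitution approach is precisely what the paper uses for the companion Lemmas~\ref{la:CIa.K2.zero} and~\ref{la:CIa.K2.K3.nonzero}, so it is a natural alternative.

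There is, however, a genuine gap in your argument at the $\mu=-1$ step. You assert that ``feeding $\mu=\pm1$ back into the coefficient-matching equations yields $\varrho'=\varrho$'', but this is false for $\mu=-1$. Substituting $(x,y)=(u,-v)$ into $g[\kappa',\varrho']$ one finds $y^2+x=v^2+u$, $(y-\varrho')^3=-(v+\varrho')^3$, $(y-\varrho')^4=(v+\varrho')^4$ and $dx\,dy=-du\,dv$, whence
\[
\tau^*g[\kappa',\varrho']
=\kappa'\Big(-2\,\tfrac{v^2+u}{(v+\varrho')^3}\,du\,dv+\tfrac{(v^2+u)^2}{(v+\varrho')^4}\,dv^2\Big)
=g[\kappa',-\varrho'].
\]
So $\mu=-1$ gives a bona fide isometry $g[\kappa,\varrho]\cong g[\kappa,-\varrho]$, not a contradiction; coefficient matching alone cannot force $\varrho'=\varrho$. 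To complete your argument you must exclude $k=-1$ in Lemma~\ref{la:dom3.form.preserving.isometry} by some additional input---for instance a domain restriction such as $y>0$ (the paper imposes this explicitly in the parallel case C(Ib), Section~\ref{sec:C.7}), which rules out the reflection $y\mapsto-y$. Without such a restriction your rigidity claim, as written, does not go through.
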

\begin{proof}
Equation~\eqref{eqn:superint.K3zero.2} is obtained by a suitable choice of~$\K1$ according to Remark~\ref{rmk:normalization.superintegrable.case}.
In order to prove that two different copies of~\eqref{eqn:superint.K3zero.2} are non-isometric, recall that the flow of the projective vector field~\eqref{eqn:proj.vector.field.CIa} is $\phi_s(x_0,y_0)=(s^2x_0,sy_0)$ with $s=e^t>0$.
Consider Equation~\eqref{eqn:isometries.length.preservation}, i.e.
\begin{equation}\label{eqn:CIa.lengths.equation}
  \kappa\,\left(-2\frac{y_0^2+x_0}{(s\,y_0-\varrho)^3}\,s^5\,x_0y_0
  +\frac{s^6\,(y_0^2+x_0)^2}{(s\,y_0-\varrho)^4}\,y_0^2\right)
  =
  \kappa'\,\left(-2\frac{{y'_0}^2+x'_0}{(s\,y'_0-\varrho')^3}\,s^5\,x'_0y'_0
  +\frac{s^6\,({y'_0}^2+x'_0)^2}{(s\,y'_0-\varrho')^4}\,{y'_0}^2\right)\,.
\end{equation}
In view of Section~\ref{sec:poles.isometries}, we compare the poles of either side and find $\varrho y_0=\varrho' y_0'$.
Resubstituting this into~\eqref{eqn:CIa.lengths.equation}, we obtain a polynomial equation in~$s$,
\begin{equation}\label{eqn:CIa.K3.zero.nonisometry.polynomial.cond}
 \big[ (\kappa-\kappa')y_0^4+(\kappa'{x'_0}^2-\kappa x_0^2)\big]\,y_0^2\,s^6
  +2\varrho\,y_0\,\big[  (\kappa x_0-\kappa'x'_0)y_0^2+(\kappa x_0^2-\kappa'{x'_0}^2) \big]\,s^5
  =0\,,
\end{equation}
whose coefficients yield a system of polynomial equations.
For generic values of $y_0$, it implies $\kappa'=\kappa$ and $x_0'=x_0$.
Substituting these relations back into~\eqref{eqn:CIa.lengths.equation}, we have
\[
  -2\frac{y_0^2+x_0}{(s\,y_0-\varrho)^3}\,x_0y_0
  +\frac{s\,(y_0^2+x_0)^2}{(s\,y_0-\varrho)^4}\,y_0^2
  =
  -2\frac{\varrho'\varrho\,y_0^2+x_0}{(s\,\varrho'\varrho\,y_0-\varrho')^3}\,
			  \varrho'\varrho\,x_0y_0
  +\frac{s\,(\varrho'\varrho\,y_0^2+x'_0)^2}{(s\,\varrho'\varrho\,y_0-\varrho')^4}\,\varrho'\varrho\,y_0^2\,,
\]
from which we finally infer $\varrho'=\varrho$.
\end{proof}

\begin{lemma}\label{la:CIa.K2.zero}
If $\K2=0$, $\K1,\K3\ne0$, then a metric $g\in\projcl(\g1,\g2,\g3)$ is isometric to
\begin{equation}\label{eqn:superint.K2zero}
g[\kappa,\varrho]
=\frac{\kappa}{F(\varrho;x,y)^2}\,\left(
9(y^2+x)^2\,dx^2
-2(y^3+9xy-2\varrho)(y^2+x)\,dxdy
+12x(y^2+x)^2\,dy^2
\right)\,.
\end{equation}
with $\varrho\in\pmo$ and $\kappa\in\Rnz$ and
$F(c;x,y) =
y^6-9xy^4+27x^2y^2-27x^3+4c^2-(36xy+4y^3)\,c$.
There is no coordinate transformation between two different copies~$g=g[\kappa,\varrho]$ and $g'=g[\kappa',\varrho']$ of metrics~\eqref{eqn:superint.K2zero}.
\end{lemma}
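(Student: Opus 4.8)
The plan is to follow the proof of Lemma~\ref{la:CIa.K3.zero} almost verbatim, with the eigenspaces $\langle\a1\rangle$ and $\langle\a3\rangle$ (eigenvalues $5$ and $-4$ of $\lie_w$; recall Remark~\ref{rmk:dom3.eigenvalues}) playing the role that $\langle\a1\rangle$ and $\langle\a2\rangle$ played there. There are two things to establish: (i) that $\K2=0$, $\K1\K3\ne0$ produces, after normalization, a metric of the form~\eqref{eqn:superint.K2zero}; and (ii) that two distinct copies $g[\kappa,\varrho]$, $g[\kappa',\varrho']$ of~\eqref{eqn:superint.K2zero} are not related by any coordinate transformation. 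Throughout, $w$ denotes the projective vector field~\eqref{eqn:dom3.proj.vf}, normalized by~\eqref{eqn:fix.proj.vector.field} together with case~(I) of~\eqref{eqn:normal-matrices}; since any isometry is a projective transformation and $\dim\projalg=1$, such an isometry sends $w$ to itself.

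For part~(i), substitute $\K2=0$ in Formula~\eqref{eqn:general.metric}, so that $g=\psi(\K1\a1+\K3\a3)$ with $\a{i}=\psi^{-1}(\g{i})$ and $\g1,\g3$ taken in the explicit coordinates of~\eqref{eqn:supint.generators}. Carrying out the algebra (forming the weighted tensor $\K1\a1+\K3\a3$ and applying $\psi$ of~\eqref{eqn:Liouville.tensor.g}) produces a rational metric whose denominator is the square of a polynomial of the form $F(c;x,y)=y^6-9xy^4+27x^2y^2-27x^3+4c^2-(36xy+4y^3)c=(y^2-3x)^3+4c^2-(36xy+4y^3)c$, with $c$ a constant built from $\K1,\K3$; for $c=0$ one has $F(0;x,y)^2=(3x-y^2)^6$, matching the denominator of $\g3$. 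Then I would invoke Remark~\ref{rmk:normalization.superintegrable.case}: under $(x,y)\mapsto(k^2x,ky)$ one has $\K1\mapsto\K1k^{-5/3}$ and $\K3\mapsto\K3k^{4/3}$, so choosing $k$ suitably (and allowing $k<0$) normalizes one of the two coefficients entirely, leaving only an overall factor $\kappa\in\Rnz$ and a residual sign $\varrho\in\pmo$; matching the resulting expression against~\eqref{eqn:superint.K2zero} is then a direct verification. This case does not overlap with C(Ia) having $\K2\ne0$, nor with the $\K3=0$ case of Lemma~\ref{la:CIa.K3.zero}, because the set of nonvanishing components relative to the $\lie_w$-eigenbasis is a projective invariant (alternatively, one may exclude the overlap a posteriori via the parameter discussion at the end of the proof of Theorem~\ref{thm:normal.forms}).

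For part~(ii), let $\tau$ be an isometry with $g[\kappa,\varrho]=\tau^*(g[\kappa',\varrho'])$; as above $\tau_*w=w$. With the flow $\phi_s(x_0,y_0)=(s^2x_0,sy_0)$, $s=e^t>0$, the length identity~\eqref{eqn:isometries.length.preservation} becomes, for $p_0'=\tau(p_0)=(x_0',y_0')$, an identity of rational functions of $s$ (cf.\ Section~\ref{sec:poles.isometries}). A short computation gives, along the orbit, $g[\kappa,\varrho](w,w)=4\kappa\,x_0(y_0^2+x_0)\,s^5\,(s^3Q_0+2\varrho y_0)/F_0(s)^2$, where $Q_0=9x_0^2+3x_0y_0^2+2y_0^4$ and $F_0(s)=F(\varrho;s^2x_0,sy_0)=(y_0^2-3x_0)^3\,s^6-\varrho(36x_0y_0+4y_0^3)\,s^3+4$, and similarly for the primed side. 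Comparing the (double) poles in $s$ forces $F_0$ and $F_0'$, which are quadratics in $s^3$ with the same constant term $4$, to coincide, whence $y_0^2-3x_0=y_0'^2-3x_0'$ and $\varrho(36x_0y_0+4y_0^3)=\varrho'(36x_0'y_0'+4y_0'^3)$. Substituting these relations back into~\eqref{eqn:isometries.length.preservation} and equating coefficients of the ensuing polynomial identity in $s$ yields a polynomial system which, for a generic base point $(x_0,y_0)$, forces $\kappa'=\kappa$, a relation between $(x_0,y_0)$ and $(x_0',y_0')$, and finally $\varrho'=\varrho$. This is the assertion.

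The main obstacle, exactly as in Lemma~\ref{la:CIa.K3.zero}, is the pole analysis of $F_0$: one must dispose of the non-generic loci where the leading coefficient $(y_0^2-3x_0)^3$ vanishes (so that the denominator drops degree in $s$) or where the quadratic in $s^3$ has a repeated root, and verify that these are genuinely avoided for generic base points; one must also check that the leftover relation between $(x_0,y_0)$ and $(x_0',y_0')$ does not re-introduce freedom in $\kappa$ or $\varrho$. A convenient cross-check is Lemma~\ref{la:dom3.form.preserving.isometry}: the only form-preserving isometries in these coordinates are $(x,y)\mapsto(k^2x,ky)$, and one verifies directly that these act on the pair $(\kappa,\varrho)$ precisely as the statement prescribes, i.e.\ they cannot alter $\varrho$ and cannot alter $\kappa$ once the parametric form~\eqref{eqn:superint.K2zero} has been fixed.
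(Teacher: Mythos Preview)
Your proposal is correct, but the paper takes precisely the shortcut you relegate to a ``cross-check'' at the end. For part~(i) you proceed exactly as the paper does (compute $\psi(\K1\a1+\K3\a3)$ and normalize via Remark~\ref{rmk:normalization.superintegrable.case}). For part~(ii), however, the paper does \emph{not} imitate the pole analysis of Lemma~\ref{la:CIa.K3.zero}; instead it invokes Lemma~\ref{la:dom3.form.preserving.isometry} directly: any isometry preserving the form of a C(Ia) metric is $(x,y)\mapsto(k^2x,ky)$, and substituting this into $\tau^*(g[\kappa,\varrho])=g[\kappa',\varrho']$ gives a polynomial identity in $x,y$ whose only real solution is $k=1$, $\kappa'=\kappa$, $\varrho'=\varrho$.

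Your route via~\eqref{eqn:isometries.length.preservation} works (your expressions for $g(w,w)$ and $F_0(s)$ are correct), but it is the harder path: you must handle the non-generic loci where $F_0$ drops degree or shares a root with the numerator factor $s^3Q_0+2\varrho y_0$, and then still extract $\kappa'=\kappa$, $\varrho'=\varrho$ from the resulting relations. The paper's approach sidesteps all of this because Lemma~\ref{la:dom3.form.preserving.isometry} already pins down the isometry to a one-parameter family, reducing the question to elementary algebra. What your longer route buys is methodological uniformity with Lemma~\ref{la:CIa.K3.zero}; what the paper's route buys is brevity and the avoidance of genericity caveats.
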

\begin{proof}
From Equation~\eqref{eqn:general.metric}, we infer
\begin{equation*}
  g=\frac{\K3^{-3}}{F\left(\frac{\K1}{\K3};x,y\right)^2}\,\left(
  9(y^2+x)^2\,dx^2
  -2\left(y^3+9xy-2\frac{\K1}{\K3}\right)(y^2+x)\,dxdy
  +12x)(y^2+x)^2\,dy^2
  \right)\,,
\end{equation*}
with
$
F(c;x,y) =
y^6-9xy^4+27x^2y^2-27x^3+4c^2-(36xy+4y^3)\,c
$.
In view of Remark~\ref{rmk:normalization.superintegrable.case}, a change of coordinates $(x,y)\to(k^2x,ky)$ with suitable $k$ then yields~\eqref{eqn:superint.K2zero}.
We continue with an argument by contradiction. Thus, assume there is a (non-trivial) coordinate transformation~$\tau$ between two metrics~\eqref{eqn:superint.K2zero}. It must, by virtue of Lemma~\ref{la:dom3.form.preserving.isometry}, be of the form~\eqref{eqn:dom3.transformation}.
Then $\tau^*(g)=g'$ is an equation polynomial in $dx,dy,x,y$ and therefore implies a system of algebraic equations on $\kappa,\varrho,\kappa',\varrho'$ and $k$ only.
Solving this system under the assumption $k\ne0$, we find that the only real solution is~$k=1$ and $\kappa'=\kappa$, $\varrho'=\varrho$. This is the identity, and thus $\tau$ is trivial.
This concludes the proof.
\end{proof}

\begin{lemma}\label{la:CIa.K2.K3.nonzero}
If $\K2,\K3\ne0$, $\K1\in\R$, then a metric $g\in\projcl(\g1,\g2,\g3)$ is isometric to
\begin{equation}\label{eqn:superint.K2K3nonzero}
  g[\kappa;c,\varrho]=\frac{\kappa}{F(\varrho,c;x,y)^2}\,\left(
      9(y^2+x)^2\,dx^2
      -2(y^3+2\varrho\,y+9xy-2c)(y^2+x)\,dxdy
      +4(3x+\varrho)(y^2+x)^2\,dy^2
    \right)
\end{equation}
with $\varrho=\sgn(K_2)\in\pmo$, $\kappa\in\Rnz$, $c\in\mathds{R}$, and where the function $F$ is defined by
\begin{equation}\label{eqn:F}
 F(\zeta,c;x,y) =
 y^6-9xy^4+27x^2y^2-27x^3+4c^2-(36xy+4y^3)\,c
 +(18xy^2-5y^4-9x^2-8cy)\,\zeta+4y^2\,\zeta^2\,.
\end{equation}
There is no coordinate transformation between two different copies~$g=g[\kappa,c,\varrho]$ and $g'=g[\kappa',c',\varrho']$ of~\eqref{eqn:superint.K2K3nonzero}.
\end{lemma}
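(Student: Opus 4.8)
\textbf{Proof plan for Lemma~\ref{la:CIa.K2.K3.nonzero}.}
The plan is to follow the same two-step scheme used in Lemmas~\ref{la:CIa.K3.zero} and~\ref{la:CIa.K2.zero}: first produce the claimed coordinate form~\eqref{eqn:superint.K2K3nonzero} by applying the normalization of Remark~\ref{rmk:normalization.superintegrable.case}, and then establish the non-existence of coordinate transformations between distinct copies by an argument by contradiction using Lemma~\ref{la:dom3.form.preserving.isometry}. For the first step I would start from a general metric $g\in\projcl(\g1,\g2,\g3)$ written via Formula~\eqref{eqn:general.metric} with all three coefficients $K_1,K_2,K_3$ present, insert the explicit generators~\eqref{eqn:supint.generators}, and carry out the linear combination and the determinant in the denominator. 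Since $K_2,K_3\ne0$, I can factor out $K_3$ (or $K_2$) and introduce the ratios $c\sim K_1/K_3$ and a sign $\varrho=\sgn(K_2)$; a further rescaling $(x,y)\to(k^2x,ky)$ according to Remark~\ref{rmk:normalization.superintegrable.case} normalizes the remaining absolute value, leaving exactly the parameters $(\kappa,c,\varrho)$ with $\kappa\in\Rnz$, $c\in\R$, $\varrho\in\pmo$, and producing the function $F$ of~\eqref{eqn:F}. This is a routine but somewhat lengthy algebraic computation.

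For the second (rigidity) step, assume a non-trivial coordinate transformation $\tau$ exists between $g=g[\kappa,c,\varrho]$ and $g'=g[\kappa',c',\varrho']$. By Lemma~\ref{la:dom3.form.preserving.isometry}, any such form-preserving isometry in these coordinates must be of the form~\eqref{eqn:dom3.transformation}, i.e.\ $(x,y)=(k^2u,kv)$ with $k\in\Rnz$. Then $\tau^*(g)=g'$ becomes an identity between two rational expressions in $dx,dy,x,y$ whose numerators and denominators are polynomials; clearing denominators (using the explicit form of $F$) and comparing coefficients of the monomials in $x,y,dx,dy$ yields a polynomial system in the finitely many unknowns $\kappa,\kappa',c,c',k$, together with the sign constraints on $\varrho,\varrho'$. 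I would solve this system (using computer algebra, as the authors do elsewhere in the paper) and check that, under $k\ne0$, the only real solution is $k=1$, $\kappa'=\kappa$, $c'=c$, and consequently $\varrho'=\varrho$ — so $\tau$ is the identity, contradicting non-triviality. The comparison of poles/zeros along the orbits of the projective flow $\phi_s(x_0,y_0)=(s^2x_0,sy_0)$, in the spirit of Section~\ref{sec:poles.isometries} and Equation~\eqref{eqn:isometries.length.preservation}, can be invoked as an alternative or a cross-check, forcing first $\varrho$ (from the location of the poles of the length function) and then the remaining parameters.

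The main obstacle I anticipate is purely computational: the function $F$ in~\eqref{eqn:F} is a degree-six polynomial in $y$ with extra $c$- and $\zeta$-dependent terms, so after pulling back by $(x,y)=(k^2u,kv)$ and clearing denominators one obtains a large polynomial identity, and extracting a tractable subsystem of coefficient equations requires care. A secondary subtlety is making sure the normalization in the first step is stated cleanly — in particular that the sign $\varrho=\sgn(K_2)$ genuinely survives as an invariant and is not absorbed by the rescaling~\eqref{eqn:dom3.transformation} (which only allows $k\in\Rnz$ and hence can flip signs in a controlled way). Once the coefficient system is set up, verifying that its only real solution is the trivial one is mechanical; the conceptual content is entirely carried by Lemma~\ref{la:dom3.form.preserving.isometry}, which drastically restricts $\tau$ before any computation begins.
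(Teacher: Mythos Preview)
Your proposal is correct and follows essentially the same approach as the paper: factor out $K_3$, apply the rescaling $(x,y)\to(k^2x,ky)$ from Remark~\ref{rmk:normalization.superintegrable.case} (the paper uses the specific value $k=|K_2/K_3|^{3/14}$) to obtain~\eqref{eqn:superint.K2K3nonzero}, then for rigidity invoke Lemma~\ref{la:dom3.form.preserving.isometry} to reduce to $(x,y)=(k^2u,kv)$ and solve the resulting polynomial system by computer algebra. The paper does not use the pole-comparison alternative for this lemma, but otherwise your plan matches it step for step.
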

\begin{proof}
Factor out~$K_3$ in~\eqref{eqn:general.metric}. Then, recalling Remark~\ref{rmk:normalization.superintegrable.case}, perform a change of coordinates $(x,y)\to(k^2x,ky)$
with $k=|\frac{\K2}{\K3}|^{\nicefrac{3}{14}}\ne0$. This yields~\eqref{eqn:superint.K2K3nonzero}.
If there were a non-trivial coordinate transformation~$\tau$ between two such metrics, it must, by virtue of Lemma~\ref{la:dom3.form.preserving.isometry}, be of the form~\eqref{eqn:dom3.transformation}.
Then $\tau^*(g)=g'$ is an equation polynomial in $dx,dy,x,y$ and therefore implies a system of algebraic equations on $\kappa,c,\varrho,\kappa',c',\varrho'$ and $k$ only.
Under the assumption $k\ne0$, the only real solution of this system is $(\kappa',c',\varrho')=(\kappa,c,\varrho)$, $k=1$, a contradiction.
This concludes the proof.
\end{proof}

\noindent Finally, we have to make sure that two metrics of different form~\eqref{eqn:superint.K3zero.2}, \eqref{eqn:superint.K2zero} or~\eqref{eqn:superint.K2K3nonzero} cannot be transformed into one another.

\begin{lemma}\label{la:CIa.no.cross.isometries}
There is no coordinate transformation that can map a metric of the form~\eqref{eqn:superint.K3zero.2}, \eqref{eqn:superint.K2zero} or~\eqref{eqn:superint.K2K3nonzero} into a metric of another of these forms.
\end{lemma}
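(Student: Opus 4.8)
The plan is to single out a projectively invariant feature that takes three mutually exclusive values on the families~\eqref{eqn:superint.K3zero.2}, \eqref{eqn:superint.K2zero} and~\eqref{eqn:superint.K2K3nonzero}, namely the pattern of vanishing among the coefficients $(\K1,\K2,\K3)$ in the representation~\eqref{eqn:general.metric}, and to show that no admissible coordinate transformation can alter this pattern. All three families are metrics of the single projective class $\projcl(\g1,\g2,\g3)$, written in a coordinate system in which $\g1,\g2,\g3$ have (up to constant factors) their standard forms~\eqref{eqn:supint.generators}; by Lemmas~\ref{la:CIa.K3.zero}, \ref{la:CIa.K2.zero} and~\ref{la:CIa.K2.K3.nonzero} the associated coefficients satisfy $\K3=0$, $\K1,\K2\ne0$ for~\eqref{eqn:superint.K3zero.2}; $\K2=0$, $\K1,\K3\ne0$ for~\eqref{eqn:superint.K2zero}; and $\K2,\K3\ne0$ (with $\K1\in\R$) for~\eqref{eqn:superint.K2K3nonzero}. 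The corresponding zero-sets $\{3\}$, $\{2\}$ and ($\emptyset$ or $\{1\}$) are pairwise incompatible.

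First I would observe that any coordinate transformation $\tau$ mapping a metric of one of these forms onto a metric of one of these forms is a (local) isometry between two metrics of $\projcl(\g1,\g2,\g3)$, hence a projective transformation. By Remark~\ref{rmk:dom3.eigenvalues}, $\lie_w$ (with $w$ the projective vector field~\eqref{eqn:dom3.proj.vf}) has three distinct real eigenvalues on $\solSp$, with one-dimensional eigenspaces spanned respectively by $\a1,\a2,\a3$; since a projective transformation preserves $\solSp$ together with the eigenspaces of $\lie_w$, we get $\tau^*\g{i}=\mu_i\g{i}$ with $\mu_i\in\Rnz$, hence also $\tau^*\a{i}=\nu_i\a{i}$ with $\nu_i\in\Rnz$ (the operation $\psi^{-1}$ of~\eqref{eqn:Liouville.tensor.g} commutes with pullback). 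Equivalently, by the proof of Lemma~\ref{la:dom3.form.preserving.isometry}, $\tau$ must be of the form~\eqref{eqn:dom3.transformation}, and by Remark~\ref{rmk:normalization.superintegrable.case} such a transformation multiplies $\K{i}$ by the nonzero factor $k^{-\alpha_i/3}$, $(\alpha_1,\alpha_2,\alpha_3)=(5,2,-4)$.

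In either description $\tau$ rescales each coefficient $\K{i}$ by a nonzero scalar, so it leaves the zero-set $\{\,i:\K{i}=0\,\}$ unchanged. Combined with the incompatibility of the three zero-sets, this shows $\tau$ cannot send a metric of one of the forms~\eqref{eqn:superint.K3zero.2}, \eqref{eqn:superint.K2zero}, \eqref{eqn:superint.K2K3nonzero} to a metric of another, which is the assertion. The one step I expect to require a little care is the invocation of Lemma~\ref{la:dom3.form.preserving.isometry} (or of Remark~\ref{rmk:dom3.eigenvalues}) \emph{across forms}, since there the source and target carry different coefficient patterns: however, the proof of Lemma~\ref{la:dom3.form.preserving.isometry} uses nothing beyond the relations $\tau^*\g{i}=\mu_i\g{i}$, which hold for every isometry between metrics of $\projcl(\g1,\g2,\g3)$ regardless of their individual coefficients, so the argument transfers verbatim. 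Should one prefer to avoid this cross-form appeal, an alternative is to separate the three families by a genuine differential invariant, for instance by comparing the poles of $g(w,w)$ along the projective orbits as in Section~\ref{sec:poles.isometries}, or the determinant $E$ of~\eqref{eqn:E.general}; the coefficient-pattern argument is, however, the shortest.
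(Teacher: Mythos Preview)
Your proof is correct and follows essentially the same approach as the paper: invoke Lemma~\ref{la:dom3.form.preserving.isometry} and Remark~\ref{rmk:normalization.superintegrable.case} to see that any isometry rescales each $\K{i}$ by a nonzero factor $k^{-\alpha_i/3}$, so the zero-pattern of $(\K1,\K2,\K3)$ is preserved and the three families cannot be interchanged. Your extra remark about the ``cross-form'' applicability of Lemma~\ref{la:dom3.form.preserving.isometry} is a valid point of care that the paper leaves implicit, and your resolution (that the proof only uses the eigenspace relations $\tau^*\g{i}=\mu_i\g{i}$) is exactly right.
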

\begin{proof}
Let $a=\sum\K{i}\a{i}$ and $\pe{a}=\sum\K{i}'\a{i}$ be the Liouville tensors of a pair of metrics $g,\pe{g}$, as in the statement of Lemma~\ref{la:CIa.no.cross.isometries}.
In view of Lemma~\ref{la:dom3.form.preserving.isometry} and Remark~\ref{rmk:normalization.superintegrable.case}, isometries between $g$ and $\pe{g}$ rescale the coefficients $\K1,\K2,\K3$ according to
$\K{i}' = \K{i}\,k^{-\nicefrac{\alpha_i}{3}}$ ($i\in\{1,2,3\}$), with $k\in\Rnz$, $(\alpha_1,\alpha_2,\alpha_3)=(5,2,-4)$.
Therefore, if $\K2=0$, then $\K2'=0$, and if $\K3=0$, then $\K3'=0$. If $\K2,\K3\ne0$, then $\K2',\K3'\ne0$. So, $g$ and $\pe{g}$ cannot be transformed into one another.
\end{proof}

\noindent Combining Lemmas~\ref{la:CIa.K3.zero}, \ref{la:CIa.K2.zero}, \ref{la:CIa.K2.K3.nonzero} and \ref{la:CIa.no.cross.isometries}, we have found the following:

\begin{proposition}\label{prop:normal.forms.CIa}
A metric with degree of mobility~3 that admits exactly one, essential projective vector field can locally, by a coordinate transformation, be mapped onto one and only one of the following metrics:
\begin{enumerate}[label={(\roman*)}]
	\item $g=\kappa\,\left(-2\frac{y^2+x}{(y-\varrho)^3}\,dxdy+\frac{(y^2+x)^2}{(y-\varrho)^4}\,dy^2\right)$\,,

	\item
	$g=\frac{\kappa}{F(0,\varrho;x,y)^2}\,\left(
	9(y^2+x)^2\,dx^2
	-2(y^3+9xy-2\varrho)(y^2+x)\,dxdy
	+12x)(y^2+x)^2\,dy^2
	\right)$\,,

	\item
	$g=\frac{\kappa}{F(\varrho,c;x,y)^2}\,\left(
	9(y^2+x)^2\,dx^2
	-2(y^3+2\varrho\,y+9xy-2c)(y^2+x)\,dxdy
	+4(\varrho+3x)(y^2+x)^2\,dy^2
	\right)$\,,
\end{enumerate}
where $\kappa\in\Rnz$, $c\in\mathds{R}$, $\varrho\in\pmo$, and where $F(\zeta,c;x,y)$ is given by~\eqref{eqn:F}.
\end{proposition}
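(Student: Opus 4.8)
The plan is to assemble Proposition~\ref{prop:normal.forms.CIa} directly from the four lemmas just proved, after clarifying what remains to be checked. By Proposition~\ref{prop:dom3.proj.equiv}, any metric of degree of mobility~3 with a unique essential projective vector field is projectively equivalent, on some open subset of its domain, to a metric of type C(Ia), hence — in a suitable coordinate system — can be written in the form~\eqref{eqn:general.metric} with the three generating metrics $\g1,\g2,\g3$ of~\eqref{eqn:supint.generators}. Thus such a metric is represented by a triple $(\K1,\K2,\K3)\in\R^3\setminus\{0\}$, and by Proposition~\ref{prop:homotheties.and.eigenvectors} together with Remark~\ref{rmk:dom3.eigenvalues}, at least two of the $\K{i}$ must be nonzero (otherwise the metric is proportional to an eigenvector of $\lie_w$ and admits a homothety, contradicting the hypothesis).

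First I would set up the trichotomy according to which of $\K2,\K3$ vanishes. There are exactly three possibilities consistent with ``at least two nonzero coefficients'': (a) $\K3=0$, $\K1,\K2\ne0$; (b) $\K2=0$, $\K1,\K3\ne0$; (c) $\K2,\K3\ne0$ (with $\K1\in\R$ arbitrary). The case $\K1=0$, $\K2,\K3\ne0$ is subsumed in~(c). In each of the three cases, the corresponding lemma — Lemma~\ref{la:CIa.K3.zero}, Lemma~\ref{la:CIa.K2.zero}, Lemma~\ref{la:CIa.K2.K3.nonzero}, respectively — already provides a concrete normal form (namely~\eqref{eqn:superint.K3zero.2}, \eqref{eqn:superint.K2zero}, \eqref{eqn:superint.K2K3nonzero}), obtained by applying a suitable rescaling $(x,y)\to(k^2x,ky)$ as permitted by Remark~\ref{rmk:normalization.superintegrable.case}, and each lemma also asserts that two distinct parameter configurations within that same family are non-isometric. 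I would simply cite these three lemmas to cover existence and intra-family rigidity. Note that in case~(b) the normal form~\eqref{eqn:superint.K2zero} is exactly item~(ii) with the convention $F(\varrho;x,y) = F(0,\varrho;x,y)$, matching~\eqref{eqn:F} at $\zeta=\varrho$, $c=0$; I would make this identification explicit so the statement of the proposition is literally what the lemma delivers.

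Next I would invoke Lemma~\ref{la:CIa.no.cross.isometries} to rule out ``cross-isometries'' between the three families: since any form-preserving isometry is of the shape~\eqref{eqn:dom3.transformation} and rescales the coefficients by $\K{i}'=\K{i}\,k^{-\alpha_i/3}$ with $(\alpha_1,\alpha_2,\alpha_3)=(5,2,-4)$, the vanishing pattern of $(\K2,\K3)$ is preserved, so a metric in one family cannot be mapped to a metric in another. Combining this with the three existence-and-rigidity lemmas gives precisely the claim: every degree-of-mobility-3 metric with exactly one essential projective vector field is coordinate-equivalent to one and only one metric among~(i), (ii), (iii), with $\kappa\in\Rnz$, $c\in\R$, $\varrho\in\pmo$, and $F$ as in~\eqref{eqn:F}. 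The proof is therefore essentially a bookkeeping argument: the real content has already been discharged by Lemmas~\ref{la:CIa.K3.zero}--\ref{la:CIa.no.cross.isometries} and by Proposition~\ref{prop:dom3.proj.equiv} and Lemma~\ref{la:dom3.form.preserving.isometry}.

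I do not expect a genuine obstacle here, since this is an assembly step; the only point requiring a little care is making sure the case split on $(\K2,\K3)$ is exhaustive given the constraint that at least two coefficients are nonzero, and that the parameter $c$ in case~(c) is genuinely free (ranging over all of $\R$) rather than being normalizable away — this is already handled inside Lemma~\ref{la:CIa.K2.K3.nonzero}, because the only remaining rescaling freedom was used to fix $|\K2/\K3|$, leaving $\K1/\K3 \mapsto c$ untouched. A secondary subtlety is that Proposition~\ref{prop:dom3.proj.equiv} gives equivalence only on \emph{some} open subset, so the normal forms are local; but this is consistent with the statement of the proposition and with the overall local nature of the classification, so no further argument is needed. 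Hence the proof reduces to:
\begin{proof}
By Proposition~\ref{prop:dom3.proj.equiv} and the correspondence of Remark~\ref{rmk:g-a-correspondence}, a metric $g$ as in the statement is, on some open part of its domain, of the form~\eqref{eqn:general.metric} with $\g1,\g2,\g3$ given by~\eqref{eqn:supint.generators}; write $a=\psi^{-1}(g)=\K1\a1+\K2\a2+\K3\a3$. By Remark~\ref{rmk:dom3.eigenvalues} and Proposition~\ref{prop:homotheties.and.eigenvectors}, if two of the $\K{i}$ vanish then $w$ is homothetic for $g$, contrary to assumption; hence at least two of $\K1,\K2,\K3$ are nonzero. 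There are three mutually exclusive and exhaustive possibilities: $\K3=0$ with $\K1,\K2\ne0$; $\K2=0$ with $\K1,\K3\ne0$; and $\K2,\K3\ne0$ (with $\K1\in\R$ arbitrary). In the first case, Lemma~\ref{la:CIa.K3.zero} shows $g$ is isometric to a metric~\eqref{eqn:superint.K3zero.2}, i.e.\ to~(i), and that distinct parameters $(\kappa,\varrho)$ give non-isometric metrics. In the second case, Lemma~\ref{la:CIa.K2.zero} shows $g$ is isometric to a metric~\eqref{eqn:superint.K2zero}, which is~(ii) upon writing $F(\varrho;x,y)=F(0,\varrho;x,y)$ with $F$ as in~\eqref{eqn:F}, again with distinct $(\kappa,\varrho)$ non-isometric. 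In the third case, Lemma~\ref{la:CIa.K2.K3.nonzero} shows $g$ is isometric to a metric~\eqref{eqn:superint.K2K3nonzero}, i.e.\ to~(iii), with distinct $(\kappa,c,\varrho)$ non-isometric. Finally, by Lemma~\ref{la:CIa.no.cross.isometries}, no coordinate transformation maps a metric of one of the forms~(i), (ii), (iii) to a metric of another of these forms. Hence $g$ is equivalent to one and only one metric in the list.
\end{proof}
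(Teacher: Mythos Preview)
Your proposal is correct and follows exactly the paper's approach: the paper states Proposition~\ref{prop:normal.forms.CIa} immediately after the sentence ``Combining Lemmas~\ref{la:CIa.K3.zero}, \ref{la:CIa.K2.zero}, \ref{la:CIa.K2.K3.nonzero} and \ref{la:CIa.no.cross.isometries}, we have found the following,'' so the proof is precisely the assembly argument you wrote out. Your additional bookkeeping (exhaustiveness of the case split on $(\K2,\K3)$, the identification $F(\varrho;x,y)=F(0,\varrho;x,y)$, and the appeal to Proposition~\ref{prop:dom3.proj.equiv} and Proposition~\ref{prop:homotheties.and.eigenvectors}) makes explicit what the paper leaves implicit, but there is no substantive difference.
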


\subsubsection{Way 2: Parametrisation by points on the unit sphere}
The solution that we found, though it is a perfectly valid answer to the considered question, may not be entirely satisfactory, as we obtain several cases instead of a concise formula. Moreover, the cases do not respect the fact that $\g1,\g2,\g3$ should be considered on an equal level: they can be interchanged since our ordering of the eigenvalues is, basically, arbitrary.
We now show how to uniformly describe the metrics of the projective class (Ia) of Theorem~\ref{thm:matveev}.
These are metrics $g$ with degree of mobility~3 that admit exactly one, essential projective vector field.
Applying the transformation~\eqref{eqn:dom3.transformation} to
\[
 a=\sum_{i=1}^3 \K{i}k^{-\frac{\alpha_i}{3}}\,\a{i}\,,
\]
with $(\alpha_1,\alpha_2,\alpha_3)=(5,2,-4)$.
If we choose~$k$ in a suitable way, we achieve $|K_i|=1$ for one value of~$i$, i.e.\ one among the coefficients $K_1$, $K_2$ or $K_3$ can be normalized.
Let us consider the action of~$\lie_w$ on~$\solSp$, i.e.\ $\lie_wa_i=-\tfrac{\alpha_i}{3}\,a_i$ for $i\in\{1,2,3\}$.
We conclude that
\begin{equation}\label{eqn:proj.orbit.supint}
\phi_t^*\left(\sum\K{i}\a{i}\right) = e^{-\frac53t}\K1\a1+e^{-\frac23t}\K2\a2+e^{\frac43t}\K3\a3\,,
\end{equation}
where $\phi_t$ is the local flow of $w$.
Next, using ellipsoidal coordinates, we reparametrize
\[
\begin{pmatrix} \K1 \\ \K2 \\ \K3 \end{pmatrix}
= \begin{pmatrix}
e^{-\frac53r}\,\sin\theta\,\cos\varphi \\
e^{-\frac23r}\,\sin\theta\,\sin\varphi \\
e^{\frac43r}\,\cos\theta
\end{pmatrix}\,.
\]
Recall that points along the orbits~\eqref{eqn:proj.orbit.supint} correspond to metrics that are linked by coordinate transformations, and thus we may choose representatives for these metrics on the unit sphere, by letting $t=-r$,
\begin{equation}\label{eqn:normal.forms.supint}
a[\theta,\varphi] = \sin\theta\,\cos\varphi\,\a1
+\sin\theta\,\sin\varphi\,\a2
+\cos\theta\,\a3\,.
\end{equation}
Now, it suffices to consider coordinate transformations between such representatives only.
We complete this in two steps. First, apply~\eqref{eqn:dom3.transformation} to an element $a=\sum_{i=1}^3 \K{i}\a{i}$ on the unit sphere, i.e.~\eqref{eqn:normal.forms.supint},
and obtain
$\tau^*(a)=\sum_{i=1}^3 \K{i}k^{-\frac{\alpha_i}{3}}\,\a{i}$.
Generically, this result will not be on the unit sphere, so before we continue, let us replace~$\tau^*(a)$ by its representative on the unit sphere.
In fact, using~\eqref{eqn:proj.orbit.supint},
\[
 \phi_t^*(\tau^*(a))=\sum_{i=1}^3 e^{-\frac{\alpha_i}{3}\,t}\,\K{i}k^{-\frac{\alpha_i}{3}}\,\a{i}\,,
\]
and this lies on the unit sphere iff
$e^{-\frac{\alpha_i}{3}\,t}\,k^{-\frac{\alpha_i}{3}} = 1$, $i\in\{1,2,3\}$.
Therefore, we impose $t=-\ln(k)$, in order that $\phi_t^*(\tau^*(a))$ lies on the unit sphere.
As a result, $\phi_{t}^*(\tau^*(a))=a$.
This confirms that, indeed, the normal forms~\eqref{eqn:normal.forms.supint} cannot be mapped into one another by any coordinate transformation.

\subsection{The mutually non-diffeomorphic normal forms}\label{sec:proof.normal.forms}
For each of the metrics of Theorem~\ref{thm:matveev} with degree of mobility~2, we determine the freedom of applying coordinate transformations in terms of restrictions to the parameters.
Making use of this freedom, we arrive at the normal forms of Theorem~\ref{thm:normal.forms}.
In view of Lemma~\ref{la:different.cases}, we need only consider coordinate transformations between metrics of the same type in Theorem~\ref{thm:matveev}. Partially, we have already addressed this freedom in Section~\ref{sec:preparations}.
Now, we investigate it in more detail.

\begin{remark}\label{rmk:general.reasoning.cases}
In the current section, we exclusively consider cases for which the degree of mobility is exactly~2, i.e.\ the inclusion $\hat{\solSp}\subseteq\solSp$ of the 2-dimensional invariant subspace actually is an equality.
Silently, we incorporate the parameter restrictions obtained in Theorems~\ref{thm:theorem.2.revised} and~\ref{thm:theorem.3.revised}, as well as in Section~\ref{sec:preparations}.
\end{remark}

\subsubsection{Normal forms (A.1) of Theorem~\ref{thm:normal.forms}}\label{sec:A.1}
Consider metrics A(I) of Theorem~\ref{thm:matveev}, for which $\lie_w:\solSp\to\solSp$ has two real (distinct) eigenvalues. By Formula~\eqref{eqn:general.metric} any metric of this type is of the form (recall that in what follows we always assume degree of mobility~2)
\begin{equation}\label{eqn:A.I.generic.metric}
 g[\xi,h,\varepsilon;\K1,\K2]
 = \frac{x^\xi-hy^\xi}{(\K1+\K2hy^\xi)(\K1+\K2x^\xi)^2}\,dx^2
  +\varepsilon\,\frac{x^\xi-hy^\xi}{(\K1+\K2hy^\xi)^2(\K1+\K2x^\xi)}\,dy^2\,.
\end{equation}
\begin{align*}
  \varepsilon=\pm1\,,\quad
  |h|\geq1\,,\quad
  \xi\in(0,4]\,,\qquad\qquad
  \text{but $\xi\ne1$ and:}\quad
  & \text{if $\xi=2$, then $h\ne-\varepsilon$}, \\
  & \text{if $\xi=3$, then $(h,\varepsilon)\ne(\pm1,-1)$}, \\
  & \text{if $\xi=4$, then $h\ne1$}\,.
\end{align*}

\begin{remark}
The general form~\eqref{eqn:A.I.generic.metric} of metrics of type A(I) is obtained using Formula~\eqref{eqn:general.metric}, cf.~\cite{matveev_2012}. The~$\g{i}$ are, by grace of Theorem~\ref{thm:matveev}, given in the form
\begin{equation}\label{eqn:Liouville.A.1.base.metrics}
\g1=(e^{\xi x}-he^{\xi y})(e^{2x}\,dx^2+\varepsilon\,e^{2y}\,dy^2)\,,\quad
\g2=\left(e^{-\xi x}-\frac{1}{h}\,e^{-\xi y}\right)
\left(e^{(2-\xi)\,x}\,dx^2+\varepsilon\,e^{(2-\xi)\,y}\,dy^2\right)\,,
\end{equation}
but we need to take into account also the restrictions put forth in Theorems~\ref{thm:theorem.2.revised} and~\ref{thm:theorem.3.revised} as well as in Proposition~\ref{prop:homotheties.and.eigenvectors} and Lemma~\ref{la:lambda.1.homothetic}.
Note also that the $g_{i}$ correspond, via~\eqref{eqn:Liouville.tensor.g}, to non-proportional eigenvectors of~$\lie_w$.
Moreover, in Equation~\eqref{eqn:Liouville.A.1.base.metrics} and case~A(I) of Theorem~\ref{thm:normal.forms} we use coordinates different from those in~\eqref{eqn:A.I.generic.metric}, and, for consistency, $x>0$ and $y>0$ must be required in~\eqref{eqn:A.I.generic.metric}. This is easily seen from the explicit transformation $x_\text{new}=e^{\xi\,x_\text{old}}$, $y_\text{new}=e^{\xi\,y_\text{old}}$.
The projective vector field~$w$, in the new coordinates and after rescaling, is given by $w=x\partial_x+y\partial_y$ and its flow is $\phi_t(x_0,y_0)=(x_0e^{t},y_0e^{t})$ where $x_0>0$, $y_0>0$.
\end{remark}

\paragraph{Benenti tensors and candidates for isometries.}
The Benenti tensor~\eqref{eqn:benenti.tensor} of the metrics
\begin{equation}\label{eqn:Liouville.A.1.base.metrics.g1.g2}
\g1=g[\xi,h,\varepsilon;1,0]=(x^{\xi}-hy^{\xi})(dx^2+\varepsilon\,dy^2)\,,
\quad
\g2=g[\xi,h,\varepsilon;0,1]=\left(\frac{1}{x^{\xi}}-\frac{1}{hy^{\xi}}\right)
\left(\frac{dx^2}{x^{\xi}}+\varepsilon\,\frac{dy^2}{y^{\xi}}\right)
\end{equation}
is, up to a constant conformal factor,
\begin{equation}\label{eqn:A.I.benenti.1}
L(\g2,\g1) = x^\xi\,dx\otimes\partial_x + hy^\xi\,dy\otimes\partial_y\,.
\end{equation}
Consider a second pair of metrics, in coordinates $(u,v)$, $\hat{g}_1=g[\eta,k,\zeta;1,0]$ and $\hat{g}_2=g[\eta,k,\zeta;0,1]$. Their Benenti tensor is
\begin{equation}\label{eqn:A.I.benenti.2}
 L(\hat{g}_2,\hat{g}_1) = u^\eta\,du\otimes\partial_u + kv^\eta\,dv\otimes\partial_v\,.
\end{equation}
As the signature of the metrics is given by $\varepsilon$ and $\zeta$, respectively, and isometries preserve the signature, we have that $\zeta=\varepsilon$.
Moreover, thanks to Lemma~\ref{la:lambda.different.classes}, we obtain $\eta=\xi$.
The Benenti tensors~\eqref{eqn:A.I.benenti.1} and~\eqref{eqn:A.I.benenti.2} are of the form discussed in Section~\ref{sec:strategy.benenti}, and so we can use the results obtained there, particularly Equation~\eqref{eqn:possible.transf.2r.diagonal.L}, which leads us to consider the following transformations:
\begin{equation}\label{eqn:A.I.candidate.transformations}
 (x,y)\to(Kh^{\nicefrac{1}{\xi}}x,Kk^{\nicefrac{1}{\xi}}y)\,\quad\text{and}\quad
 (x,y)\to(K(kh)^{\nicefrac{1}{\xi}}y,Kx)\,,\quad\text{where } K>0\,.
\end{equation}

\begin{remark}
The transformations~\eqref{eqn:A.I.candidate.transformations} are obtained under the assumption that isometries map $g_{i}\to\hat{g}_{i}$.
This assumption is, see Lemma~\ref{la:lambda.different.classes}, justified as long as $\lambda\ne-1$. If however $\lambda=-1$, i.e.\ $\xi=4$, the eigenvalues of~$\lie_w$ have the same absolute value, which makes the eigenspaces indistinguishable. We therefore need to take into account the transformations $g_{1}\to\hat{g}_{2}$ and $g_{2}\to\hat{g}_{1}$, too.
Formally this means we have to replace~\eqref{eqn:A.I.benenti.2} by its inverse, which gives two additional transformations
\begin{equation}\label{eqn:A.I.candidate.transformations.special}
 (x,y) = \left( \frac{K}{u},\frac{K}{(kh)^{\nicefrac14}\,v} \right)
 \quad\text{and, respectively,}\quad
 (x,y) = \left( \frac{K}{|k|^{\nicefrac14}\,v},\frac{K}{|h|^{\nicefrac14}\,u} \right)\,,\quad
 \text{with $K>0$}\,.
\end{equation}
\end{remark}

\noindent For ease of notation, we extend our previous convention:
An expression $K^{\xi}$ for real $K$ and with real, but unspecified exponent shall be understood as having an absolute value, $|K|^{\xi}$.

\paragraph{Eigenspaces and remaining freedom.}
Any diffeomorphism must preserve the eigenspaces of~$\lie_w$, and we now use this freedom to produce restrictions on the transformations~\eqref{eqn:A.I.candidate.transformations} and~\eqref{eqn:A.I.candidate.transformations.special}.
Let us discuss the first case of~\eqref{eqn:A.I.candidate.transformations},
\[
  g_1 \to K^{\xi+2}\,h^{1+\frac{2}{\xi}}\,(x^\xi-\sgn(h)|k|\,y^\xi)\,\left(dx^2+\varepsilon \left(\frac{k}{h}\right)^{\frac{2}{\xi}} dy^2\right)\,,
  \quad
  \text{compare~\eqref{eqn:Liouville.A.1.base.metrics.g1.g2}}\,.
\]
This must be a multiple of $g_1=(x^\xi-k\,y^\xi)\,(dx^2+\varepsilon dy^2)$, implying $|h|=|k|$ and then $h=k$. Redefining $K\to Kh^{\frac{1}{\xi}}$, and applying $(x,y)\to(Kx,Ky)$ to~\eqref{eqn:A.I.generic.metric}, we obtain
\[
 g \to K^{\xi+2}\,\left(\frac{x^\xi-hy^\xi}{(K_1+K_2h\,K^\xi y^\xi)(K_1+K_2\,K^\xi x^\xi)^2}\,dx^2
 +\varepsilon\frac{x^\xi-hy^\xi}{(K_1+K_2h\,K^\xi y^\xi)^2(K_1+K_2\,K^\xi x^\xi)}\,dy^2\right)\,,
\]
verifying that this transformation is form-preserving.
Choosing \smash{$K^{\xi}=\big|\frac{K_1}{K_2}\big|$}, and defining $\varrho=\sgn(\K1\K2)$ and $\kappa=K^{\xi+2}\,\K1^{-3}$, we thus have
\begin{equation}\label{eqn:A.I.result.1}
 g \to \kappa\,\left(\frac{x^\xi-hy^\xi}{(1+\varrho h\,y^\xi)(1+\varrho x^\xi)^2}\,dx^2
+\varepsilon\frac{x^\xi-hy^\xi}{(1+\varrho h\,y^\xi)^2(1+\varrho x^\xi)}\,dy^2\right)\,.
\end{equation}
Applying the second transformation in~\eqref{eqn:A.I.candidate.transformations} to~$\g1$ yields
\[
 g_1 \to K^{\xi+2}\,h\varepsilon\,\left(x^\xi-\frac{|hk|}{h}y^\xi\right)\,(dx^2+\varepsilon (kh)^{\frac{2}{\xi}} dy^2)\,,
\]
from which we conclude $|kh|=1$ and then $k=\frac1h$. Since $|h|\geq1$, we therefore conclude this transformation may only be applied if $k=h\in\{\pm1\}$.\footnote{Note that this restriction arises from the fact that we have already made use of the freedom to swap $x\leftrightarrow y$ in Section~\ref{sec:optimizing.h.C}. In fact, this freedom was used to replace $h\to\frac{1}{h}$ in some cases (identifying two projective classes). For the special case $|h|=1$, this is an identity and swapping $x\leftrightarrow y$ can therefore be used to identify metrics within the same projective class.}
Applying the resulting transformation $(x,y)\to(Ky,Kx)$ to~\eqref{eqn:A.I.generic.metric}, we find
\[
 g \to h\,\left(\frac{x^\xi-k y^\xi}{(K_1+K_2h\,K^\xi x^\xi)(K_1+K_2\,K^\xi y^\xi)^2}\,dx^2
			+\varepsilon\frac{x^\xi-k y^\xi}{(K_1+K_2h\,K^\xi x^\xi)^2(K_1+K_2\,K^\xi y^\xi)}\,dy^2\right)\,,
\]
and letting $K^\xi=\left|\frac{K_1}{K_2h}\right|$ we obtain, with the definition $\kappa=\frac{K^{\xi+2}}{\K1^3}$,
\begin{equation}\label{eqn:A.I.result.2}
g \to \kappa\,\frac{h\varepsilon}{K_1^3}\,\left(\frac{x^\xi-ky^\xi}{(1+\varrho kx^\xi)(1+\varrho y^\xi)^2}\,dx^2
       +\varepsilon\frac{x^\xi-ky^\xi}{(1+\varrho\sgn(h)x^\xi)^2(1+\varrho\sgn(k)ky^\xi)}\,dy^2\right)\,.
\end{equation}
Finally, redefining $\varrho\to\varrho\sgn(h)$, the resulting metric is as in~\eqref{eqn:A.I.result.1}, if we replace $h\to\frac1h$ and $\kappa\to\kappa\,h\varepsilon$.
How can we use this freedom to optimize the parameters? If $h<0$, we obtain a metric similar in form but with reversed sign of $\varrho$. Thus we may assume $\varrho=1$ in the normal form. If $h>0$ and $\varepsilon=1$, we obtain a metric of the same form as the initial metric, with the only difference that $\kappa$ needs to be replaced by $-\kappa$. This enables us to choose $\kappa>0$ in the normal form. If $\varepsilon=-1$ and $h=1$, we obtain exactly the initial metric, leaving us no means to reduce the parametric freedom further.

\begin{remark}[The special case $\lambda=-1$]
If $\lambda=-1$, i.e.\ $\xi=4$, we also need to take into account the transformations~\eqref{eqn:A.I.candidate.transformations.special}. The first, $(x,y) = \left( \frac{K}{u},\frac{K}{(kh)^{\nicefrac14}\,v} \right)$, leads to
\[
 \g1 \to -K^6\,\left(\frac{1}{u^4}-\frac{h}{|hk|v^4}\right)\left(\frac{du^2}{u^4}+\varepsilon\,\frac{dv^2}{\sqrt{kh}\,v^4}\right)\,,
\]
which implies $k=h>0$. Doing the analogous computation for $\g2$, we achieve $h=1$.
The second transformation in~\eqref{eqn:A.I.candidate.transformations.special}, $(x,y) = \left( \frac{K}{v},\left|\frac{k}{h}\right|^{\nicefrac14}\frac{K}{u} \right)$, leads to
\[
 \g1 \to \varepsilon K^6\,\frac{h|k|}{|h|}\,\left(\frac{1}{u^4}-\frac{|h|}{|k|h\,v^4}\right)\left(\frac{du^2}{u^4}+\varepsilon\,\frac{dv^2}{v^4}\right)\,,
\]
from which follows $h=k=1$.
Summarizing, we have found that if $\xi=4$, the additional freedom of transformations~\eqref{eqn:A.I.candidate.transformations.special} might only be applied if $h=1$. But this configuration has been explicitly ruled out by Theorem~\ref{thm:theorem.3.revised}, as such metrics admit a non-trivial Killing vector field and are not of the kind that we study in this paper. We can therefore discard~\eqref{eqn:A.I.candidate.transformations.special} completely.
\end{remark}

\paragraph{There are no coordinate transformations between the normal forms.}
The above computations suffice to prove that all metrics of type A(I) can be brought into the form~\eqref{eqn:A.I.result.1} or~\eqref{eqn:A.I.result.2}, respectively, and that these normal forms are mutually non-diffeomorphic.
This follows by construction since we checked all possible coordinate transformations that preserve the form of the metric~\eqref{eqn:A.I.generic.metric}, i.e.\ we ensured that all available freedom is made use of.
However, it is instructive to present another approach here that verifies our result, providing further corroboration, and insight (c.f.\ Section~\ref{sec:poles.isometries}).
To this end, let us consider the square of the length of the projective vector field as in Equation~\eqref{eqn:isometries.length.preservation}.
Writing down~\eqref{eqn:isometries.length.preservation} explicitly for~\eqref{eqn:A.I.result.1} resp.~\eqref{eqn:A.I.result.2}, we have
\begin{multline}\label{eqn:equality.length.A.I}
  \kappa\,\left(\frac{x_0^\xi-hy_0^\xi}{(1+\varrho hy_0^\xi t)(1+\varrho x_0^\xi t)^2}\,x_0^2
  +\varepsilon\,\frac{x_0^\xi-hy_0^\xi}{(1+\varrho hy_0^\xi t)^2(1+\varrho x_0^\xi t)}\,y_0^2\right) \\
  =\kappa'\,\left(\frac{x_0'^\xi-h'y_0'^\xi}{(1+\varrho'h'y_0'^\xi t)(1+\varrho'x_0'^\xi t)^2}\,x_0'^2
  +\varepsilon\,\frac{x_0'^\xi-h'y_0'^\xi}{(1+\varrho'h'y_0'^\xi t)^2(1+\varrho'x_0'^\xi t)}\,y_0'^2\right)\,.
\end{multline}
Recalling Section~\ref{sec:poles.isometries}, let us study poles of the functions in $t$ on the left and the right hand side of Equation~\eqref{eqn:equality.length.A.I}. We find below that this is enough for our purposes.
Possible poles are the zeros of the denominators, provided $x_0^\xi\ne hy_0^\xi$ and $x_0'^\xi\ne hy_0'^\xi$, respectively.
Determining these possible poles and equating them pairwisely, we find:
\begin{center}
 \begin{tabular}{cccc}
	(a) &
	(b) &
	(c) &
	(d) \\
	$\frac{h'y_0'^\xi}{hy_0^\xi}=\varrho'\varrho$ &
	$\frac{x_0'^\xi}{hy_0^\xi}=\varrho'\varrho$ &
	$\frac{h'y_0'^\xi}{hy_0^\xi}=\varrho'\varrho$ &
	$\frac{x_0'^\xi}{hy_0^\xi}=\varrho'\varrho$ \\
	$\frac{x_0'^\xi}{x_0^\xi}=\varrho'\varrho$ &
	$\frac{h'y_0'^\xi}{x_0^\xi}=\varrho'\varrho$ &
	$\frac{h'y_0'^\xi}{x_0^\xi}=\varrho'\varrho$ &
	$\frac{x_0'^\xi}{x_0^\xi}=\varrho'\varrho$ \\
	$\Downarrow$ &
	$\Downarrow$ &
	$\Downarrow$ &
	$\Downarrow$ \\
	$\varrho'=\varrho$ &
	$\sgn(h)=\varrho'\varrho$ &
	$\sgn(h')=\varrho'\varrho$ &
	$\varrho'=\varrho$ \\
	$\sgn(h')=\sgn(h)$ &
	$\sgn(h')=\varrho'\varrho$ &
	$\sgn(h)=1$ &
	$\sgn(h)=1$
 \end{tabular}
\end{center}
Let us consider the individual cases separately:
In case~(a), eliminating  $x_0',y_0'$ as well as $\varrho'$ in~\eqref{eqn:equality.length.A.I}, we obtain the requirement $|h'|=|h|$, and thus $h'=h$, and, reinserting this into~\eqref{eqn:equality.length.A.I}, the expressions in the brackets are identical, from which follows $\kappa'=\kappa$. Thus, we conclude that there is no coordinate transformation between two different metrics of the form~\eqref{eqn:A.I.result.1}.
In case~(b), Equation~\eqref{eqn:equality.length.A.I} becomes
\begin{multline*}
  \kappa\,\left(\frac{x_0^\xi-hy_0^\xi}{(1+\varrho hy_0^\xi t)(1+\varrho x_0^\xi t)^2}\,x_0^2
  +\varepsilon\,\frac{x_0^\xi-hy_0^\xi}{(1+\varrho hy_0^\xi t)^2(1+\varrho x_0^\xi t)}\,y_0^2\right) \\
  =\kappa'\sgn(h)\,\left(\frac{hy_0^\xi-x_0^\xi}{(1+\varrho x_0^\xi t)(1+\varrho hy_0^\xi t)^2}\,h^{\nicefrac{2}{\xi}}y_0^2
  +\varepsilon\,\frac{hy_0^\xi-x_0^\xi}{(1+\varrho x_0^\xi t)^2(1+\varrho hy_0^\xi t)}\,(h')^{-\nicefrac{2}{\xi}}\,x_0^2\right)\,.
\end{multline*}
We conclude first that $|h'h|=1$, and then we infer from $|h|\geq1$ that $h'=h=\varrho'\varrho$ hold.
But this implies that $\kappa'=-\sgn(h)\varepsilon\,\kappa$, and thus we obtain the following table.
\begin{center}
\textbf{Transformation behaviour of parameters $\kappa$ and $\varrho$}\smallskip

\begin{TAB}(r,0.5cm,0.5cm)[3pt]{|c|c|c|}{|c|c|c|}
		    & $h=-1$		& $h=+1$ \\
  $\varepsilon=-1$  & $\kappa'=-\kappa$, $\varrho'=-\varrho$
		    & $\kappa'=\kappa$,\quad $\varrho'=\varrho$ \\
  $\varepsilon=+1$  & $\kappa'=\kappa$,\quad $\varrho'=-\varrho$
		    & $\kappa'=-\kappa$, $\varrho'=\varrho$ \\
\end{TAB}
\end{center}
Now, since for $h=-1$ the normal form~\eqref{eqn:A.I.result.2} requires $\varrho=1$, the case~(b) cannot apply, meaning we cannot associate the poles in this way. Similarly, if $h=1$ and $\varepsilon=1$, we chose $\kappa>0$ and the case~(b) again cannot apply. We are therefore left with our finding in (a), namely that if two normal forms are linked by a coordinate transformation, we already have $(\kappa',\varrho')=(\kappa,\varrho)$, and the normal forms are identical.

For the cases~(c) and~(d), we can proceed analogously, but the finding remains the same, confirming that two normal forms must already be identical, if a coordinate transformation exists between them.
Therefore we have proven the following.
\begin{proposition}\label{prop:normal.forms.AI}
A metric of the type A(I) in Theorem~\ref{thm:matveev} that admits exactly one essential projective vector field can locally, by a suitable coordinate transformation, be mapped into one and only one of the following metrics.
	\begin{enumerate}[label={(\roman*)}]
		\item For $\kappa\in\Rnz$, $\varrho\in\pmo$, and either $|h|>1$, $\varrho\in\pmo$, or $h=1$ and $\varepsilon=-1$:
		\begin{equation*}
		g = \kappa\left(\frac{x^\xi-hy^\xi}{(1+\varrho hy^\xi)(1+\varrho x^\xi)^2} dx^2 +\frac{\varepsilon\,(x^\xi-hy^\xi)}{(1+\varrho hy^\xi)^2(1+\varrho x^\xi)} dy^2\right),
		\end{equation*}
		\item \emph{Special case $h=-1$, i.e.\ choose $\varrho=1$ in~\eqref{eqn:A.I.result.2}.}
		For $\varepsilon\in\pmo$ and $\kappa\in\Rnz$, the normal forms are:
		\begin{equation*}
		g = \kappa\left(\frac{x^\xi+y^\xi}{(1-y^\xi)(1+x^\xi)^2} dx^2 +\frac{\varepsilon\,(x^\xi+y^\xi)}{(1-y^\xi)^2(1+x^\xi)} dy^2\right),
		\end{equation*}
		\item \emph{Special case $h=+1$, $\varepsilon=+1$ (choose $\kappa>0$).}
		For $\varrho\in\pmo$, the normal forms are:
		\begin{equation*}
		g = \kappa\left(\frac{x^\xi-y^\xi}{(1+\varrho y^\xi)(1+\varrho x^\xi)^2} dx^2 +\frac{(x^\xi-y^\xi)}{(1+\varrho y^\xi)^2(1+\varrho x^\xi)} dy^2\right),
		\end{equation*}
	\end{enumerate}
The parameter $\xi$ can assume values $\xi\in(0,1)\cup(1,4]$. In case $\xi=2$ we require $h\ne-\varepsilon$, in case $\xi=3$ we require $(h,\varepsilon)\ne(\pm1,-1)$ and in case $\xi=4$ we require $h\ne1$.
\end{proposition}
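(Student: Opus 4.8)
The plan is to eliminate the parametric redundancy in the coordinate form~\eqref{eqn:A.I.generic.metric} of metrics of type A(I) by first determining \emph{all} coordinate transformations preserving that form, and then using up this freedom to fix the parameters. Throughout I assume degree of mobility~$2$ (as recorded in Remark~\ref{rmk:general.reasoning.cases}); the type-A(I) metrics with degree of mobility~$3$ are precisely those with $(\xi,h,\varepsilon)$ among the configurations listed in Theorem~\ref{thm:theorem.2.revised}, and these are already excluded by the parameter restrictions in the statement, while those admitting a Killing vector field are excluded by Theorem~\ref{thm:theorem.3.revised}. By Lemma~\ref{la:different.cases} it suffices to consider a diffeomorphism $\tau$ between two metrics of type A(I) with parameters $(\xi,h,\varepsilon)$ and $(\eta,k,\zeta)$; since $\tau$ preserves the signature we obtain $\zeta=\varepsilon$, and since it preserves the eigenvalues of~$\lie_w$ (up to the rescaling of~$w$) Lemma~\ref{la:lambda.different.classes}(a) gives $\eta=\xi$ whenever $\xi\ne4$, the case $\xi=4$ being treated separately below.

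Next I would exploit the Benenti tensor of the pair of eigenvector metrics $(\g1,\g2)$ of~$\lie_w$ from~\eqref{eqn:Liouville.A.1.base.metrics.g1.g2}, namely the diagonal tensor~\eqref{eqn:A.I.benenti.1}. Since $\tau$ sends eigenvectors of~$\lie_w$ to eigenvectors, Lemma~\ref{la:benenti.transformations} forces it to send this Benenti tensor to a scalar multiple of the corresponding tensor~\eqref{eqn:A.I.benenti.2} of the target metrics, and the diagonal analysis of Section~\ref{sec:strategy.benenti} (Example~1, culminating in~\eqref{eqn:possible.transf.2r.diagonal.L}) restricts $\tau$ to the two families of candidate transformations in~\eqref{eqn:A.I.candidate.transformations}. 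Substituting these candidates into~\eqref{eqn:Liouville.A.1.base.metrics.g1.g2} and imposing that $\g1,\g2$ map to multiples of the corresponding eigenvector metrics of the target yields $h=k$ for the first family, and $|hk|=1$ — hence (since $|h|\ge1$) $h=k\in\{\pm1\}$ — for the second. Applying the first family to~\eqref{eqn:A.I.generic.metric} and choosing the free scaling $K$ so as to normalise $|K_1/K_2|$ produces the candidate normal form~\eqref{eqn:A.I.result.1}; applying the second (available only for $|h|=1$) gives the additional identification~\eqref{eqn:A.I.result.2}. Reading off the remaining freedom produces exactly the three cases (i)--(iii): $\varrho$ can be normalised to~$1$ when $h=-1$, $\kappa$ can be chosen positive when $(h,\varepsilon)=(1,1)$, and no further reduction is possible otherwise. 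For $\xi=4$ the two real eigenvalues of~$\lie_w$ have equal modulus, so one must additionally admit the eigenspace-swapping transformations~\eqref{eqn:A.I.candidate.transformations.special}; substituting them into~\eqref{eqn:Liouville.A.1.base.metrics.g1.g2} forces $h=k=1$, which is excluded by Theorem~\ref{thm:theorem.3.revised}, so these extra transformations contribute nothing.

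It then remains to prove that the list is sharp, i.e.\ that no coordinate transformation links two distinct normal forms. Here I would use the invariant built from the squared length of~$w$, as in Section~\ref{sec:poles.isometries}: along the flow $\phi_t$ of~$w$, any isometry must match the function $t\mapsto g_{\phi_t(p_0)}(w,w)$ on the two sides, cf.~\eqref{eqn:isometries.length.preservation}. For the normal forms~\eqref{eqn:A.I.result.1}/\eqref{eqn:A.I.result.2} this is a rational function of~$t$ whose poles are the zeros of the denominators; equating them pairwise yields the four pole-matchings (a)--(d). In each case, eliminating the primed quantities and using $|h|,|h'|\ge1$ forces $h'=h$, and then comparing residues forces $\kappa'=\kappa$ and $\varrho'=\varrho$ — \emph{unless} the matching is incompatible with the normalisations already imposed (e.g.\ $\varrho=1$ for $h=-1$, or $\kappa>0$ for $(h,\varepsilon)=(1,1)$), in which case that matching does not occur. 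This shows that any two normal forms related by a diffeomorphism must coincide, completing the argument.

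The step I expect to be most delicate is the last one, the pole-matching bookkeeping: one must verify that the parameter normalisations fixed earlier (which of $\varrho$ or $\kappa$ is frozen, depending on $\sgn(h)$ and $\varepsilon$) are exactly the ones needed to rule out the ``cross'' matchings~(b) and~(c). Making these compatibility conditions line up consistently — so that neither an overlap between distinct normal forms is missed, nor a spurious identification is introduced — is where the genuine care is required; the remaining computations (the Benenti-tensor candidates and their substitution into the explicit metrics) are routine, if lengthy.
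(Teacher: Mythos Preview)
Your proposal is correct and follows essentially the same route as the paper's own proof: Benenti-tensor analysis to pin down the candidate transformations~\eqref{eqn:A.I.candidate.transformations} (and the extra~\eqref{eqn:A.I.candidate.transformations.special} for $\xi=4$), substitution into the eigenvector metrics to constrain $h,k$, normalisation of $K$ to reach~\eqref{eqn:A.I.result.1}/\eqref{eqn:A.I.result.2}, and then the pole-matching of $g(w,w)$ along the flow to verify sharpness via the cases (a)--(d). Your identification of the pole-matching bookkeeping as the delicate step is apt; the paper handles it exactly as you outline, checking that the normalisations imposed on $\varrho$ and $\kappa$ in the special cases $h=\pm1$ are precisely what rules out the cross-matchings.
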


\subsubsection{Normal forms (A.2) of Theorem~\ref{thm:normal.forms}}\label{sec:A.2}
Let us now take the case A(II) in Theorem~\ref{thm:matveev}, i.e.\ we assume $\lie_w:\solSp\to\solSp$ has exactly one $1$-dimensional eigenspace, cf.\ Remark~\ref{rmk:eigenspaces.are.1d}.
A metric from this class takes the form, with $|h|\geq1$ and $\K1\ne0$,
\begin{equation}\label{eqn:metric.AII.generic}
 g[h;\K1,\K2] = \frac{(y-x)\,e^{-3x}}{(\K1x+\K2)^2(\K1y+\K2)}\,dx^2
  +\frac{h\,(y-x)\,e^{-3y}}{(\K1x+\K2)(\K1y+\K2)^2}\,dy^2\,.
\end{equation}

\begin{remark}
The metric~\eqref{eqn:metric.AII.generic} is obtained through Formula~\eqref{eqn:general.metric} from the following two metrics:
\begin{subequations}\label{eqn:basis.metrics.IIA}
\begin{align*}
\g1 = g[h;1,0] &=\frac{1}{x} \left(\frac{1}{x}-\frac{1}{y}\right) e^{-3x} dx^2+\frac{h}{y} \left(\frac{1}{x}-\frac1y\right) e^{-3y} dy^2 \\
\g2 = g[h;0,1] &=(y-x) e^{-3x} dx^2+(y-x) h e^{-3y} dy^2.
\end{align*}
\end{subequations}
Note that~$\g2$ corresponds to an eigenvector of~$\lie_w$. Thus we assume~$\K1\ne0$ in view of Proposition~\ref{prop:homotheties.and.eigenvectors}.
Explicitly, the projective vector field is \eqref{eqn:proj.vector.field.A}.
The flow of~$w$ is $\phi_t(x_0,y_0)=(x_0+t,y_0+t)$.
\end{remark}

\noindent We proceed as in Section~\ref{sec:A.1}, but with Equation~\eqref{eqn:benenti.1r} instead of~\eqref{eqn:benenti.2r}, i.e.
\[
  L=-L(\g2,\K1\g1+\K2\g2) = \begin{pmatrix}
  K_1x-K_2 & 0 \\ 0 & K_1y-K_2
  \end{pmatrix}\,.
\]
Let us study a change of coordinates $(x,y)\to(u,v)$ that preserves $L$ up to~\eqref{eqn:benenti.1r}. The possible transformations follow from~\eqref{eqn:possible.transf.1r.diagonal.L}, i.e.
\begin{equation}\label{eqn:trafos}
  u=Kx+C,\ \ v=Ky+C\qquad\text{or}\qquad
  u=Ky+C,\ \ v=Kx+C\,.
\end{equation}
These transformations have to preserve the $1$-dimensional eigenspace.
Therefore, apply~\eqref{eqn:trafos} to the metric~$\g2$,
\[
  g_2 \to K^3e^{-3C}\,(y-x)\,(e^{-3Kx}dx^2+he^{-3Ky}dy^2)\,,
  \ \ \text{respectively,}\ \
  -K^3e^{-3C}\,(y-x)\,(he^{-3Kx}dx^2+e^{-3Ky}dy^2)\,.
\]
By a straightforward comparison with $\g2$, we find $K=1$.
In order to arrive at normal forms, let us consider a generic metric \eqref{eqn:metric.AII.generic} with $\K1\ne0$.
Using $(x,y) \to (x+C, y+C)$, we can put~\eqref{eqn:metric.AII.generic} into the same form with $K_2=0$, such that we arrive at
\begin{equation}\label{eqn:A.II.normal.form}
  g[h;\kappa] = \kappa\,\left(\frac{(y-x)\,e^{-3x}}{x^2y}\,dx^2
				+ \frac{h\,(y-x)\,e^{-3y}}{xy^2}\,dy^2\right)\,,
\end{equation}
with $\kappa=\frac{e^{3\frac{K_2}{K_1}}}{K_1^3}$.
%
Finally, consider \eqref{eqn:metric.AII.generic} with the transformation
$(x,y) \to (y+C, x+C)$.
The only case when this reproduces a metric of the same form, is when $h=1$, implying that we reproduce~\eqref{eqn:A.II.normal.form} up to its sign,
and thus we are free the choose the sign of $\K1$.
Therefore, \eqref{eqn:A.II.normal.form} is indeed the required normal form after provision of $\kappa\in\Rnz$, if $h\ne1$, or $\kappa>0$, if $h=1$.
Summarizing, we have proven the following proposition.
\begin{proposition}\label{prop:normal.forms.AII}
A metric of the type A(II) in Theorem~\ref{thm:matveev} that admits exactly one, essential projective vector field can locally, by a suitable coordinate transformation, be mapped into one and only one of the following metrics.
\begin{equation}\label{eqn:normal.form.IIA}
  g=\kappa\,\left(\frac{(y-x)e^{-3x}}{x^2y}dx^2 + \frac{h (y-x)e^{-3y}}{xy^2}dy^2\right)
\end{equation}
with $\kappa\in\Rnz$. If $h=1$, we require $k>0$.
\end{proposition}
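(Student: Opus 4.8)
The plan is to follow the general strategy laid out in Section~\ref{sec:techniques.strategies}: write down the most general metric of type A(II) via Formula~\eqref{eqn:general.metric}, enumerate all form-preserving coordinate transformations with the help of the associated Benenti tensor, and use this freedom to normalise the parameters. First, note that A(II) metrics have degree of mobility exactly~$2$ (none of the degree-$3$ cases of Theorem~\ref{thm:theorem.2.revised} is of type A(II)) and admit no non-trivial Killing vector field (Theorem~\ref{thm:theorem.3.revised}), so the hypotheses of Lemma~\ref{la:benenti.transformations} are in force; moreover, by Proposition~\ref{prop:homotheties.and.eigenvectors} we may discard the locus where the Liouville tensor is an eigenvector of $\lie_w$, i.e.\ we may assume $\K1\ne0$ in~\eqref{eqn:metric.AII.generic}. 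The relevant Benenti tensor is $L=-L(\g2,\K1\g1+\K2\g2)=\mathrm{diag}(\K1 x-\K2,\ \K1 y-\K2)$; since $\g2$ corresponds to an eigenvector of $\lie_w$ while $\g1$ does not, Lemma~\ref{la:benenti.transformations} shows that every form-preserving local isometry $\tau$ satisfies $\tau^*L\in\langle L,\Id\rangle$, which is precisely the diagonal situation treated in Example~2 of Section~\ref{sec:strategy.benenti}. Hence, by~\eqref{eqn:possible.transf.1r.diagonal.L}, the only candidates are $(x,y)\to(Kx+C,Ky+C)$ and $(x,y)\to(Ky+C,Kx+C)$ with $K\in\Rnz$ and $C\in\R$.

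The next step is to impose the remaining structural constraints. Applying either candidate to $\g2$ and requiring the result to be a scalar multiple of $\g2$ (isometries preserve the $1$-dimensional eigenspace of $\lie_w$) forces $K=1$. With $K=1$, the translation $(x,y)\to(x+C,y+C)$ acts on $g[h;\K1,\K2]$ of~\eqref{eqn:metric.AII.generic} simply by shifting $\K2$, so a suitable choice of $C$ sets $\K2=0$ and brings $g$ to the form~\eqref{eqn:A.II.normal.form} with $\kappa=e^{3\K2/\K1}\K1^{-3}$. Finally, the swap-type transformation $(x,y)\to(y+C,x+C)$ applied to~\eqref{eqn:metric.AII.generic} reproduces a metric of the same parametric form only when $h=1$ (the swap sends $h\mapsto h^{-1}$, which stays within $|h|\ge1$ only at $h=\pm1$, and only at $h=1$ does it produce a genuine new identification, flipping the sign of $\kappa$). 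Therefore, for $h=1$ we may additionally impose $\kappa>0$, while for all other admissible $h$ the parameter $\kappa\in\Rnz$ cannot be reduced further.

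It then remains to observe that distinct normal forms are mutually non-diffeomorphic. This is automatic from the construction: the list of candidate transformations above is complete (Lemma~\ref{la:benenti.transformations}), and each of them either fixes the pair $(h,\kappa)$ or, when $h=1$, only flips the sign of $\kappa$, which has been absorbed into the convention $\kappa>0$. As an independent check one can run the pole/length argument of Section~\ref{sec:poles.isometries}: writing out $g_{\phi_t(p_0)}(w,w)$ explicitly for two normal forms as a rational function of $t$ and matching its poles yields $h'=h$ and $\kappa'=\kappa$ directly.

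The point requiring the most care is the completeness of the transformation list in the first paragraph: one must rule out an isometry sending the non-eigenvector generator $\g1$ out of $\langle\g1,\g2\rangle$ in a way not captured by the Benenti family $\{KL+c\,\Id\}$. This is exactly what the combination of Lemma~\ref{la:benenti.transformations} with the fact that $\tau^*\g2$ must stay proportional to $\g2$ (because $\g2$ is an eigenvector of $\lie_w$) is designed to exclude; everything else is routine substitution and sign bookkeeping.
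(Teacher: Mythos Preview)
Your proposal is correct and follows essentially the same approach as the paper's own proof: compute the Benenti tensor $L=\mathrm{diag}(\K1 x-\K2,\K1 y-\K2)$, invoke Example~2 of Section~\ref{sec:strategy.benenti} to reduce to the affine candidates~\eqref{eqn:trafos}, force $K=1$ by preserving the eigenspace $\langle\g2\rangle$, absorb $\K2$ via the translation, and then observe that the swap gives a new identification only at $h=1$ (flipping the sign of~$\kappa$). Your additional remark that the pole/length argument of Section~\ref{sec:poles.isometries} provides an independent cross-check is a nice touch not spelled out in the paper for this case.
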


\begin{remark}
Note that the normal forms~\eqref{eqn:normal.form.IIA} are constant multiples of metric~$\g1$ of~\eqref{eqn:basis.metrics.IIA}. In Figure~\ref{fig:orbit.II} on page~\pageref{fig:orbit.II}, these metrics lie on the axis of abscissa. In the special case $h=1$, the normal forms~\eqref{eqn:normal.form.IIA} lie in the right half of the horizontal axis only.
\end{remark}

\subsubsection{Normal forms (A.3) of Theorem~\ref{thm:normal.forms}}\label{sec:A.3}
Metrics of case A(III) in Theorem~\ref{thm:matveev} are never eigenvectors of $\lie_w$, but we may still use a modification of the method used in Sections~\ref{sec:A.1} and~\ref{sec:A.2}.
Instead of the Benenti tensor for eigenvectors of $\lie_w$ (which do not exist), we use a family of Benenti tensors as discussed in Section~\ref{sec:techniques.strategies}.
Namely, we use the Benenti tensor for the two generating metrics $\g1,\g2$ given in Theorem~\ref{thm:matveev}.
According to Lemma~\ref{la:lambda.different.classes} we require $\lambda\geq0$, and from Section~\ref{sec:optimizing.h.C} we infer $h\leq1$. Moreover, if $\lambda=0$ we require $|h|\ne1$.
From~\eqref{eqn:general.metric} we obtain that any metric~$g\in\projcl(\g1,\g2)$ in the projective class of the metrics~$\g1,\g2$, see Theorem~\ref{thm:matveev}, is of the form
\begin{equation}\label{eqn:A.III.general.metric}
\begin{split}
 g[\lambda,h;\K1,\K2]=(\tan(y)-\tan(x))\,
 &\left(
   \frac{e^{-3\lambda x}}{\cos(x) (\K2\tan(y)-\K1) (\K2\tan(x)-\K1)^2}\,dx^2
   \right. \\
 &\left.\qquad
   +\frac{h e^{-3\lambda y}}{\cos(y) (\K2\tan(y)-\K1)^2 (\K2\tan(x)-\K1)}\,dy^2
   \right)\,.
\end{split}
\end{equation}
The projective vector field, in these coordinates, is \eqref{eqn:proj.vector.field.A}, and its flow $\phi_t(x_0,y_0)=(x_0+t,y_0+t)$.
Metric~\eqref{eqn:A.III.general.metric} actually admits a simple representative under geodesic equivalence.
\begin{lemma}\label{la:metrics.III.representatives}
 Let $g$ be a metric of type A(III) of Theorem~\ref{thm:matveev} but with the additional restrictions of Theorems~\ref{thm:theorem.2.revised} and~\ref{thm:theorem.3.revised}.
 There exists a coordinate transformation $\tau$ that maps~$g$ onto a multiple of~$\g1$,
 \begin{equation}\label{eqn:representative.A.III}
  \tau^*(g)
  = e^{\beta}\,\g1
  = e^{\beta}\,(\tan(x)-\tan(y))\,\left(\frac{e^{-3\lambda x}}{\cos(x)}\,dx^2 + \frac{h\,e^{-3\lambda y}}{\cos(y)}\,dy^2\right)
 \end{equation}
 where $\beta\in\left(0,2\pi\lambda\right]$ if $\lambda>0$ and $\beta\in(-\infty,\infty)$ if $\lambda=0$.
\end{lemma}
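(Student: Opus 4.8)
The plan is to use the action of the projective flow on the space of solutions $\solSp$, as set up in Section~\ref{sec:reparametrisation}, together with the Benenti-tensor technique of Section~\ref{sec:strategy.benenti}.

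First I would note that, by Formula~\eqref{eqn:general.metric} (incorporating the restrictions of Theorems~\ref{thm:theorem.2.revised} and~\ref{thm:theorem.3.revised}, so that the degree of mobility is exactly~$2$), every metric in the projective class of the A(III)-generators $\g1,\g2$ of Theorem~\ref{thm:matveev} has the explicit form $g[\lambda,h;\K1,\K2]$ given in~\eqref{eqn:A.III.general.metric}, with $(\K1,\K2)\ne0$. Since $\lie_w$ has no real eigenvalue here, I would pass to the adapted parameters $(K,\theta)$ of~\eqref{eqn:III.reparametrization}; by Lemma~\ref{la:projcurve_III} the quantity $K$ is constant along a projective orbit, while the local flow $\phi_t$ of $w=\partial_x+\partial_y$ acts by $\theta\mapsto\theta+t$ (cf.~\eqref{eqn:III.pullback}). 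As $\phi_t$ is an explicit coordinate transformation, all metrics on one orbit are isometric, so I may flow until $\theta=\tfrac\pi2$, i.e.\ $\K2=0$, keeping $\K1=Ke^{\lambda\pi/2}>0$. Substituting $\K2=0$ into~\eqref{eqn:A.III.general.metric} collapses the denominators and gives $g[\lambda,h;\K1,0]=\K1^{-3}\,\g1$; setting $e^\beta:=\K1^{-3}$ yields the asserted identity $\tau^*(g)=e^\beta\g1$ with $\tau=\phi_{\pi/2-\theta}$. This settles existence of the representative.

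It then remains to restrict $\beta$ to the stated range, which is where the real work lies. For $\lambda=0$ the flow is a pure rotation of $\solSp$, so $K$ ranges freely over $(0,\infty)$ and no normalisation beyond $\beta\in\R$ is available; sharpness of this parametrisation would be checked via the Benenti tensor $L(\g1,\g2)$, whose preserved family $\{KL+c\,\Id\}$ (see Lemma~\ref{la:benenti.transformations} and the diagonal-Benenti example in Section~\ref{sec:strategy.benenti}) forces any form-preserving coordinate transformation to be a joint translation $x\to x+c$, $y\to y+c$ (up to the $x\leftrightarrow y$ swap, excluded since it changes $|h|$). For $\lambda>0$ the orbits are logarithmic spirals; the discrete form-preserving transformations $\phi_{k\pi}$ send $\g1\mapsto(-1)^k e^{-3k\lambda\pi}\g1$, and these, together with the remaining transformations permitted by the same Benenti constraint, and after a shift absorbing the constant, identify representatives so that $\beta$ may be taken in a fundamental interval of length $2\pi\lambda$, which we normalise to $(0,2\pi\lambda]$ (with the endpoint/phase subtleties recorded in the statement and traced back to the parameter range for $K$ in Remark~\ref{rmk:III.parameter.values}).

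The main obstacle is precisely this last point for $\lambda>0$: one must be certain that \emph{every} coordinate transformation between two representatives $e^\beta\g1$ has been accounted for — not merely the evident translations $\phi_t$ — so the asserted range is neither too large (distinct $\beta$'s secretly isometric) nor too small (some metric missed). This is what the Benenti-tensor analysis buys us: by Lemma~\ref{la:benenti.transformations} any such transformation must preserve $L(\g1,\g2)$ up to the family $\{KL+c\,\Id\}$, which turns the search for isometries into a small first-order PDE system on the Jacobian of $\tau$; its solutions can be enumerated and their rescaling effect on $\g1$ computed directly, pinning down the exact range of $\beta$.
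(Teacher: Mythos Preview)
Your existence argument is correct and is exactly what the paper does: flow along $\phi_t$ until the orbit meets the $\g1$-axis (i.e.\ $\K2=0$), and read off the multiple of $\g1$. The paper's entire proof is just this, plus the remark that for $\lambda>0$ one picks the intersection closest to the origin, the range of $\beta$ then being inherited from the orbit parametrisation of Remark~\ref{rmk:III.parameter.values}.

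Where you diverge is in the second half. The lemma only asserts \emph{existence} of a representative with $\beta$ in the stated range; it does not claim that distinct $\beta$'s are non-isometric. Your Benenti-tensor analysis, aimed at pinning down all form-preserving transformations and thereby certifying the range is sharp, is not needed here --- that is precisely what the paper does \emph{after} this lemma, on the way to Proposition~\ref{prop:normal.forms.AIII}. So you are front-loading work that belongs to a later step.

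There is also a technical slip in how you invoke the Benenti machinery: Lemma~\ref{la:benenti.transformations} assumes $w$ is homothetic for at least one of the metrics in the pair, but in case A(III) the map $\lie_w$ has no real eigenvalue, so by Proposition~\ref{prop:homotheties.and.eigenvectors} $w$ is homothetic for \emph{no} metric in the class. The paper's subsequent Benenti argument (equation~\eqref{eqn:benenti.A3.case}) therefore does not appeal to Lemma~\ref{la:benenti.transformations} directly; it uses the Benenti tensor $L(\a1,\K1\a1+\K2\a2)$ for the generators and analyses its transformation behaviour ad hoc, since neither $\a1$ nor $\a2$ is an eigenvector. Your outline would need the same adjustment.
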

\begin{proof}
 Let $\phi_t$ be the flow of the projective vector field. Then $\tau^*(g)=\phi_t^*(g)$ for a suitably chosen~$t$. Indeed, considering the parametrization discussed in Remark~\ref{rmk:III.parameter.values}, $t$ should be chosen (if $\lambda>0$) such that $\tau^*(g)$ is the intersection closest to the origin of the orbit with the $\g1$-axis. For $\lambda=0$ the intersection of the orbit with the positive $\g1$-axis is unique, see also Figure~\ref{fig:orbit.III}.
\end{proof}

\noindent Lemma~\ref{la:metrics.III.representatives} tells us that we can restrict our investigation to coordinate changes that preserve the $\g1$-direction. Thus, we may consider the Benenti tensor ($\K1,\K2\in\R$)
\begin{equation}\label{eqn:benenti.A3.case}
  L = -L(\a1,\K1\a1+\K2\a2) =
  \begin{pmatrix} \K1\tan(x)+\K2 & \\ & \K1\tan(y)+\K2 \end{pmatrix}
\end{equation}
where $\a{i}$ corresponds to $\g{i}$ via~\eqref{eqn:general.metric}.
Proceeding as in Section~\ref{sec:strategy.benenti}, we arrive at the analog of Equation~\eqref{eqn:possible.transf.2r.diagonal.L}, i.e.\ the new coordinates $u(x,y)$, $v(x,y)$ satisfy
\begin{equation}\label{eqn:A.III.trafo}
  \tan(u)=K\tan(x)+C\,,\ \tan(v)=K\tan(y)+C\,,
  \quad\text{or}\quad
  \tan(u)=K\tan(y)+C\,,\ \tan(v)=K\tan(x)+C\,,
\end{equation}
with $K\ne0$ and $C\in\R$.
We require the transformation $(x,y)\to(u,v)$ to preserve $\g1$ up to multiplication by a non-zero constant, i.e.\ we require the existence of a transformation from representatives to representatives (note that if two metrics are connected by a coordinate transformation, there exists also a coordinate transformation between their representatives).
We obtain, applying the first transformation of~\eqref{eqn:A.III.trafo} to a copy of~$\g1$,
\begin{align*}
 \g1 &= (\tan(u)-\tan(v))\,\left(\frac{e^{-3\lambda u}}{\cos(u)}\,du^2 + \frac{h\,e^{-3\lambda v}}{\cos(v)}\,dv^2\right) \\
     &= K^2\,(\tan(x)-\tan(y))\,\left(\frac{e^{-3\lambda\arctan(K\tan(x)+C)}}{\cos(\arctan(K\tan(x)+C))}\,\frac{\sec^4(x)\,dx^2}{((K\tan(x)+C)^2+1)^2} \right. \\
     &\hspace{4cm} \left. + \frac{h\,e^{-3\lambda\arctan(K\tan(y)+C)}}{\cos(\arctan(K\tan(y)+C))}\,\frac{\sec(y)^4\,dy^2}{((K\tan(y)+C)^2+1)^2}\right) \\
     &= K^2\,(\tan(x)-\tan(y))\,\left(\frac{e^{-3\lambda\arctan(K\tan(x)+C)}}{\cos^4(x)\,((K\tan(x)+C)^2+1)^{\nicefrac32}}\,dx^2 \right. \\
     &\hspace{4cm} \left. + \frac{h\,e^{-3\lambda\arctan(K\tan(y)+C)}}{\cos^4(y)\,((K\tan(y)+C)^2+1)^{\nicefrac32}}\,dy^2\right) \\
     &\propto (\tan(x)-\tan(y))\,\left(\frac{e^{-3\lambda x}}{\cos(x)}\,dx^2 + \frac{h\,e^{-3\lambda y}}{\cos(y)}\,dy^2\right)
\end{align*}
This can only be satisfied if
\begin{align*}
 (K\tan(x)+C)^2+1=\frac{1}{\cos^2(x)}
 &\quad\Leftrightarrow\quad
 (K^2-1)\sin^2(x)+2KC\cos(x)\sin(x)+C^2\cos^2(x)=0
 \\
 &\quad\Leftrightarrow\quad
 K^2=1\,, C=0\,.
\end{align*}
Let us consider the cases $\lambda=0$ and $\lambda>0$ separately, as suggested by Remark~\ref{rmk:III.parameter.values}.
Assume first $\lambda>0$. Then the situation is as shown on the right panel of Figure~\ref{fig:nf.C.III}.
Resubstituting $K=\pm1$, $C=0$ into $\g1$, we obtain $K=1$ and $C=0$, since otherwise the exponents do not transform correctly.
Now let us assume $\lambda=0$. In this case, $K=-1$ would change the sign of a constant conformal factor to $\g1$. However, the normal forms require a positive factor, which implies $K=1$.
We have therefore confirmed that the coordinates transform as follows:
\begin{equation}\label{eqn:A.III.transformation}
  u = x+N\pi\,,\
  v = y+M\pi\,,\quad\text{or}\quad
  u = y+N\pi\,,\
  v = x+M\pi\,,\quad\text{for some}\ N,M\in\Z\,.
\end{equation}
Let us start with the case $\lambda=0$. We can use the second transformation to transform~\eqref{eqn:A.III.general.metric} into itself, but with $\theta\in\halfangl$ after the transformation. But, by a simple shift of $x,y$ by $\theta$, we remove $\theta$ from~\eqref{eqn:A.III.general.metric}, so the metric can essentially not be changed.
Next, let us consider $\lambda>0$ with the first transformation in~\eqref{eqn:A.III.transformation},
\[
 \g1\to (-1)^N\,e^\beta e^{-3\lambda\,N\pi}\,(\tan(x)-\tan(y))\,\left(\frac{e^{-3\lambda x}}{\cos(x)}\,dx^2 + (-1)^{M-N} he^{-3\lambda(M-N)\pi}\,\frac{\,e^{-3\lambda y}}{\cos(y)}\,dy^2\right)\,.
\]
We have $\beta\to\beta-3\lambda N\pi$, and we must require $N$ to be even.
Since $0<\beta-2N\lambda\pi\leq 2\lambda\pi$, however, already $N=0$.
Thus, $h\to (-1)^M he^{-3\lambda\,M\pi}$.
The transformed metric must satisfy $|he^{-3\lambda\,(M-N)\pi}|\leq e^{-3\lambda\pi}$.
On the other hand, we have
\[
  |he^{-3\lambda\,M\pi}|
  = |h| e^{-3\lambda\,M\pi}
  \leq e^{-3\lambda\pi\,(1+M)}\,,
\]
which allows us to restrict to $e^{-3\lambda\pi}<h\leq1$.
The second transformation in~\eqref{eqn:A.III.transformation} needs only to be considered when $h=1$. In this case, however,
\[
 \g1\to (-1)^N\,e^\beta e^{-3\lambda\,N\pi}\,(\tan(x)-\tan(y))\,\left(\frac{e^{-3\lambda x}}{\cos(x)}\,dx^2 + (-1)^{M-N} e^{-3\lambda(M-N)\pi}\,\frac{\,e^{-3\lambda y}}{\cos(y)}\,dy^2\right)\,,
\]
and thus $M=N$. Moreover, as in the first case, we obtain $N=0$.
From the above considerations we infer restrictions on the parameters. Incorporating these restrictions into~\eqref{eqn:A.III.general.metric}, we obtain the following proposition.

\begin{proposition}\label{prop:normal.forms.AIII}
A metric of the type A(III) in Theorem~\ref{thm:matveev} that admits exactly one essential projective vector field can, locally by a suitable coordinate transformation, be mapped onto one and only one of the following metrics.
\begin{enumerate}[label={(\roman*)}]
	\item for $\lambda=0$:\qquad
	$g=\kappa\,\sin(x-y)\,\left(
	\frac{dx^2}{\sin(y)\,\sin^2(x)}
	+\frac{h\,dy^2}{\sin^2(y)\,\sin(x)}
	\right)$\,,

	\item for $\lambda>0$:\qquad
	$g=\sin(x-y)\,\left(
	 \frac{e^{-3\lambda\,x}}{\sin(y-\theta)\,\sin^2(x-\theta)}\,dx^2
	 +\frac{h\,e^{-3\lambda\,y}}{\sin^2(y-\theta)\,\sin(x-\theta)}\,dy^2
	\right)$\,.
\end{enumerate}
If $\lambda=0$, $|h|<1$ and $\kappa>0$.
If $\lambda>0$, $h$ assumes values $e^{-3\lambda\pi}<h\leq1$, and $\theta\in\angl$.
\end{proposition}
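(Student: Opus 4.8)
The goal is to establish Proposition~\ref{prop:normal.forms.AIII}, giving the mutually non-diffeomorphic normal forms for metrics of type A(III). The strategy follows the template already executed for the cases A(I) and A(II) in Sections~\ref{sec:A.1} and~\ref{sec:A.2}, adapted to the fact that here $\lie_w$ has two complex-conjugate eigenvalues, so there are no eigenvectors of $\lie_w$ in $\solSp$ and consequently no Benenti tensor built from eigenvectors. Instead I would use the family-of-Benenti-tensors argument from Section~\ref{sec:strategy.benenti} (the first case of Lemma~\ref{la:benenti.transformations}), built from the two generating metrics $\g1,\g2$ of Theorem~\ref{thm:matveev}. The key simplification is Lemma~\ref{la:metrics.III.representatives}: using the projective flow $\phi_t$ and the reparametrization of Remark~\ref{rmk:III.parameter.values}, every metric in the projective class is isometric to a constant multiple of $\g1$, so it suffices to analyze coordinate transformations that preserve the $\g1$-direction in $\solSp$.

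The plan is as follows. First, I would write down the general metric $g[\lambda,h;\K1,\K2]$ in the class via Formula~\eqref{eqn:general.metric}, i.e.~Equation~\eqref{eqn:A.III.general.metric}, with the parameter restrictions from Theorems~\ref{thm:theorem.2.revised},~\ref{thm:theorem.3.revised} and Section~\ref{sec:optimizing.lambda.xi}--\ref{sec:optimizing.h.C} (so $\lambda\geq0$, $h\leq1$, and $|h|\ne1$ if $\lambda=0$) already incorporated. Then I invoke Lemma~\ref{la:metrics.III.representatives} to reduce to the representative $e^\beta\g1$, with $\beta\in(0,2\pi\lambda]$ for $\lambda>0$ and $\beta\in\mathds{R}$ for $\lambda=0$. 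Next I form the Benenti tensor $L=-L(\a1,\K1\a1+\K2\a2)$ of~\eqref{eqn:benenti.A3.case}, which is diagonal in $(\tan x,\tan y)$, and apply the generic computation of Example~1 in Section~\ref{sec:strategy.benenti}: the admissible coordinate changes are~\eqref{eqn:A.III.trafo}, namely $\tan u = K\tan x + C$, $\tan v = K\tan y + C$ (or with $x,y$ swapped), with $K\ne0$, $C\in\mathds{R}$. Substituting the first of these into a copy of $\g1$ and demanding the result be proportional to $\g1$ forces, via the algebraic identity $(K^2-1)\sin^2 x + 2KC\cos x\sin x + C^2\cos^2 x = 0$, that $K^2=1$ and $C=0$; comparing exponential factors then pins down $K=1$ (for $\lambda>0$ because otherwise $e^{-3\lambda\arctan(\cdot)}$ does not transform correctly, for $\lambda=0$ because the normal form requires a positive conformal factor). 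Hence the only surviving transformations are $u=x+N\pi$, $v=y+M\pi$ (or swapped), $N,M\in\mathds{Z}$, as in~\eqref{eqn:A.III.transformation}.

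The final step is bookkeeping with these residual shifts. For $\lambda=0$: the swap transformation $u=y+N\pi$, $v=x+M\pi$ maps~\eqref{eqn:A.III.general.metric} into itself but with $\theta$ moved into $\halfangl$, and then a shift of both $x,y$ by $\theta$ removes $\theta$ entirely, leaving the metric unchanged in essence; so one obtains the normal form with parameters $|h|<1$ and $\kappa>0$. For $\lambda>0$: applying $u=x+N\pi$, $v=y+M\pi$ to $e^\beta\g1$ gives $\beta\to\beta-3\lambda N\pi$ and $h\to(-1)^M h e^{-3\lambda(M-N)\pi}$; parity constraints force $N$ even, and $0<\beta-2N\lambda\pi\leq 2\lambda\pi$ forces $N=0$; the condition $|h e^{-3\lambda M\pi}|\leq e^{-3\lambda\pi}$ then lets one restrict to $e^{-3\lambda\pi}<h\leq1$. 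The swap transformation, relevant only when $h=1$, forces $M=N=0$, so no further reduction is possible. Assembling these restrictions into~\eqref{eqn:A.III.general.metric} and renaming coordinates yields exactly the two cases of Proposition~\ref{prop:normal.forms.AIII}. The main obstacle — and the step I expect to be most delicate — is handling the interplay of the discrete residual shifts $N\pi,M\pi$ with the constraint $\theta\in\angl$ and the bound on $|h|$ simultaneously, making sure the resulting parameter ranges are sharp (no two distinct parameter triples give diffeomorphic metrics) and exhaustive; as in the A(I) case, a cross-check via the length function~\eqref{eqn:isometries.length.preservation} of $w$ along its flow, comparing poles in the flow parameter, provides the needed corroboration.
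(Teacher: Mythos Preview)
Your proposal is correct and follows essentially the same approach as the paper: reduce via Lemma~\ref{la:metrics.III.representatives} to representatives $e^\beta\g1$, use the diagonal Benenti tensor~\eqref{eqn:benenti.A3.case} and Example~1 of Section~\ref{sec:strategy.benenti} to force $K^2=1$, $C=0$ and hence the residual shifts~\eqref{eqn:A.III.transformation}, then do the parity and range bookkeeping separately for $\lambda=0$ and $\lambda>0$. The only difference is cosmetic: the paper does not actually invoke the length-function cross-check~\eqref{eqn:isometries.length.preservation} in this case, relying entirely on the Benenti argument.
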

\begin{proof}
 The normal forms are~\eqref{eqn:representative.A.III} with the additional restrictions on $h$.
 However, it seems desirable to choose new coordinates.
 Specifically, using standard trigonometrical identities and the reparametrization~\eqref{eqn:III.reparametrization}, we obtain
 \begin{equation}
  g = \sin(y-x)\,\left(
		  \frac{e^{-3\lambda\,(x+\theta+\beta)}\,dx^2}{\sin(y-\theta)\,\sin^2(x-\theta)}
		  +\frac{h\,e^{-3\lambda\,(y+\theta+\beta)}\,dy^2}{\sin^2(y-\theta)\,\sin(x-\theta)}
		\right)\,.
 \end{equation}
After a suitable translational change of the coordinates, we then arrive at the asserted normal forms. Note that in case $\lambda>0$, we also redefine $\theta_\text{old}\to\theta_\text{new}=2\theta_\text{old}+\beta$, and thus arrive at the requirement $\theta_\text{new}\in\angl$.
\end{proof}

\subsubsection{Normal forms (B.4) of Theorem~\ref{thm:normal.forms}}\label{sec:B.4}
A metric from class B(I) in Theorem~\ref{thm:matveev} takes the form
\begin{equation}\label{eqn:B.I.general.metric}
  g[\xi,C;\K1,\K2]
  = \frac{Cz^\xi-\cc{C}\cc{z}^\xi}{(\K1+\K2\cc{C}\cc{z}^\xi)(\K1+\K2Cz^\xi)^2}\,dz^2
  -\frac{Cz^\xi-\cc{C}\cc{z}^\xi}{(\K1+\K2\cc{C}\cc{z}^\xi)^2(\K1+\K2Cz^\xi)}\,d\cc{z}^2
\end{equation}
where $C\in\mathds{C}$, $C=e^{i\varphi}$ with $\varphi\in[0,\pi)$, and $\xi=2\,\frac{\lambda-1}{2\lambda+1}$ as in~\eqref{eqn:xi.via.lambda}. We require $\xi\in(0,4]$ with the restrictions $\xi\ne1$ and: if $\xi=2$ then $\varphi\in(0,\pi)$; if $\xi=3$ then $\varphi\in(0,\pi)$ with $\varphi\ne\tfrac{\pi}{2}$.

\begin{remark}
Metric~\eqref{eqn:B.I.general.metric} is obtained, using Formula~\eqref{eqn:general.metric}, from the pair $\g1,\g2$ of Theorem~\ref{thm:matveev}.
The metrics~$\g1$ and~$\g2$ correspond, via~\eqref{eqn:Liouville.tensor.g}, to non-proportional eigenvectors $\a1,\a2$ of~$\lie_w$, respectively.
Furthermore, we have
\begin{equation*}
 \lie_w\vect{\a1\\ \a2}=\begin{pmatrix} \lambda & 0\\ 0 & 1 \end{pmatrix}\vect{\a1\\ \a2}
 \qquad\text{and}\qquad
 w=-\frac{2\lambda+1}{2}\,(z\partial_z+\cc{z}\partial_{\cc{z}})\,.
\end{equation*}
\end{remark}

\noindent The Benenti tensor~\eqref{eqn:benenti.2r} for $\g1=g[\xi,C;1,0]$ and $\g2=g[\xi,C;0,1]$ reads
\[
  L(\g2,\g1) =
  \begin{pmatrix}
  -Cz^\xi & 0 \\ 0 & -\cc{C}\cc{z}^\xi
  \end{pmatrix}\,.
\]
We are looking for coordinate transformations $(z,\cc{z})\to(w,\cc{w})$ such that $L$ assumes the same form, up to a constant conformal factor.
Indeed, we find, analogously to Equation~\eqref{eqn:possible.transf.2r.diagonal.L}, the possible form-preserving transformations $\tau:(z,\cc{z})\to(w,\cc{w})$, with
$z=Aw$ or $z=A\cc{w}$, where $A>0$ is a positive real number and $\cc{z}$ denotes the complex conjugate of $z$.
Using these transformations, we arrive, in a manner analogous to that in Section~\ref{sec:A.1}, at the following normal forms

\begin{proposition}\label{prop:normal.forms.BI}
Metrics of type B(I) of Theorem~\ref{thm:matveev} that admits exactly one, essential projective vector field can, locally by a suitable coordinate transformation, be mapped onto one and only one of the following metrics,
\[
  g[\xi,C;\kappa] = \kappa\,\left(
  \frac{Cz^\xi -\cc{C}\cc{z}^\xi}{(1+Cz^\xi)(1+\cc{C}^\xi)^2}\,dz^2
  +\frac{Cz^\xi -\cc{C}\cc{z}^\xi}{(1+Cz^\xi)^2(1+\cc{C}^\xi)}\,d\cc{z}^2
  \right)\,.
\]
where $\xi\in(0,1)\cup(1,4]$, $\kappa\in\Rnz$ and $C=e^{i\varphi}$, $\varphi\in\halfangl$. If $\xi=2$, $C\ne\pm1$; if $\xi=3$ then $C^2\ne\pm1$.
\end{proposition}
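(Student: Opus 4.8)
The plan is to transcribe the argument of Section~\ref{sec:A.1} (the real case A(I)) into the complex setting. By Remark~\ref{rmk:general.reasoning.cases} we may assume degree of mobility exactly~$2$, so that $\hat{\solSp}=\solSp$; the excluded configurations ($\xi=1$, and $\xi\in\{2,3,4\}$ with the specific values of $C$ named in the statement) are accounted for by Theorems~\ref{thm:theorem.2.revised} and~\ref{thm:theorem.3.revised} and by the superintegrable analysis of Section~\ref{sec:proof.degree.of.mobility.3}. Writing $(\a1,\a2)$ for the basis of $\solSp$ in which $\lie_w$ has eigenvalues $\lambda$ and $1$, the generating metrics $\g1=g[\xi,C;1,0]$, $\g2=g[\xi,C;0,1]$ correspond via~\eqref{eqn:Liouville.tensor.g} to non-proportional eigenvectors of $\lie_w$, and~\eqref{eqn:B.I.general.metric} exhibits an arbitrary metric of the class as a combination of them.

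First I would compute the Benenti tensor $L(\g2,\g1)$, which is diagonal as displayed. Since both $\a1$ and $\a2$ are eigenvectors of $\lie_w$, Lemma~\ref{la:benenti.transformations} shows that any isometry between two metrics of the class must preserve $L$ up to a nonzero constant. Comparing with a second metric $\hat g[\eta,C';\cdot]$ of the same type and feeding the diagonal form of $L$ into the Example~1 computation of Section~\ref{sec:strategy.benenti} (the complex analogue of~\eqref{eqn:possible.transf.2r.diagonal.L}), together with $|C|=1$, with Lemma~\ref{la:lambda.different.classes} (which makes $\xi$ a projective invariant, hence $\eta=\xi$), and with the reality condition ($\cc z$ being the complex conjugate of $z$), restricts the admissible coordinate changes to $z=Aw$ or $z=A\cc w$ with $A>0$ real. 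The option $z=A\cc w$ is the reflection $z\leftrightarrow\cc z$, already used in Section~\ref{sec:optimizing.h.C} to reduce the phase of $C$ to $\varphi\in\halfangl$, and within that reduced range it produces nothing new away from the excluded endpoints. At the borderline value $\xi=4$ ($\lambda=-1$) the two eigenvalues of $\lie_w$ have equal modulus, so --- exactly as in the ``special case $\lambda=-1$'' Remark of Section~\ref{sec:A.1} --- one must additionally admit the transformations coming from the inverse Benenti tensor; checking these reproduces the finding there that they survive only if $C=1$, which is excluded here.

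It then remains to use the scaling $z\to\Lambda z$, $\Lambda>0$: a suitable choice of $\Lambda^{\xi}$ turns $\K1+\K2Cz^\xi$ into a constant multiple of $1+Cz^\xi$, and absorbing the residual real factor into $\kappa$ yields the stated normal form $g[\xi,C;\kappa]$. The delicate point is the sign of $\K1\K2$: in the real case A(I) it survives as the parameter $\varrho\in\pmo$, whereas here, because $C$ is a genuine complex phase, composing the scaling with the reflection $z\leftrightarrow\cc z$ (which sends $C\mapsto\cc C$) reabsorbs it, so that no $\varrho$ appears. Finally, to show that distinct normal forms are non-diffeomorphic, I would run the pole-matching argument of Section~\ref{sec:poles.isometries}: expressing the square length $g_{\phi_t(p_0)}(w,w)$ along the projective flow as a function of $t$ for two candidate normal forms and equating its poles and zeros forces $\xi'=\xi$, $C'=C$ and $\kappa'=\kappa$, just as in Proposition~\ref{prop:normal.forms.AI}.

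The main obstacle is precisely this bookkeeping of the phase $C=e^{i\varphi}$: one has to verify that restricting $\varphi\in\halfangl$ together with the exclusions at $\xi\in\{2,3,4\}$ captures all of the phase/sign redundancy --- in particular that the $\varrho$-parameter of the real case genuinely disappears --- while simultaneously leaving no accidental coincidences within the family. Everything else is a routine complexification of the computations in Section~\ref{sec:A.1}.
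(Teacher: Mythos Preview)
Your overall approach mirrors the paper's exactly: compute the diagonal Benenti tensor, invoke Lemma~\ref{la:benenti.transformations} together with the Example~1 analysis of Section~\ref{sec:strategy.benenti} to reduce the admissible isometries to $z=Aw$ or $z=A\cc w$ with $A>0$, handle the $\xi=4$ borderline via the inverse-Benenti transformations as in the A(I) remark, normalize by scaling, and verify non-diffeomorphy through the pole-matching of Section~\ref{sec:poles.isometries}. The paper's own proof is in fact just the sentence ``in a manner analogous to that in Section~\ref{sec:A.1}'', so your outline is the intended one.

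The one place where your argument does not go through is the claim that the reflection $z\leftrightarrow\cc z$ absorbs the sign $\varrho=\sgn(\K1\K2)$. It does not: that reflection sends $C\mapsto\cc C$ while leaving $(\K1,\K2)$ untouched, so $\varrho$ is unchanged. What \emph{does} flip the sign of $\K1$ is the relabeling $C\mapsto -C$, since $\g1[\xi,-C]=-\g1[\xi,C]$ (hence $\a1\mapsto-\a1$); but this is a change of basis in $\solSp$, not a coordinate transformation, and it moves the phase to $\varphi+\pi\notin\halfangl$. Composing that relabeling with the reflection lands at phase $\pi-\varphi$, which for generic $\varphi\in(0,\pi)$ is a \emph{different} point of the parameter interval. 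Your mechanism therefore produces an isometry between the $\varrho=-1$ form with parameter $C$ and a $\varrho=+1$ form with parameter $-\cc C\ne C$, not an identification within a fixed $C$. A literal transcription of Section~\ref{sec:A.1} would leave $\varrho\in\pmo$ in the normal form (or, equivalently, force the phase range down to $[0,\tfrac\pi2]$). The paper does not spell this step out either, so you have correctly located the delicate point; but the resolution you propose does not work as stated.
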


\begin{remark}\label{rmk:conjugation.IB.reparam.class}
The observation in Section~\ref{sec:optimizing.h.C} is crucial: Conjugating the coordinates, $z\leftrightarrow\cc{z}$, the metric $g$ is transformed such that $(\kappa,\varphi)\to(\kappa,-\varphi)$.
It allows us to always transform $\varphi$ such that it lies in $\halfangl$.
\end{remark}

\subsubsection{Normal forms (B.5) of Theorem~\ref{thm:normal.forms}}\label{sec:B.5}
A metric of the type in B(II) takes the form
\begin{equation}\label{eqn:B.II.general.metric}
 g[C;\K1,\K2] =
 \frac{C\,(\cc{z}-z)\,e^{-3z}}{(\K2-\K1\cc{z})(\K2-\K1z)^2}\,dz^2
 -\frac{\cc{C}\,(\cc{z}-z)\,e^{-3\cc{z}}}{(\K2-\K1\cc{z})^2(\K2-\K1z)}\,d\cc{z}^2\,,
\end{equation}
with $C=e^{i\varphi}$, $\varphi\in[0,\pi)$; we also require $\K1\ne0$ (metrics with $\K1=0$ admit a homothetic vector field).

\begin{remark}\label{rmk:case.B.II}
Metric~\eqref{eqn:B.II.general.metric} is obtained, via Formula~\eqref{eqn:general.metric}, from $\g1,\g2$ given in Theorem~\ref{thm:matveev}.
The projective vector field is $w=\partial_z+\partial_{\cc{z}}$, and thus the projective flow reads
$\phi_t(z_0,\cc{z}_0) = (z_0+t,\cc{z}_0+t)$.
Next, let~$C=e^{i\varphi}$.
\end{remark}

\noindent The metrics $\g1=g[C;0,\K2]$ are eigenvectors of $\lie_w$ and the Benenti tensor~\eqref{eqn:benenti.tensor} for $\g2=g[C;0,1]$ reads
\[
  L = -L(a_2,\K1\a1+\K2\a2) = (\K1z-\K2)\,dz\otimes\partial_z + (\K1\cc{z}-\K2)\,d\cc{z}\otimes\partial_{\cc{z}}\,,
\]
where we denote by $\a1$ and $\a2$ the solutions corresponding to $\g1,\g2$, respectively, via Equation~\eqref{eqn:Liouville.tensor.g}.
Analogously to Section~\ref{sec:A.2}, we find that any possible coordinate transformation is of the form
\[
  w = z+k\,,\ \ \cc{w}=\cc{z}+k
  \qquad\text{or}\qquad
  w = \cc{z}+k\,,\ \ \cc{w}=z+k\,.
\]
The computations would just repeat the reasoning in Section~\ref{sec:A.2}, so we instead just give the result.
\begin{proposition}\label{prop:normal.forms.BII}
A metric of the type B(II) in Theorem~\ref{thm:matveev} that admits exactly one, essential projective vector field can, locally by a suitable coordinate transformation, be mapped onto one and only one of the following metrics,
 \[
   g=\kappa\,(\cc{z}-z)\,
 	\left(
 	  \frac{e^{i\varphi}\,e^{-3z}}{z^2\cc{z}}\,dz^2
 	  -\frac{e^{-i\varphi}\,e^{-3\cc{z}}}{z\cc{z}^2}\,d\cc{z}^2
 	\right),
 \]
 with $\varphi\in[0,\pi)$ and $\kappa\in\Rnz$.
\end{proposition}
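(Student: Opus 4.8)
The plan is to follow exactly the template established for the real-Liouville cases A(I) in Section~\ref{sec:A.1} and A(II) in Section~\ref{sec:A.2}, since case B(II) is the complex-Liouville analogue of A(II): the projective vector field is the translation $w=\partial_z+\partial_{\cc z}$ with flow $\phi_t(z_0,\cc z_0)=(z_0+t,\cc z_0+t)$, the space $\solSp$ is two-dimensional (degree of mobility~2, having excluded the cases of Theorems~\ref{thm:theorem.2.revised} and~\ref{thm:theorem.3.revised}), and $\lie_w$ has a single one-dimensional eigenspace. First I would note that $\g1=g[C;0,\K2]$ is an eigenvector of $\lie_w$, so by Proposition~\ref{prop:homotheties.and.eigenvectors} we must take $\K1\ne0$, and then form the Benenti tensor~\eqref{eqn:benenti.tensor} for the pair $(\g2,\K1\a1+\K2\a2)$, which is the diagonal tensor $L=(\K1 z-\K2)\,dz\otimes\partial_z+(\K1\cc z-\K2)\,d\cc z\otimes\partial_{\cc z}$. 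This has precisely the diagonal form analysed in Example~2 of Section~\ref{sec:strategy.benenti}, so~\eqref{eqn:possible.transf.1r.diagonal.L} applies (with $K=1$ forced, exactly as in Section~\ref{sec:A.2}, by comparing the $e^{-3z}$, $e^{-3\cc z}$ exponents after the transformation): the only candidate coordinate changes are $w=z+k,\ \cc w=\cc z+k$ or the swap $w=\cc z+k,\ \cc w=z+k$, with $k$ a (complex) translation parameter.

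Next I would use the first family of transformations to kill $\K2$: applying $(z,\cc z)\to(z+k,\cc z+k)$ to the generic metric~\eqref{eqn:B.II.general.metric} sends $\K1,\K2$ to $\K1,\K2+k\K1$, so choosing $k=-\K2/\K1$ we reach the form with $\K2=0$, i.e.
\[
  g=\kappa\,(\cc z-z)\,\left(\frac{e^{i\varphi}\,e^{-3z}}{z^2\cc z}\,dz^2-\frac{e^{-i\varphi}\,e^{-3\cc z}}{z\cc z^2}\,d\cc z^2\right),
  \qquad
  \kappa=\frac{e^{3k}}{\K1^3}\in\Rnz,
\]
where $C=e^{i\varphi}$ with $\varphi\in\halfangl$ by the reduction in Section~\ref{sec:optimizing.h.C} (conjugation $z\leftrightarrow\cc z$ maps $\varphi\to-\varphi$, cf. Remark~\ref{rmk:conjugation.IB.reparam.class}). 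Then I would check the swap transformation $(z,\cc z)\to(\cc z+k,z+k)$: it interchanges the roles of $z$ and $\cc z$ in~\eqref{eqn:B.II.general.metric} and simultaneously conjugates $C$, so it merely reproduces the already-exploited $z\leftrightarrow\cc z$ freedom (used to normalise $\varphi\in\halfangl$) and produces no further collapse of parameters. Hence no two distinct $(\kappa,\varphi)$ give coordinate-equivalent metrics — that is, the listed forms are mutually non-diffeomorphic — and this is essentially by construction, since we have enumerated all form-preserving coordinate transformations.

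To corroborate non-equivalence independently (paralleling the optional ``length of $w$'' argument in Section~\ref{sec:A.1}), I would write out $g(w,w)$ along an orbit $\phi_t$ for two putative normal forms, compare the poles and zeros in $t$ as in Section~\ref{sec:poles.isometries}, and read off $\kappa'=\kappa$, $\varphi'=\varphi$; but since the Benenti enumeration already settles it, this step is optional and I would keep it brief or omit it, exactly as the paper does (``The computations would just repeat the reasoning in Section~\ref{sec:A.2}, so we instead just give the result''). The one place to be slightly careful — the main (minor) obstacle — is tracking that $K=1$ is genuinely forced in the complex setting: the argument is that $L$ being preserved only up to a \emph{real} positive conformal constant, together with the requirement that the exponential factors $e^{-3z},e^{-3\cc z}$ and the prefactor $(\cc z-z)$ transform into constant multiples of themselves, leaves no room for a nontrivial dilation, so the reasoning of Section~\ref{sec:A.2} transfers verbatim with $h$ replaced by the phase $C$ (and $|C|=1$ makes the ``$h=1$'' special case of Section~\ref{sec:A.2} irrelevant here). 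Thus the proof is a direct transcription, and I would present it as such, citing Sections~\ref{sec:A.2} and~\ref{sec:strategy.benenti} for the details.
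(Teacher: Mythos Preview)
Your approach is essentially identical to the paper's own: compute the Benenti tensor $L=(\K1 z-\K2)\,dz\otimes\partial_z+(\K1\cc z-\K2)\,d\cc z\otimes\partial_{\cc z}$, invoke Example~2 of Section~\ref{sec:strategy.benenti} to reduce the form-preserving transformations to $w=z+k$ or $w=\cc z+k$ (with $K=1$ forced by the exponentials, as in Section~\ref{sec:A.2}), use the real translation to eliminate~$\K2$, and observe that the swap $z\leftrightarrow\cc z$ has already been spent on the normalisation $\varphi\in\halfangl$. The paper does exactly this and then simply declines to write out the details, pointing back to Section~\ref{sec:A.2}.

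One small bookkeeping slip: with the sign convention of~\eqref{eqn:B.II.general.metric} the denominator is $(\K2-\K1 z)$, so the translation $z\mapsto z+k$ sends $\K2\mapsto\K2-k\K1$, not $\K2+k\K1$; hence you should take $k=\K2/\K1$ (and the explicit formula for~$\kappa$ changes accordingly). This is harmless for the argument since $\kappa$ ranges over all of~$\Rnz$ anyway, but worth correcting.
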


\subsubsection{Normal forms (B.6) of Theorem~\ref{thm:normal.forms}}\label{sec:B.6}
A metric of type B(III) in Theorem~\ref{thm:matveev} reads
\begin{equation}\label{eqn:B.III.general.metric.1}
\begin{split}
 g=g[\lambda,C;\K1,\K2]
 =(\tan(\cc{z})-\tan(z))\,
   &\left(
    \frac{C e^{-3\lambda z}}{\cos(z) (\K2\tan(\cc{z})-\K1) (\K2\tan(z)-\K1)^2}\,dz^2
   \right. \\
   &\left.\qquad
    - \frac{\cc{C} e^{-3\lambda\cc{z}}}{\cos(\cc{z}) (\K2\tan(\cc{z})-\K1)^2 (\K2\tan(z)-\K1)}\,d\cc{z}^2
   \right)
\end{split}
\end{equation}
where $C=e^{i\varphi}$ with $\varphi\in[0,\pi)1$; if $\lambda=0$, $\varphi\in(0,\pi)$.
Using the parametrization~\eqref{eqn:III.reparametrization}, Metric~\eqref{eqn:B.III.general.metric.1} becomes
\begin{equation}\label{eqn:B.III.general.metric.2}
  g = \frac{\sin(\cc{z}-z)}{K^3}\,\left(
		 \frac{C\,e^{-3\lambda\,(z+\theta)}\,dz^2}{\sin(\cc{z}-\theta)\,\sin^2(z-\theta)}
-\frac{\cc{C}\,e^{-3\lambda\,(\cc{z}+\theta)}\,d\cc{z}^2}{\sin^2(\cc{z}-\theta)\,\sin(z-\theta)}
	\right)\,.
\end{equation}

\begin{remark}
Metrics of type B(III) are obtained, via Formula~\eqref{eqn:general.metric}, from the pair (neither is an eigenvector)
\begin{align*}
 \g1 &= (\tan(z)-\tan(\cc{z}))\,\left(
		 C\,\frac{e^{-3\lambda z}}{\cos(z)}\,dz^2
		 -\cc{C}\frac{e^{-3\lambda\cc{z}}}{\cos(\cc{z})}\,d\cc{z}^2
	\right), \\
 \g2 &= (\cot(z)-\cot(\cc{z}))\,\left(
		C\,\frac{e^{-3\lambda z}}{\sin(z)}\,dz^2
		 -\cc{C}\frac{e^{-3\lambda\cc{z}}}{\sin(\cc{z})}\,d\cc{z}^2
	\right).
\end{align*}
The projective vector field, fixed by condition~\eqref{eqn:fix.proj.vector.field} and the case~(III) of~\eqref{eqn:normal-matrices}, assumes the form
\eqref{eqn:proj.vector.field.B},
and thus its flow reads
$\phi_t(z_0,\cc{z}_0)=(z_0+t,\cc{z}_0+t)$.
\end{remark}

\noindent We continue in two steps, treating the cases of vanishing and non-vanishing~$\lambda$ separately. Such a separation is also suggested by Remark~\ref{rmk:III.parameter.values}.
We begin with the case of vanishing $\lambda$.

\begin{lemma}\label{la:B.III.lambda.zero}
If $\lambda=0$ in~\eqref{eqn:B.III.general.metric.2}, then $g$ is isometric to a metric of the form
\begin{equation}\label{eqn:metric.B.III.final.lambda.zero}
g[C;\kappa] = \kappa\,\sin(z-\cc{z})\,\left(
\frac{C\,dz^2}{\sin(\cc{z})\,\sin^2(z)}
-\frac{\cc{C}\,d\cc{z}^2}{\sin^2(\cc{z})\,\sin(z)}
\right)\,,
\end{equation}
where $\kappa>0$.
There is no coordinate transformation between two different copies $g=g[C;\kappa]$ and $g'=g[C';\kappa']$ of~\eqref{eqn:metric.B.III.final.lambda.zero}.
\end{lemma}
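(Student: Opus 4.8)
The plan is to mirror the treatment of case A(III) in Section~\ref{sec:A.3}, adapted to the complex--Liouville setting and specialised to $\lambda=0$. The argument splits into two halves: \emph{(i)} every metric~\eqref{eqn:B.III.general.metric.2} with $\lambda=0$ is isometric to one of the shape~\eqref{eqn:metric.B.III.final.lambda.zero} with $\kappa>0$; \emph{(ii)} two such normal forms with different parameters are never related by a coordinate transformation.

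For \emph{(i)} I would invoke the reparametrisation of Remark~\ref{rmk:III.parameter.values}: when $\lambda=0$ the exponential $\exp(t\lie_w)$ is a pure rotation, so by Lemma~\ref{la:projcurve_III} the projective orbit through a point of $\solSp$ is a circle whose only invariant is the radius $K>0$. Since metrics on the same orbit are isometric, any metric~\eqref{eqn:B.III.general.metric.2} is isometric to the one with $\theta=0$; after the real translation $x\to x+\theta$ (which removes $\theta$ from the trigonometric arguments) this is exactly~\eqref{eqn:metric.B.III.final.lambda.zero} with $\kappa=K^{-3}>0$. This is the $\lambda=0$ analogue of Lemma~\ref{la:metrics.III.representatives}, and it reduces the whole problem to coordinate transformations between such representatives, which must preserve the $\g1$-direction.

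For \emph{(ii)} I would use a Benenti tensor as in Section~\ref{sec:strategy.benenti}. Because the eigenvalues of $\lie_w$ are non-real, $\g1,\g2$ are not eigenvectors of $\lie_w$, so I take the Benenti tensor of the pair $(\g1,\K1\g1+\K2\g2)$, which in the coordinates $(z,\cc z)$ is diagonal with entries $\K1\tan z+\K2$ and $\K1\tan\cc z+\K2$. Running the ``diagonal Benenti tensor'' computation leading to~\eqref{eqn:possible.transf.2r.diagonal.L} shows that every form-preserving coordinate transformation satisfies $\tan u=K\tan z+C$, $\tan v=K\tan\cc z+C$, or the $z\leftrightarrow\cc z$-swapped variant, with $K\ne0$ and $C\in\R$. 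Imposing that $\g1$ be preserved up to a (necessarily positive, since $\kappa>0$ is fixed in the normal form) constant forces $K=1$, $C=0$; compatibility with complex conjugation then leaves only the maps $x\to x+N\pi$ (with $y$ fixed) and $x\to x+N\pi$, $y\to-y$, for $N\in\Z$. A direct substitution into~\eqref{eqn:metric.B.III.final.lambda.zero} shows the pure $\pi$-translations fix $(\kappa,C)$, while the conjugation-type ones send $C$ to $\cc C$ or $-\cc C$ and, because $\kappa>0$ and $|C|=1$, always leave $\kappa$ unchanged; as an independent check one can compare, as in Section~\ref{sec:poles.isometries}, the poles in $t$ of the two sides of~\eqref{eqn:isometries.length.preservation}, which likewise pins down $\kappa'=\kappa$ and the admissible matchings of $C'$ with $C$.

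The step I expect to be the main obstacle is the careful bookkeeping of these conjugation-type transformations against the admissible range of $\varphi$: since the $z\leftrightarrow\cc z$ freedom was already partly used in Section~\ref{sec:optimizing.h.C} to restrict $\varphi$, one has to determine precisely which of the residual maps $C\mapsto\cc C$, $C\mapsto-\cc C$ remain inside the chosen range, fix the sharp range for $\varphi$ accordingly, and note that $\varphi=\tfrac{\pi}{2}$ (the value giving the metric with an extra Killing field, cf.\ Theorem~\ref{thm:theorem.3.revised}) is the only parameter requiring separate comment. Once this is settled, the exhaustiveness of the list of admissible transformations together with their triviality on $(\kappa,C)$ yields the non-existence of a coordinate transformation between two distinct copies of~\eqref{eqn:metric.B.III.final.lambda.zero}.
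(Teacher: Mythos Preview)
Your part (i) coincides with the paper's: it applies the real translation $(x,y)\to(x+\theta,y+\theta)$ to~\eqref{eqn:B.III.general.metric.2} with $\lambda=0$ and sets $\kappa=K^{-3}>0$.

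For part (ii) the routes genuinely differ. The paper does \emph{not} use Benenti tensors here; it argues directly via Equation~\eqref{eqn:isometries.length.preservation}. Writing out the equality of $g(w,w)$ along the two flows (this is~\eqref{eqn:B6.lambda.zero.nonisom}), it matches poles in $t$ to get $z_0'=z_0+N\pi$ or $z_0'=\cc z_0+N\pi$, substitutes back, and uses the linear independence of $\sin(z_0+t)$ and $\sin(\cc z_0+t)$ (Wronskian $i\sinh 2y$) to obtain $C'\in\{C,\cc C\}$; the range of~$\varphi$ eliminates $\cc C$, and finally $\kappa'=(-1)^N\kappa>0$ forces $N$ even and $\kappa'=\kappa$. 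Thus what you list as an ``independent check'' is in fact the paper's primary (and only) argument. Your Benenti-tensor classification, carried over from the A(III) treatment, is a legitimate alternative that makes the candidate transformations explicit and exposes the role of complex conjugation; it is slightly longer but more transparent. One point to tighten: Lemma~\ref{la:benenti.transformations} does not apply verbatim here since $\lie_w$ has no real eigenvector, so you should justify the Benenti step as in Section~\ref{sec:A.3} (via representatives on the $\g1$-axis) rather than by direct appeal to that lemma.

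Your instinct about the ``main obstacle'' is well placed. The odd-$N$ conjugation $(x,y)\mapsto(x+\pi,-y)$ formally sends $g[C;\kappa]$ to $g[\cc C;-\kappa]=g[-\cc C;\kappa]$, and $-\cc C$ has phase $\pi-\varphi\in(0,\pi)$; the paper's proof handles the interplay of the sign constraint $\kappa'=(-1)^N\kappa>0$ with the range restriction on~$\varphi$ quite tersely, so your plan to track the maps $C\mapsto\cc C$ and $C\mapsto-\cc C$ explicitly against the chosen parameter range is a genuine clarification rather than a detour.
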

\begin{proof}
Apply the change of coordinates $(x,y)\to(x+\theta,y+\theta)$ to metric~\eqref{eqn:B.III.general.metric.2} and introduce $\kappa=K^{-3}>0$. This yields~\eqref{eqn:metric.B.III.final.lambda.zero}.
Next, let us check that these normal forms are mutually non-diffeomorphic.
This is follows from inspection of Equation~\eqref{eqn:isometries.length.preservation}, evaluated for the current context,
\begin{multline}\label{eqn:B6.lambda.zero.nonisom}
  \kappa\,\sin(z_0-\cc{z}_0)\,\left(
  \frac{C}{\sin(\cc{z}_0+t)\,\sin^2(z_0+t)}
  -\frac{\cc{C}}{\sin^2(\cc{z}_0+t)\,\sin(z_0+t)}
  \right) \\
  =
  \kappa'\,\sin(z_0'-\cc{z}_0')\,\left(
  \frac{C'}{\sin(\cc{z}_0'+t)\,\sin^2(z_0'+t)}
  -\frac{\cc{C}'}{\sin^2(\cc{z}_0'+t)\,\sin(z_0'+t)}
  \right)\,.
\end{multline}
Examining the poles in $t$, we obtain two cases: (a) $z_0'=z_0+N\pi$, $\cc{z}_0'=\cc{z}_0+N\pi$, and (b) $z_0'=\cc{z}_0+N\pi$, $\cc{z}_0'=z_0+N\pi$ (with $N\in\mathds{Z}$).

Substituting back into~\eqref{eqn:B6.lambda.zero.nonisom}, we get an equation on $\kappa,\kappa',C,C'$, only involving the functions $\sin(z_0+t),\sin(\cc{z}_0)$.
Note that these functions are linearly independent; their Wronskian, $i\sinh(2y)$, does not vanish identically. Therefore, $C'=C$ or $C'=\cc{C}$, but revisiting the restrictions on $C$ in Theorem~\ref{thm:matveev} we conclude $C'=C$.
Next, reinserting again into~\eqref{eqn:B6.lambda.zero.nonisom}, we obtain that $\kappa'=(-1)^N\,\kappa$. Due to $\kappa,\kappa'>0$, this implies that $N$ is even and $\kappa'=\kappa$.
\end{proof}

\noindent Next, let us consider the case when $\lambda>0$, i.e., when $\lambda$ is non-zero.

\begin{lemma}\label{la:B.III.lambda.nonzero}
If $\lambda>0$ in~\eqref{eqn:B.III.general.metric.2}, then $g$ is isometric to a metric of the form
\begin{equation}\label{eqn:metric.B.III.final.lambda.nonzero}
 g[C;\theta]= \sin(z-\cc{z})\,\left(
		 \frac{C\,e^{-3\lambda z}}{\sin^2(z+\theta)\,\sin(\cc{z}+\theta)}\,dz^2
		 -\frac{\cc{C}\,e^{-3\lambda\cc{z}}}{\sin(z+\theta)\,\sin^2(\cc{z}+\theta)}\,d\cc{z}^2
		\right)
\end{equation}
with $C=e^{i\varphi}$, $\varphi\in\halfangl$, and $\theta\in\angl$.
There is no coordinate transformation between two different copies $g=g[C;\theta]$ and $g'=g[C';\theta']$ of~\eqref{eqn:metric.B.III.final.lambda.nonzero}.
\end{lemma}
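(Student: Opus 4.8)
The plan is to mirror the structure already used in the proof of Lemma~\ref{la:metrics.III.representatives} (for the real case A(III) with $\lambda>0$) and in Lemma~\ref{la:B.III.lambda.zero}, adapting the two ingredients: (i) reduction of a generic metric in the projective class to the representative form~\eqref{eqn:metric.B.III.final.lambda.nonzero}, and (ii) the non-diffeomorphism argument via the length function of the projective vector field along its flow, Equation~\eqref{eqn:isometries.length.preservation}. First I would observe that, by the reparametrization~\eqref{eqn:III.reparametrization} applied in~\eqref{eqn:B.III.general.metric.2}, any metric in the class already has the displayed shape up to the value of $K=e^{\lambda\alpha}$ and the shift parameter~$\theta$; absorbing $K$ into a translation of the coordinates along the flow $\phi_t(z_0,\cc z_0)=(z_0+t,\cc z_0+t)$ (exactly as in Lemma~\ref{la:metrics.III.representatives}, where $t$ is chosen so the orbit hits the representative section), one is left with~\eqref{eqn:metric.B.III.final.lambda.nonzero} where the conformal prefactor is normalized away. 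Here the crucial point, already exploited in Lemma~\ref{la:B.III.lambda.zero} and Remark~\ref{rmk:conjugation.IB.reparam.class}, is that conjugation $z\leftrightarrow\cc z$ sends $\varphi\to-\varphi$, which lets us confine $\varphi\in\halfangl$; the range $\theta\in\angl$ then records the residual (unremovable) shift freedom once $\lambda>0$ prevents us from using the conformal rescaling to kill it, cf.\ Remark~\ref{rmk:III.parameter.values}.

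For the second part I would write out Equation~\eqref{eqn:isometries.length.preservation} explicitly for two copies $g[C;\theta]$ and $g[C';\theta']$, with $w=\partial_z+\partial_{\cc z}$ and flow $\phi_t(z_0,\cc z_0)=(z_0+t,\cc z_0+t)$, obtaining an identity of functions of~$t$ of the form
\[
  \sin(z_0-\cc z_0)\left(\frac{C\,e^{-3\lambda(z_0+t)}}{\sin^2(z_0+t+\theta)\sin(\cc z_0+t+\theta)}-\frac{\cc C\,e^{-3\lambda(\cc z_0+t)}}{\sin(z_0+t+\theta)\sin^2(\cc z_0+t+\theta)}\right)
\]
equated to the primed analog. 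As in Lemma~\ref{la:B.III.lambda.zero}, the first step is to match the poles of the two sides as functions of~$t$: the left side has double poles at $t\equiv -z_0-\theta$ and $t\equiv -\cc z_0-\theta$ modulo~$\pi$, so pole-matching forces either $z_0'+\theta'=z_0+\theta+N\pi$, $\cc z_0'+\theta'=\cc z_0+\theta+M\pi$, or the conjugated pairing, for integers $N,M$. The extra exponential factor $e^{-3\lambda t}$ (absent when $\lambda=0$) is precisely what pins down the shifts further: substituting the pole relations back and comparing the exponential growth rates in~$t$ of the surviving terms forces $N=M$ and then (because $\theta,\theta'\in\angl$ and $\lambda>0$) $N=M=0$, hence $\theta'=\theta$. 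Plugging $\theta'=\theta$ back into the length identity leaves an equation involving only the linearly independent functions $e^{-3\lambda z_0}/(\cdots)$ and its conjugate — linear independence checked, as in Lemma~\ref{la:B.III.lambda.zero}, by a non-vanishing Wronskian — which yields $C'=C$ (the alternative $C'=\cc C$ being excluded once $\varphi\in\halfangl$ as in Theorem~\ref{thm:matveev}).

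The step I expect to be the main obstacle is the careful bookkeeping in the pole-and-growth-rate matching: unlike the $\lambda=0$ case, here one must simultaneously track the periodic poles of the $\sin$ factors and the aperiodic exponential weight $e^{-3\lambda t}$, and argue that no conspiracy between a nonzero integer shift $N\pi$ in the trigonometric arguments and a compensating factor $e^{-3\lambda N\pi}$ can occur within the admissible parameter ranges. This is exactly the mechanism that appeared when deriving the restriction $e^{-3\lambda\pi}<h\le1$ in the real case (Section~\ref{sec:A.3}, proof of Proposition~\ref{prop:normal.forms.AIII}), and here it should instead close up to show the shift is forced to zero; the verification is routine but must be done with some care. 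Everything else is a direct transcription of the arguments in Lemmas~\ref{la:metrics.III.representatives} and~\ref{la:B.III.lambda.zero}, together with the conjugation symmetry of Section~\ref{sec:optimizing.h.C}.
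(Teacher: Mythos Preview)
Your proposal is correct and follows essentially the same approach as the paper: reduction to the representative via the flow $\phi_t$ (absorbing $K$ into a coordinate shift and redefining $\theta$), followed by the pole-matching argument in Equation~\eqref{eqn:isometries.length.preservation}, where the exponential weight $e^{-3\lambda t}$ is used to rule out nonzero integer shifts and force $\theta'=\theta$, and finally $C'=C$ from the remaining identity (with $C'=\cc C$ excluded by $\varphi\in\halfangl$). One small simplification you can exploit: in the complex Liouville case $z_0'$ and $\cc z_0'$ are complex conjugates, so any real shift obtained from pole-matching is automatically the same for both, i.e.\ $N=M$ holds from the outset rather than requiring a separate growth-rate comparison; the paper organizes the pole-matching into four cases (identity/conjugated pairing) each with a single integer shift $N$, and then the key equation $e^{-3\lambda(H+N\pi)}=(-1)^N$ (with $H=\theta'-\theta$) immediately forces $N$ even and $H=0$.
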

\begin{proof}
Letting $\alpha=\frac{\ln(K)}{\lambda}+\theta$, we may rewrite metric~\eqref{eqn:B.III.general.metric.2} as
\begin{equation*}\label{eqn:metric.B.III.general.lambda.nonzero}
 g = \sin(z-\cc{z})\,\left(
		 \frac{Ce^{-3\lambda\,(z+\alpha)}}{\sin(\cc{z}-\theta)\,\sin^2(z-\theta)}\,dz^2
		 -\frac{\cc{C}\,e^{-3\lambda\,(\cc{z}+\alpha)}}{\sin^2(\cc{z}-\theta)\,\sin(z-\theta)}\,d\cc{z}^2
	\right)\,,
\end{equation*}
where $C=e^{i\varphi}$, $\varphi\in\halfangl$.
By a change of coordinates $z\to z+\alpha$ and a redefinition of $\theta$, implying $\theta\in\angl$, we arrive at the normal form~\eqref{eqn:metric.B.III.final.lambda.nonzero}.
For the second part of the statement consider again the requirement~\eqref{eqn:isometries.length.preservation},
\begin{equation}\label{eqn:lengths.equality.lemma.nf.B.III}
\begin{split}
& \sin(z_0-\cc{z}_0)\,\left(
		 \frac{Ce^{-3\lambda\,(z_0+t)}}{\sin(\cc{z}_0+t-\theta)\,\sin^2(z_0+t-\theta)}
		 -\frac{\cc{C}\,e^{-3\lambda\,(\cc{z}_0+t)}}{\sin^2(\cc{z}_0+t-\theta)\,\sin(z_0+t-\theta)}
	\right) \\
&= \sin(z_0'-\cc{z}_0')\,\left(
		 \frac{C'e^{-3\lambda\,(z_0'+t)}}{\sin(\cc{z}_0'+t-\theta')\,\sin^2(z_0'+t-\theta')}
		 -\frac{\cc{C}'\,e^{-3\lambda\,(\cc{z}_0'+t)}}{\sin^2(\cc{z}_0'+t-\theta')\,\sin(z_0'+t-\theta')}
	\right)
\end{split}
\end{equation}
In order that both sides can be equal, poles of the left hand side must correspond to poles on the right hand side. Thus, we obtain the possibilities:
\begin{center}
\begin{tabular}{cccc}
(a) & (b) & (c) & (d) \\
	$z_0'=z_0+H+N\pi$
	& $z_0'=z_0+H+N\pi$
	& $z_0'=\cc{z}_0+H+N\pi$
	& $z_0'=\cc{z}_0+H+N\pi$ \\
	$\cc{z}_0'=\cc{z}_0+H+N\pi$
	& $\cc{z}_0'=z_0+H+N\pi$
	& $\cc{z}_0'=\cc{z}_0+H+N\pi$
	& $\cc{z}_0'=z_0+H+N\pi$
\end{tabular}
\end{center}
with $N\in\mathds{Z}$ and $H=\arctan\frac{\K2'}{\K1'}-\arctan\frac{\K2}{\K1}=\theta'-\theta$. Again, in the same way as we did before, we can discard cases (b) and (c).
Let us consider case (a):
Plugging the relations $z_0'=z_0+H+N\pi$ and $\cc{z}_0'=z_0+H+N\pi$ into~\eqref{eqn:lengths.equality.lemma.nf.B.III}, we find
\begin{equation*}
e^{-3\lambda\,(H+N\pi)} = (-1)^N
\qquad\quad
\Rightarrow
\qquad\quad
N\in2\mathds{Z}\quad\text{and}\quad\theta' = \theta\,,
\end{equation*}
where we view $\theta',\theta$ as angles.
In case (d), we follow the same procedure and obtain
\begin{equation}\label{eqn:aux.La.17}
  C\,(-1)^N\,e^{-3\lambda\,(H+N\pi)} = \cc{C},
\end{equation}
The real part of~\eqref{eqn:aux.La.17} entails the requirement
$e^{-3\lambda\,(H+N\pi)} = (-1)^{N}$,
which is only possible if $N$ is an even integer.
On the other hand, taking the absolute value of both sides of~\eqref{eqn:aux.La.17} implies
$e^{-3\lambda\,(H+N\pi)} = 1$,
which entails $H+N\pi$.
Since $N$ is even and $\theta,\theta'\in\angl$, we are forced to conclude $N=0$ and $\theta'=\theta$.
Resubstituting into~\eqref{eqn:aux.La.17} yields the restriction $\cc{C}=C$, and thus $C=1$ for case (d) to happen. This completes the proof.
\end{proof}

\noindent We conclude, putting together Lemma~\ref{la:B.III.lambda.zero} and Lemma~\ref{la:B.III.lambda.nonzero}:
\begin{proposition}\label{prop:normal.forms.BIII}
A metric of the type B(III) in Theorem~\ref{thm:matveev} that admits exactly one essential projective vector field can, locally by a suitable coordinate transformation, be mapped onto one and only one of the following metrics,
\begin{enumerate}[label={(\roman*)}]
	\item for $\lambda=0$:\qquad
	$g = \kappa\,\sin(z-\cc{z})\,\left(
	  \frac{C\,dz^2}{\sin^2(z)\,\sin(\cc{z})}
	  -\frac{\cc{C}\,d\cc{z}^2}{\sin(z)\,\sin^2(\cc{z})}
	\right)$\,,

	\item for $\lambda>0$:\qquad
	$g = \sin(z-\cc{z})\,\left(
	\frac{C\,e^{-3\lambda z}}{\sin^2(z+\theta)\,\sin(\cc{z}+\theta)}\,dz^2
	 -\frac{\cc{C}\,e^{-3\lambda\cc{z}}}{\sin(z+\theta)\,\sin^2(\cc{z}+\theta)}\,d\cc{z}^2
	\right)$\,.
\end{enumerate}
Here, $C=e^{i\varphi}$.
If $\lambda=0$, $\varphi\in(0,\pi)$ and $\kappa>0$.
If $\lambda>0$, $\varphi\in\halfangl$ and $\theta\in\angl$.
\end{proposition}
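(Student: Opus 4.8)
\textbf{Proof proposal for Proposition~\ref{prop:normal.forms.BIII}.}

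The plan is to assemble the statement from the two lemmas just established, Lemma~\ref{la:B.III.lambda.zero} and Lemma~\ref{la:B.III.lambda.nonzero}, together with the parameter reductions collected in Section~\ref{sec:preparations}. First I would recall the preliminary normalizations that apply to every metric of type B(III): by Lemma~\ref{la:lambda.different.classes}(b) the parameter $\lambda$ may be taken non-negative, and two projective classes with $|\lambda|\ne|\lambda'|$ are genuinely distinct; by the discussion in Section~\ref{sec:optimizing.h.C} the phase of $C=e^{i\varphi}$ may be restricted to $\varphi\in\halfangl$ (using the $z\leftrightarrow\cc z$ symmetry, together with the rescaling of $C$ to unit modulus); by Theorem~\ref{thm:theorem.2.revised} no metric of type B(III) with a one-dimensional projective algebra has degree of mobility~$3$, so the standing assumption of degree of mobility~$2$ under which Lemmas~\ref{la:B.III.lambda.zero} and~\ref{la:B.III.lambda.nonzero} were proven is automatically in force; and by Theorem~\ref{thm:theorem.3.revised} together with Proposition~\ref{prop:homotheties.and.eigenvectors} the configurations whose (unique, up to rescaling) projective vector field becomes Killing or merely homothetic have already been removed — for $\lambda=0$ these are exactly the values $C=\pm1$ and $C=\pm i$ — which accounts for the excluded values of $\varphi$ in the statement.

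With these reductions in place, I would split into the two cases dictated by Remark~\ref{rmk:III.parameter.values}. For $\lambda=0$, Lemma~\ref{la:B.III.lambda.zero} shows that every such metric is isometric to one of the normal forms in item~(i) with $\kappa>0$, and that two such normal forms with different parameter data are non-diffeomorphic; this is precisely part~(i) of the proposition after renaming the coordinates. For $\lambda>0$, Lemma~\ref{la:B.III.lambda.nonzero} does the analogous job for item~(ii), producing the normal form with $\varphi\in\halfangl$ and $\theta\in\angl$ and establishing that distinct pairs $(C,\theta)$ yield non-isometric metrics. It then remains to rule out cross-case coincidences. A coordinate transformation is in particular a projective transformation, so by Lemma~\ref{la:different.cases} it cannot connect metrics belonging to different cases of Theorem~\ref{thm:matveev}; hence no B(III) metric is diffeomorphic to a metric of any other type in Theorem~\ref{thm:normal.forms}. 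Within type B(III), a diffeomorphism between a $\lambda=0$ representative and a $\lambda>0$ representative — or, more generally, between representatives with different values of $\lambda$ — would force the two projective classes to coincide, contradicting Lemma~\ref{la:lambda.different.classes}(b). Combining the two lemmas with these observations yields the full statement.

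As for the main obstacle: essentially all of the computational content — analysing the poles in $t$ of the length function $g_{\phi_t(p_0)}(w,w)$ along the projective flow, see~\eqref{eqn:isometries.length.preservation}, and the Wronskian argument establishing linear independence of the relevant trigonometric/exponential functions — has already been carried out inside the proofs of Lemmas~\ref{la:B.III.lambda.zero} and~\ref{la:B.III.lambda.nonzero}. The only delicate point in the assembly is bookkeeping: one must verify that the parameter ranges declared in the proposition ($\varphi\in(0,\pi)$ and $\kappa>0$ for $\lambda=0$; $\varphi\in\halfangl$ and $\theta\in\angl$ for $\lambda>0$) coincide exactly with what the reductions of Section~\ref{sec:preparations} and the two lemmas leave behind, i.e.\ that no residual identification among parameter configurations has been overlooked and no admissible configuration has been inadvertently discarded. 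This is the step I would write out most carefully.
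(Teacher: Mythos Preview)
Your proposal is correct and follows exactly the paper's approach: the proposition is obtained by combining Lemma~\ref{la:B.III.lambda.zero} and Lemma~\ref{la:B.III.lambda.nonzero} with the parameter reductions from Section~\ref{sec:preparations} and Lemma~\ref{la:lambda.different.classes}. One small bookkeeping remark: for type B(III) there are no real eigenvectors of $\lie_w$, so Proposition~\ref{prop:homotheties.and.eigenvectors} excludes nothing here; the exclusion of $C=\pm i$ for $\lambda=0$ comes solely from Theorem~\ref{thm:theorem.3.revised} via the hypothesis that the projective algebra is one-dimensional, while $C=\pm1$ is already forbidden in Theorem~\ref{thm:matveev}.
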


\subsubsection{Normal forms (C.7) of Theorem~\ref{thm:normal.forms}}\label{sec:C.7}
Metrics C(Ia) in Theorem~\ref{thm:matveev} have degree of mobility~3, see~\cite{matveev_2012} and Theorem~\ref{thm:theorem.2.revised}. We therefore turn immediately to case C(Ib). Metrics of this class are of the form ($y>0$)
\begin{equation}\label{eqn:C.Ib.general metric}
 g[\xi;\K1,\K2] =
 2\,\frac{x+y^{\frac{1}{\xi}}}{(\K1-\K2y)^3}\,dxdy
 +\K2\,\frac{(x+y^{\frac{1}{\xi}})^2}{(\K1-\K2y)^4}\,dy^2\,.
\end{equation}

\begin{remark}
 Metrics~\eqref{eqn:C.Ib.general metric} are obtained, via Formula~\eqref{eqn:general.metric}, from
\begin{subequations}\label{eqn:C.Ib.eigenvectors}
\begin{align}
	\g1 &=2(Y(y)+x)dxdy \label{eqn:jordan.block.metric.ICa.g1} \\
	\g2 &=-2\frac{Y(y)+x}{y^3}dxdy + \frac{(Y(y)+x)^2}{y^4}dy^2\,,
\end{align}
\end{subequations}
which lie in the same eigenspace of $\lie_w$.
Here, $y>0$ and $Y(y)=Cy^{\frac{1}{\xi}}$ with $C\in\mathds{R}$, $\xi$ as in~\eqref{eqn:xi.via.lambda}.
Let us quickly remark the following:
If $C=0$, the metric has constant curvature and we can ignore this case. For $C\ne0$, we can w.l.o.g.\ restrict to $C=1$, by grace of a coordinate transformation $x\to Cx$ and subsequent rescaling of the metrics~\eqref{eqn:C.Ib.eigenvectors}.
In the following we therefore continue only with $C=1$.
We recall the definition~\eqref{eqn:xi.via.lambda} of $\xi$ in terms of the eigenvalue~$\lambda$: $\xi=2\,\frac{\lambda-1}{2\lambda+1}$.
The normal forms (C.9) of Theorem~\ref{thm:normal.forms} have degree of mobility~2 (see Theorem~\ref{thm:matveev}), i.e.\ $\dim(\projcl(\g1,\g2))=2$ according to Definition~\ref{def:degree.mobility}. This requires $\lambda\ne\frac52$, i.e.\ $\xi\neq\frac12$, for the present section, since for $\xi=\frac12$ the degree of mobility is 3 (this case is discussed by Theorem~\ref{thm:theorem.2.revised}).
The projective vector field is \eqref{eqn:proj.vector.field.CIb}. We may multiply it through with $-(\lambda+\nicefrac12)$ to obtain
\begin{equation}\label{eqn:proj.vf}
w=-\left(\lambda+\frac{1}{2}\right)x\partial_x - (\lambda-1)y\partial_y\,.
\end{equation}
By integration we infer the projective flow as
\begin{equation}\label{eqn:ICa.proj.flow}
  \phi_t(x_0,y_0)=(x_0\,e^{-(\lambda+\frac12)t},\,y_0\,e^{-(\lambda-1)t})\,.
\end{equation}
\end{remark}
\medskip

\noindent The Benenti tensor~\eqref{eqn:benenti.2r} for $\g1=g[\xi;1,0]$ and $\g2=g[\xi;0,1]$ is, up to a conformal rescaling,
\[
  L=L(\g2,\g1)
  =\begin{pmatrix}
   y & x+y^\eta \\ 0 & y
  \end{pmatrix}\,,
\]
where we introduce $\eta=\frac{1}{\xi}$ for convenience.
Transformations $(x,y)\to(u,v)$ that preserve $L$ up to a constant conformal factor satisfy a set of equations from which, through a straightforward computation (analogous to those in Section~\ref{sec:strategy.benenti}), we can conclude that
\begin{equation}\label{eqn:C.I.trafos}
  (x,y) = (c_1u+c_2,kv)\,.
\end{equation}
Now, the metric
\[
  g_2 = -2\,\frac{y^\eta+x}{y^3}\,dxdy +\frac{(y^\eta+x)^2}{y^4}\,dy^2
\]
is an eigenvector of $\lie_w$.
Using \eqref{eqn:C.I.trafos},
\[
 g_2 \to -2\,\frac{K^\eta v^\eta+c_1u+c_2}{K^3v^3}\,Kc_1\,dudv +\frac{(K^\eta v^\eta+c_1u+c_2)^2}{K^4v^4}\,K^2\,dv^2\,,
\]
we infer
\begin{equation}\label{eqn:C.I.trafo}
  c_2 = 0\,,\quad
  c_1 = K^\eta\qquad
  \text{and thus}\quad
  (x,y) = (K^\eta\,u,Kv)\,.
\end{equation}
Next, let us consider a generic metric~\eqref{eqn:C.Ib.general metric} from the projective class
and apply~\eqref{eqn:C.I.trafo} with $K=\left|\frac{K_1}{K_2}\right|$. Thus,
\begin{equation}\label{eqn:C.7.normal.forms.1}
 g \to
 g[\eta;k,\varrho]
 = k\left( 2\,\frac{y^\eta+x}{(\varrho-y)^3}\,dxdy +\frac{(y^\eta+x)^2}{(\varrho-y)^4}\,dy^2 \right)
\end{equation}
with $k=\frac{K^\eta}{K_2^3}$ and $\varrho=\sgn\left(\tfrac{\K1}{\K2}\right)$.

\begin{proposition}\label{prop:normal.forms.CIb}
A metric of the type C(Ib) in Theorem~\ref{thm:matveev} that admits exactly one essential projective vector field can, locally by a suitable coordinate transformation, be mapped onto one and only one on the following metrics,
\begin{equation*}
g=\kappa\left(-2\frac{y^{\frac{1}{\xi}}+x}{(y-\varrho)^3}dxdy
+ \frac{
	\left(y^{\frac{1}{\xi}}+x\right)^2
   }{
    (y-\varrho)^4
   }dy^2\right)\,,
\end{equation*}
where $\varrho\in\pmo$, $\kappa\in\Rnz$ and $\xi\in(0,\frac12)\cup(\frac12,1)\cup(1,4]$.
\end{proposition}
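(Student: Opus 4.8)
The plan is to follow the same strategy already established for the cases A(I), A(II), A(III), B(I)--B(III): first write down the general metric of the projective class C(Ib) via Formula~\eqref{eqn:general.metric}, then determine all coordinate transformations that preserve this parametric form by exploiting the Benenti tensor and the eigenspaces of $\lie_w$, and finally use the residual freedom to reduce the parameters to the claimed normal form and prove that no further reduction is possible. Recall from Section~\ref{sec:A.1}--\ref{sec:A.2} that the Benenti tensor $L=L(\g2,\g1)$ associated to the two eigenvectors $\a1,\a2$ of $\lie_w$ (corresponding to $\g1,\g2$ in~\eqref{eqn:C.Ib.eigenvectors}) controls the admissible coordinate changes: any isometry must, up to rescaling, preserve the pair of eigenlines of $\lie_w$, hence map $L$ to a constant multiple of itself.

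First I would record that a generic metric in the class is~\eqref{eqn:C.Ib.general metric}, with $C=1$ as justified in the preceding remark (the case $C=0$ giving constant curvature, and $C\ne0$ being reducible to $C=1$ by $x\to Cx$). Second, I would compute the Benenti tensor, obtaining the upper-triangular form displayed after the remark, and then solve the form-preservation equations $\tau^*(L)=\text{const}\cdot L$ to conclude that the admissible transformations are $(x,y)=(c_1u+c_2,kv)$, as in~\eqref{eqn:C.I.trafos}. Third, imposing that $\g2$ (an eigenvector of $\lie_w$) is sent to a multiple of itself pins down $c_2=0$ and $c_1=K^\eta$, i.e.\ $(x,y)=(K^\eta u,Kv)$ as in~\eqref{eqn:C.I.trafo}. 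Fourth, applying this transformation to the generic metric~\eqref{eqn:C.Ib.general metric} with the choice $K=|K_1/K_2|$ yields the one-parameter-plus-sign family~\eqref{eqn:C.7.normal.forms.1}, with $\kappa=K^\eta/K_2^3\in\Rnz$ and $\varrho=\sgn(K_1/K_2)\in\pmo$; rewriting $\eta=1/\xi$ and noting that $(\varrho-y)^3$ versus $(y-\varrho)^3$ only absorbs a sign into $\kappa$ gives exactly the form asserted in Proposition~\ref{prop:normal.forms.CIb}.

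The remaining task — and the main obstacle — is to show that two normal forms~\eqref{eqn:C.7.normal.forms.1} with distinct parameters $(\kappa,\varrho)$ and $(\kappa',\varrho')$ are genuinely non-diffeomorphic; equivalently, that all the transformational freedom has indeed been exhausted. Here I would proceed exactly as in Section~\ref{sec:A.1}: since any further isometry must be of the already-classified form $(x,y)=(K^\eta u,Kv)$, one substitutes such a transformation into~\eqref{eqn:C.7.normal.forms.1}, equates coefficients of $dxdy$ and $dy^2$, and verifies that the resulting algebraic system in $K,\kappa,\varrho,\kappa',\varrho'$ forces $K=1$, $\kappa'=\kappa$, $\varrho'=\varrho$. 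As an independent cross-check one can use Section~\ref{sec:poles.isometries}: the squared length $g(w,w)$ along the projective flow~\eqref{eqn:ICa.proj.flow} is a specific function of $t$ whose poles are the zeros of $(\varrho-y_0\,e^{-(\lambda-1)t})^{3,4}$, so comparing poles of the two sides of~\eqref{eqn:isometries.length.preservation} immediately yields $\varrho\,y_0=\varrho' y_0'$, and resubstitution gives $\kappa'=\kappa$, $\varrho'=\varrho$. One subtlety to watch is the degree-of-mobility hypothesis: the argument above presupposes degree of mobility exactly~$2$, so one must recall (as the preceding remark does) that $\xi=\tfrac12$ must be excluded because it yields degree of mobility~$3$ and is treated by Theorem~\ref{thm:theorem.2.revised}; together with $\xi\ne1$ (homothety, Lemma~\ref{la:lambda.1.homothetic}) this explains the parameter range $\xi\in(0,\tfrac12)\cup(\tfrac12,1)\cup(1,4]$, completing the proof.
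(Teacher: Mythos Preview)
Your proposal is correct and follows essentially the same approach as the paper: compute the Benenti tensor $L(\g2,\g1)$, deduce that any form-preserving transformation is of the type $(x,y)=(K^\eta u,Kv)$, use it with $K=|K_1/K_2|$ to reach the normal form~\eqref{eqn:C.7.normal.forms.1}, and then verify uniqueness either by exhausting the residual freedom or via the pole comparison of~\eqref{eqn:isometries.length.preservation}. The paper's own proof is terser on the uniqueness step (it simply remarks that this ``implicitly follows from the above computations'' and that the pole-comparison route is available), but the content is the same.
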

\begin{proof}
 Recalling $\eta=\frac{1}{\xi}$ and letting $\kappa=-k$, the normal forms are~\eqref{eqn:C.7.normal.forms.1}.
 It remains to check that there exists no coordinate transformation between two different copies $g[\eta;k,\varrho]$ and $g[\eta;k',\varrho']$ of~\eqref{eqn:C.7.normal.forms.1}.
 As it has been explained in Section~\ref{sec:A.1}, this implicitly follows from the above computations.
 Alternatively, although the computations are a bit more cumbersome, the reader will find it easy to check the statement in a way analogous to the previous cases, studying poles and zeros of the length of the projective vector field.
\end{proof}

\subsubsection{Normal forms (C.8) of Theorem~\ref{thm:normal.forms}}\label{sec:C.8}
Metrics of class C(II) of Theorem~\ref{thm:matveev} have the general form
\begin{equation}\label{eqn:C.II.general.metric}
g[\K1,\K2] =
2\,\frac{x+Y(y)}{(\K1-\K2y)^3}\,dxdy
+\frac{(x+Y(y))^2\,\K2}{(\K1-\K2y)^4}\,dy^2
\end{equation}
where
\begin{equation}\label{eqn:C.II.Y}
Y(y)= e^{\frac{3}{2y}}\,\frac{\sqrt{|y|}}{y-3}
      +\int^y e^{\frac{3}{2s}}\,\frac{\sqrt{|s|}}{(s-3)^2}\,ds\,.
\end{equation}
The metrics $g[0,1]$ corresponds, via~\eqref{eqn:Liouville.tensor.g}, to an eigenvector of $\lie_w$, but $g[1,0]$ does not.

\begin{remark}
 We can construct any metric $g$ in a projective class of type C(II) in Theorem~\ref{thm:matveev}, via Formula~\eqref{eqn:general.metric}, from the following two metrics:
\begin{equation}\label{eqn:basis.metrics.IIC}
 \g1 = g[1,0] = 2(Y+x)\,dxdy\,,\qquad
 \g2 =g[0,1] = -2\,\frac{Y+x}{y^3}\,dxdy+\frac{(Y+x)^2}{y^4}\,dy^2\,,
\end{equation}
where $Y=Y(y)$ is given by~\eqref{eqn:C.II.Y}.
Using integration by parts, one can rewrite it in the simplified form
\begin{equation}\label{eqn:C.II.final.Y}
 Y(y) = \frac12\int^y \frac{e^{\frac{3}{2s}}}{|s|^{\nicefrac32}}\,ds\,,
\end{equation}
where in the final normal form we drop the constant factor $\frac12$ using an obvious coordinate transformation of $x$.
Note that in Section~3.1.3.\ of \cite{matveev_2012}, the function $Y=Y_1'$ is obtained, in the form~\eqref{eqn:C.II.Y}, from a function $Y_1$ that satisfies the ODE
\begin{equation}\label{eqn:IIC.ODE.Y1}
 y^2\,Y_1''-\frac12 (y-3) Y_1'+\frac12 Y_1 =0\,.
\end{equation}
\end{remark}

\noindent Although in theory possible, doing computations similar to those in Sections~\ref{sec:A.2} or~\ref{sec:B.5} appears to be too complicated and cumbersome for the case under consideration here.
We therefore proceed with a slightly different reasoning.
For the case C(II), the action on $\solSp$ of the local flow $\phi_t$ of the projective vector field~$w$ is given by
\[
  \phi_t^*(\K1\a1+\K2\a2)
  = \K1\,e^t\,\a1 + (\K1 t +\K2)\,e^t\,\a2
\]
and thus by choosing $t=-\frac{\K2}{\K1}$ we can obtain a representative on the $\a1$-axis.
First let us compute the Benenti tensor for the metrics $(\g2,\g1)$. It reads
\begin{equation*}
L = L(\g2,\g1) = \begin{pmatrix} y & x+Y(y) \\ 0 & y \end{pmatrix}\,.
\end{equation*}
Note that the Benenti tensor $L$ is preserved up to a constant factor under diffeomorphisms preserving the $\a1-$direction.
Analyzing the transformation behavior of $L$ as we have done in the other cases, we obtain that necessarily
\begin{equation*}
v = Ky\qquad\text{with }K\in\Rnz\,.
\end{equation*}
Let us assert
\[
  \g1 \to K\,(u(x,y)+Y(Ky))\,du\,dy \stackrel{!}{=} (x+Y(y))\,dx\,dy\,.
\]
Thus, using this invariance of the linear spans of $\a1$ and $\a2$, we first find the possible coordinate transformations
\[
 u = Cx+D\,,\qquad v = Ky\,,
\]
and in fact we have
$D+Y(Ky) = C\,Y(y)$.
Differentiating this, we obtain
\begin{equation*}
 K\,\frac{e^{\frac{3}{2Ky}}}{|Ky|^{\frac32}}
 = C\,\frac{e^{\frac{3}{2y}}}{|y|^{\frac32}}
 \quad\Rightarrow\quad
 e^{\frac{3}{2y}\,\left(\frac{1}{K}-1\right)}
 = C\,\frac{K}{|K|^{\frac12}}\,,
\end{equation*}
which confirms $C=K=1$, and thus $D=0$, i.e.\ any coordinate transformation of the considered type is necessarily the identity. We have thus confirmed that the representatives on $\a1$ are mutually non-diffeomorphic, which implies that, if two metrics are linked by a coordinate transformation, they lie on the same $\phi_t$-orbit in $\solSp$.
By Formula~\eqref{eqn:general.metric}, a metric from the projective class $\projcl(\g1,\g2)$ can be written as
\begin{equation}\label{eqn:metric.IIC.general.initial}
 g = g[\K1,\K2] = 2\,\frac{Y(y)+x}{(\K1-\K2\,y)^3}\,dxdy
		 +\frac{\K2\,(Y(y)+x)^2}{(\K1-\K2\,y)^4}\,dy^2\,,
\end{equation}
using the same coordinates as in~\eqref{eqn:basis.metrics.IIC}.
Metrics with $\K1=0$ are eigenvectors of $\lie_w$ and admit a homothetic vector field. We therefore require $\K1\ne0$ in the remainder of this section.
We reparametrize the metrics~\eqref{eqn:metric.IIC.general.initial}, for $\K1\ne0$, by replacing $(\K1,\K2)\to(K,\alpha)$ according to the definitions
\begin{equation*}
 K = \frac{e^{\nicefrac{\K2}{\K1}}}{\K1}
 \qquad\text{and}\qquad
 \alpha = \frac{\K2}{\K1}\,.
\end{equation*}
Thus, the metric~\eqref{eqn:metric.IIC.general.initial} is obtained, in the same local coordinates, as
\begin{equation}\label{eqn:metric.IIC.reparam}
 g = K^3\,e^{-3\alpha}\,\,
	  \left( 2\,\frac{Y(y)+x}{(1-\alpha\,y)^3}\,dxdy
		+\frac{\alpha\,(Y(y)+x)^2}{(1-\alpha\,y)^4}\,dy^2 \right)\,,
\end{equation}
with $\alpha\in\mathds{R}$ and $K\in\Rnz$.
The parameter $K$ characterizes orbits of the projective flow, as we shall see below.

\begin{lemma}\label{la:normal.form.IIC}
There is a local coordinate transformation that maps metric~\eqref{eqn:metric.IIC.reparam} onto a constant multiple of metric~$\g1$ of~\eqref{eqn:basis.metrics.IIC}, i.e.\
\begin{equation}\label{eqn:metric.IIC.transformation}
  K^3\,e^{-3\alpha}\,\,
  \left( 2\,\frac{Y(y)+x}{(1-\alpha\,y)^3}\,dxdy
  +\frac{\alpha\,(Y(y)+x)^2}{(1-\alpha\,y)^4}\,dy^2 \right)
  = 2K^3\,(Y(v)+u)\,dudv\,.
\end{equation}
in terms of new coordinates $u=u(x,y)$, $v=v(y)$.
\end{lemma}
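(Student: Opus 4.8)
The plan is to realise the desired coordinate transformation as the composition of (a) the projective flow $\phi_t$ of the vector field $w$, which moves a generic metric in $\projcl(\g1,\g2)$ back to a representative on the $\a1$-axis, followed by (b) a rescaling-type change of the $y$-coordinate that absorbs the parameter $\alpha$. Concretely, starting from metric~\eqref{eqn:metric.IIC.reparam}, I would first apply $\phi_{-\alpha}$: recalling that $\phi_t^*(\K1\a1+\K2\a2) = \K1 e^t\a1 + (\K1 t+\K2)e^t\a2$ and that the reparametrisation $(\K1,\K2)\to(K,\alpha)$ corresponds to $\alpha=\K2/\K1$, the choice $t=-\alpha$ sends the metric onto a multiple of the $\a1$-representative, i.e.\ a constant multiple of $\g1=2(Y(y)+x)\,dxdy$. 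The exponential map of $w$ provides the explicit coordinate transformation realising this isometry (this is exactly the mechanism described in Section~\ref{sec:poles.isometries} and used repeatedly, e.g.\ in Lemma~\ref{la:metrics.III.representatives}). Since metrics on the same projective orbit are isometric, this already establishes that~\eqref{eqn:metric.IIC.reparam} is isometric to $2K^3(Y(v)+u)\,dudv$ for suitable constant (the $e^{-3\alpha}$ and the Jacobian factors conspiring to give precisely $K^3$, tracked via~\eqref{eqn:general.metric} and the eigenvalue data $\lie_w\g1=\g1$, $\lie_w\g2=\g2+\ldots$).

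The second step is to write this transformation in the explicit triangular form $u=u(x,y)$, $v=v(y)$ asserted in the statement. For this I would integrate the flow~\eqref{eqn:ICa.proj.flow}-style equations for $w$ in the coordinates of~\eqref{eqn:basis.metrics.IIC}; because $w$ has the form (in case C(II)) with the $\partial_y$-component depending only on $y$, the new $v$ depends on $y$ alone, while $u$ picks up an $x$-linear contribution plus a $y$-dependent term built from $Y$. The consistency condition to be verified is precisely $D+Y(Ky)=C\,Y(y)$ with $C=K=1$, $D=0$, which was already checked in the paragraph preceding the lemma via the derivative identity $K\,e^{3/(2Ky)}/|Ky|^{3/2}=C\,e^{3/(2y)}/|y|^{3/2}$; here the analogous computation forces the $y$-part of the transformation and confirms the claimed form.

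The main obstacle is bookkeeping rather than conceptual: one must verify that the overall constant emerging from the Jacobian of the flow transformation, combined with the factor $e^{-3\alpha}$ and the $(1-\alpha y)$ denominators in~\eqref{eqn:metric.IIC.reparam}, collapses exactly to $K^3$ on the right-hand side of~\eqref{eqn:metric.IIC.transformation}. This is a direct substitution: change variables $y\mapsto v$ so that $1-\alpha y\mapsto$ a constant multiple of $1$, $dxdy\mapsto (\text{Jacobian})\,dudv$, and $Y(y)+x\mapsto$ a constant multiple of $Y(v)+u$ using the $Y$-functional equation; collecting powers of the Jacobian against the $(1-\alpha y)^{-3}$ and $(1-\alpha y)^{-4}$ factors shows that the $dv^2$-term vanishes identically (as it must, since $\g1$ has no $dy^2$-term) and the $dudv$-coefficient equals $2K^3$. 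I would present this as a short explicit calculation, citing the already-established transformation behaviour of $L$ and of $\g1,\g2$ under the permitted coordinate changes, and the $Y$-identity $Y(y)=\int^y e^{3/(2s)}|s|^{-3/2}\,ds$ from~\eqref{eqn:C.II.final.Y}.
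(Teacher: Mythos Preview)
Your approach is essentially identical to the paper's: both use the flow $\phi_t$ of the projective vector field to carry the metric along its projective orbit to the $\a1$-axis, choosing $t_0=-\alpha$ so that $\K2'=(\K1 t_0+\K2)e^{t_0}=0$, and both observe that the flow has the triangular form $u=u(x,y)$, $v=v(y)$ because the $\partial_y$-component of $w$ depends only on $y$ (in the paper this appears as the ODE system $\dot{x}=\tfrac12(y-3)x+\tfrac12 Y_1(y)$, $\dot{y}=y^2$).

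One small confusion worth flagging: the identity $D+Y(Ky)=C\,Y(y)$ with $C=K=1$, $D=0$ that you invoke as a ``consistency condition'' is \emph{not} part of the flow construction. That computation appears in the paragraph preceding the lemma for a different purpose---to show that the only form-preserving transformations fixing the $\a1$-direction are trivial, which is used afterwards (in Lemma~\ref{la:nonisometry.IIC}) to conclude that distinct representatives on the $\a1$-axis are non-isometric. For the present lemma you do not need it: the triangular shape of the transformation follows directly from the structure of the flow equations, and the constant $K^3$ emerges from $\K1'=\K1 e^{t_0}=1/K$ and~\eqref{eqn:general.metric}. Remove that sentence and your argument matches the paper's exactly.
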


\begin{proof}
Metrics on the same projective orbit are linked by coordinate transformations given by the flow of the projective vector field, i.e.\ solutions to
$\dot{x} = \frac12\,(y-3)\,x +\frac12\,Y_1(y)$,
$\dot{y} = y^2$.
The new coordinates are thus given by $u=x(t_0;x,y)$, $v=y(t_0;y)$.
Now consider the metric on the left hand side of~\eqref{eqn:metric.IIC.transformation},
\begin{equation}\label{eqn:metric.lhs.CII}
 g = K^3\,e^{-3\alpha}\,\,
 \left( 2\,\frac{Y(y)+x}{(1-\alpha\,y)^3}\,dxdy
 +\frac{\alpha\,(Y(y)+x)^2}{(1-\alpha\,y)^4}\,dy^2 \right)\,.
\end{equation}
Applying the mentioned coordinate transformation, $g$ is transformed into a metric of the same form because, for the metrics $g$ and another metric $\til{g}$ isometric to it, $\til{g}=\phi_t^*(g)$ (for a fixed $t$ where $\phi_t$ is the projective flow; compare~\eqref{eqn:III.reparametrization}). Thus we have, with $a=\psi^{-1}(g)$ and $\til{a}=\psi^{-1}(\til{g})$, that
\[
  \til{a} = \psi^{-1}(\til{g}) = \psi^{-1}(\phi_t^*(g))
  = \phi_t^*(\psi^{-1}(g)) = \phi_t^*(a)\,.
\]
The metric $g$ is determined by two parameters,
$\alpha = -\frac{\K2}{\K1}$ and $K = \frac{1}{\K1}\,e^{\nicefrac{\K2}{\K1}}$,
represented in terms of the initially introduced parameters $\K1,\K2$.
\begin{figure}
  \begin{center}
	\scalebox{.4}{\input{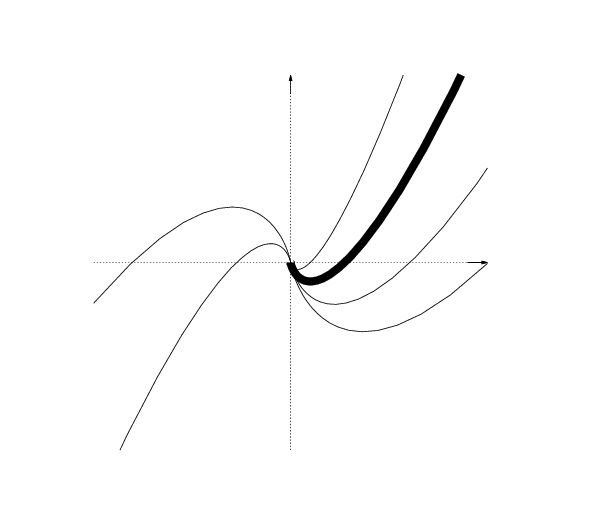}}
	\hspace{0.8cm}
	\scalebox{.4}{\input{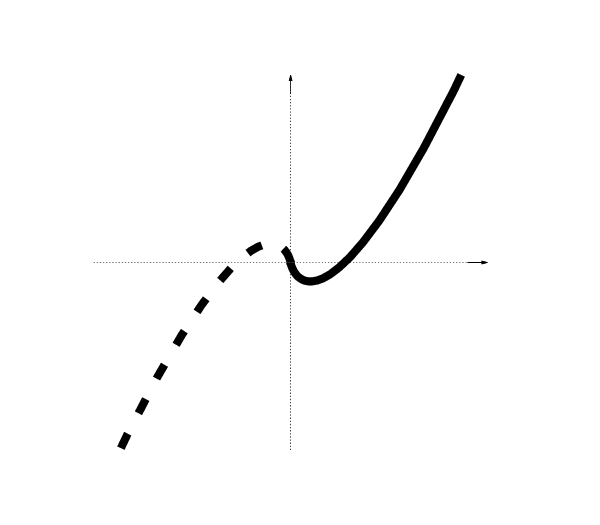}}
	\hspace{0.8cm}
	\scalebox{.4}{\input{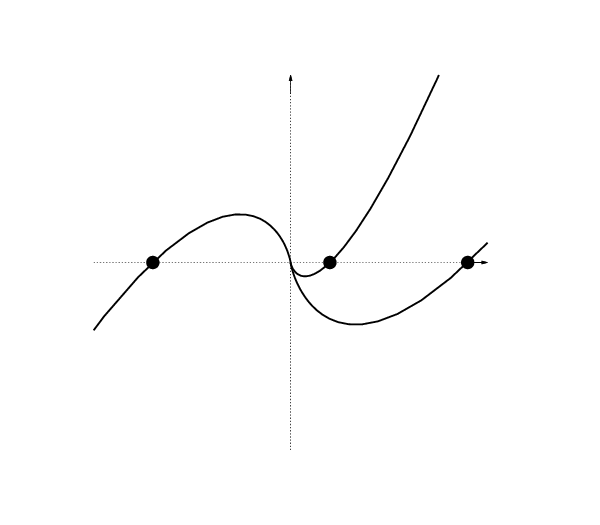}}
  \end{center}
  \vspace{-1cm}
  \caption{Graph of the projective orbits for a metric $g=\psi(\K1\,\a1+\K2\,\a2)$ of type II. The panel on the left depicts several generic orbits, the thick curve being one example. The graph in the middle illustrates the special case $h=1$. The right panel shows how representatives under the projective flow are chosen: Metrics on the same projective orbit are linked by coordinate transformations and we chose the metric on the intersection of an orbit with the axis of abscissa (i.e.\ proportional to $\g1$).}\label{fig:orbit.II}
\end{figure}
Looking at Figure~\ref{fig:orbit.II}, the canonical choice for a normal form is the intersection of the (thick=projective orbit) curve with the axis of abscissa.
The value $t_0$ hence is determined by
\[
  \K2' = (\K1\,t_0+\K2)\,e^{t_0} = 0
  \qquad\Rightarrow\qquad
  t_0=-\frac{\K2}{\K1}=-\alpha\,.
\]
The resulting metric, isometric to~$g$, is thus a multiple of $\g1$, and the multiplicative factor is determined by
\[
  \K1' = \K1\,e^{t_0} = \frac{e^\alpha}{K}\,e^{-\alpha} = \frac{1}{K}\,
\]
and thus, in new coordinates $(u,v)$, the metric \eqref{eqn:metric.lhs.CII} assumes the form
\begin{equation}\label{eqn:metric.rhs.CII}
 \til{g} = g[\kappa] = \kappa\,(Y(v)+u)\,dudv
\end{equation}
with $\kappa:=2K^3$.
\end{proof}

\noindent Figure~\ref{fig:orbit.II} illustrates geometrically how the normal forms are chosen on the orbits of the projective vector field.

\begin{lemma}\label{la:nonisometry.IIC}
Two different copies $g=g[\kappa]$ and $\til{g}=g[\til{\kappa}]$ of~\eqref{eqn:metric.rhs.CII} are non-isometric.
\end{lemma}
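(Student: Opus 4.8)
The plan is to use the same tool that has already been exploited repeatedly in this paper, namely the length of the projective vector field along its flow (cf.\ Equation~\eqref{eqn:isometries.length.preservation} and the discussion in Section~\ref{sec:poles.isometries}). The metrics $g=g[\kappa]$ and $\til{g}=g[\til\kappa]$ of~\eqref{eqn:metric.rhs.CII} are, by Lemma~\ref{la:normal.form.IIC}, representatives lying on the $\a1$-axis of $\solSp$, and a coordinate transformation between them would have to respect the projective vector field $w$ (up to rescaling) and preserve the $\a1$-direction. First I would recall that the projective flow is $\phi_t(x_0,y_0)=(x_0e^{-(\lambda+\frac12)t},\dots)$ — but in the C(II) coordinates of~\eqref{eqn:basis.metrics.IIC} the flow acts on $\solSp$ by $\phi_t^*(\K1\a1+\K2\a2)=\K1 e^t\a1+(\K1t+\K2)e^t\a2$, so the only metrics isometric to a multiple of $\g1$ via a $\phi_t$-orbit are again multiples of $\g1$ (the orbit meets the $\a1$-axis in a single point). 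Hence any isometry between $g[\kappa]$ and $g[\til\kappa]$ must be a \emph{form-preserving coordinate transformation} fixing the $\a1$-direction, and in the proof of Lemma~\ref{la:normal.form.IIC} such transformations were shown (via the Benenti tensor $L=L(\g2,\g1)$) to be necessarily the identity: the condition $D+Y(Kv)=C\,Y(v)$ forced $C=K=1$, $D=0$.

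More explicitly, I would write down~\eqref{eqn:isometries.length.preservation} for the two normal forms. Since $\g1=2(Y(y)+x)\,dxdy$ has no $dx^2$ term, the length $g(w,w)$ with $w$ the projective vector field in these coordinates (here $w$ is, after the reparametrization, proportional to the generator whose flow is~\eqref{eqn:ICa.proj.flow}) is a very simple function of $t$; comparing its zeros and poles along the flow starting from a generic point yields that the transformation relating the two flows must be trivial, i.e.\ $u=x$, $v=y$ up to the inherent freedom. Substituting back into $\kappa\,(Y(y)+x)\,dxdy=\til\kappa\,(Y(v)+u)\,dudv$ then forces $\kappa=\til\kappa$. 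The one subtlety is the indefinite integral in~\eqref{eqn:C.II.final.Y}: $Y$ is only determined up to an additive constant, and a shift $Y\to Y+c$ corresponds to the translation $x\to x-c$, which is precisely the residual freedom; after fixing the integration constant (as announced in the Remark following Theorem~\ref{thm:normal.forms}) this ambiguity disappears and the argument closes.

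The main obstacle I anticipate is handling the function $Y(y)$ abstractly rather than through explicit exponentials: unlike the A(I)--A(III) or B(I)--B(III) cases, here $Y$ is given by a transcendental integral, so linear-independence arguments via Wronskians are less immediate. The key observation that makes it work is the functional equation $Y(Ky)=C\,Y(y)-D$, which by differentiation reduces to $K\,e^{3/(2Ky)}|Ky|^{-3/2}=C\,e^{3/(2y)}|y|^{-3/2}$, i.e.\ $e^{\frac{3}{2y}(\frac1K-1)}=C\,K|K|^{-1/2}$; the left side is non-constant in $y$ unless $K=1$, which then gives $C=1$ and $D=0$. So the heart of the proof is really this rigidity of $Y$ under scaling, exactly as exploited in Lemma~\ref{la:normal.form.IIC}; the present lemma is then an easy corollary, and I would phrase it as such, referring back to that computation rather than repeating it.

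\begin{proof}
Recall from the proof of Lemma~\ref{la:normal.form.IIC} that any coordinate transformation preserving the form of a metric in the projective class $\projcl(\g1,\g2)$ of type C(II), and preserving the $\a1$-direction (equivalently, the eigenspace of $\lie_w$ corresponding to $\g2$ together with the generalized eigendirection $\g1$), is necessarily the identity: writing such a transformation as $u=Cx+D$, $v=Ky$ and imposing that $\g1=2(Y(y)+x)\,dxdy$ is preserved up to a constant factor gives $D+Y(Kv)=C\,Y(v)$; differentiating and using~\eqref{eqn:C.II.final.Y} yields
\[
  e^{\frac{3}{2y}\left(\frac{1}{K}-1\right)} = C\,\frac{K}{|K|^{\nicefrac12}}\,,
\]
whose left-hand side is non-constant in $y$ unless $K=1$, whence $C=1$ and $D=0$.

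Now let $\tau$ be a coordinate transformation with $\tau^*(g)=\til{g}$, where $g=g[\kappa]$ and $\til{g}=g[\til\kappa]$ are as in~\eqref{eqn:metric.rhs.CII}. Since $\tau$ is in particular a projective transformation, it maps the projective vector field~$w$ to a constant multiple of itself, and it preserves the eigenspaces of $\lie_w$ on $\solSp$; as both $g$ and $\til g$ lie on the $\a1$-axis, $\tau$ preserves the $\a1$-direction. By the previous paragraph, $\tau$ is (up to the freedom discussed below) the identity on coordinates, and substituting into $\kappa\,(Y(v)+u)\,dudv = \til\kappa\,(Y(y)+x)\,dxdy$ with $u=x$, $v=y$ gives $\kappa=\til\kappa$, so $g=\til g$.

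Finally, the function $Y$ in~\eqref{eqn:C.II.final.Y} is determined only up to an additive constant $c$, and the replacement $Y\to Y+c$ is absorbed by the translation $x\to x-c$; this is the only residual freedom, and once the integration constant is fixed once and for all it leaves no further ambiguity. Hence two different copies of~\eqref{eqn:metric.rhs.CII}, i.e.\ with $\kappa\ne\til\kappa$, are non-isometric.
\end{proof}
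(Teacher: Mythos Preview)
Your proof is correct. The rigidity computation you invoke is the one the paper carries out in the discussion \emph{preceding} Lemma~\ref{la:normal.form.IIC} (not in its proof), where the Benenti tensor $L=L(\g2,\g1)$ forces any form-preserving transformation fixing the $\a1$-direction to be the identity via the functional equation $D+Y(Ky)=C\,Y(y)$. Your additional observation---that an isometry sending a multiple of $\a1$ to a multiple of $\a1$ must, by linearity of $\tau^*$ on $\solSp$, preserve the $\a1$-direction---closes the loop.

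The paper's own proof of Lemma~\ref{la:nonisometry.IIC}, however, takes a genuinely different route. It abandons the projective/Benenti machinery entirely and works directly from the isometry equation $\kappa(Y(y)+x)=\til\kappa\,(Y(v)+u)\,u_{\bullet}v_{\bullet}$, splitting into the two separated cases $u=u(x),\,v=v(y)$ and $u=u(y),\,v=v(x)$. In the first case, two $x$-derivatives and the non-vanishing of $Y'$ force $u$ to be affine, after which comparison of $e^{3/(2y)}|y|^{-3/2}$ against its rescaled version pins down all constants and yields $\kappa=\til\kappa$. The swapped case is eliminated by a separate contradiction using the ODE~\eqref{eqn:IIC.ODE.Y1} for $Y_1$. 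Your approach is shorter and more conceptual, and it dispatches the swapped case implicitly through the Jordan structure of $L$ (the unique eigenvector $\partial_x$ must be preserved); the paper's direct argument is more elementary and self-contained, not presupposing that the isometry interacts well with the projective structure.
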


\begin{proof}
	One only needs to prove that the condition $\tilde{\kappa}=\kappa$ is necessary. Thus assume $g$ and $\tilde{g}$ are connected by a coordinate transformation. This means that there exists a coordinate transformation $(x,y)\mapsto(u,v)$, $u=u(x,y), v=v(x,y)$ mapping one metric onto the other. There are only two possibilities: $u=u(x), v=v(y)$ or $u=u(y), v=v(x)$.
	Assume first $u=u(x), v=v(y)$, so that the following equation holds:
	\[
	\kappa\,(Y(y)+x)\,dxdy = \tilde{\kappa}\,\left(Y\big(v(y)\big)+u(x)\right)\,u_x\,v_y\,dxdy\,.
	\]
	The above equation implies
	\begin{equation}\label{eq.lemma9}
	\kappa\,(Y(y)+x) = \tilde{\kappa}\,\left(Y\big(v(y)\big)+u(x)\right)\,u_x\,v_y\,.
	\end{equation}
	By differentiating Equation~\eqref{eq.lemma9} twice w.r.t.\ $x$, we obtain
	\begin{equation}\label{eq.kkk}
    0 = \til{\kappa}u_y\,\left( Y\big(v(y)\big)u_{xxx} +3u_xu_{xx} +uu_{xxx} \right)\,.
	\end{equation}
	Since $v(y)$, being a coordinate transformation, is invertible,
we can infer that $Y'(v(y))u_{xxx}=0$ (by differentiating the right hand side of \eqref{eq.kkk}). Thus, $u_{xxx}=0$, since $Y'(v(y))$ is not identically zero, so that
	\[
	  u(x) = \frac{e_1x^2}{2}+c_1x+d_1\,.
	\]
	Substituting this back into \eqref{eq.lemma9} yields
	\[
	  \kappa\,(Y(y)+x) = \til{\kappa}\,\left( Y\big(v(y)\big)+\frac{e_1x^2}{2}+c_1x+d_1\right) (e_1x+c_1)\,v_y\,,
	\]
	which is polynomial in $x$. Analyzing each coefficient w.r.t.\ $x$ separately, one finds from the cubic term $e_1=0$, and from the linear term that $v_y$ is a non-zero constant, i.e.\ $v=c_2y+d_2$, $c_2\in\mathds{R}\setminus\{0\},d_2\in\mathds{R}$. Taking this into account, by differentiating Equation~\eqref{eq.lemma9} w.r.t. $y$, we obtain
	\begin{equation}\label{eq.kkkk}
	\kappa Y'=\tilde{\kappa}\,c_2^2\, Y_v\, u_x
	\end{equation}
	implying that $u_x$ is a non-zero constant, i.e.\ $u=c_1x+d_1$, $c_1\in\mathds{R}\setminus\{0\},d_1\in\mathds{R}$, so that equation \eqref{eq.kkk} becomes
	\begin{equation}\label{eq.k.ktilde.c1.c2}
	\kappa = \tilde{\kappa}c_1^2c_2
	\end{equation}
	whereas Equation~\eqref{eq.kkkk} assumes the form
	\begin{equation}\label{eq.hope.pre.final}
	\frac{\kappa}{2}\frac{e^{\frac{3}{2y}}}{|y|^{\frac{3}{2}}} = \frac{\tilde{\kappa}c_1c_2^2}{2} \frac{e^{\frac{3}{2(c_2y+d_2)}}}{|c_2y+d_2|^{\frac{3}{2}}}\,,
	\end{equation}
	which, on account of \eqref{eq.k.ktilde.c1.c2}, reduces to
	\begin{equation}\label{eq.hope.final}
	c_1\frac{e^{\frac{3}{2y}}}{|y|^{\frac{3}{2}}} = c_2 \frac{e^{\frac{3}{2(c_2y+d_2)}}}{|c_2y+d_2|^{\frac{3}{2}}}\,.
	\end{equation}
	Since Equations~\eqref{eq.hope.pre.final} and~\eqref{eq.hope.final} hold for any $y$, we have that $d_2=0$, $c_2=1$ and finally $c_1=1$, and therefore $\kappa=\tilde{\kappa}$ in view of \eqref{eq.k.ktilde.c1.c2}.
	\medskip

	Now assume $u=u(y)$ and $v=v(x)$. The analogue of Equation~\eqref{eq.lemma9} becomes
	\begin{equation}\label{eqn:IIC.nonisometry.start}
	 \kappa(Y(y)+x) = \til{\kappa}\,\left(Y\big(v(x)\big)+u(y)\right)\,u_y v_x\,.
	\end{equation}
	Taking two derivatives w.r.t.\ $x$, we arrive at
	\begin{equation}\label{eqn:IIC.nonisometry.Dxx}
	  \til{\kappa}u_y\,\left(
	  Y''(v(x))\,v_x^3 +2v_xv_{xx}\,Y'(v(x)) +Y'(v(x))v_{xx} +Y(v(x))v_{xxx}+u(y)\,v_{xxx}
	  \right) = 0\,.
	\end{equation}
	Dividing by $\til{\kappa}u_y$ (recall that $u_y$ is non-zero) and taking a derivative w.r.t.\ $y$ of this equation, we thus find $v_{xxx}=0$, which implies that $v(x)$ is of the form $v(x)=\frac{e_1x^2}{2}+c_1x+d_1$, $e_1,c_1,d_1\in\mathds{R}$.
	Since $v_{xxx}=0$, from \eqref{eqn:IIC.nonisometry.Dxx} we obtain
	\begin{equation}\label{eqn:IIC.nonisometry.ODE.v}
	  Y''(v(x)) v_x^3 +(2v_x+1)\,v_{xx}\,Y'(v(x)) = 0\,.
	\end{equation}
	Differentiating~\eqref{eqn:IIC.ODE.Y1} w.r.t.\ $y$, we obtain
	\begin{equation}\label{eqn:IIC.nonisometry.ODE.Y}
	 y^2 Y''(y) + \frac32\,(y+1)\,Y'(y) = 0\,.
	\end{equation}
	Eliminating $Y''$ from~\eqref{eqn:IIC.nonisometry.ODE.v} by grace of~\eqref{eqn:IIC.nonisometry.ODE.Y}, and replacing $v(x)$, a polynomial equation in $x$ is obtained. The system of equations on $e_1$, $c_1$ and $d_1$ obtained from its coefficients w.r.t.\ $x$ can straightforwardly be solved and yields the solution $e_1=c_1=0$, $d_1=-1$. This, however, is a contradiction since it would imply $v(x)=-1$, but $v(x)$, being part of a coordinate transformation, cannot be constant.
\end{proof}

\noindent Summing up, we arrive at the following proposition.

\begin{proposition}\label{prop:normal.forms.CII}
A metric of the type C(II) in Theorem~\ref{thm:matveev} that admits exactly one essential projective vector field can, locally by a suitable coordinate transformation, be mapped onto one and only one of the following metrics, where $\kappa\in\Rnz$,
\[
  g = \kappa\,(Y(y)+x)\,dxdy\,,
\]
with $Y(y)$ given by Equation~\eqref{eqn:C.II.final.Y}.
\end{proposition}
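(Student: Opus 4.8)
## Proof proposal for Proposition~\ref{prop:normal.forms.CII}

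The plan is to mirror the strategy already used for cases C(Ib) and B(II): reduce an arbitrary metric in the projective class to a canonical representative lying on the eigenvector-axis $\langle\a1\rangle$ of $\lie_w$, and then show that no nontrivial coordinate transformation links two such representatives. The first part is essentially Lemma~\ref{la:normal.form.IIC}: starting from the general form~\eqref{eqn:metric.IIC.general.initial} with $\K1\ne0$ (metrics with $\K1=0$ are eigenvectors of $\lie_w$ and hence admit a homothety by Proposition~\ref{prop:homotheties.and.eigenvectors}, so they are excluded), I reparametrize via $K=e^{\K2/\K1}/\K1$ and $\alpha=\K2/\K1$, then flow along the projective vector field~$w$ for time $t_0=-\alpha$ so that the $\a2$-component $(\K1 t+\K2)e^t$ vanishes. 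The resulting metric is the multiple $\kappa\,(Y(y)+x)\,dxdy$ of $\g1$ with $\kappa=2K^3$, and since $K\in\Rnz$ we get $\kappa\in\Rnz$. The simplified form~\eqref{eqn:C.II.final.Y} of $Y(y)$ follows by integration by parts from~\eqref{eqn:C.II.Y}, as already noted in the remark preceding the proposition.

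The second part — mutual non-diffeomorphism — is Lemma~\ref{la:nonisometry.IIC}, and this is where the real work lies. I would argue as follows. Any coordinate transformation between two metrics of the form $g=\kappa(Y(y)+x)\,dxdy$ must, by the diagonal/off-diagonal structure of the $dxdy$-only metric, be of one of the two types $u=u(x),v=v(y)$ or $u=u(y),v=v(x)$. In the first type, writing out $\tau^*(\til g)=g$ gives the functional equation $\kappa(Y(y)+x)=\til\kappa\,(Y(v(y))+u(x))\,u_x v_y$; differentiating twice in $x$ forces $Y'(v(y))\,u_{xxx}=0$, hence $u_{xxx}=0$ (as $Y'\not\equiv0$), so $u$ is quadratic in $x$. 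Substituting back and matching powers of $x$ kills the quadratic term and forces $v_y$ constant; differentiating once more in $y$ then forces $u_x$ constant, reducing everything to the single relation $c_1\,e^{3/(2y)}|y|^{-3/2}=c_2\,e^{3/(2(c_2 y+d_2))}|c_2 y+d_2|^{-3/2}$, whose validity for all $y$ forces $d_2=0$, $c_2=1$, $c_1=1$ and therefore $\kappa=\til\kappa$. The second type is handled similarly: differentiating the functional equation repeatedly in $x$ forces $v(x)$ quadratic, and then eliminating $Y''$ using the ODE~\eqref{eqn:IIC.ODE.Y1} (more precisely its $y$-derivative $y^2Y''+\tfrac32(y+1)Y'=0$) turns the resulting relation into a polynomial identity in $x$ whose only solution is $v(x)\equiv -1$, a contradiction since $v$ is part of a diffeomorphism.

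The main obstacle is the bookkeeping in the $u=u(y),v=v(x)$ branch: one must combine the ODE satisfied by $Y$ with the structure forced on $v(x)$ and then verify that the polynomial system in the coefficients $e_1,c_1,d_1$ has no admissible (i.e.\ non-constant) solution. This is a finite computation but delicate, because one has to keep track that $Y'$ does not vanish identically and that the putative transformation is genuinely invertible. Everything else — the reduction via the projective flow, the invariance of the Benenti tensor $L(\g2,\g1)=\begin{pmatrix} y & x+Y(y)\\ 0 & y\end{pmatrix}$ up to a constant factor, and the resulting constraint $v=Ky$ — follows the template of Sections~\ref{sec:A.2} and~\ref{sec:B.5} essentially verbatim. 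Combining Lemmas~\ref{la:normal.form.IIC} and~\ref{la:nonisometry.IIC} then yields the stated normal forms, completing the proof.
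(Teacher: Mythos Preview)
Your proposal is correct and follows essentially the same approach as the paper: reduction to a multiple of $\g1$ via the projective flow (Lemma~\ref{la:normal.form.IIC}), followed by the direct non-isometry argument splitting into the two cases $u=u(x),v=v(y)$ and $u=u(y),v=v(x)$ (Lemma~\ref{la:nonisometry.IIC}), with the Benenti-tensor constraint $v=Ky$ as supporting input. One small slip: you call $\langle\a1\rangle$ the ``eigenvector-axis'', but in case~(II) the eigenvector of $\lie_w$ is $\a2$ (this is precisely why $\K1=0$ is excluded), and the representatives you flow to lie on the \emph{non}-eigenvector axis $\langle\a1\rangle$; the argument itself is unaffected.
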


\subsubsection{Normal forms (C.9) of Theorem~\ref{thm:normal.forms}}\label{sec:C.9}
We can treat case C(III) similar to the case C(II). A metric of class C(III) has the form~\eqref{eqn:C.II.general.metric}, except that we have to replace $Y(y)$ by $Y(y;\lambda)=Y_\lambda(y)$.
Thus we obtain, by help of Formula~\eqref{eqn:general.metric}, a metric in $\projcl(\g1,\g2)$ in the form
\begin{equation}\label{eqn:metric.C.III.general.initial}
g = g[\lambda;\K1,\K2] = 2\,\frac{Y(y)+x}{(\K1-\K2\,y)^3}\,dxdy
+\frac{\K2\,(Y(y)+x)^2}{(\K1-\K2\,y)^4}\,dy^2\,.
\end{equation}
where
\begin{equation*}
Y_\lambda(y) = e^{-\frac32\lambda\arctan(y)}\,
\frac{\sqrt[4]{y^2+1}}{y-3\lambda}
+\int^y e^{-\frac32\lambda\arctan(\xi)}\,
\frac{\sqrt[4]{\xi^2+1}}{(\xi-3\lambda)^2}\,d\xi
= \int^y \frac{e^{-\frac32\lambda\arctan(\xi)}}{|\xi^2+1|^{\nicefrac34}}\,d\xi\,.
\end{equation*}
Note that $\lie_w$ does not admit real eigenvalues in the present case, so no further restrictions on $\K1$,$\K2$ apply.

\begin{remark}
 Metric~\eqref{eqn:metric.C.III.general.initial} is obtained, using Formula~\eqref{eqn:general.metric}, from the two metrics~$\g1$ and~$\g2$ given for case~C(III) in Theorem~\ref{thm:matveev}.
The function $Y=Y_\lambda(y)=Y_1'(y)$ is obtained from the ODE
$(y^2+1)\,Y_1''-\frac12 (y-3\lambda) Y_1'+\frac12 Y_1 =0$,
see~\cite{matveev_2012}.
W.l.o.g., the solution is given by
\begin{equation}\label{eqn:Y1.case.C.III}
Y_1(y)
= (y-3\lambda)\,\int^y e^{-\frac32\lambda\arctan(s)} \frac{\sqrt[4]{s^2+1}}{(\xi-3\lambda)^2}\,ds\,.
\end{equation}
\end{remark}
\noindent Let us again use the parametrization~\eqref{eqn:III.reparametrization}, such that the metric~\eqref{eqn:metric.C.III.general.initial}, in the same local coordinates, reads
\begin{equation}\label{eqn:metric.C.III.reparam}
g = K^{-3}\,e^{-3\lambda\theta}\,\,
	\left( 2\,\frac{Y(y)+x}{(\sin\theta-y\,\cos\theta)^3}\,dxdy
	 +\cos(\theta)\frac{(Y(y)+x)^2}{(\sin\theta-y\,\cos\theta)^4}\,dy^2 \right)\,,
\end{equation}
with $\theta\in\angl$ and $K\in\Rnz$.
We prove that any two isometric metrics lie on the same orbit under the action of the projective flow. Thus these orbits are completely characterized by the constant~$K$.

\begin{lemma}\label{la:C.III.lambda.zero}
There is a local coordinate transformation that maps metric~\eqref{eqn:metric.C.III.reparam} onto a constant multiple of the metric~$\g1$, i.e.\
\begin{equation*}
K^{-3}\,e^{-3\lambda\theta}\,\,
\left( 2\,\frac{Y(y)+x}{(\sin\theta-y\,\cos\theta)^3}\,dxdy
+\cos(\theta)\,\,\frac{(Y(y)+x)^2}{(\sin\theta-y\,\cos\theta)^4}\,dy^2 \right)
= 2\,K^3\,(Y(v)+u)\,dudv\,.
\end{equation*}
\end{lemma}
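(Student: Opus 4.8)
The plan is to mimic exactly the proof of Lemma~\ref{la:normal.form.IIC}, since the case C(III) is structurally identical to C(II) with only the function $Y=Y_\lambda$ and the reparametrization~\eqref{eqn:III.reparametrization} changed to polar/spiral form. First I would recall that two metrics lying on the same projective orbit are linked by a coordinate transformation, namely the flow~$\phi_t$ of the projective vector field. So it suffices to exhibit, for a metric of the form~\eqref{eqn:metric.C.III.reparam}, a specific value of the flow parameter~$t$ (equivalently, of the angular parameter~$\theta$) such that $\phi_t$ sends~$g$ onto a point on the $\a1$-axis, which corresponds to a constant multiple of~$\g1$.

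Concretely, I would compute the action of $\lie_w$ on $\solSp$ in the basis $(\a1,\a2)$ using case~(III) of~\eqref{eqn:normal-matrices}, so that $\phi_t^*$ acts as in~\eqref{eqn:III.pullback}. Writing $a=\K1\a1+\K2\a2$ with $(\K1,\K2)$ reparametrized by $(\theta,K)$ via~\eqref{eqn:III.reparametrization}, the orbit is the logarithmic spiral described in Section~\ref{sec:reparametrisation}; choosing $t=-\theta$ (plus, if $\lambda>0$, a suitable multiple of $2\pi$ to stay in the fundamental domain, cf.\ Remark~\ref{rmk:III.parameter.values}) sends the point to $(0, Ke^{\lambda\cdot 0})$-type coordinates, i.e.\ onto the $\a1$-direction. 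Translating this back into the explicit coordinate picture: $\phi_{-\theta}$ is a concrete coordinate change $(x,y)\to(u,v)$ with $v=v(y)$ (the $y$-component of the flow) and $u=u(x,y)$, and since the flow transforms the metric $g\in\projcl(\g1,\g2)$ of the form~\eqref{eqn:metric.C.III.general.initial} into another metric of the same form but with $\K2'=0$, the result is $2K^3(Y(v)+u)\,dudv$. The conformal factor $K^3$ comes out by tracking the coefficient $\K1'$ under~\eqref{eqn:III.pullback}, exactly as in Lemma~\ref{la:normal.form.IIC} where $\K1'=1/K$ and $\kappa=2K^3$.

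The main obstacle — and the only real difference from C(II) — is bookkeeping: in case~(III) the eigenvalue is complex ($\lambda\pm i$), so the flow rotates as well as scales the pair $(\K1,\K2)$, and one must make sure the exponents $e^{-\tfrac32\lambda\arctan(y)}$ in $Y_\lambda$ transform consistently under the coordinate change induced by the flow. I would verify this by explicitly integrating the flow equations $\dot{x}=-(\lambda+\tfrac12)\ldots$, $\dot{y}=\ldots$ (the analogue of the system in Lemma~\ref{la:normal.form.IIC}), or, more cleanly, by invoking the abstract argument: since $\phi_t^*$ acts linearly on $\solSp$ and $\psi^{-1}$ intertwines $\phi_t^*$ with the coordinate pullback (as spelled out in the proof of Lemma~\ref{la:normal.form.IIC}), the transformed metric is automatically $\psi$ of the transformed Liouville tensor, hence of the correct form with no separate check of the $Y_\lambda$ transformation needed. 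I would therefore favour the abstract route, which makes the proof of this lemma essentially a one-line reduction to the computation $\phi_{-\theta}^*(\K1\a1+\K2\a2)=Ke^{\lambda\cdot 0}\a1$, followed by reading off $\kappa=2K^3$. The statement that two different copies are non-isometric (the analogue of Lemma~\ref{la:nonisometry.IIC}) would then be handled separately, by the same differentiation-and-coefficient-matching argument applied to $Y_\lambda$ in place of~$Y$.
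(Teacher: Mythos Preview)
Your approach is essentially the paper's own: both use the projective flow $\phi_t$ to move along the orbit in $\solSp$ until the $\a2$-component vanishes, invoking the abstract fact that $\phi_t^*$ intertwines with $\psi^{-1}$ so no explicit coordinate verification of the $Y_\lambda$ transformation is needed. The paper likewise splits into $\lambda=0$ (circular orbits) and $\lambda>0$ (spirals, with the extra bookkeeping of choosing a suitable $N\in\Z$), and defers non-isometry to the next lemma.

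One small slip: with the reparametrization~\eqref{eqn:III.reparametrization}, $\K1=Ke^{\lambda\theta}\sin\theta$ and $\K2=Ke^{\lambda\theta}\cos\theta$, so setting $t=-\theta$ lands you at $\theta'=0$, i.e.\ $(\K1',\K2')=(0,K)$, which is the $\a2$-axis, not $\a1$. Since $\g1=g[\lambda;1,0]$ (i.e.\ $\K2=0$), you need $\theta'=\tfrac{\pi}{2}+2\pi N$, hence $t=\tfrac{\pi}{2}-\theta+2\pi N$; this is exactly the choice the paper makes, and it is what kills the $dy^2$ term via $\cos\theta'=0$.
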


\begin{proof}
The proof is analogous to that of Proposition~\ref{prop:normal.forms.CII}.
For the normal forms we make the following choices:

\paragraph{Case $\lambda=0$:}
We have $K=\const$ and thus the projective orbits are concentric circles around the origin, c.f.\ Figures~\ref{fig:orbit.III} and~\ref{fig:nf.C.III}. For the normal forms we can therefore choose freely (by grace of a rotation) the value of $\theta$ in \eqref{eqn:metric.C.III.reparam}, and if we choose $\theta=\tfrac{\pi}{2}$, then $\cos(\theta)=0$. The resulting metric is a constant multiple of $\g1$, with the factor being a positive real number.

\paragraph{Case $\lambda\ne0$:}
The orbital invariant is still $K$, but the orbits are spirals in $\solSp$, see the middle or right panel of Figure~\ref{fig:nf.C.III}.
Since the radial distance of any of these spirals is strictly growing with increasing $\theta$, each orbit crosses the unit circle exactly once.
Following the spiraling orbit of the projective flow, the distance from the origin is given by $\zeta(t)=K e^{\lambda\theta}\,e^{\lambda\,t}$.
We require that $\zeta(t)=1$, i.e.\ we choose the intersection of this orbit with the unit sphere, which is reached at time $t=-\frac{\ln K}{\lambda}-\theta$.
However, it is more convenient to choose the normal forms again on the axis given by multiples of $\g1$.
We use the parametrization~\eqref{eqn:III.reparametrization}, but recall Remark~\ref{rmk:III.parameter.values}, i.e.\ we let $K=e^{\lambda\alpha}$ where $\alpha\in\angl$.
Choosing $\theta=\tfrac{\pi}{2}+2\pi N$ ($N\in\mathds{Z}$) for the normal form, we obtain a metric of the form~\eqref{eqn:metric.C.III.general.initial} with
\[
  \K1 = K e^{\lambda (\frac{\pi}{2}+2\pi N)}
  \qquad\text{and}\qquad
  \K2 = 0\,,
\]
and choosing $N$ appropriately permits us further to have
$\K1 = e^{\lambda\beta}$ with $\beta\in\angl$.
\end{proof}

\noindent Figure~\ref{fig:nf.C.III} illustrates how we can choose representatives for the orbits of the flow of~$w$.
Summing up, we have found a representative in the form
\begin{equation}\label{eqn:normal.form.C.9}
 g = g[\lambda;\kappa] = \kappa\,(Y(y)+x)\,dxdy
\end{equation}
with $\kappa\in\Rnz$ if $\lambda>0$ and $\kappa=e^{\lambda\alpha}$ if $\lambda=0$, $\alpha\in\angl$. The following lemma shows that this form is already optimal.

\begin{figure}
\begin{center}
 \scalebox{.45}{\input{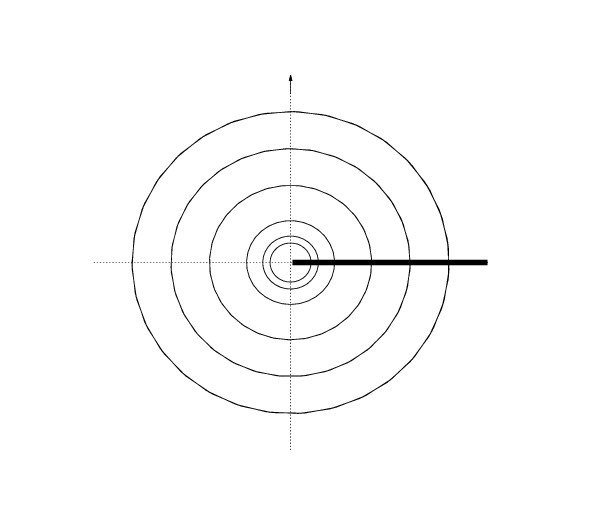}}
 \hspace{1cm}
 \scalebox{.45}{\input{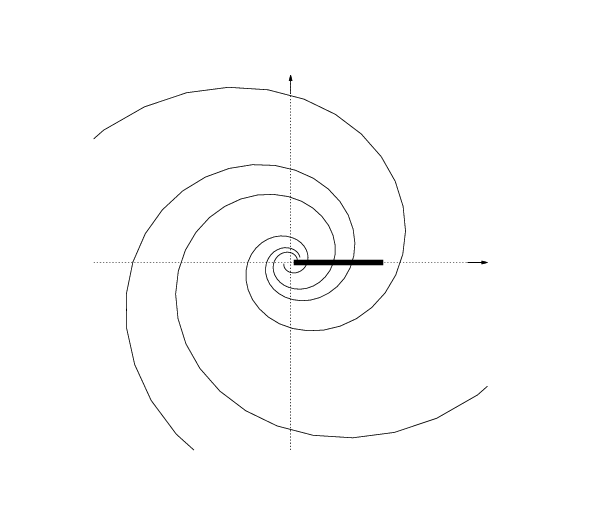}}
 \hspace{1cm}
 \scalebox{.45}{\input{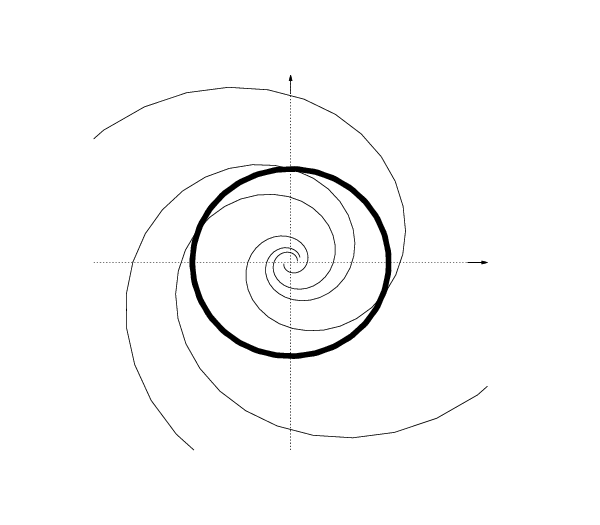}}
\end{center}
\vspace{-1cm}
\caption{Illustration of the choice of representatives for metrics of type III in Theorem~\ref{thm:matveev}. The thin lines show orbits of the projective vector field; at any point on the thick lines passes only one orbit and such a point is the representative for this orbit. In the left panel, the situation for $\lambda=0$ is shown (the orbits are circles; we choose the representatives to be the intersection of each circle with the positive $\K1$-axis). The middle panel illustrates $\lambda>0$ with the representatives as in Lemma~\ref{la:C.III.lambda.zero}. Finally, the right panel shows an alternative choice of representatives for $\lambda>0$.}\label{fig:nf.C.III}
\end{figure}

\begin{lemma}\label{la:C.III.lambda.nonzero}
Two copies of metric~\eqref{eqn:normal.form.C.9}, say $g[\lambda;\kappa]$ and $g[\til{\lambda};\til{\kappa}]$ are isomorphic if and only if $\til{\lambda}=\lambda$ and $\til{\kappa}=\kappa$.
\end{lemma}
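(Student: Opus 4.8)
The plan is to mimic, for the family~\eqref{eqn:normal.form.C.9}, the argument already used for case C(II) in Lemma~\ref{la:nonisometry.IIC}, now keeping track of the extra parameter~$\lambda$. Suppose $g=g[\lambda;\kappa]=\kappa\,(Y_\lambda(y)+x)\,dxdy$ and $\til g=g[\til\lambda;\til\kappa]=\til\kappa\,(Y_{\til\lambda}(v)+u)\,dudv$ are linked by a coordinate transformation $(x,y)\mapsto(u(x,y),v(x,y))$. Since the metric is of Jordan-block type with a one-dimensional kernel for its null direction, the transformation must, around almost every point, either preserve or swap the two factors; that is, either $u=u(x),\ v=v(y)$ or $u=u(y),\ v=v(x)$. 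This reduces the claim to the analysis of the functional equation
\[
  \kappa\,(Y_\lambda(y)+x) = \til\kappa\,\big(Y_{\til\lambda}(v(y))+u(x)\big)\,u_x\,v_y
\]
in the first case, and the analogous equation with $x,y$ interchanged in the second case.

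For the first case I would proceed exactly as in Lemma~\ref{la:nonisometry.IIC}: differentiating twice in~$x$ and using that $v$ is invertible (so $Y_{\til\lambda}'(v(y))\not\equiv0$), one forces $u_{xxx}=0$, hence $u$ is a quadratic polynomial in~$x$; reinserting and comparing the $x$-coefficients kills the quadratic term and shows $v_y$ is a nonzero constant, so $v=c_2y+d_2$; a further differentiation in~$y$ shows $u_x$ is a nonzero constant, so $u=c_1x+d_1$. At that point the equation collapses to $\kappa=\til\kappa\,c_1^2c_2$ together with a pointwise identity of the form
\[
  c_1\,\frac{e^{-\frac32\lambda\arctan(y)}}{|y^2+1|^{\nicefrac34}}
  = c_2\,\frac{e^{-\frac32\til\lambda\arctan(c_2y+d_2)}}{|(c_2y+d_2)^2+1|^{\nicefrac34}}
\]
valid for all~$y$. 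Matching the behaviour of this identity — most transparently by taking the logarithmic derivative, which gives a rational-plus-$\arctan$-derivative identity — forces $c_2=1$, $d_2=0$, then $\til\lambda=\lambda$ (comparing the exponential factors) and finally $c_1=1$, whence $\kappa=\til\kappa$. The second case, $u=u(y)$, $v=v(x)$, is handled as in Lemma~\ref{la:nonisometry.IIC}: one shows $v$ is a quadratic polynomial, then uses the ODE $(y^2+1)Y_1''-\tfrac12(y-3\lambda)Y_1'+\tfrac12 Y_1=0$ (equivalently its $y$-derivative $(y^2+1)Y_\lambda''+\tfrac32(y+\lambda^{-1}\cdot\!\dots)Y_\lambda'=0$, i.e.\ the relation satisfied by $Y_\lambda$) to eliminate $Y_{\til\lambda}''$ from the second-derivative equation; the resulting polynomial identity in~$x$ has only the solution $v(x)\equiv\text{const}$, which is impossible for a coordinate transformation. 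This rules out the swapping case entirely and completes the proof.

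The main obstacle I anticipate is the final step of the first case: showing that the pointwise identity between the two ``density'' functions $e^{-\frac32\lambda\arctan(y)}|y^2+1|^{-3/4}$ and its $(\til\lambda,c_1,c_2,d_2)$-rescaled counterpart forces all parameters to coincide. The affine substitution $y\mapsto c_2y+d_2$ interacts nontrivially with $\arctan$, so one cannot simply match elementary factors; the clean way is to observe that $\tfrac{d}{dy}\log$ of the left side is $-\tfrac{3\lambda}{2(y^2+1)}-\tfrac{3y}{2(y^2+1)}=-\tfrac{3(y+\lambda)}{2(y^2+1)}$, while the right side gives $-\tfrac{3c_2(c_2y+d_2+\til\lambda)}{2((c_2y+d_2)^2+1)}$, and then equate these two rational functions of~$y$ identically. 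Clearing denominators yields a polynomial identity whose coefficients immediately give $c_2^2=1$, $d_2=0$, and then $\lambda=\til\lambda$ (using also $c_2=1$, which is forced since $c_2=-1$ together with $d_2=0$ would send the orbit parameter $\theta$ outside its allowed range, or can be excluded directly from $\kappa=\til\kappa c_1^2c_2>0$-type sign considerations in the Riemannian case and by re-substitution in general). With $c_2=1,d_2=0,\lambda=\til\lambda$ in hand, the density identity reduces to $c_1=1$, and $\kappa=\til\kappa$ follows. Everything else is a routine repetition of the C(II) computation, so I would keep the write-up short and refer back to Lemma~\ref{la:nonisometry.IIC} for the shared parts.
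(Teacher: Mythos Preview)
Your proposal is correct and follows essentially the same route as the paper: reduce to affine $u,v$, arrive at the density identity for $Y_\lambda'$, and conclude. The one substantive difference is how $\til\lambda=\lambda$ is obtained. The paper invokes Lemma~\ref{la:lambda.different.classes} at the outset: since $\lambda$ encodes the (normalized) eigenvalues of $\lie_w$ on $\solSp$, it is a projective invariant, so $\til\lambda=\lambda$ is immediate and the functional-equation analysis can proceed with a single $\lambda$ exactly as in Lemma~\ref{la:nonisometry.IIC}. You instead carry $\til\lambda$ through the computation and extract $\til\lambda=\lambda$ from the logarithmic-derivative identity $-\tfrac{3(y+\lambda)}{2(y^2+1)}=-\tfrac{3c_2(c_2y+d_2+\til\lambda)}{2((c_2y+d_2)^2+1)}$; clearing denominators indeed gives $d_2=0$, $c_2^2=1$, $\til\lambda=c_2\lambda$, after which $c_2=-1$ is ruled out (for $\lambda>0$ by the normalization $\til\lambda\geq0$; for $\lambda=0$ by $\kappa=\til\kappa c_1^2c_2$ together with the normalization $\kappa,\til\kappa>0$). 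This is perfectly valid and self-contained, but your phrasing of the $c_2=-1$ exclusion (``orbit parameter $\theta$ outside its allowed range'', ``Riemannian case'') is imprecise---it is simply the sign/range normalizations on $\lambda$ and $\kappa$ that do the work, not signature. The paper's shortcut via Lemma~\ref{la:lambda.different.classes} is cleaner here and avoids that case split entirely; your approach has the virtue of not relying on the earlier eigenvalue argument. For the swapped case $u=u(y),\,v=v(x)$, the relevant ODE for $Y_\lambda$ is $(y^2+1)Y_\lambda''+\tfrac32(y+\lambda)Y_\lambda'=0$ (your ``$\lambda^{-1}\cdot\dots$'' is a slip), and with that the C(II) argument carries over verbatim.
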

\begin{proof}
Recall that $\til{\lambda}=\lambda$ follows from Lemma~\ref{la:lambda.different.classes}. So, assume $\til{\lambda}=\lambda$.
The proof for $\til{\kappa}=\kappa$ is analogous to that of Lemma~\ref{la:nonisometry.IIC}. Instead of \eqref{eq.hope.pre.final} we find
\[
  \kappa\,\frac{e^{\frac32 \lambda \arctan(y)}}{|y^2+1|^{\nicefrac34}}
  = \til{\kappa}c_1c_2^2\,\frac{e^{\frac32 \lambda \arctan(c_2y+d_2)}}{|(c_2y+d_2)^2+1|^{\nicefrac34}}\,,
\]
and instead of \eqref{eq.hope.final} we find
\[
  c_1\,\frac{e^{\frac32 \lambda \arctan(y)}}{|y^2+1|^{\nicefrac34}}
= c_2\,\frac{e^{\frac32 \lambda \arctan(c_2y+d_2)}}{|(c_2y+d_2)^2+1|^{\nicefrac34}}\,.
\]
Thus, it follows that $d_2=0$, $c_2=1$ and finally $c_1=1$. Therefore, $\til{\kappa}=\kappa$ as claimed.
\end{proof}

\noindent Putting together Lemma~\ref{la:C.III.lambda.zero} and Lemma~\ref{la:C.III.lambda.nonzero}, we arrive at

\begin{proposition}\label{prop:normal.forms.CIII}
A metric of the type C(III) in Theorem~\ref{thm:matveev} that admits exactly one essential projective vector field can, locally by a suitable coordinate transformation, be mapped onto one and only one of the following metrics,
\begin{align*}
 &\text{For $\lambda=0$:}
 & g &= \kappa\,(Y(\lambda=0,y)+x)\,dxdy
 && \text{with $\kappa>0$} \\
 &\text{For $\lambda>0$:}
 & g &= e^{\lambda\,\alpha}\,(Y(\lambda,y)+x)\,dxdy
 &&\text{with $\alpha\in\angl$}\,.
\end{align*}
\end{proposition}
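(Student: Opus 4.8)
The plan is to prove the statement simply by assembling Lemma~\ref{la:C.III.lambda.zero} and Lemma~\ref{la:C.III.lambda.nonzero}, following the same pattern already used for the cases C(II) and B(III): first move every metric in a given projective class to a canonical slice (a constant multiple of $\g1$) using the flow of the projective vector field, and then show that distinct points of that slice are non-isometric. I would begin from a generic metric of type C(III), which by Formula~\eqref{eqn:general.metric} has the form~\eqref{eqn:metric.C.III.general.initial}, and rewrite it via the reparametrization~\eqref{eqn:III.reparametrization} — keeping in mind the case distinction of Remark~\ref{rmk:III.parameter.values} — as the metric~\eqref{eqn:metric.C.III.reparam}, now depending on parameters $\theta$ and $K$. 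Since in case C(III) the Lie derivative $\lie_w$ admits no real eigenvalue, no extra restriction on $\K1,\K2$ is needed, so this really does describe every metric of the class.

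Next I would invoke Lemma~\ref{la:C.III.lambda.zero}: the local flow $\phi_t$ of $w$ carries~\eqref{eqn:metric.C.III.reparam} onto a constant multiple of $\g1$, i.e.\ onto $\kappa\,(Y(\lambda,y)+x)\,dxdy$, the invariant along each orbit being $K$. Choosing the representative then amounts to selecting a point on the orbit, and here the two sub-cases behave differently. For $\lambda=0$ the orbits in $\solSp$ are concentric circles, so $\theta$ may be rotated freely; taking $\theta=\tfrac{\pi}{2}$ annihilates the $dy^2$-term and the surviving conformal factor is a positive real number, which gives $\kappa>0$. For $\lambda>0$ the orbits are logarithmic spirals, each crossing the $\g1$-axis exactly once; writing $K=e^{\lambda\alpha}$ with $\alpha\in\angl$ as in Remark~\ref{rmk:III.parameter.values} and choosing $\theta=\tfrac{\pi}{2}+2\pi N$ for suitable $N\in\Z$ produces $\K1=e^{\lambda\beta}$ with $\beta\in\angl$, hence $\kappa=e^{\lambda\alpha}$ with $\alpha\in\angl$. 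This is exactly the form~\eqref{eqn:normal.form.C.9}.

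Finally, to see that~\eqref{eqn:normal.form.C.9} is already optimal, I would apply Lemma~\ref{la:C.III.lambda.nonzero}, according to which two copies $g[\lambda;\kappa]$ and $g[\til{\lambda};\til{\kappa}]$ are isometric if and only if $\til\lambda=\lambda$ and $\til\kappa=\kappa$. The equality $\til\lambda=\lambda$ follows from Lemma~\ref{la:lambda.different.classes}; for $\til\kappa=\kappa$ one reruns the argument of Lemma~\ref{la:nonisometry.IIC} almost verbatim — a hypothetical coordinate transformation must be of the form $u=u(x),\,v=v(y)$ or $u=u(y),\,v=v(x)$, and differentiating the identity relating the two length functions, together with the fact that $Y_\lambda'$ is nowhere vanishing, forces the transformation to be the identity and $\til\kappa=\kappa$. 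Combining these three ingredients yields the proposition.

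The only genuinely delicate point — and hence the step I expect to be the main obstacle — is the bookkeeping of the parameter ranges in the case $\lambda>0$: one must be sure that fixing the branch $\theta=\tfrac{\pi}{2}$ and restricting $\alpha$ to $\angl$ via the $2\pi$-periodicity of the spiral truly exhausts all remaining coordinate freedom, so that no two distinct values of $(\lambda,\kappa)$ — respectively $(\lambda,\alpha)$ — can give isometric metrics. This is precisely what Lemma~\ref{la:C.III.lambda.nonzero} certifies once Lemma~\ref{la:C.III.lambda.zero} has been used to pass to the canonical slice, so in the end the whole argument reduces to citing the two lemmas in the right order.
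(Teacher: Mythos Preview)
Your proposal is correct and follows essentially the same approach as the paper: the paper's proof of Proposition~\ref{prop:normal.forms.CIII} consists precisely of combining Lemma~\ref{la:C.III.lambda.zero} (reduction to the canonical slice $\kappa\,(Y(\lambda,y)+x)\,dxdy$) with Lemma~\ref{la:C.III.lambda.nonzero} (mutual non-isometry of these representatives). Your write-up simply unpacks in more detail what those two lemmas accomplish, including the case distinction $\lambda=0$ versus $\lambda>0$ and the parameter bookkeeping, but the logical skeleton is identical.
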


\noindent The collection of Propositions~\ref{prop:normal.forms.AI},
\ref{prop:normal.forms.AII}, \ref{prop:normal.forms.AIII},
\ref{prop:normal.forms.BI}, \ref{prop:normal.forms.BII}, \ref{prop:normal.forms.BIII}
\ref{prop:normal.forms.CIb}, \ref{prop:normal.forms.CII}, \ref{prop:normal.forms.CIII} and
\ref{prop:normal.forms.CIa}
form the statement and the proof of Theorem~\ref{thm:normal.forms}.

\begin{remark}
 Let us quickly comment on how to find Proposition~\ref{prop:normal.forms.CIII} using Benenti tensors.
 Let $\a1,\a2$ correspond to $\g1,\g2$ via~\eqref{eqn:Liouville.tensor.g}.
 Their Benenti tensor $L = L(\a1,\K1\a1+\K2\a2)$ is
 \[
  L=\begin{pmatrix} \K2y+\K1 & \K2\,(x+Y_\lambda(y))+\K1 \\  & \K2y+\K1 \end{pmatrix}\,,
  \quad\text{with}\quad
  Y_\lambda(y) = \int^y\frac{e^{-\frac32\lambda\arctan(s)}}{|s^2+1|^{\nicefrac34}}\,ds\,.
 \]
 As in Sections~\ref{sec:C.7} and~\ref{sec:C.8}, we find the possible form-preserving transformation $(x,y)\to(u,v)$,
 $x = Ku+C$, $y = Kv+s$,
 with the constants $K\ne0$, and $C,s\in\R$.
 Any metric in the projective class $\projcl(\g1,\g2)$ permits representatives such that
 $g \sim e^\alpha\,\g1$ with
 $\alpha\in(0,\tfrac{2\pi}{\lambda}]$,
 where $\frac{2\pi}{\lambda}$ is to be understood as $\infty$ if $\lambda=0$.
 Thus, since any isometry of the desired kind maps representatives onto representatives, we must require
 \[
  Kx+C+Y_\lambda(Ky+s)=e^{\beta}\,(x+Y_\lambda(y))\,,
  \quad\text{with}\quad
  \beta\in\big(0,\tfrac{2\pi}{\lambda}\big]\,.
 \]
 Hence, $K = e^\beta$.
 Continuing the computations further is technically involved, which is why we proved Proposition~\ref{prop:normal.forms.CIII} in a different manner.
\end{remark}

\subsection{Proof of Corollary~\ref{cor:projective.classes}}\label{sec:proof.projective.classes}
As a by-product of Theorem~\ref{thm:normal.forms} we obtain Corollary~\ref{cor:projective.classes}.
This is a classification of the projective classes, and a sharper result than the one in~\cite{matveev_2012}.
In fact, a careful examination of the proof undertaken in the previous sections reveals that it also proves Corollary~\ref{cor:projective.classes}.
To substantiate this, we note first that we have seen, implicitly in Section~\ref{sec:proof.normal.forms}, that there are two kinds of parameters involved in Theorem~\ref{thm:normal.forms}. The first kind of parameters pertains to the projective class, namely $C,h,\varepsilon$ and $\lambda$ or, respectively, $\xi$. The second kind are such parameters that specify a metric within a certain projective class, namely $\kappa,\varrho,\theta$. These parameters are obtained from $\K1$ and $\K2$ after specifying the first-kind parameters (i.e., after fixing the space~$\solSp$ of solutions to~\eqref{eqn:linear.system}). The case of degree of mobility~3 gives rise to only one projective class, as proven in Proposition~\ref{prop:dom3.proj.equiv}.

Let us denote, abstractly, $g[c_1,\dots,c_r;d_1,\dots,d_m]$ whereby we mean that the metric~$g$ (from Theorem~\ref{thm:normal.forms}) is characterized by parameters $c_1,\dots,c_r$ that specify its projective class, and parameters $d_1,\dots,d_m$ that ``locate'' the specific metric (or rather its class under coordinate transformations) within its projective class. Now, if two different sets of parameters $(c_1',\dots,d_m')$ and $(c_1,\dots,d_m)$ would characterize projectively equivalent metrics, their projective classes would be identical.
If $c_i'=c_i$ for all $i$, we have proven that $d_j'\not=d_j$ for some $j$ (i.e.\ the metrics are merely projectively equivalent), or otherwise the metrics are already isometric, i.e.\ linked by a coordinate transformation.
Therefore, there must be at least one $1\leq k\leq r$ such that $c_k'\not=c_k$, but $(c_1',\dots,c_r')$ and $(c_1,\dots,c_r)$ would still specify the same projective class. Thus, there would exist a pair of metrics $g_a=g[c_1',\dots,c_r';d_1^a,\dots,d_m^a]$, $g_b=g[c_1,\dots,c_r;d_1^b,\dots,d_m^b]$ from these two different representations of the same projective class, such that $g_a$ and $g_b$ are connected by a coordinate transformation.
But this contradicts Theorem~\ref{thm:normal.forms}, which we have just proven.

\section*{Acknowledgments}
The authors wish to thank Vladimir Matveev for numerous suggestions, helpful discussions and comments.
They gratefully acknowledge support from the project \emph{FIR-2013 Geometria delle equazioni differenziali} of \emph{Istituto Nazionale di Alta Matematica} (INdAM).
Gianni Manno acknowledges, as well, support by ``Starting grant per giovani ricercatori'' of Politecnico di Torino (code \emph{53\_RSG16MANGIO}), the PRIN project 2017
``Real and Complex Manifolds: Topology, Geometry and holomorphic dynamics'',
the project ``Connessioni proiettive, equazioni di Monge-Amp\`ere e sistemi integrabili'' (INdAM)
and ``MIUR grant Dipartimenti di Eccellenza 2018-2022
(E11G18000350001)''.
Gianni Manno is a member of GNSAGA of INdAM.
Andreas Vollmer acknowledges his previous membership with GNSAGA. This paper was partially written when Andreas Vollmer was a research fellow of INdAM.
He is now a research fellow of the German Research Foundation (Deutsche Forschungsgemeinschaft DFG, project number 353063958).

\printbibliography

\noindent We acknowledge the use of Sagemath and Maple\textsuperscript{TM} in computations referred to in this paper. Maple is a trademark of Waterloo Maple Inc.
Sagemath is a free and open-source mathematics software system licensed under the GNU General Public License.

\end{document}